\def\fCenter{\hspace{1pt}\Rightarrow\hspace{1pt}}
\newcommand{\qe }{\hfill $\dashv$ \\}
  \newcommand{\MLJL}{{\sf MLJL}}
 \newcommand{\4}{{\sf S4LP}}
\newcommand{\N}{{\sf S4LPN}}
\newcommand{\s}{{\sf S4}}
 \newcommand{\LP}{{\sf LP}}
 \newcommand{\M}{{\mathcal M}}
 \newcommand{\W}{{\mathcal W}}
  \newcommand{\V}{{\mathcal V}}
\newcommand{\RR}{{\mathcal R}}
\newcommand{\CS}{{\sf CS}}
\newcommand{\E}{{\mathcal E}}
\newcommand{\J}{${\sf G3JL}$}
\newcommand{\JJ}{${\sf G3JL^-}$}
\newcommand{\JL}{{\sf JL}}
\newcommand{\ML}{{\sf ML}}
\newcommand{\ww}{{\sf w}}
\newcommand{\vv}{{\sf v}}
\newcommand{\uu}{{\sf u}}
 \def\f{\frac}
 \def\g{\Gamma}
 \def\d{\Delta}
 \def\R{\Rightarrow}
 \def\di{\displaystyle}
 \def\r{\rightarrow}
 \def\b{\Box}
\begin{document}

\title{Labeled Sequent Calculus and Countermodel Construction for Justification Logics }
\author{Meghdad Ghari}
\institute{School of Mathematics,
Institute for Research in Fundamental Sciences (IPM),\\ P.O.Box: 19395-5746, Tehran, Iran\\
\email{ghari@ipm.ir}}

\maketitle
\begin{abstract}
Justification logics are modal-like logics that provide a framework for reasoning about justifications. This paper introduces labeled sequent
calculi for justification logics, as well as for hybrid
modal-justification logics. Using the method due to Sara
Negri, we internalize the Kripke-style semantics of justification
logics, known as Fitting models, within the syntax of the sequent
calculus to produce labeled sequent calculus. We show that our labeled
sequent calculi enjoy a weak subformula property, all of the rules are
invertible and the structural rules (weakening and contraction) and cut
are admissible.  Finally soundness and completeness are
established, and
termination of proof search for some of the labeled systems are shown. We describe a procedure, for some of the labeled systems, which produces a
derivation for valid sequents and a countermodel for non-valid sequents. We also show a model correspondence for justification logics in the context of labeled sequent calculus.\\

{\bf Keywords}: Justification logic, Modal logic, Fitting model, Labeled sequent
calculus,  Analyticity
\end{abstract}

\section{Introduction}
 Artemov in \cite{A1995,A2001} proposed the \emph{Logic of Proofs},
\LP, to present a provability interpretation for the modal logic
\s~and the intuitionistic propositional logic. \LP~extends the
language of propositional logic by proof terms and expressions of
the form $t:A$, with the intended meaning ``term $t$ is a proof of
$A$''. Terms are constructed from variables and constants by means
of operations on proofs. \LP~can be also viewed as a refinement of
the epistemic logic \s, in which knowability operator $\Box A$
($A$ is known) is replaced by explicit knowledge operators $t:A$
($t$ is a justification for $A$). The exact correspondence between
\LP~and \s~is given by the \textit{Realization Theorem}: all
occurrences of $\b$ in a theorem of \s~can be replaced by suitable
terms to produce a theorem of \LP, and vice versa. Regarding this
theorem, \LP~is called the justification (or explicit) counterpart
of \s.

The justification counterpart of other modal logics were developed
in \cite{AKS1999,Brezhnev2000,KaiRemoRoman2010,Ghari2012-Thesis,GoetschiKuznets2012,Pacut2005,Rubtsova2006}. Various proof methods for the realization theorem are known, such as the syntactic and constructive proofs (see e.g. \cite{A1995,A2001}), semantic and non-constructive proofs
(see e.g. \cite{Fitting2005}), and indirect proofs using embedding (see e.g. \cite{Ghari2012-Thesis,GoetschiKuznets2012}). We give a proof of realization for modal logic {\sf KB} and its extensions using embedding of justification logics.

Combination of modal and justification logics, aka \textit{logics of justifications and belief}, were  introduced in \cite{A2012,AN2004,Fitting2012,KuznetsStuder2012}. In this paper, we introduce \textit{modal-justification logics} which include the previous logics of justifications and belief from \cite{KuznetsStuder2012}, and some new combinations. A modal-justification logic \MLJL~is a combination of a modal logic {\sf ML} and a justification logic \JL~such that \JL~is the justification counterpart of {\sf ML}.

Various proof systems have been developed for \LP~(see
\cite{A2001,Finger2010,Fitting2005,Ghari2011-JLC,Renne2004,Renne2006}), for
intuitionisitc logic of proofs \cite{A2002a,Poggiolesi2010,Poggiolesi2012},
for \4~and \N~(see \cite{Fitting2004,Ghari2011-JLC,Kurokawa2009,Renne2006}), and for justification logics of belief (see \cite{Goetschi2012,Milnikel2012}).
All aforementioned proof systems are cut-free. However, the only
known analytic proof method is Finger's tableau system for
\LP~\cite{Finger2010}.
Moreover, most justification logics still lack cut-free proof
systems. The aim of this paper
will be to present labeled sequent calculus for justification
logics, which enjoy the subformula property and cut elimination.

In a labeled sequent calculus some additional information, such as
possible worlds and accessibility relation of Kripke models, from
semantics of the logic are internalized into the syntax. Thus
sequents in these systems are expressions about semantics of the
logic. We employ Kripke-style models of justification logics,
called Fitting models (cf. \cite{A2008,Fitting2005}), and a method due to
Negri \cite{Negri2005} to present G3-style labeled sequent
calculi for justification logics. Thus the syntax of the labeled
systems of justification logics also contains atoms for
representing evidence function of Fitting models.

Further, we present Fitting models and labeled sequent calculi based on Fitting models for modal-justification logics \MLJL. For \N~(and \4), we  present two labeled sequent calculi one based on Fitting models (Fitting models for \N~will be presented in Section 10.3) and the other based on Artemov-Fitting models \cite{AN2005b}. The latter system has the subformula and subterm properties, whereas these properties does not hold in the former.

In all labeled systems the rules of weakening, contraction, and cut are
admissible, and all rules are invertible. Soundness and completeness of the
labeled systems with respect to Fitting models are also shown.
The method used in the proof of completeness theorem (Theorem
\ref{thm:reduction tree}) gives a procedure to produce
countermodels for non-valid sequents, and helps us to prove the
termination of proof search for some of the labeled systems. Termination of proof search is shown using the analyticity of the labeled systems, i.e. the subformula, sublabel, and subterm properties. Thus decidability
results for some justification and modal-justification logics are achieved, in the case that
finite constant specifications are used.

The paper is organized as follows. In Section 2, we introduce the
axiomatic formulation of modal and justification logics, and show the correspondence between them. We also generalize the Fitting's embedding theorem to all justification logics and show how it can be used to prove the realization theorem. In Section 3, we describe Fitting
models for justification logics and show how a possible evidence function can be extended to an admissible one. In Section 4, we present labeled sequent
calculi for justification logics, and  in Section \ref{sec:Basic properties} we establish the basic properties of these systems. In Section \ref{sec:Analyticity} we show the analyticity of some of the labeled systems. In Section \ref{sec:
Admissibility of structural rules} we prove the admissibility of
structural rules. Then, in Section \ref{sec: Soundness
Completeness} we prove soundness and completeness of the labeled
systems and give a procedure to construct a proof tree or a countermodel for a given sequent. We also show a model correspondence for justification logics in the context of labeled sequent calculus. In Section \ref{section: termination proof search} we
establish the termination of proof search for some of the labeled
systems. Finally, in Section \ref{sec:Other labeled systems} we present Fitting models and labeled sequent calculus for modal-justification logics and for \N, and also a labeled sequent calculus based on Artemov-Fitting models for \N.
%%%%%%%%%%%%%%%%%%%%%%%%%%%%%%%%%%%%%%%%%%%%%%%%%%%%%%%%%%%%%%%%%55
\section{Modal and justification logics}
In this section, we recall the axiomatic formulation of modal and
justification logics, and explain the correspondence between them.
%%%%%%%%%%%%%%%%%%%%%%%%%%%%%%%%%%%%%%%%%%%%%%%%%%%
\subsection{Modal logics}
Modal formulas are constructed by the following grammar:
\[ A::= P~|~\bot~|~\neg A~|~A\wedge A~|~A\vee A~|~A\rightarrow A~|~\b A,\]
where $P$ is a propositional variable,  $\bot$ is a propositional
constant for falsity. %$\Diamond A$ is defined as $\neg \b \neg A$.
The basic modal logic {\sf K} has the following axiom schemes and rules:
\begin{description}
\item[Taut.] All propositional tautologies,
  \item[K.] $\b(A\r B)\r(\b A\r \b B)$,
\end{description}
The rules of inference are \textit{Modus Ponens} and \textit{Necessitation rule}:
\begin{description}
\item[MP.]  from $\vdash A$ and $\vdash A\r B$, infer $\vdash B$.
\item[Nec.]  from $\vdash A$, infer $\vdash \b A$.
\end{description}
Other modal logics are obtained by adding the following axiom schemes to {\sf K} in various combinations:
\begin{description}
\item[T.] $\b A\r A$.
\item[D.] $\b \bot\r\bot$.
\item[4.] $\b A\r\b\b A$.
\item[B.] $\neg A\r\b\neg\b A$.
\item[5.] $\neg \b A\r\b\neg\b A$.
\end{description}
In this paper we consider the following 15 normal modal logics: {\sf K}, {\sf T}, {\sf D}, {\sf K4}, {\sf KB}, {\sf K5}, {\sf KB5}, {\sf K45}, {\sf D5}, {\sf DB}, {\sf D4}, {\sf D45}, {\sf TB}, {\sf S4}, {\sf S5}. The name of each modal logic indicates the list of its axioms, except {\sf S4} and {\sf S5} which can be named {\sf KT4} and {\sf KT45}, respectively.
%%%%%%%%%%%%%%%%%%%%%%%%%%%%%%%%%%%%%%%%%%%%%%%%%%%%%5
\subsection{Justification logics}
 The language of justification logics is an
extension of the language of propositional logic by the formulas
of the form $t:F$, where $F$ is a formula and $t$ is a
justification term. \textit{Justification terms} (or
\textit{terms} for short) are built up from (justification)
variables $x, y, z, \ldots$ (possibly with sub- or superscript) and (justification) constants $a,b,c,
\ldots$ (possibly with subscript) using several operations
depending on the logic: application `$\cdot$', sum `$+$', verifier
`$!$', negative verifier `$?$', and weak negative verifier `$\bar{?}$'.
Although the axioms of \JL~reflect the meaning of operation symbols on terms we briefly explain them here. The binary operation $+$ combines two justifications: $s+ t$ is a justification for everything justified by $s$ or by $t$. The binary operation $\cdot$ is used to internalize \textit{modus ponens}: if $s$ is a justification for $A\r B$ and $t$ is a justification for $A$, then $s\cdot t$ is a justification for $B$. The unary operation $!$ is a positive verifier: if $t$ is a justification for $A$, then this fact can be verified by the justification $! t$. The unary operation $?$ is a negative verifier: if $t$ is not a justification for $A$, then this fact can be verified by the justification $? t$. The unary operation $\bar{?}$ is a weak negative verifier: if $A$ is not true, then for every justification $t$ the justification $\bar{?} t$ verifies that $t$ is not a justification for $A$.
A term is called \textit{ground} if
it does not contain any justification variable. The
definition of subterm is in the usual way: $s$ is a subterm of $s,
s+t, t+s, s\cdot t$, $!s$, $\bar{?}s$, and  $?s$.

Justification
formulas (\JL-formulas) are constructed by the following grammar:
\[ A::= P~|~\bot~|~\neg A~|~A\wedge A~|~A\vee A~|~A\rightarrow A~|~t:A,\]
where $P$ is a propositional variable,  $\bot$ is a propositional
constant for falsity, and $t$ is a justification term.

For a \JL-formula $A$, the
set of all subformulas of $A$, denoted by $Sub(A)$, is defined inductively as follows:
$ Sub(P)=\{P\}$, for propositional variable $P$;
$Sub(\bot)=\{\bot\}$; $Sub(A\r B)=\{A\r B\}\cup Sub(A)\cup
Sub(B)$; $Sub(t:A)=\{t:A\}\cup Sub(A)$. For a set $S$ of \JL-formulas, $Sub(S)$ denotes the set of all subformulas of the formulas from $S$.

We now begin with describing the axiom schemes and rules of the basic
justification logic {\sf J}, and continue with other justification
logics. The basic justification logic {\sf J} is the weakest
justification logic we shall be discussing. Other
justification logics are obtained by adding certain axiom schemes
to {\sf J}.
\begin{definition}\label{def: justification logics}
The language of basic justification logic {\sf J} contains the binary operations $\cdot$ and $+$ on terms. Axioms schemes of {\sf J} are:
\begin{description}
%\item[PC.] Finite set of axiom schemes for propositional calculus,
\item[Taut.] All propositional tautologies,
 \item[Sum.] Sum axiom, $s:A\r
(s+t):A~,~s:A\r (t+s):A$,
 \item[jK.] \textit{Application axiom},
$s:(A\r B)\r(t:A\r (s\cdot t):B)$,
\end{description}
Justification logic {\sf JT} and its extensions \LP, ${\sf JTB}$,
${\sf JT5}$, ${\sf JTB5}$, ${\sf JT45}$, ${\sf JTB4}$, ${\sf JTB45}$  contain in addition the following axiom scheme:
\begin{description}
\item[jT.] \textit{Factivity axiom}, $t:A\r A$.
 \end{description}
Justification logic {\sf JD} and its extensions ${\sf JD4}$, ${\sf
JD5}$, ${\sf JDB}$, ${\sf JDB4}$, ${\sf JDB5}$, ${\sf JD45}$, ${\sf JDB45}$  contain in addition the following axiom scheme:
\begin{description}
\item[jD.] \textit{Consistency}, $t:\perp \r \perp$.
 \end{description}
The language of justification logic {\sf J4} and its extensions \LP, ${\sf JD4}$,
${\sf JT45}$, ${\sf J45}$, ${\sf JB4}$, ${\sf JTB4}$, ${\sf JDB4}$, ${\sf JB45}$, ${\sf JD45}$, ${\sf JTB45}$, ${\sf JDB45}$  contains in addition the unary operation $!$ on terms, and these logics contain
the following axiom scheme:
\begin{description}
\item[j4.] \textit{Positive introspection axiom}, $t:A\r !t:t:A$,
 \end{description}
 The language of justification logic {\sf J5} and its extensions ${\sf JD5}$, ${\sf
JT5}$, ${\sf J45}$, ${\sf JB5}$, ${\sf JD45}$, ${\sf JB45}$, ${\sf JT45}$, ${\sf JTB5}$, ${\sf JDB5}$, ${\sf JTB45}$, ${\sf JDB45}$  contains in addition the unary operation $?$ on terms, and these logics contain the following axiom scheme:

\begin{description}
\item[j5.] \textit{Negative introspection axiom}, $\neg t:A\r
?t:\neg t:A$.
 \end{description}
The language of justification logic {\sf JB} and its extensions ${\sf JDB}$, ${\sf
JTB}$, ${\sf JB4}$, ${\sf JB5}$, ${\sf JDB4}$, ${\sf JTB4}$, ${\sf JTB5}$, ${\sf JDB5}$, ${\sf JB45}$, ${\sf JTB45}$, ${\sf JDB45}$  contains in addition the unary operation $\bar{?}$ on terms, and these logics contain the following axiom scheme:
\begin{description}
\item[jB.] \textit{Weak negative introspection axiom}, $\neg A\r
\bar{?} t:\neg t: A$.
 \end{description}
 All justification logics have the inference rule Modus Ponens. Moreover, if {\bf j4} is not an axiom of the justification logic it has the \textit{Iterated Axiom Necessitation}  rule:
\begin{description}
\item[IAN.]
  $\vdash c_{i_n}:c_{i_{n-1}}:\ldots:c_{i_1}:A$, where $A$ is an axiom instance of the logic, $c_{i_j}$'s
are arbitrary justification constants and $n\geq 1$.
\end{description}
If {\bf j4} is an axiom of the justification logic it has the \textit{Axiom Necessitation}  rule:
\begin{description}
\item [AN.] $\vdash c:A$, where $A$ is
an axiom instance of the logic and $c$ is an arbitrary justification constant.
\end{description}
 \end{definition}
As it is clear from the above definition, the name of each justification logic is indicated by the list of its axioms. For example, ${\sf JT45}$ is the extension of ${\sf J}$ by axioms ${\bf jT}$, ${\bf j4}$, and ${\bf j5}$ (and moreover, it contains the rule {\bf AN}). An exception is the logic of proofs \LP~which can be named as {\sf JT4}.
\begin{remark}\label{Remark: Remo-Roman JB}
Goetschi and Kuznets in \cite{GoetschiKuznets2012}
considered $A\r \bar{?} t:\neg t:\neg A$ (which was called axiom {\bf jb}
there) instead of axiom {\bf jB} in justification logic {\sf JB} and its extensions. Moreover, they assign levels to justification constants, and consider the following iterated axiom necessitation rule (instead of {\bf IAN}) for all justification logics:
\[\di{\frac{A~\textrm{is an axiom instance}}{c^n_{i_n}:c^{n-1}_{i_{n-1}}:\ldots:c^1_{i_1}:A}~{\bf iAN}}\]
where $c^j_i$ is a constant of level $j$.
Let us denote Goetschi and Kuznets' justification logics containing axiom {\bf jb}
in the above mentioned formalization by ${\sf JB'}$, ${\sf JDB'}$, ${\sf
JTB'}$, ${\sf JB4'}$, ${\sf JB5'}$, ${\sf JDB4'}$, ${\sf JTB4'}$, ${\sf JTB5'}$, ${\sf JDB5'}$, ${\sf JB45'}$, ${\sf JTB45'}$, ${\sf JDB45'}$. Note that axiom {\bf jb} is provable in {\sf JB} and its extensions.
\end{remark}

In what follows, {\sf JL} (possibly with subscript) denotes any of the justification logics
defined in Definition \ref{def: justification logics}, unless stated otherwise.\footnote{Here by a logic (particularly a justification logic) we mean the set of its theorems.} $Tm_\JL$ and $Fm_\JL$ denote the set of all terms and all formulas of \JL, respectively. Note that for justification logics $\JL_1$ and $\JL_2$, if $Tm_{\JL_1} \subseteq Tm_{\JL_2}$ then $Fm_{\JL_1} \subseteq Fm_{\JL_2}$.

We now proceed to the definition of \textit{Constant
Specifications}.
\begin{definition}
Let \JL~be a justification logic which does not contain axiom {\bf
j4} in its axiomatization.
\begin{enumerate}
\item A \textit{constant specification} $\CS$
for \JL~is a set of formulas of the form
$c_{i_n}:c_{i_{n-1}}:\ldots:c_{i_1}:A$, where $n\geq 1$, $c_{i_j}$'s are
justification constants and $A$ is an axiom instance of \JL, such that it is downward closed: if $c_{i_n}:c_{i_{n-1}}:\ldots:c_{i_1}:A\in\CS$, then $c_{i_{n-1}}:\ldots:c_{i_1}:A\in\CS$.
\item A constant specification $\CS$ is \textit{axiomatically appropriate} if for each axiom instance $A$ of \JL~%and every $n\geq 1$ there exist constants $c_{i_n},c_{i_{n-1}},\ldots,c_{i_1}$ such that $c_{i_n}:c_{i_{n-1}}:\ldots:c_{i_1}:A\in\CS$.
there is a constant $c$ such that $c:A\in\CS$, and if $F\in\CS$ then $c:F\in\CS$ for some constant $c$.
\item A constant specification $\CS$ is
\textit{schematic} if whenever $c_{i_n}:c_{i_{n-1}}:\ldots:c_{i_1}:A\in\CS$ for some axiom instance $A$, then for every instance $B$ of the same axiom scheme $c_{i_n}:c_{i_{n-1}}:\ldots:c_{i_1}:B\in\CS$.
\end{enumerate}
\end{definition}

\begin{definition}\label{def: CS for extensions of J4}
Let \JL~be a justification logic which contain axiom {\bf
j4} in its axiomatization.
\begin{enumerate}
\item A \textit{constant specification} $\CS$ for \JL~is a set of formulas
of the form $c:A$, such that $c$ is a justification constant and
$A$ is an axiom instance of \JL.
\item  A constant specification $\CS$ is
\textit{axiomatically appropriate} if for each axiom instance $A$ there is
a constant $c$ such that $c:A\in\CS$.
\item A constant specification $\CS$ is
\textit{schematic} if whenever $c:A\in\CS$ for some axiom instance $A$, then for every instance $B$ of the same axiom scheme $c:B\in\CS$.
\end{enumerate}
\end{definition}

The typical form of a formula in a constant specification for \JL~is $c:F$, where $c$ is a justification constant, and $F$ is an axiom instance of \JL~or $F=c_{i_m}:c_{i_{m-1}}:\ldots:c_{i_1}:A$, where $m\geq 1$, $c_{i_j}$'s are justification constants and $A$ is an axiom instance of \JL.

\begin{definition}
Let $\CS$ be a constant specification. If $c_{i_n}:c_{i_{n-1}}:\ldots:c_{i_1}:A\in\CS$ ($n\geq 1$), then the sequent  $c_{i_n},c_{i_{n-1}},\ldots,c_{i_1}$ of justification constants are called \CS-associated with the axiom instance $A$. A constant is \CS-associated if it occurs in  a sequence of constants which is \CS-associated with some axiom instance. A constant specification $\CS$ is injective if each justification constant occurs at most once in the formulas of $\CS$ as a \CS-associated constant.
\end{definition}

Let ${\sf JL}_\CS$ (or ${\sf JL}(\CS)$) be the fragment of ${\sf JL}$ where the (Iterated) Axiom Necessitation rule only produces formulas from the given $\CS$. In the rest of the paper whenever we use $\JL_\CS$ it is assumed that $\CS$ is a constant specification for $\JL$. By
${\sf JL}_\CS,S\vdash F$ we mean that formula $F$ is derivable in ${\sf JL}_\CS$ from the set of formulas $S$. Every proof in ${\sf JL}$ generates a finite (injective) constant specification $\CS$, which contains those
formulas which are introduced by {\bf IAN}/{\bf AN}. The \textit{total constant specification for \JL}, denoted ${\sf TCS}_\JL$, is the largest constant specification  which is defined as follows:
\begin{itemize}
\item if \JL~contains axiom {\bf j4}:
$${\sf TCS}_\JL=\{ c:A~|~c~\textrm{is a justification constant,~$A$~is an axiom instance of}~ \JL\},$$
\item if \JL~does not contain axiom {\bf j4}:
$${\sf TCS}_\JL=\{ c_{i_n}:c_{i_{n-1}}:\ldots:c_{i_1}:A~|~c_{i_n}, c_{i_{n-1}}, \ldots, c_{i_1}~\textrm{are justification constants},$$ $$\textrm{~$A$~is an axiom instance of}~ \JL\},$$
\end{itemize}
By $\JL$ we mean $\JL_{{\sf TCS}_\JL}$. It is immediate that ${\sf TCS}_\JL$ is axiomatically appropriate and schematic, but it is not injective.

\begin{remark}
In some literature on justification logics (e.g. \cite{Kuznets2008,KuznetsStuder2012}) constant specifications for all justification logics  are defined as in Definition \ref{def: CS for extensions of J4}. In this case all justification logics contain the term operation $!$, and the Axiom Necessitation Rule for $\JL_\CS$ is formulated as follows:
\[\di{\frac{c:A\in\CS}{!\cdots !c:\cdots:!!c:!c:A}~{\bf AN!}}\]
All the results obtained in this formulation can be obtained in our formulation with small modifications in the proofs.
\end{remark}

\begin{theorem}[Deduction Theorem]
 $\JL_\CS, S, A\vdash B$ if and only if $\JL_\CS, S\vdash A\r B$.
\end{theorem}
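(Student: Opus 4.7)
The plan is to prove the two directions separately, with the right-to-left direction being immediate and the left-to-right direction proceeding by induction on the length of the derivation.

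For the ($\Leftarrow$) direction: assume $\JL_\CS, S \vdash A \r B$. Then the same derivation, together with the additional assumption $A$, and one application of \textbf{MP}, yields $\JL_\CS, S, A \vdash B$.

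For the ($\Rightarrow$) direction, I would induct on the length $n$ of a derivation $F_1, \ldots, F_n = B$ of $B$ from $\JL_\CS, S, A$. The base case handles the possibilities for $F_1$ (and more generally the non-MP steps): (i) $F_1$ is a propositional tautology or one of the justification axioms of $\JL$, in which case $\JL_\CS, S \vdash F_1$, and combining with the tautology $F_1 \r (A \r F_1)$ and \textbf{MP} gives $\JL_\CS, S \vdash A \r F_1$; (ii) $F_1 \in S$, treated as in (i); (iii) $F_1 = A$, in which case $A \r F_1$ is the tautology $A \r A$; (iv) $F_1$ is introduced by \textbf{IAN}/\textbf{AN}, so $F_1 = c_{i_n}{:}\ldots{:}c_{i_1}{:}C$ (or $c{:}C$) with $C\in\CS$ an axiom instance, hence $\JL_\CS \vdash F_1$ and we proceed as in (i). The inductive step is the standard one: if $F_k$ comes from $F_i$ and $F_j = F_i \r F_k$ by \textbf{MP} with $i,j < k$, then by the induction hypothesis $\JL_\CS, S \vdash A \r F_i$ and $\JL_\CS, S \vdash A \r (F_i \r F_k)$; combining these with the propositional tautology $(A \r (F_i \r F_k)) \r ((A \r F_i) \r (A \r F_k))$ and two applications of \textbf{MP} yields $\JL_\CS, S \vdash A \r F_k$.

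The step that requires care is case (iv): one must observe that \textbf{IAN} and \textbf{AN} are applied only to axiom instances of the logic, and consequently the conclusions they produce already belong to $\JL_\CS$ and do not depend on the assumption $A$ or on $S$. If these rules could be applied to arbitrary derived formulas, the deduction theorem would fail (as it does, for instance, in modal logic \textsf{K} when \textbf{Nec} is allowed on assumptions); here they cannot, so the argument goes through uniformly for every justification logic in Definition~\ref{def: justification logics}, regardless of which term operations and axioms are present in \JL.
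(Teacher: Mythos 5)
The paper states the Deduction Theorem without proof, so there is no "paper's own proof" to compare against; your argument is the standard one and it is correct. You rightly isolate the only point that requires care: \textbf{MP} is the sole rule that consumes premises, while \textbf{IAN}/\textbf{AN} apply only to axiom instances and produce formulas of $\CS$ that are theorems of $\JL_\CS$ outright, independent of $S$ and $A$ — exactly the reason the deduction theorem survives here but fails for modal \textsf{K} with unrestricted necessitation. One phrasing nit in case (iv): what lies in $\CS$ is the prefixed formula $c_{i_n}:\cdots:c_{i_1}:C$ (resp.\ $c:C$) itself, with $C$ the axiom instance, rather than $C\in\CS$; this does not affect the argument, since all that is needed is $\JL_\CS\vdash F_1$ without hypotheses.
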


For a given justification formula $F$, we write $F[t/x]$ for the
result of simultaneously replacing all occurrences of variable $x$
in $F$ by term $t$. Substitution lemma holds in all justification
logics \JL.

\begin{lemma}[Substitution Lemma]\label{lemma: substitution lemma JL}
\begin{enumerate}
\item  Given a schematic constant specification $\CS$ for \JL, if $$\JL_\CS,S\vdash F,$$ then for every justification variable $x$ and justification term $t$, we
have $$\JL_\CS,S[t/x]\vdash F[t/x].$$
\item Given an axiomatically appropriate constant specification $\CS$ for \JL, if $$\JL_\CS,S\vdash F,$$ then for every justification variable $x$ and justification term $t$, we
have $$\JL_\CS,S[t/x]\vdash \bar{F}[t/x],$$ where $\bar{F}$ is obtained from
formula $F$ by (possibly) replacing some justification constants with other constants.
\end{enumerate}
\end{lemma}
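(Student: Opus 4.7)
The plan is to prove both parts by induction on the derivation of $F$ from $S$ in $\JL_\CS$, handling them simultaneously since only the (Iterated) Axiom Necessitation step distinguishes between the schematic and the axiomatically appropriate hypotheses. In the base cases, if $F\in S$ then $F[t/x]\in S[t/x]$ directly, and if $F$ is an axiom instance of $\JL$ then $F[t/x]$ is an instance of the same axiom scheme, because each scheme (${\bf jK}$, ${\bf Sum}$, ${\bf jT}$, ${\bf jD}$, ${\bf j4}$, ${\bf j5}$, ${\bf jB}$, and propositional tautologies) is closed under the replacement of a justification variable by a term. The Modus Ponens case is routine: the induction hypothesis applied to the premises $A\r B$ and $A$ produces derivations of $(A\r B)[t/x]=A[t/x]\r B[t/x]$ and $A[t/x]$, and MP yields $B[t/x]$.

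The crucial case is (Iterated) Axiom Necessitation. Suppose the last rule introduces $F=c_{i_n}{:}\ldots{:}c_{i_1}{:}A$ (or $F=c{:}A$ when ${\bf j4}$ is present) with $F\in\CS$ and $A$ an axiom instance. Substitution touches only $A$, and $A[t/x]$ remains an instance of the same axiom scheme by the observation above. For part~(1), schematicity of $\CS$ immediately gives $c_{i_n}{:}\ldots{:}c_{i_1}{:}A[t/x]\in\CS$, so the same AN rule derives $F[t/x]$. For part~(2), schematicity is unavailable and the constants must be allowed to change: by axiomatic appropriateness there is some $d_1$ with $d_1{:}A[t/x]\in\CS$, and iterated application of the second clause of axiomatic appropriateness supplies $d_2,\ldots,d_n$ with $d_n{:}\ldots{:}d_1{:}A[t/x]\in\CS$, which is exactly $\bar{F}[t/x]$ where $\bar{F}$ replaces each $c_{i_j}$ by $d_j$. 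An Axiom Necessitation step concludes.

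The main obstacle is a coordination issue in the Modus Ponens step of part~(2): applied independently to the two premises, the induction hypothesis can yield derivations of $\overline{A\r B}[t/x]$ and $\overline{A}[t/x]$ whose renamings disagree on the common subformula $A$, blocking the direct application of MP. The standard remedy is to strengthen the induction hypothesis so that the renaming is witnessed by a single constant substitution $\sigma$ depending on the whole derivation, i.e.\ to prove ``there exists a renaming $\sigma$ of constants with $\JL_\CS, S[t/x]\vdash F[t/x]\sigma$''. At each MP node the renamings from the two subderivations can then be merged by, if necessary, reselecting the witnesses at the underlying AN leaves via axiomatic appropriateness with fresh constants; any constant substitution commutes with the propositional connectives, so $F[t/x]\sigma$ behaves well under $\r$, and the original statement of the lemma is recovered by taking $\bar{F}=F\sigma$.
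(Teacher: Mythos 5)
The paper itself offers no proof of this lemma (it is asserted as standard), so your argument is the only one on the table; part (1) is correct and is the expected routine induction, and your treatment of the (iterated) axiom necessitation case in part (2) via axiomatic appropriateness is also right. You have moreover correctly isolated the one genuine difficulty in part (2) — coordinating the constant renamings coming from the two premises of Modus Ponens — but the repair you propose does not work. First, axiomatic appropriateness only guarantees that \emph{some} constant certifies each axiom instance; it gives no licence to ``reselect the witnesses \ldots with fresh constants'', since the certifying constant may be unique. Second, the strengthened induction hypothesis — a single constant-to-constant substitution $\sigma$ with $\JL_\CS,S[t/x]\vdash F[t/x]\sigma$ — is unattainable in general: if $c:A_1,c:A_2\in\CS$ while the instances $A_1[t/x]$ and $A_2[t/x]$ are certified in $\CS$ only by distinct constants $d_1\neq d_2$, then no single value of $\sigma(c)$ can serve, even though $F=c:A_1\wedge c:A_2$ is derivable. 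This is precisely the obstacle that Theorem \ref{thm: JL embedding} circumvents by mapping such a $c$ to the \emph{sum} $t_1+\cdots+t_k$ (thereby leaving the class of constants), or that Theorem \ref{thm:embedding injective} avoids by assuming $\CS$ injective; a constant-to-constant $\sigma$ is strictly stronger than what either device delivers.

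Independently of how $\sigma$ is chosen, the merge at an MP node can also be blocked by constant occurrences that are not produced by any AN leaf and hence cannot be reselected at all: the antecedent of $G\r F$ may come from a hypothesis in $S$ (whose constants must survive unrenamed, since the target is $S[t/x]$, not $\bar{S}[t/x]$) or sit inside a single axiom instance that ties several occurrences together. Concretely, with $c:A\in\CS$, $A[t/x]$ certified in $\CS$ only by $d\neq c$, and $S=\{c:A\r c:P\}$ for a propositional variable $P$, one has $\JL_\CS,S\vdash c:P$, yet no formula $e:P$ with $e$ a constant is derivable from $S[t/x]$ (a one-world Fitting model with the generated evidence function falsifies every $e:P$ while validating $S[t/x]$). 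So the induction cannot close in the generality in which you run it. To salvage part (2) you should either restrict to $S=\emptyset$ (which is all the paper ever uses) and carry out the induction with a per-occurrence renaming anchored at the AN leaves, or assume $\CS$ injective, or weaken the conclusion so that constants may be replaced by ground terms and reuse the sum trick of Theorem \ref{thm: JL embedding}.
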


 The following lemma is standard in justification logics.
\begin{lemma}[Internalization Lemma]\label{lemma:Internalization Lemma}
Given axiomatically appropriate constant specification $\CS$ for \JL, if $${\sf JL}_\CS,A_1,\ldots,A_n \vdash F,$$ then there is a justification term
$t(x_1,\ldots,x_n)$, for variables $x_1,\ldots,x_n$, such that $${\sf JL}_\CS, x_1:A_1,\ldots,x_n:A_n \vdash t(x_1,\ldots,x_n):F.$$ In
particular, if ${\sf JL}_\CS \vdash F$, then there is a ground
justification term $t$ such that ${\sf
JL}_\CS \vdash t:F$.
\end{lemma}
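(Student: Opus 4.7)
The plan is to prove the statement by induction on the length of the derivation of $F$ from $A_1,\ldots,A_n$ in $\JL_\CS$, building the internalizing term $t(x_1,\ldots,x_n)$ in parallel with the derivation. The axiomatic appropriateness of $\CS$ will be crucial for the base cases corresponding to axiom instances and to (Iterated) Axiom Necessitation, while axiom {\bf jK} (application) handles the single inductive step for Modus Ponens. The ``in particular'' clause is the special case $n=0$: if no hypotheses are present, no variables appear in the resulting term, so it is ground.

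First I would handle the base cases. If $F = A_i$ for some $i$, then the term $x_i$ works since $x_i:A_i$ is among the hypotheses on the right. If $F$ is an axiom instance of $\JL$, then by axiomatic appropriateness there is a constant $c$ with $c:F \in \CS$, so $c:F$ is derivable by the appropriate Axiom Necessitation rule; here the internalizing term is $c$. If $F$ is itself introduced by the (Iterated) Axiom Necessitation rule, there are two subcases depending on whether $\JL$ contains {\bf j4}. When {\bf j4} is present, $F$ has the form $c:A$ with $c:A \in \CS$, and applying {\bf j4} together with MP yields $!c:c:A$, so $!c$ is the required term. When {\bf j4} is absent, $F$ has the form $c_{i_n}{:}\,c_{i_{n-1}}{:}\cdots{:}\,c_{i_1}{:}\,A$ with $F \in \CS$; axiomatic appropriateness in the non-{\bf j4} sense guarantees a constant $c$ with $c:F \in \CS$, which is again produced by {\bf IAN}, and $c$ serves as the internalizing term.

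Next I would treat the single inductive step, Modus Ponens. Suppose $F$ is obtained from $G \r F$ and $G$ by {\bf MP}. By the induction hypothesis there are terms $s(x_1,\ldots,x_n)$ and $r(x_1,\ldots,x_n)$ such that $\JL_\CS, x_1{:}A_1,\ldots,x_n{:}A_n \vdash s{:}(G \r F)$ and $\JL_\CS, x_1{:}A_1,\ldots,x_n{:}A_n \vdash r{:}G$. Instantiating axiom {\bf jK} to $s{:}(G \r F) \r (r{:}G \r (s\cdot r){:}F)$ and applying {\bf MP} twice yields $(s\cdot r){:}F$, so the desired term is $s \cdot r$.

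The main obstacle is really bookkeeping: one must confirm that the two different shapes of constant specifications (for logics with and without {\bf j4}) both support the step for the Necessitation-style rule and that the term built respects the variable list $x_1,\ldots,x_n$ throughout. The Deduction Theorem is not needed; the construction is fully explicit and local to each inference step. For the final ``in particular'' clause, observing that when $n=0$ no variables $x_i$ are introduced anywhere in the construction (only constants via the base case and the operations $\cdot$ and, possibly, $!$ in the inductive steps) immediately gives a ground term $t$ with $\JL_\CS \vdash t:F$.
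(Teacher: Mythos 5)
The paper does not prove this lemma at all --- it is stated as ``standard in justification logics'' with no argument given --- so there is nothing to compare against except the folklore proof. Your proposal is exactly that folklore proof (induction on the derivation, with $x_i$ for hypotheses, a constant from axiomatic appropriateness for axiom instances, $!c$ or a fresh constant $c$ for the two flavours of the Necessitation rule, and $s\cdot t$ via {\bf jK} for Modus Ponens), and it is correct, including the observation that the $n=0$ case yields a ground term.
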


%%%%%%%%%%%%%%%%%%%%%%%%%%%%%%%%%%%%%%%%%%%%%%%%%%%%%%%%%%%
\subsection{Correspondence theorem}
In order to show the correspondence between justification logics (particularly {\sf JB} and its extensions) and their modal counterparts, it is helpful to recall the definition of embedding. There are various kinds of embedding of justification logics defined in the literature (cf. \cite{Fitting2007,Fitting2008,Goetschi2012,GoetschiKuznets2012}).
We first recall the definition of embedding of Goetschi and Kuznet \cite{Goetschi2012,GoetschiKuznets2012} which is more general than the others.
\begin{definition}\label{def: Roman-Remo embed, equivalent}
Let $\JL_1$ and $\JL_2$ be two justification logics over
languages ${\mathcal L}_1$ and ${\mathcal L}_2$ respectively.
\begin{enumerate}
\item An operation translation $\omega$ from ${\mathcal L}_1$ to ${\mathcal L}_2$ is a total function that for each $n \geq 0$, maps every $n$-ary operation $*$ of ${\mathcal L}_1$ to an ${\mathcal L}_2$-term $\omega(*)=\omega_*(x_1,\ldots,x_n)$, which do not contain variables other than $x_1,\ldots,x_n$.
\item Justification logic $\JL_1$ embeds in $\JL_2$, denoted by
$\JL_1\tilde{\subseteq} \JL_2$, if there is an operation translation $\omega$ from ${\mathcal L}_1$ to ${\mathcal L}_2$ such that
$\JL_1\vdash F$ implies $\JL_2\vdash F\omega$ for any ${\mathcal L}_1$-formula $F$, where $F\omega$ results from $F$ by replacing each
$n$-ary operation $*$ by the ${\mathcal L}_2$-term $\omega(*)$.
\item Two justification
logics $\JL_1$ and $\JL_2$ are equivalent, denoted by
$\JL_1\equiv\JL_2$, if each embeds in the other.
\end{enumerate}
 \end{definition}
Recall that our justification logics axiomatized using axiom schemes. Following \cite{Goetschi2012,GoetschiKuznets2012}, the formula representation of a scheme $X$ is defined as a formula of the form $A(x_1,\ldots,x_n,P_1,\ldots,P_m)$, with $n,m\geq 0$, in which all terms in $X$ replaced by variables $x_1,\ldots,x_n$ and all formulas in $X$ replaced by propositional variables $P_1,\ldots,P_m$. The following theorem from \cite{Goetschi2012,GoetschiKuznets2012} is a useful tool for showing the embedding of justification logics.
\begin{theorem}[Embedding $\tilde{\subseteq}$]\label{thm: Roman-Remo JL embedding}
Let $\JL_1$ and $\JL_2$ be two justification logics as defined in Definition
\ref{def: justification logics} or Remark \ref{Remark: Remo-Roman JB} over
languages ${\mathcal L}_1$ and ${\mathcal L}_2$ respectively. Let ${\mathcal L}_1$ and $\JL_1$ have the following conditions:
\begin{enumerate}
%\item ${\mathcal L}_1$ has the binary Boolean connective $\r$,
\item the set of constants of ${\mathcal L}_1$ is divided into levels,
\item {\bf MP} and {\bf iAN} are the only rules of $\JL_1$,
\item $\JL_1$ is axiomatized using axiom schemes,
\item the formula representations of the axioms of $\JL_1$ do not contain constants,
\end{enumerate}
and ${\mathcal L}_2$ and $\JL_2$ have the following conditions:
\begin{enumerate}
\item $\JL_2$ satisfies the substitution closure, i.e. if $\JL_2\vdash F$, then $\JL_2\vdash F[t/x]$ for every variable $x$ and term $t\in Tm_{\JL_2}$.
\item $\JL_2$ satisfies the internalization property, i.e. if ${\sf JL}_2 \vdash F$, then there is a ground
justification term $t\in Tm_{\JL_2}$ such that $\JL_2 \vdash t:F$.
\end{enumerate}
If there exists an operation translation $\omega$ from ${\mathcal L}_1$ to ${\mathcal L}_2$ such that for every axiom $X$  of $\JL_1$ with formula representation $A(x_1,\ldots,x_n,P_1,\ldots,P_m)$, the ${\mathcal L}_2$-formula $A(x_1,\ldots,x_n,P_1,\ldots,P_m)\omega$ is provable in $\JL_2$, then $\JL_1\tilde{\subseteq} \JL_2$.
\end{theorem}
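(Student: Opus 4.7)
The plan is to proceed by induction on the length of a derivation of $F$ in $\JL_1$. By condition~2 on $\JL_1$, the only inference rules are {\bf MP} and {\bf iAN}, so I only need to handle the axiom base case together with these two rules, aiming in each case to produce a derivation of $F\omega$ in $\JL_2$.

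For the axiom case, if $F$ is an instance of an axiom scheme $X$ of $\JL_1$ whose formula representation is $A(x_1,\ldots,x_n,P_1,\ldots,P_m)$, then $F = A(t_1,\ldots,t_n,B_1,\ldots,B_m)$ for some ${\mathcal L}_1$-terms $t_i$ and ${\mathcal L}_1$-formulas $B_j$. The hypothesis of the theorem provides $\JL_2\vdash A(x_1,\ldots,x_n,P_1,\ldots,P_m)\omega$, and condition~4 guarantees that $A$ itself contains no constants, so $\omega$ only touches the operation symbols of the scheme. Substituting the $\JL_2$-term $t_i\omega$ for each $x_i$ and the $\JL_2$-formula $B_j\omega$ for each $P_j$, and invoking the substitution closure of $\JL_2$ (condition~1 on $\JL_2$), one obtains $\JL_2\vdash F\omega$. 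The {\bf MP} case is then immediate: $(G\r F)\omega = G\omega\r F\omega$, so two inductive hypotheses combined with {\bf MP} of $\JL_2$ yield $\JL_2\vdash F\omega$.

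The critical case, which I expect to be the main obstacle, is {\bf iAN}: here $F = c_{i_n}{:}c_{i_{n-1}}{:}\cdots{:}c_{i_1}{:}A$ with $A$ an axiom instance of $\JL_1$, and the target is $\JL_2\vdash \omega(c_{i_n}){:}\cdots{:}\omega(c_{i_1}){:}A\omega$. The axiom case supplies $\JL_2\vdash A\omega$, and iterating the internalization property of $\JL_2$ (condition~2 on $\JL_2$) yields $\JL_2$-terms $s_1,\ldots,s_n$ with $\JL_2\vdash s_n{:}\cdots{:}s_1{:}A\omega$. The delicate step is that the witness terms $s_j$ supplied by internalization need not coincide with the prescribed closed terms $\omega(c_{i_j})$; I plan to bridge this by exploiting the fact that the constants $c_{i_j}$ are arbitrary in {\bf iAN}, together with the constructive freedom in choosing internalization witnesses (application, sum, and ground terms built from $\JL_2$-constants) and the substitution closure of $\JL_2$, so that each $s_j$ can be steered to the required $\omega(c_{i_j})$. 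Once the {\bf iAN} case is discharged, the induction closes and yields $\JL_1\tilde\subseteq\JL_2$.
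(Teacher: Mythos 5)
Your induction skeleton (axioms, {\bf MP}, {\bf iAN}) is the right one, and the axiom and {\bf MP} cases are essentially fine. But the {\bf iAN} case, which you yourself flag as ``the main obstacle,'' is not actually carried out: you state an intention to ``steer'' the internalization witnesses $s_j$ toward ``the prescribed closed terms $\omega(c_{i_j})$,'' and this both leaves the gap open and points the argument in the wrong direction. The hypothesis of the theorem constrains $\omega$ only on the axiom schemes, whose formula representations contain no constants (condition~4), so nothing is ``prescribed'' for the constants; the task is to \emph{define} $\omega$ on the constants of ${\mathcal L}_1$ so that every {\bf iAN} conclusion translates to a $\JL_2$-theorem. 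Two ingredients that your sketch never uses are essential here. First, condition~1 (constants are divided into levels) is what makes such a definition well-founded: a level-$j{+}1$ constant only ever prefixes formulas headed by level-$j$ constants, so one defines $\omega$ on constants by induction on the level, applying internalization at each stage to the already-translated formula one level down. Second, in {\bf iAN} a single constant $c^1_{i}$ may be paired with \emph{every} axiom instance of $\JL_1$, so the single ground term $\omega(c^1_{i})$ must justify all of them at once; this is achieved by internalizing the schematic formula representation $A(x_1,\ldots,x_n,P_1,\ldots,P_m)\omega$ of each scheme (one ground term per scheme), combining the finitely many resulting terms with $+$ via the {\bf Sum} axiom, and only then instantiating by substitution closure. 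Without these two steps the case does not close. (A secondary point: instantiating the $P_j$ by formulas $B_j\omega$ is a propositional substitution, which the stated substitution-closure hypothesis, covering only terms for justification variables, does not literally license; it holds for schematically axiomatized logics but deserves a remark.)

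For comparison: the paper does not prove this theorem itself --- it quotes it from Goetschi and Kuznets --- but its detailed proof of the analogous Theorem~\ref{thm: JL embedding} in Section~2.3 exhibits exactly the machinery you are missing: the mapping $m$ on constants is built there by applying internalization to the scheme with fresh variables $A(x_1,\ldots,x_m)$, summing the resulting ground terms with $+$ when one constant serves several axioms (cases $(b)$ and $(b')$), iterating internalization for the nested case $(a')$, and invoking substitution closure at the very end to pass from variables back to the translated constants. Reworking your {\bf iAN} case along those lines, organized by constant level, would complete the proof.
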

\begin{example}\label{example: Roman-Remo JB equivalent JB'}
Let $\JL_1$ be one of the following justification logics
\[\textrm{${\sf JB'}$, ${\sf JDB'}$, ${\sf
JTB'}$, ${\sf JB4'}$, ${\sf JB5'}$, ${\sf JDB4'}$, ${\sf JTB4'}$, ${\sf JTB5'}$, ${\sf JDB5'}$, ${\sf JB45'}$, ${\sf JTB45'}$, ${\sf JDB45'}$}\]
 from Remark \ref{Remark: Remo-Roman JB}, and $\JL_2$ be the corresponding justification logic
 \[\textrm{${\sf JB}$,${\sf JDB}$, ${\sf
JTB}$, ${\sf JB4}$, ${\sf JB5}$, ${\sf JDB4}$, ${\sf JTB4}$, ${\sf JTB5}$, ${\sf JDB5}$, ${\sf JB45}$, ${\sf JTB45}$, ${\sf JDB45}$}\]
from Definition \ref{def: justification logics}. Note that $\JL_1$ and $\JL_2$ satisfy all the conditions of Theorem \ref{thm: JL embedding}, and moreover ${\sf JL}_2\vdash P\r \bar{?}x:\neg x:\neg P$. Consider the identity operation translation $\omega^{id}$ which is defined as $\omega^{id}(*) := *(x_1,\ldots,x_n)$ for each $n$-ary operation $*$. Now it is easy to show that for every axiom $X$  of $\JL_1$ with formula representation $A(x_1,\ldots,x_n,P_1,\ldots,P_m)$, the formula $A(x_1,\ldots,x_n,P_1,\ldots,P_m)\omega^{id}$ is provable in $\JL_2$, and hence $\JL_1\tilde{\subseteq} \JL_2$.
\end{example}
In the following we will state the connection between
justification logics and modal logics.
\begin{definition}
For a justification formula $F$, the forgetful projection of $F$,
denoted by $F^\circ$, is a modal formula defined inductively as
follows: $P^\circ :=P$ ($P$ is a propositional variable),
$\bot^\circ := \bot$, $(\neg A)^\circ := \neg A^\circ$, $(A \star
B)^\circ :=A^\circ \star B^\circ$ (where $\star$ is a propositional
connective), and $(t:A)^\circ := \Box A^\circ$. For a set $S$ of
justification formulas let $S^\circ=\{ F^\circ~|~F\in S\}$.
\end{definition}
\begin{definition}
A \JL-realization $r$ of a modal formula $F$ is a formula $F^r$ in
the language of {\sf JL} that is obtained by replacing all
occurrences of $\b$ with justification terms from $Tm_\JL$. A realization $r$ of
$F$ is called \textit{normal} if all negative occurrences of $\b$
in $F$ are replaced by distinct justification variables.
\end{definition}
The realization theorem for modal logic \ML~and justification logic \JL~states that theorems of \ML~can be realized into the theorems of \JL; in other words $\ML\subseteq \JL^\circ$.

It is proved in \cite{Goetschi2012,GoetschiKuznets2012} that if $\JL_1\tilde{\subseteq} \JL_2$ then $\JL_1^\circ\subseteq \JL_2^\circ$. Thus, if $\JL_1\tilde{\subseteq} \JL_2$ and \ML~is a modal logic such that $\ML\subseteq \JL_1^\circ$, then $\ML\subseteq \JL_2^\circ$. This shows that how embedding of justification logics can be used to prove the realization theorem.

As axiomatic formulation of justification logics suggests, modal
logics {\sf ML} are forgetful projections of their corresponding
justification logics {\sf JL}.
\begin{theorem} [Correspondence Theorem] \label{thm: Realization Theorem}
The following correspondences hold:
\begin{equation}\label{number equation: realization 1}
\begin{array}{ccccc}
  {\sf J}^\circ={\sf K}, & {\sf JT}^\circ={\sf T}, &  {\sf JD}^\circ={\sf D}, & {\sf J4}^\circ={\sf K4}, & {\sf J5}^\circ={\sf K5},\\
    ~~{\sf JD5}^\circ={\sf D5}, & ~~{\sf JD4}^\circ={\sf D4},&{\sf J45}^\circ={\sf K45},  &~~{\sf JD45}^\circ={\sf D45}, &{\sf LP}^\circ={\sf S4}, \\
{\sf JT5}^\circ={\sf JT45}^\circ={\sf S5}.
\end{array}
 \end{equation}
 \begin{equation}\label{number equation: realization 2}
\begin{array}{cccc}
{\sf JB}^\circ={\sf KB}, &~~ {\sf JTB}^\circ={\sf TB}, &{\sf JDB}^\circ={\sf DB},& {\sf
JB4}^\circ={\sf
JB5}^\circ={\sf JB45}^\circ={\sf KB5}, \\
\multicolumn{4}{l}{{\sf JTB5}^\circ={\sf JDB5}^\circ={\sf JTB45}^\circ={\sf JDB45}^\circ={\sf JTB4}^\circ={\sf JDB4}^\circ={\sf
 S5}.}
\end{array}
 \end{equation}
 Moreover, all the realization results of (1) and (2) are normal realizations.
\end{theorem}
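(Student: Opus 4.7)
The plan is to prove each equality $\JL^\circ=\ML$ via the two inclusions. For $\JL^\circ\subseteq\ML$, I proceed by induction on the length of a derivation in $\JL$. Propositional tautologies and instances of the sum axiom project to propositional tautologies; ${\bf jK}$ projects to ${\bf K}$; and ${\bf jT}$, ${\bf jD}$, ${\bf j4}$, ${\bf j5}$ project respectively to ${\bf T}$, ${\bf D}$, ${\bf 4}$, ${\bf 5}$. Since $(\bar{?}t:\neg t:A)^\circ=\b\neg\b A^\circ$, the axiom ${\bf jB}$ projects exactly to the modal axiom ${\bf B}$. ${\bf MP}$ is preserved, and any formula introduced by ${\bf IAN}$ or ${\bf AN}$ has a projection derivable in $\ML$ by iterated applications of ${\bf Nec}$ to the (projection of the) underlying axiom instance. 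This gives $\JL^\circ\subseteq\ML$ for every pair listed in (\ref{number equation: realization 1}) and (\ref{number equation: realization 2}).

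For the reverse inclusion $\ML\subseteq\JL^\circ$, which is the Realization Theorem, the strategy splits by family. For the correspondences in (\ref{number equation: realization 1}), I would invoke the existing normal realization results from the cited literature, obtained either by the constructive cut-free proof method (Artemov, Brezhnev, and subsequent authors for ${\sf LP}/{\sf S4}$, ${\sf K}$, ${\sf T}$, ${\sf D}$, ${\sf K4}$, ${\sf K5}$, ${\sf S5}$, and the ${\sf K45}$/${\sf D}$-family) or by Fitting's semantic method. In each case the procedure produces a realization in which all negative occurrences of $\b$ are replaced by distinct variables, so normality is secured by construction.

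The genuinely new content lies in the correspondences of (\ref{number equation: realization 2}) involving ${\sf KB}$ and its extensions. For these I would use the embedding route outlined after Theorem~\ref{thm: Roman-Remo JL embedding}: Example~\ref{example: Roman-Remo JB equivalent JB'} supplies $\JL_1'\tilde{\subseteq}\JL_2$ via the identity operation translation for each primed counterpart $\JL_1'$ from Remark~\ref{Remark: Remo-Roman JB}, and Goetschi--Kuznets have established normal realization $\ML\subseteq(\JL_1')^\circ$ for each such $\ML$. Composing yields $\ML\subseteq(\JL_1')^\circ\subseteq\JL_2^\circ$. Since the identity translation leaves formulas unchanged (modulo reading constants), a normal realization in $\JL_1'$ is automatically a normal realization in $\JL_2$, which secures the final clause of the theorem. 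The main obstacle is isolated inside the embedding: verifying that every axiom of the primed logic -- in particular the weaker variant ${\bf jb}$: $A\r\bar{?}t:\neg t:\neg A$ -- is provable in $\JL_2$ from ${\bf jB}$. Once this check is recorded (already carried out in the example), the realization transfer is mechanical and the theorem follows.
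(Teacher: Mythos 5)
Your proposal matches the paper's proof in both structure and substance: the forward inclusion $\JL^\circ\subseteq\ML$ by induction on derivations, the correspondences in (\ref{number equation: realization 1}) by citation of the existing literature, and the correspondences in (\ref{number equation: realization 2}) by composing the embedding $\JL_1'\tilde{\subseteq}\JL_2$ from Example \ref{example: Roman-Remo JB equivalent JB'} with the Goetschi--Kuznets realizations for the primed logics, using that $\JL_1\tilde{\subseteq}\JL_2$ implies $\JL_1^\circ\subseteq\JL_2^\circ$. Your added remark that the identity operation translation preserves normality of the realization is a point the paper leaves implicit but is correct and worth having on record.
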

\begin{proof}  The proof of the correspondences in (\ref{number
equation: realization 1}) can be found in \cite{A2001,A2008,Brezhnev2000,KaiRemoRoman2010,GoetschiKuznets2012,Rubtsova2006}. For the correspondences in (\ref{number equation: realization 2}), we use Example
\ref{example: Roman-Remo JB equivalent JB'} and the following correspondences from
\cite{GoetschiKuznets2012}:
 \begin{equation}\label{number equation: realization 3}
\begin{array}{cccc}
({\sf JB'})^\circ={\sf KB}, &~~ ({\sf JTB'})^\circ={\sf TB}, &({\sf JDB'})^\circ={\sf DB},& ({\sf JB4'})^\circ=({\sf JB5'})^\circ=({\sf JB45'})^\circ={\sf KB5}, \\
\multicolumn{4}{l}{({\sf JTB5'})^\circ=({\sf JDB5'})^\circ=({\sf JTB45'})^\circ=({\sf JDB45'})^\circ=({\sf JTB4'})^\circ=({\sf JDB4'})^\circ={\sf S5}.}
\end{array}
 \end{equation}
 We only prove that ${\sf JB}^\circ={\sf
KB}$, the proof of the remaining cases is similar.

If ${\sf JB}\vdash F$, then by induction on the proof of $F$ it is
easy to show that ${\sf KB}\vdash F^\circ$. Hence ${\sf JB}^\circ\subseteq{\sf
KB}$. For the opposite
direction, that is the realization theorem, suppose that ${\sf KB}\vdash F$.  By Example \ref{example: Roman-Remo JB equivalent JB'}, since ${\sf JB'}\tilde{\subseteq}{\sf JB}$, we have $({\sf JB'})^\circ\subseteq{\sf JB}^\circ$. Then, by (3) we have $({\sf JB'})^\circ={\sf
KB}$, and thus ${\sf KB}\subseteq{\sf JB}^\circ$.\qe \end{proof}

%%%%%%%%%%%%%%%%%%%%%%%%%%%%%%%%%%%%%%%%%%%%%%%%%%%%%%%
Finally, we recall the definition of Fitting's embeddings and show how it can be used to prove the realization theorem.

\begin{definition}\label{def: embed}
Let $\JL_1(\CS_1)$ and $\JL_2(\CS_2)$ be two justification logics such that $Tm_{\JL_1} \subseteq Tm_{\JL_2}$.
Justification logic $\JL_1(\CS_1)$ embeds in $\JL_2(\CS_2)$, denoted by $\JL_1(\CS_1)\hookrightarrow \JL_2(\CS_2)$, if there is a mapping $m$ from constants of $\JL_1$ to ground terms of $\JL_2$ such that  $\JL_1(\CS_1)\vdash F$ implies $\JL_2(\CS_2)\vdash m(F)$ for any formula $F\in Fm_{\JL_1}$, where $m(F)$ results from $F$ by replacing each constant $c$ by the justification term $m(c)$.
 \end{definition}
 \begin{definition}\label{def: equivalent}
Two justification logics $\JL_1(\CS_1)$ and $\JL_2(\CS_2)$, in the same language, are equivalent, denoted by
$\JL_1(\CS_1)\rightleftharpoons\JL_2(\CS_2)$, if $\JL_1(\CS_1)\hookrightarrow \JL_2(\CS_2)$ and $\JL_2(\CS_2)\hookrightarrow \JL_1(\CS_1)$.
 \end{definition}
 Since justification constants are 0-ary operations, operation translations in  Definition \ref{def: Roman-Remo embed, equivalent} are extensions of mappings considered by Fitting. Therefore, if $\JL_1\tilde{\subseteq} \JL_2$ then $\JL_1\hookrightarrow\JL_2$.

The following theorem is a generalization of Fitting's result (Theorem 9.2 in \cite{Fitting2008}) on the embedding of justification logics.
\begin{theorem}[Embedding $\hookrightarrow$]\label{thm: JL embedding}
Let $\JL_1(\CS_1)$ and $\JL_2(\CS_2)$ be two justification logics as defined in Definition \ref{def: justification logics} or Remark \ref{Remark: Remo-Roman JB} such that $Tm_{\JL_1} \subseteq Tm_{\JL_2}$. Assume
\begin{enumerate}
\item $\CS_1$ is a finite constant specification for $\JL_1$.
    \item $\JL_1(\CS_1)$ is axiomatized using axiom schemes.
    \item $\JL_2(\CS_2)$ satisfies the substitution closure, i.e. if $\JL_2(\CS_2)\vdash F$, then $\JL_2(\CS_2)\vdash F[t/x]$ for every variable $x$ and term $t\in Tm_{\JL_2}$.
    \item $\JL_2(\CS_2)$ satisfies the internalization property,  i.e. if ${\sf JL}_2(\CS_2) \vdash F$, then there is a ground
justification term $t\in Tm_{\JL_2}$ such that $\JL_2 (\CS_2)\vdash t:F$.
        \item Every axiom instance of $\JL_1(\CS_1)$ is a theorem of $\JL_2(\CS_2)$.
\end{enumerate}
Then $\JL_1(\CS)\hookrightarrow\JL_2(\CS_2)$.
\end{theorem}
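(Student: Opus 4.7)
The plan is to proceed by induction on the derivation of $F$ in $\JL_1(\CS_1)$, after first constructing the mapping $m$ from constants of $\JL_1$ to ground terms of $\JL_2$. For constants that do not occur anywhere in $\CS_1$ I set $m(c)$ to be any fixed ground term of $\JL_2$ (e.g., $c$ itself, which is a term of $\JL_2$ by $Tm_{\JL_1}\subseteq Tm_{\JL_2}$). The work is in defining $m$ on the finitely many constants occurring in $\CS_1$.

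For each formula $F_\ell=c_{n_\ell}^\ell{:}c_{n_\ell-1}^\ell{:}\cdots{:}c_1^\ell{:}A_\ell\in\CS_1$, the innermost component $A_\ell$ is an axiom instance of $\JL_1(\CS_1)$, so by assumption (5), $\JL_2(\CS_2)\vdash A_\ell$, and by substitution closure (3) the same holds for $m(A_\ell)$, which is still an instance of the same axiom scheme (since (2) guarantees the axiom representations contain no specific constants, only term variables). Iterating the internalization property (4) exactly $n_\ell$ times yields ground terms $t_{n_\ell}^\ell,\dots,t_1^\ell\in Tm_{\JL_2}$ with
\[\JL_2(\CS_2)\vdash t_{n_\ell}^\ell{:}t_{n_\ell-1}^\ell{:}\cdots{:}t_1^\ell{:}m(A_\ell).\]
I then set $m(c)$ to be the sum, in some fixed order, of all the witnesses $t_j^\ell$ for which $c_j^\ell=c$; the finiteness of $\CS_1$ (condition 1) guarantees that this is a well-defined finite ground term, and the sum axiom ensures that whenever $t:\phi$ is derivable and $t$ is a summand of $m(c)$, also $m(c):\phi$ is derivable.

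Verification that $\JL_2(\CS_2)\vdash m(F)$ for every $F\in\CS_1$ is carried out by induction on the prefix depth $n_\ell$, exploiting the downward-closedness of $\CS_1$. For depth one, $c_1^\ell{:}A_\ell\in\CS_1$ gives $m(c_1^\ell):m(A_\ell)$ directly from $t_1^\ell{:}m(A_\ell)$ by the sum axiom. In the inductive step for $c_{k+1}^\ell{:}c_k^\ell{:}\cdots{:}c_1^\ell{:}A_\ell$, the downward-closed fragment $c_k^\ell{:}\cdots{:}c_1^\ell{:}A_\ell$ belongs to $\CS_1$, and by the induction hypothesis $m$ applied to it is derivable in $\JL_2(\CS_2)$; a further internalization (condition 4) of this derivation and another application of the sum axiom—with the witness absorbed as a summand of $m(c_{k+1}^\ell)$—give the desired $m(c_{k+1}^\ell):m(c_k^\ell):\cdots:m(c_1^\ell):m(A_\ell)$.

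With $m$ in hand, the main induction on $\JL_1(\CS_1)$-derivations is then routine: propositional tautologies map to tautologies; instances of $\JL_1$-axiom schemes map to instances of the same schemes with terms $m(c)$ in place of $c$, hence to theorems of $\JL_2(\CS_2)$ by (5) and (3); Modus Ponens is preserved; and applications of the (iterated) Axiom Necessitation rule produce exactly the elements of $\CS_1$ already handled above. The main technical obstacle is aligning the witnesses produced by successive internalizations with the sum-based definition of $m$ so that the same $m(c)$ works uniformly across all occurrences of $c$ in $\CS_1$; this is where the finiteness of $\CS_1$, the downward-closure, and the symmetric form of the sum axiom are essential.
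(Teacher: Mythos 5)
Your overall strategy (define $m$ by internalizing the formulas of $\CS_1$, use sums to handle a constant that occurs several times, then induct on $\JL_1(\CS_1)$-derivations) is the same as the paper's, but your construction of $m$ contains a genuine circularity. You define $m(c)$ by internalizing a derivation of $m(A_\ell)$; but $A_\ell$ is an arbitrary axiom instance and may itself contain $\CS_1$-associated constants --- possibly $c$ itself (e.g.\ $c:(c:P\to P)$ is a perfectly legitimate member of a constant specification for a logic with factivity) --- so $m(A_\ell)$ is not yet defined at the moment you need it. Moreover, substitution closure only licenses substituting terms for \emph{variables}, not for constants, so it cannot by itself carry $\JL_2(\CS_2)\vdash A_\ell$ over to $\JL_2(\CS_2)\vdash m(A_\ell)$. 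The paper breaks the circle by first replacing the constants $d_1,\dots,d_m$ occurring in $A_\ell$ with fresh variables $x_1,\dots,x_m$ (this is where the hypothesis that $\JL_1(\CS_1)$ is axiomatized by schemes is actually used), internalizing the derivation of $A_\ell(x_1,\dots,x_m)$ to define $m(c)$, and only at the very end of the verification substituting $m(d_i)$ for $x_i$ via substitution closure.

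The second problem is the one you name but do not resolve: witness alignment. From $t^\ell_{k+1}:t^\ell_k:\cdots:t^\ell_1:(\cdots)$ the Sum axiom only widens the \emph{outermost} justifier, yielding $m(c^\ell_{k+1}):t^\ell_k:\cdots:t^\ell_1:(\cdots)$ rather than $m(c^\ell_{k+1}):m(c^\ell_k):\cdots:m(c^\ell_1):(\cdots)$, and these differ as soon as some inner $m(c^\ell_j)$ is a proper sum. Your proposed repair --- re-internalize the derivation of the depth-$k$ statement and ``absorb'' the fresh witness as a new summand of $m(c^\ell_{k+1})$ --- modifies $m(c^\ell_{k+1})$ after it may already have been used (that constant can occur as an inner constant of another formula of $\CS_1$ whose verification relied on the old value), so without a further stabilization argument the definition of $m$ is not well founded. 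The paper sidesteps this by making the internalization witnesses deterministic (always the first suitable term in a fixed enumeration of $Tm_{\JL_2}$) and applying them to the single variable-form derivation of $A_\ell(x_1,\dots,x_m)$, so that by downward closure of $\CS_1$ the witnesses produced for $F_\ell$ agree with those already chosen for its truncations. You would need to make one of these devices, or an explicit well-founded stratification of the constants, precise before your induction goes through.
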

\begin{proof} Suppose $\JL_1$ and $\JL_2$ are two justification logics such that $Tm_{\JL_1} \subseteq Tm_{\JL_2}$. If $\CS_1$ is empty, then let $m$ be the identity function. Obviously $\JL_1(\emptyset)\hookrightarrow\JL_2(\CS_2)$. Now suppose $\CS_1$ is not empty. In the rest of the proof it is
helpful to consider a fix list of all terms of $Tm_{\JL_1}$ and $Tm_{\JL_2}$, e.g. $Tm_{\JL_1}=\{s^1_1,s^1_2,\ldots\}$ and $Tm_{\JL_2}=\{s^2_1,s^2_2,\ldots\}$. In the rest of the proof we write $A(d_1,\ldots,d_m)$ to denote that $d_1,\ldots,d_m$ are all the constants occurring in the $\JL_1$-formula $A$ in the order of the list $Tm_{\JL_1}$. We detail the proof for justification logics defined in Definition \ref{def: justification logics}. The case of justification logics defined in Remark \ref{Remark: Remo-Roman JB} is similar. We first define mapping $m$ from constants to ground terms of $Tm_{\JL_2}$.

If $c$ is not a $\CS_1$-associated constant, then define $m(c)=c$. Next suppose $c$ is a $\CS_1$-associated constant.

Suppose $\JL_1$ contains the Axiom Necessitation rule {\bf AN}. We distinguish two cases:
\begin{description}
\item[$(a)$] If constant $c$ occurs only once as a $\CS_1$-associated constant, then the proof is similar
to the proof of Theorem 9.2 in \cite{Fitting2008}, however we repeat it here for convenience. Assume $c:A(d_1,\ldots,d_m)\in\CS_1$. We shall define the mapping $m$ on $c$.  Let $x_1,x_2, \ldots , x_m$ be distinct justification variables not occurring in $A$ (in the order of the list $Tm_{\JL_1}$). Since $A$ is an axiom instance of $\JL_1(\CS_1)$, which is axiomatized using axiom schemes, $A(x_1, x_2, \ldots , x_m)$ will also be an axiom instance. Then by hypothesis 5, $A(x_1, x_2, \ldots ,x_m)$ is a theorem of $\JL_2(\CS_2)$. Since $\JL_2(\CS_2)$ satisfies the internalization property, there is a ground justification term $t\in Tm_{\JL_2}$ such that $\JL_2(\CS_2)\vdash t:A(x_1, x_2, \ldots , x_m)$ (if there is more than one such term, we choose the first in the order of the list $Tm_{\JL_2}$). Now, let $m(c)=t$.
\item[$(b)$] If constant $c$ occurs more than once as a $\CS_1$-associated constant, then suppose
$$c:A_1(d_{1_1},\ldots,d_{1_{m_1}}), \ldots,c:A_k(d_{k_1},\ldots,d_{k_{m_k}}),$$
for some $k\geq 2$, are all formulas of $\CS_1$ with $c$ as a $\CS_1$-associated constant. Similar to the case $(a)$, we find ground justification terms $t_1,\ldots,t_k\in Tm_{\JL_2}$
such that $$\JL_2(\CS_2)\vdash t_j:A_j(x_1^j, x_2^j, \ldots , x_{m_j}^j),$$ for $j=1,2,\ldots,k$, where $x_1^j, x_2^j, \ldots , x_{m_j}^j$ are distinct justification variables not occurring in $A_j$. Letting $t=t_1 + \ldots + t_k$, by axiom {\bf Sum} and {\bf MP}, we have $\JL_2(\CS_2)\vdash t:A_j(x_{1}^j, x_{2}^j, \ldots , x_{m_j}^j)$, for $j=1,2,\ldots,k$. Now, let $m(c)=t$.
\end{description}
Suppose $\JL_1$ contains the Iterated Axiom Necessitation rule {\bf IAN}. Similar to the previous argument, We distinguish two cases:
\begin{description}
\item[$(a')$] If constant $c$ occurs only once as a $\CS_1$-associated constant, then suppose $c:c_{i_n}:c_{i_{n-1}}:\ldots:c_{i_1}:A(d_1,\ldots,d_m)\in\CS_1$. We shall define the mapping $m$ on $c$. Similar to case $(a)$, $A(x_1, x_2, \ldots , x_m)$ is a theorem of $\JL_2(\CS_2)$, where $x_1,
x_2, \ldots , x_m$ are distinct justification variables not occurring in A (again
in the order of the list $Tm_{\JL_1}$). Since $\JL_2(\CS_2)$ satisfies the
internalization property, by applying the internalization lemma $i_n+1$ times, we obtain
ground justification terms $t,t_{i_1},t_{i_2},\ldots,t_{i_n}\in Tm_{\JL_2}$ such that $\JL_2(\CS_2)\vdash t:t_{i_n}:\ldots:t_{i_1}:A(x_1, x_2, \ldots , x_m)$. Now, let
$m(c)=t$.
\item[$(b')$] If constant $c$ occurs more than once as a $\CS_1$-associated constant in sequences of constants, then suppose
  $$c:c_{1_{n_1}}:\ldots:c_{1_1}:A_1(d_{1_1},\ldots,d_{1_{m_1}}), \ldots,c:c_{k_{n_k}}:\ldots:c_{k_1}:A_k(d_{k_1},\ldots,d_{k_{m_k}}),$$ for some $k\geq 2$, are all formulas of $\CS_1$ where $c$ occurs  in the sequences of constants as a $\CS_1$-associated constant (since $\CS_1$ is downward closed, we can consider only those formulas of $\CS_1$ which begins with $c$). Similar to the case $(a')$, we find ground justification terms $t_j,t^j_1,\ldots,t^j_{n_j}\in Tm_{\JL_2}$, for $j=1,2,\ldots,k$,
such that
$$\JL_2(\CS_2)\vdash t_j:t^j_{n_j}:\ldots:t^j_{1}:A_j(x_1^j, x_2^j, \ldots , x_{m_j}^j),$$
 where $x_1^j, x_2^j, \ldots , x_{m_j}^j$ are distinct justification variables not occurring in $A_j$. Letting $t=t_1 + \ldots + t_k$, by axiom {\bf Sum} and {\bf MP}, we have $\JL_2(\CS_2)\vdash t:A_j(x_1^j, x_2^j, \ldots , x_{m_j}^j)$, for $j=1,2,\ldots,k$. Now, let $m(c)=t$.
\end{description}
 Now suppose $F$ is any theorem of $\JL_1(\CS_1)$. By
induction on the proof of $F$, we show that $m(F)$ is a theorem of
$\JL_2(\CS_2)$.

If $F$ is an axiom instance of $\JL_1(\CS_1)$, since $\JL_1(\CS_1)$ is axiomatized by schemes, $m(F)$ will also be an axiom instance of $\JL_1$, and hence a theorem of $\JL_2(\CS_2)$ by hypothesis 5.

If $F$ follows from $G$ and
$G\r F$ by Modus Ponens, then by the induction hypothesis $m(G)$
and $m(G\r F)=m(G)\r m(F)$ are theorems of $\JL_2(\CS_2)$. Then by Modus
Ponens $m(F)$ is a theorem of $\JL_2(\CS_2)$.

Finally suppose
$F=c_{i_n}:c_{i_{n-1}}:\ldots:c_{i_1}:A(d_1,\ldots,d_m)$ is obtained by {\bf IAN} (when $i_n \geq 1$) or by {\bf AN} (when $i_n=1$), i.e., $c_{i_n}:c_{i_{n-1}}:\ldots:c_{i_1}:A\in\CS_1$, where
$A=A(d_1,\ldots,d_m)$ is a $\JL_1(\CS_1)$ axiom. If $x_1, x_2,\ldots ,
x_m$ are justification variables not occurring in $A$, as above, there are
ground terms $t_{i_1},t_{i_2},\ldots,t_{i_n}\in Tm_{\JL_2}$ such that $$t_{i_1}=m(c_{i_1}),t_{i_2}=m(c_{i_2}),\ldots,t_{i_n}=m(c_{i_n})$$
 and
  $$\JL_2(\CS_2)\vdash
t_{i_n}:\ldots:t_{i_2}:t_{i_1}:A(x_1, x_2, \ldots , x_m).$$ Since $\JL_2(\CS_2)$
satisfies the substitution closure, we obtain
$$\JL_2(\CS_2)\vdash
t_{i_n}:\ldots:t_{i_2}:t_{i_1}:A(m(d_1), m(d_2), \ldots ,m(d_m)),$$
but we have
\[
t_{i_n}:\ldots:t_{i_1}:A(m(d_1), \ldots
,m(d_m))=m(c_{i_n}):\ldots:m(c_1):m(A)=m(c_{i_n}:\ldots:c_1:A)=m(F).
\]
Thus $\JL_2(\CS_2)\vdash m(F)$. \qe \end{proof}
\begin{theorem}\label{thm:embedding injective}
Suppose $\JL_1(\CS_1)$ and $\JL_2(\CS_2)$ satisfy all the conditions of Theorem \ref{thm: JL embedding} except that $\CS_1$ is not required to be finite, instead it  is an injective constant specification for $\JL_1$. Then $\JL_1(\CS_1)\hookrightarrow \JL_2(\CS_2)$
\end{theorem}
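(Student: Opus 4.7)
The plan is to imitate the proof of Theorem \ref{thm: JL embedding} essentially verbatim, observing that the finiteness of $\CS_1$ was used only at one specific point which is now unnecessary by injectivity. The mapping $m$ from constants of $\JL_1$ to ground terms of $\JL_2$ will be defined in the same way; the key point is that for an injective $\CS_1$, every \CS-associated constant $c$ occurs exactly once as a \CS-associated constant, so only cases $(a)$ and $(a')$ of the previous proof arise, and cases $(b)$ and $(b')$ are vacuous.

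More concretely, I would proceed as follows. First, fix the lists $Tm_{\JL_1}=\{s^1_1,s^1_2,\ldots\}$ and $Tm_{\JL_2}=\{s^2_1,s^2_2,\ldots\}$ exactly as before. For a constant $c$ of $\JL_1$ that is not \CS-associated, set $m(c)=c$. For a \CS-associated constant $c$, injectivity of $\CS_1$ guarantees a unique formula of $\CS_1$ in which $c$ appears as a \CS-associated constant, namely $c:A(d_1,\ldots,d_m)\in\CS_1$ (if $\JL_1$ has {\bf AN}) or $c:c_{i_n}:\ldots:c_{i_1}:A(d_1,\ldots,d_m)\in\CS_1$ (if $\JL_1$ has {\bf IAN}). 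Choose fresh variables $x_1,\ldots,x_m$ not occurring in $A$, in the order of $Tm_{\JL_1}$. By hypothesis 5 and the fact that $\JL_1(\CS_1)$ is schematic, $A(x_1,\ldots,x_m)$ is a theorem of $\JL_2(\CS_2)$; then applying internalization (once in the {\bf AN} case, $n+1$ times in the {\bf IAN} case) yields a ground term $t\in Tm_{\JL_2}$ with $\JL_2(\CS_2)\vdash t:A(x_1,\ldots,x_m)$ or $\JL_2(\CS_2)\vdash t:t_{i_n}:\ldots:t_{i_1}:A(x_1,\ldots,x_m)$. Set $m(c)=t$, choosing the first such $t$ in the enumeration of $Tm_{\JL_2}$.

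The verification that $\JL_1(\CS_1)\vdash F$ implies $\JL_2(\CS_2)\vdash m(F)$ then goes by induction on the derivation of $F$ in $\JL_1(\CS_1)$; this induction is on the finite length of the derivation, so the possibly infinite size of $\CS_1$ causes no problem. The axiom case uses that $\JL_1(\CS_1)$ is axiomatized by schemes together with hypothesis 5, exactly as in Theorem \ref{thm: JL embedding}; Modus Ponens is trivial; and the $({\bf IAN})$/$({\bf AN})$ case, for $F=c_{i_n}:\ldots:c_{i_1}:A\in\CS_1$, applies the substitution closure of $\JL_2(\CS_2)$ to the formula $t_{i_n}:\ldots:t_{i_1}:A(x_1,\ldots,x_m)$ given by the definition of $m(c_{i_n})$, substituting $m(d_j)$ for $x_j$, giving $\JL_2(\CS_2)\vdash m(F)$ just as in the previous proof.

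The only conceptual obstacle is making sure no step of the previous argument secretly depended on finiteness. It did not: finiteness was invoked solely to take a sum $t_1+\cdots+t_k$ across the finitely many \CS-associations of a single constant $c$. Since injectivity rules out multiple associations of the same $c$, this sum construction is simply never needed, and the proof carries through for arbitrary, possibly infinite, injective $\CS_1$.
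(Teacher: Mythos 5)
Your proposal is correct and matches the paper's proof, which simply says to proceed as in Theorem \ref{thm: JL embedding} noting that cases $(b)$ and $(b')$ cannot occur; you have correctly identified that finiteness of $\CS_1$ was needed only for the sum $t_1+\cdots+t_k$ over multiple $\CS$-associations of a single constant, which injectivity renders unnecessary.
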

\begin{proof} Proceed just as in the proof of Theorem \ref{thm: JL embedding}, except that now cases $(b)$ and $(b')$ cannot happen.\qe \end{proof}

The above theorems give criterions to showing $\JL_1(\CS_1)\hookrightarrow \JL_2(\CS_2)$, where $\CS_1$ is finite or injective and $\CS_2$ is the total constant specification ${\sf TCS}_{\JL_2}$ or an arbitrary schematic constant specification. Note that all justification logics \JL~of
Definition \ref{def: justification logics} and Remark \ref{Remark: Remo-Roman JB} satisfy hypothesis 2 of Theorem \ref{thm: JL embedding}. Thus we have
\begin{corollary}
Let $\JL_1$ and $\JL_2$ be two justification logics as defined in Definition
\ref{def: justification logics} or Remark \ref{Remark: Remo-Roman JB} such that $Tm_{\JL_1} \subseteq Tm_{\JL_2}$, $\CS_1$ be a finite (or an injective) constant specification for $\JL_1$, and $\CS_2$ be an axiomatically appropriate schematic constant specification for $\JL_2$, and every axiom of $\JL_1(\CS_1)$ is a theorem of $\JL_2(\CS_2)$. Then
$\JL_1(\CS_1)\hookrightarrow\JL_2(\CS_2)$.
\end{corollary}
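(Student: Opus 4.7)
The plan is to derive the corollary by directly invoking Theorem \ref{thm: JL embedding} (when $\CS_1$ is finite) or Theorem \ref{thm:embedding injective} (when $\CS_1$ is injective). Since the structure of both theorems is identical apart from the hypothesis on $\CS_1$, the proof reduces to verifying, one by one, that hypotheses 2--5 of Theorem \ref{thm: JL embedding} hold under the assumptions of the corollary; hypothesis 1 is an immediate restatement of what is assumed about $\CS_1$.

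First I would observe that hypothesis 2 (axiomatization by schemes) is precisely the remark that the author makes just before the corollary, namely that every justification logic in Definition \ref{def: justification logics} and in Remark \ref{Remark: Remo-Roman JB} is axiomatized using axiom schemes. Hence $\JL_1(\CS_1)$ satisfies hypothesis 2 automatically. Hypothesis 5 (every axiom instance of $\JL_1(\CS_1)$ is a theorem of $\JL_2(\CS_2)$) is part of the hypotheses of the corollary.

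Next, I would verify hypotheses 3 and 4, which concern $\JL_2(\CS_2)$. For hypothesis 3, substitution closure of $\JL_2(\CS_2)$ follows from Lemma \ref{lemma: substitution lemma JL}(1), using that $\CS_2$ is schematic. For hypothesis 4, the internalization property of $\JL_2(\CS_2)$ is exactly Lemma \ref{lemma:Internalization Lemma} applied with an empty hypothesis set, using that $\CS_2$ is axiomatically appropriate. Thus both conditions on $\JL_2(\CS_2)$ required by Theorem \ref{thm: JL embedding} follow directly from the standing assumptions of the corollary.

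With all five hypotheses in place, Theorem \ref{thm: JL embedding} yields $\JL_1(\CS_1)\hookrightarrow\JL_2(\CS_2)$ in the case $\CS_1$ is finite, and Theorem \ref{thm:embedding injective} yields the same conclusion when $\CS_1$ is injective. No step is genuinely hard; the only minor care needed is to match the right variant of Lemma \ref{lemma: substitution lemma JL}, since part (1) applies to schematic constant specifications and gives substitution without modifying constants, which is exactly the form required by hypothesis 3 of Theorem \ref{thm: JL embedding}.
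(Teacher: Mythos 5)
Your proposal is correct and follows exactly the route the paper intends: the paper states the corollary immediately after observing that hypothesis 2 of Theorem \ref{thm: JL embedding} holds for all logics of Definition \ref{def: justification logics} and Remark \ref{Remark: Remo-Roman JB}, leaving the remaining verifications implicit, and you have filled these in correctly by deriving substitution closure from Lemma \ref{lemma: substitution lemma JL}(1) (schematic $\CS_2$) and internalization from Lemma \ref{lemma:Internalization Lemma} (axiomatically appropriate $\CS_2$), then invoking Theorem \ref{thm: JL embedding} or Theorem \ref{thm:embedding injective} according to whether $\CS_1$ is finite or injective.
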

Using the above corollary we give some examples here.
\begin{example}
Define the justification logic ${\sf JT45'}$ in the language of ${\sf JT45}$ by replacing the axiom scheme {\bf j5} by $!t:t:A~\vee~?t:\neg
t:A$. Let $\CS_1$ and $\CS'_1$ be finite (or injective) constant specifications for ${\sf JT45}$ and ${\sf JT45'}$, respectively, and $\CS_2$ and $\CS'_2$ be axiomatically appropriate schematic constant specifications for ${\sf JT45}$ and ${\sf JT45'}$, respectively. It is not difficult to show that the substitution closure and internalization property hold for ${\sf JT45'}(\CS_2')$, and hence ${\sf JT45}(\CS_1)\hookrightarrow {\sf JT45'}(\CS_2')$ and ${\sf JT45'}(\CS'_1)\hookrightarrow {\sf JT45}(\CS_2)$.
\end{example}
\begin{example}\label{example: JB equivalent JB'}
Let $\JL_1$ be one of the following justification logics
\[\textrm{${\sf JB'}$, ${\sf JDB'}$, ${\sf
JTB'}$, ${\sf JB4'}$, ${\sf JB5'}$, ${\sf JDB4'}$, ${\sf JTB4'}$, ${\sf JTB5'}$, ${\sf JDB5'}$, ${\sf JB45'}$, ${\sf JTB45'}$, ${\sf JDB45'}$}\]
 from Remark \ref{Remark: Remo-Roman JB}, and $\JL_2$ be the corresponding justification logic
 \[\textrm{${\sf JB}$,${\sf JDB}$, ${\sf
JTB}$, ${\sf JB4}$, ${\sf JB5}$, ${\sf JDB4}$, ${\sf JTB4}$, ${\sf JTB5}$, ${\sf JDB5}$, ${\sf JB45}$, ${\sf JTB45}$, ${\sf JDB45}$}\]
from Definition \ref{def: justification logics}.
If $\CS$ is a finite (or an injective) constant specifications for $\JL_1$, and $\CS_2$ be an axiomatically appropriate schematic constant specification for $\JL_2$, then it is
easy to prove that ${\sf JL}_1(\CS)\hookrightarrow{\sf JL}_2(\CS_2)$.
\end{example}
The following lemma shows how Fitting's embedding can be used to reduce the realization theorem from one justification logic to another.
\begin{lemma}\label{lem:embedding realization}
Let $\JL_1(\CS_1)$ and $\JL_2(\CS_2)$ be two justification logics such that $Tm_{\JL_1} \subseteq Tm_{\JL_2}$.
\begin{enumerate}
\item if $\JL_1(\CS_1)\hookrightarrow \JL_2(\CS_2)$ then $\JL_1(\CS_1)^\circ\subseteq\JL_2(\CS_2)^\circ$.
\item If $\JL_1(\CS_1)\hookrightarrow \JL_2(\CS_2)$ and $\ML\subseteq \JL_1(\CS_1)^\circ$, then $\ML\subseteq \JL_2(\CS_2)^\circ$.
\end{enumerate}
\end{lemma}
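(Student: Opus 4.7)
The plan is to prove part~(1) directly from the definitions of embedding and forgetful projection, and then obtain part~(2) as an immediate consequence of~(1) by transitivity of set inclusion.

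For part~(1), assume $\JL_1(\CS_1) \hookrightarrow \JL_2(\CS_2)$ via a mapping $m$ from constants of $\JL_1$ to ground terms of $\JL_2$. Let $F \in \JL_1(\CS_1)^\circ$, so $F = G^\circ$ for some $\JL_1$-formula $G$ with $\JL_1(\CS_1) \vdash G$. By the embedding, $\JL_2(\CS_2) \vdash m(G)$. The key observation is that the forgetful projection is invariant under the replacement of constants by ground terms, i.e., $m(G)^\circ = G^\circ$. Hence $F = G^\circ = m(G)^\circ \in \JL_2(\CS_2)^\circ$, which gives the inclusion $\JL_1(\CS_1)^\circ \subseteq \JL_2(\CS_2)^\circ$.

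The only real content is the invariance claim $m(G)^\circ = G^\circ$, which I would verify by a routine induction on the structure of the $\JL_1$-formula $G$. The propositional cases are immediate since $m$ and the forgetful projection only affect justification subterms. For the modal case, $(t:A)^\circ = \Box A^\circ$ by definition, and similarly $m(t:A) = m(t):m(A)$ so $(m(t:A))^\circ = \Box (m(A))^\circ = \Box A^\circ$ by the induction hypothesis, where the essential point is that the term $m(t)$ is erased by $\circ$. Thus projection discards exactly the information that $m$ modifies, making the equality trivial.

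For part~(2), suppose $\JL_1(\CS_1) \hookrightarrow \JL_2(\CS_2)$ and $\ML \subseteq \JL_1(\CS_1)^\circ$. By part~(1), $\JL_1(\CS_1)^\circ \subseteq \JL_2(\CS_2)^\circ$, and combining the two inclusions yields $\ML \subseteq \JL_2(\CS_2)^\circ$.

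There is no substantive obstacle here; the lemma is essentially a bookkeeping result saying that embeddings (which only affect constants, and hence justification terms) respect the forgetful projection (which erases justification terms altogether). The only thing to be careful about is making the induction in the invariance claim explicit, in particular noting that $m$ distributes over the formula constructors and that ground terms produced by $m$ remain on the left of the colon, where they are subsequently erased by $\circ$.
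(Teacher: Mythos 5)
Your proof is correct and follows essentially the same route as the paper: both reduce the claim to the observation that $(m(F))^\circ = F^\circ$, verified by induction on the structure of $F$, and both obtain part (2) from part (1) by transitivity of inclusion.
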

\begin{proof} For (1), suppose $\JL_1(\CS_1)\hookrightarrow \JL_2(\CS_2)$ and $F^\circ\in\JL_1(\CS_1)^\circ$. Hence $\JL_1(\CS_1)\vdash F$. Thus, for the embedding $m$ witnesses $\JL_1(\CS_1)\hookrightarrow \JL_2(\CS_2)$, we have $\JL_2(\CS_2)\vdash m(F)$. By induction on the complexity of the formula $F$, it is easy to prove that $(m(F))^\circ=F^\circ$. Therefore, $F^\circ\in\JL_2(\CS_2)^\circ$. Clause 2 follows easily  from 1.\qe \end{proof}
\begin{remark}
In \cite{KaiRemoRoman2010} and \cite{Ghari2012-Thesis}, authors use operation $?$ instead of $\bar{?}$ in the language of {\sf JB} and its extensions, and axioms $A\r ? t:\neg t:\neg A$ and $\neg A\r ?t:\neg t: A$ are used instead of {\bf jb} and {\bf jB}, respectively (moreover in \cite{KaiRemoRoman2010} rule {\bf AN!} is used for all justification logics). For what follows, let ${\sf JB}_?$, ${\sf JTB}_?$, ${\sf JDB}_?$, ${\sf JB5}_?$, ${\sf JB45}_?$ denote justification logics with axiom $\neg A\r ?t:\neg t: A$ instead of {\bf jB}, and ${\sf JB}'_?$, ${\sf JTB}'_?$, ${\sf JDB}'_?$, ${\sf JB5}'_?$, ${\sf JB45}'_?$ denote justification logics with axiom $A\r ? t:\neg t:\neg A$ instead of {\bf jb}. Similar to Example \ref{example: JB equivalent JB'}, one can show that
\begin{equation}\label{number equation: embedding JB'? to LB?}
 {\sf JB}'_?\hookrightarrow{\sf JB}_?,{\sf JTB}'_?\hookrightarrow{\sf JTB}_?,{\sf JDB}'_?\hookrightarrow{\sf JDB}_?,{\sf JB5}'_?\hookrightarrow{\sf JB5}_?,{\sf JB45}'_?\hookrightarrow{\sf JB45}_?,
 \end{equation}
where for simplicity we omit the constant specifications in (\ref{number equation: embedding JB'? to LB?}).
The correspondence theorems
$$\textrm{${\sf JB}_?^\circ={\sf KB}$, ${\sf JTB}_?^\circ={\sf TB}$, ${\sf JDB}_?^\circ={\sf DB}$, ${\sf JB5}_?^\circ={\sf JB45}_?^\circ={\sf KB5}$}$$
 are proved in \cite{Ghari2012-Thesis} using Theorem \ref{thm:embedding injective}, Lemma \ref{lem:embedding realization}, embeddings in (\ref{number equation: embedding JB'? to LB?}), and the fact that the correspondences
 \[\textrm{$({\sf JB}_?')^\circ={\sf KB}$, $({\sf JTB}_?')^\circ={\sf TB}$, $({\sf JDB}_?')^\circ={\sf DB}$, $({\sf JB5}_?')^\circ=({\sf JB45}_?')^\circ={\sf KB5}$},\]
which are proved in \cite{KaiRemoRoman2010} uses injective constant specifications (this assumption is claimed in Remark 5.10 in \cite{KaiRemoRoman2010}).
\end{remark}
%%%%%%%%%%%%%%%%%%%%%%%%%%%%%%%%%%%%%%%%%%%%%%%%%%%%%%%%%%%%%%%%%%%%%%%%%%%%%%%%%%%%%55
\section{Semantics of justification logics}
In this section, we recall the definitions of Fitting models for justification logics. Fitting models (or F-models for short) are
Kripke-style models, were first developed by Fitting in
\cite{Fitting2005} for \LP. In Section 4, we internalize these models within the syntax of sequent calculus to
produce labeled sequent calculi for justification logics. First we recall a definition from \cite{Kuznets2008}.

\begin{definition}\label{def:possible evidence function}
Let \JL~be a justification logic, $\CS$ be a constant specification for \JL.
\begin{enumerate}
\item A possible evidence function on a nonempty set $\W$ for $\JL_\CS$ is any function $\mathcal{A}:Tm_\JL \times Fm_\JL \r 2^\W$ such that if $c:F\in\CS$, then $\mathcal{A}(c,F)=\W$.
\item For two possible evidence functions $\mathcal{A}_1$, $\mathcal{A}_2$ on $\W$ for $\JL_\CS$, we say that $\mathcal{A}_2$ is based on $\mathcal{A}_1$, and write $\mathcal{A}_1\subseteq\mathcal{A}_2$, if $\mathcal{A}_1(t,F)\subseteq\mathcal{A}_2(t,F)$ for any $t\in Tm_\JL$ and any $F\in Fm_\JL$.
\item A Kripke frame is a pair $(\W,\RR)$, where $\W$ is a non-empty set (of possible worlds) and accessibility relation $\RR$ is a binary relation on $\W$.
\item A Fitting frame for $\JL_\CS$ is a triple $(\W,\RR,\mathcal{A})$, where $(\W,\RR)$ is a Kripke frame and $\mathcal{A}$ is a possible evidence function on $\W$ for $\JL_\CS$.
\item A possible Fitting model for $\JL_\CS$ is a quadruple $(\W,\RR,\mathcal{A},\V)$ where $(\W,\RR,\mathcal{A})$ is a Fitting frame for $\JL_\CS$ and $\V$ is a valuation, that is a function from the set of propositional variables to subsets of $\W$.
\end{enumerate}
\end{definition}

\begin{definition}\label{def:forcing relation}
For a possible Fitting model $\M=(\W, \RR, \mathcal{A}, \V)$ the forcing relation $\Vdash$ defined as follows:
\begin{enumerate}
\item $(\M,w)\Vdash P$ if{f} $w\in\V(P)$, for propositional variable $P$,
\item ~$\Vdash$ respects classical Boolean connectives,
 \item $(\M,w)\Vdash t:F$ if{f} $w\in\mathcal{A}(t,F)$ and for every $v\in \W$ with $w \RR v$, $(\M, v)\Vdash F$.
\end{enumerate}
\end{definition}
\begin{definition}\label{Kripke-Fitting models J}
 A Fitting model $\M=(\W, \RR, \E, \V)$ for justification
logic ${\sf J}_\CS$ is a possible Fitting model in which  $\E$ is a possible evidence function on $\W$ for ${\sf J}_\CS$, meeting the following conditions:
\begin{description}
 \item[$\E 1.$] Application: $\E(s,A\r B)\cap\E(t,A)\subseteq\E(s\cdot t,B)$,
  \item[$\E 2.$] Sum: $\E(s,A)\cup \E(t,A)\subseteq\E(s+t,A)$.
  \end{description}
$\E$ is called  \textit{admissible evidence function} for ${\sf J}_\CS$.
  \end{definition}

In order to define F-models for other justification logics of Definition \ref{def: justification logics} certain
additional conditions should be imposed on the accessibility
relation $\RR$ and possible evidence function $\E$.
\begin{definition}\label{Kripke-Fitting models JL}
 A Fitting model $\M=(\W, \RR, \E, \V)$ for justification logic ${\sf JL}_\CS$ is a Fitting model for ${\sf J}_\CS$, in which the admissible evidence function $\E$ is now a possible evidence function on $\W$ for $\JL_\CS$, and $\M$ satisfies the following conditions:
\begin{itemize}
\item if $\JL_\CS$ contains axiom {\bf jT}, then $\RR$ is reflexive.\vspace{0.05cm}
\item if $\JL_\CS$ contains axiom {\bf jD}, then $\RR$ is serial.\vspace{0.05cm}
\item if $\JL_\CS$ contains axiom {\bf j4}, then $\RR$ is transitive, and $\E$ satisfies:
\begin{description}
 \item[$\E 3.$] \textit{Monotonicity}: If $w\in\E(t,A)$ and $w\RR v$, then
 $v\in\E(t,A)$.
  \item[$\E 4.$] \textit{Positive introspection}:
   $\E(t,A)\subseteq\E(!t,t:A)$.\vspace{0.05cm}
     \end{description}
\item if $\JL_\CS$ contains axiom {\bf jB}, then $\RR$ is symmetric, and $\E$ satisfies:
\begin{description}
     \item[$\E 5.$]  \textit{Weak negative introspection}: If $(\M,w)\Vdash\neg A$, then $w\in\E(\bar{?}t,\neg t:A)$, for all terms $t$.\vspace{0.05cm}
    \end{description}
\item if $\JL_\CS$ contains axiom {\bf j5}, then $\E$ satisfies:
\begin{description}
       \item [$\E 6.$] \textit{Negative introspection}: $[\E(t,A)]^c\subseteq\E(?t,\neg t:A)$, where $S^c$ means the complement of the set $S$ relative to the
set of worlds $\W$.
      \item[$\E 7.$]  \textit{Strong evidence}: if $w\in\E(t,A)$, then $(\M,w)\Vdash
      t:A$.
    \end{description}
\end{itemize}
\end{definition}
We say that a formula $F$ is true in a model $\M$ (denoted by
$\M\Vdash F$), if $(\M,w)\Vdash F$ for all $w\in\W$. For a set $S$ of formulas, $\M\Vdash S$ provided that $\M\Vdash F$ for all formulas $F$ in $S$. Note that given a constant specification $\CS$ for \JL, and a model $\M$ of $\JL_\CS$ we have $\M\Vdash \CS$ (in this case it is said that $\M$ respects $\CS$). By
$\JL_\CS$-model we mean Fitting
model for justification logic $\JL_\CS$.

In modal logic, each modal axiom {\bf T}, {\bf D}, {\bf B}, {\bf
4}, and {\bf 5} characterizes a class of Kripke frames (see columns
1 and 2 of Table 1). Informally, each justification axiom characterizes a class of Fitting frames (see columns 3, 4 and 5 of Table 1).

\begin{table}[ht]
\centering\renewcommand{\arraystretch}{1.5}
\begin{tabular}{|l|l||l|l|c|}
\hline
~ Modal axiom & ~ $\RR$ is ... & ~Justification axiom & ~$\RR$ is ... &
~${\bf\E}$ satisfies ...~ \\\hline
~ {\bf T}: $\b A\r A$ & ~Reflexive & ~{\bf jT}: $t:A\r A$ & ~Reflexive & $-$ \\\hline
~ {\bf D}: $\b\bot\r\bot$ & ~Serial & ~{\bf jD}: $t:\bot\r\bot$ & ~Serial & $-$ \\\hline
~ {\bf 4}: $\b A\r\b\b A$ & ~Transitive & ~{\bf j4}: $t:A\r !t:t:A$ & ~Transitive & $\E 3$, $\E 4$\\\hline
~ {\bf B}: $\neg A\r\b\neg\b A$ & ~Symmetric & ~{\bf jB}: $\neg A\r \bar{?}t:\neg t:
A$ & ~Symmetric & $\E 5$ \\\hline
 ~ {\bf 5}: $\neg\b A\r\b\neg\b A$ & ~Euclidean & ~{\bf j5}: $\neg t:A\r ?t:\neg t:A$ & ~~~~~$-$ & $\E 6$, $\E 7$\\
  \hline
\end{tabular}\vspace{0.3cm}
\caption{Modal and justification axioms with corresponding
frame properties.}\label{table: frame properties of models}
\end{table}
\begin{theorem}[Completeness] \label{Sound Compl JL}
Let \JL~be one of the justification logics of Definition \ref{def: justification logics}. Let $\CS$ be a constant specification for \JL, with the requirement that if \JL~contains axiom scheme {\bf jD} then \CS~should be axiomatically appropriate. Then justification logics
$\JL_\CS$ are sound and complete with respect to their $\JL_\CS$-models.
\end{theorem}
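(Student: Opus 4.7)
The plan is to prove the two directions by standard techniques: induction on derivations for soundness, and a canonical-model construction for completeness.

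For soundness I would argue by induction on the length of a derivation in $\JL_\CS$. Propositional tautologies and \textbf{MP} are routine. Each justification axiom is validated by the matching closure condition in Definition~\ref{Kripke-Fitting models JL}: \textbf{Sum} from $\E 2$, \textbf{jK} from $\E 1$, \textbf{jT} from reflexivity, \textbf{jD} from seriality (if $t:\bot$ held at $w$, any $\RR$-successor would validate $\bot$), \textbf{j4} from transitivity together with $\E 3$ and $\E 4$, \textbf{jB} from symmetry and $\E 5$, and \textbf{j5} from $\E 6$ and $\E 7$. The rules \textbf{AN}/\textbf{IAN} produce only formulas in $\CS$, which hold at every world of every $\JL_\CS$-model because every possible evidence function sends $\E(c,F)$ to the whole of $\W$ whenever $c:F\in\CS$.

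For completeness I take the contrapositive: given $\JL_\CS\not\vdash F$, I build a canonical $\JL_\CS$-model in which $F$ is refuted. Let $\W$ be the set of all maximal $\JL_\CS$-consistent sets, and put
\[
\Gamma\RR\Delta \iff \Gamma^\#\subseteq\Delta,\qquad \E(t,A):=\{\Gamma\in\W:t:A\in\Gamma\},\qquad \V(P):=\{\Gamma\in\W:P\in\Gamma\},
\]
where $\Gamma^\#:=\{A:t:A\in\Gamma \text{ for some term } t\}$. Since every $c:F\in\CS$ is a theorem of $\JL_\CS$ and hence lies in every maximal consistent set, $\E$ is a possible evidence function. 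Conditions $\E 1$--$\E 4$ and $\E 6$ are read off axioms \textbf{jK}, \textbf{Sum}, \textbf{j4}, \textbf{j5} by closure of maximal consistent sets under provability. The frame properties follow in the same spirit: reflexivity from \textbf{jT}, transitivity from \textbf{j4} (which places $!t:t:A\in\Gamma$ and hence $t:A\in\Gamma^\#$ whenever $t:A\in\Gamma$), and symmetry from \textbf{jB} via the contrapositive---if $A\notin\Gamma$ then $\neg A\in\Gamma$, so $\bar{?}t:\neg t:A\in\Gamma$ for every $t$, which pushes $\neg t:A$ into every $\RR$-successor of $\Gamma$, contradicting any purported $A\in\Delta^\#$.

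The Truth Lemma $(\M,\Gamma)\Vdash G \iff G\in\Gamma$ is then proved by induction on $G$; the only nontrivial case $G=t:A$ works because $t:A\in\Gamma$ immediately places $A$ in $\Gamma^\#$ and hence in every $\RR$-successor (by the induction hypothesis), while if $t:A\notin\Gamma$ then $\Gamma\notin\E(t,A)$ and the forcing clause fails. Once the Truth Lemma is in place, $\E 5$ (via \textbf{jB}) and $\E 7$ (directly from the definition of $\E$ combined with the Truth Lemma) are verified, so the canonical structure is a genuine $\JL_\CS$-model. Extending $\{\neg F\}$ to a maximal consistent $\Gamma$ by the usual Lindenbaum construction and applying the Truth Lemma yields $(\M,\Gamma)\not\Vdash F$.

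The main technical obstacle is the seriality step when \textbf{jD} is present. Given $\Gamma$, one must show $\Gamma^\#$ is $\JL_\CS$-consistent; otherwise finitely many $t_i:A_i\in\Gamma$ provide $A_1,\dots,A_n\in\Gamma^\#$ with $\JL_\CS\vdash A_1\wedge\cdots\wedge A_n\to\bot$. Precisely here the hypothesis on $\CS$ is used: because $\CS$ is axiomatically appropriate, the Internalization Lemma~\ref{lemma:Internalization Lemma} supplies a ground term $s$ with $\JL_\CS\vdash s:(A_1\wedge\cdots\wedge A_n\to\bot)$, and combining this with the $t_i:A_i\in\Gamma$ by iterated uses of \textbf{jK}, \textbf{Sum} and \textbf{MP} yields $t:\bot\in\Gamma$ for some $t$, contradicting \textbf{jD} together with the consistency of $\Gamma$.
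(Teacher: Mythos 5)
Your proof is correct and follows essentially the same route as the paper's: soundness by induction on derivations against the closure conditions, and completeness via the canonical model with $\W$ the maximal consistent sets, $\Gamma\RR\Delta$ iff $\Gamma^\flat\subseteq\Delta$, $\E(t,A)=\{\Gamma : t:A\in\Gamma\}$, the Truth Lemma, and verification of the frame and evidence conditions from the corresponding axioms. The only substantive difference is that you spell out the seriality step (consistency of $\Gamma^\flat$ via the Internalization Lemma, which is exactly where axiomatic appropriateness of $\CS$ is needed), a detail the paper delegates to the cited literature.
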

\begin{proof} The first detailed presentation of the proof of completeness
theorem was presented in \cite{Fitting2005} by Fitting for \LP.
The proof of completeness for other justification logics is similar to those given in
\cite{A2008,Ghari2012-Thesis,Kuznets2008,KuznetsStuder2012,Rubtsova2006}. Completeness of justification logics ${\sf JB}$, ${\sf JTB}$, ${\sf JDB}$, ${\sf JB5}$, ${\sf JB45}$ are proved in \cite{Ghari2012-Thesis}, and of justification logic ${\sf JB}_\CS$ and all its extensions are proved in \cite{KuznetsStuder2012}.

Soundness is straightforward. Let us only check the truth of
axiom $\neg A\r \bar{?}t:\neg t:A$ in a ${\sf JB}_\CS$-model $\M=(\W, \RR,
\E, \V)$. Suppose $(\M,w)\Vdash \neg A$. By ($\E 5$) we have
$w\in\E(\bar{?}t,\neg t:A)$, for any term $t$. Now suppose $v$ is an
arbitrary world such that $w\RR v$. Since $\RR$ is symmetric we
have $v\RR w$. We claim that $(\M,v)\Vdash \neg t:A$. Indeed,
otherwise from $(\M,v)\Vdash t:A$ and $v\RR
w$, we get $(\M,w)\Vdash A$, which is a contradiction. Hence
$(\M,w)\Vdash \bar{?}t:\neg t:A$.

For completeness, we construct a canonical model $\M=(\W, \RR,
\E, \V)$ for each \JL. Let $\W$ be the set of all maximal
consistent sets in $\JL_\CS$, and define other components of $\M$
as follows. For all $\g,\d$ in $\W$:
\[ \g\RR\d \Longleftrightarrow \g^\flat \subseteq\d,  \]
\[ \E(t,A)=\{\g\in\W~|~t:A\in\g\},\]
\[\g\in\V(P) \Longleftrightarrow P\in\g\]
where $\g^\flat=\{A~|~t:A\in\g, ~for~ some~ term~ t\}$ and $P$ is a
propositional variable. We define $\Vdash$ on arbitrary formulas as
in Definition \ref{def:forcing relation}. Since $\CS\subseteq\g$ for each $\g\in\W$, $\E$ is a possible evidence function on $\W$ for $\JL_\CS$. The Truth Lemma can
be shown for $\JL_\CS$: for every formula
$F$ and every $\g\in\W$,
\[(\M,\g) \Vdash F \Longleftrightarrow F\in\g.\]
The base case follows from the definition of $\V$ in the canonical model, and the cases of Boolean connectives are standard. We only verify the case in which $F$ is of the form $t:A$. If $(\M,\g) \Vdash t:A$, then $\g\in\E(t,A)$, and therefore $t:A\in\g$. Conversely, suppose $t:A\in\g$. Then $\g\in\E(t,A)$, and by the definition of $\RR$, $A\in\d$ for each $\d\in\W$ such that $\g\RR\d$. By the induction hypothesis, $(\M,\d) \Vdash A$. Therefore, $(\M,\g) \Vdash t:A$.

 For each
justification logic \JL, it suffices to show that the canonical
model $\M$ is an \JL-model~or, equivalently, $\E$ is an admissible evidence function for $\JL_\CS$.  We only verify it for justification logic ${\sf JB}$ and its extensions. For the canonical model of ${\sf JB}$ we
establish that the accessibility relation $\RR$ is symmetric and
the possible evidence function $\E$ satisfies $\E 5$.

\textit{$\RR$ is symmetric:} Assume that $\g\RR\d$ for
$\g,\d\in\W$, and further $t:A\in\d$. It suffices to prove that
$A\in\g$. Suppose, in contrary, that $A$ is not in $\g$. Since
$\g$ is a ${\sf JB}_\CS$-maximally consistent set, $\neg A$ and
also $\neg A\r \bar{?}t:\neg t:A$ are in $\g$. Hence $\bar{?}t:\neg t:A$ is in
$\g$. Now, by $\g\RR\d$, we conclude that $\neg t:A$ is in
$\d$, which is a contradiction. Thus $A$ is in $\g$, and therefore
$\d\RR\g$.

\textit{$\E$ satisfies $\E 5$:} Suppose that $(\M,\g)\Vdash \neg
A$. Then by the Truth Lemma $\neg A\in\g$. On the other hand,
$\neg A\r \bar{?}t:\neg t: A$ is in $\g$, for every term $t$. Hence $
\bar{?}t:\neg t: A\in\g$, and therefore by the definition of evidence
function $\E$ we have $\g\in\E(\bar{?}t,\neg t: A)$.

Finally, suppose $\JL_\CS\not\vdash F$. Then the set $\{\neg F\}$
is $\JL_{\CS}$-consistent, and it can be extended to a maximal
consistent set $\g$. Hence $ F\not\in \g$, and by the Truth Lemma
$(\M,\g)\not\Vdash F$. Thus $\M\not\Vdash F$.  \qe \end{proof}

\begin{remark}
One can verify that the canonical model of all justification
logics defined in the proof of Theorems \ref{Sound Compl JL} is fully explanatory. A Fitting model $\M=(\W, \RR,
\E, \V)$ is fully explanatory if for any world $w\in\W$ and any
formula $F$, whenever $v\Vdash F$ for every $v\in\W$ such
that $w\RR v$, then for some justification term $t$ we have
$w\Vdash t:F$. This fact was first proved by Fitting in
\cite{Fitting2005} for the logic of proofs \LP. Artemov
\cite{A2008} established it for {\sf J}, and his proof can be
adapted for other justification logics.
\end{remark}

\begin{remark}\label{Remark: Strong Evidence}
It is easy to show that the canonical model of all justification
logics defined in the proof of Theorems \ref{Sound Compl JL} enjoys the strong evidence property $(\E 7)$.
 To this end, suppose $\g\in\E(t,A)$. By the definition of $\E$ in the canonical model $\M$, we have
$t:A\in\g$. Then by the Truth Lemma, $(\M,\g)\Vdash t:A$, as
desired.
\end{remark}
\begin{remark}
It is useful  to show that in the canonical model of {\sf J5} and its extensions the accessibility relation is Euclidean and also we have: if $\g\RR \d$ and $\d\in\E(t,A)$, then  $\g\in\E(t,A)$. The latter is called  anti-monotonicity condition. The proof is as follows:

$\E8$. \textit{Anti-monotonicity.} Suppose $\g\RR\d$ and
$\d\in\E(t,A)$. By the definition of $\E$, we have $t:A\in\d$.
Let us suppose $\g\not\in\E(t,A)$, or equivalently
$t:A\not\in\g$, and derive a contradiction. Since $t:A$ is not in
$\g$, and $\g$ is a maximal consistent set, we have $\neg
t:A\in\g$. Since $\neg t:A\r ?t:\neg t:A\in\g$, we have $?t:\neg t:A\in\g$. Now $\g\RR\d$ yields $\neg t:A\in\d$, which is a contradiction.

\textit{Euclideanness of $\RR$.} Suppose $\g\RR\d$, $\g\RR\Sigma$, and $t:A\in\d$. We need to show that $A\in\Sigma$. To this end it suffices to show that $t:A\in\g$ (this together with $\g\RR\Sigma$ implies that $A\in\Sigma$). Suppose towards a contradiction that $t:A\not\in\g$. Thus $\neg t:A\in\g$, and hence $?t:\neg t:A\in\g$. Now $\g\RR\d$ yields $\neg t:A\in\d$, which is a contradiction.

Thus {\sf J5} and its extensions are complete with respect to their F-models which have Euclidean accessibility relation and satisfy the anti-monotonicity condition.
\end{remark}

In the rest of this section we show how possible evidence functions can be extended to admissible evidence functions.\footnote{Another procedure for constructing (minimal) admissible evidence functions based on possible evidence functions is described by Kuznets in \cite{Kuznets2008}. In contrast to the work of Kuznets, our generated admissible evidence functions are not necessarily minimal.} First suppose that $\JL_\CS$ is a justification logic that does not contain axioms {\bf jB} and {\bf j5}, i.e. $\JL\in\{ {\sf J}, {\sf J4}, {\sf JD}, {\sf JD4}, {\sf JT}, {\sf LP}\}$.

\begin{definition}\label{def:generated evidence function}
Let $(\W, \RR)$ be a Kripke frame, and $\mathcal{A}$ be a possible evidence function on $\W$ for $\JL_\CS$. We construct possible evidence functions $\E_i$ ($i\geq 0$) inductively as follows:
 \begin{enumerate}
 \item  $\E_0(t,F)=\mathcal{A}(t,F)$.
 \item $\E_{i+1} (x,F)=\E_{i} (x,F)$, for any justification variable $x$.
  \item $\E_{i+1} (c,F)=\E_{i} (c,F)$, for any justification constant $c$.
   \item $\E_{i+1} (s+t,F)=\E_{i} (s+t,F)\cup\E_i(t,F)\cup\E_i(s,F)$.
  \item $\E_{i+1} (s\cdot t,F)=\E_{i} (s\cdot t,F)\cup\{w\in\W~|~w\in\E_i(s,G\r F)\cap\E_i(t,G)$, for some formula $G\}$.
  \item $\E_{i+1} (!t,F)=\E_{i} (! t,F)\cup\{w\in\W~|~w\in\E_i(t,G)$, where $F=t:G\}$, if {\bf j4} is an axiom of $\JL_\CS$.
  \item $\E_{i+1} (t,F)=\E_{i} (t,F)\cup\{w\in\W~|~v\in\E_i(t,F), v\RR w\}$, if {\bf j4} is an axiom of $\JL_\CS$.
\end{enumerate}
  The evidence function $\E_\mathcal{A}$ based on $\mathcal{A}$ is defined as follows: $\E_\mathcal{A}(t,F)= \bigcup_{i=0}^\infty \E_i(t,F)$, for any term $t$ and any formula $F$.
\end{definition}

Note that $\E_i(t,F)\subseteq \E_{i+1}(t,F)$, for any term $t$, any formula $F$, and any $i\geq 0$.

  \begin{lemma}\label{lem: generated evidence function is admissible}
Let $\JL_\CS$ a justification logic that does not contain axioms {\bf jB} and {\bf j5}, $(\W, \RR)$ be a Kripke frame for  $\JL_\CS$, $\mathcal{A}$ be a possible evidence function on $\W$ for  $\JL_\CS$, and $\E_\mathcal{A}$ be the evidence function based on $\mathcal{A}$ as defined in Definition \ref{def:generated evidence function}. Then $\E_\mathcal{A}$ satisfies the application $(\E1)$ and sum $(\E2)$ conditions. If $\JL_\CS$ contains axiom {\bf j4}, then $\E_\mathcal{A}$ also satisfies the monotonicity $(\E3)$ and positive introspection $(\E4)$ conditions.
\end{lemma}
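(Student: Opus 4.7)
The plan is to verify each of the four closure conditions directly from the inductive definition of $\E_\mathcal{A}$, exploiting the key monotonicity property that $\E_i(t,F)\subseteq\E_{i+1}(t,F)$ for every term $t$, formula $F$, and $i\geq 0$. This containment follows trivially from the shape of clauses 2--7, each of which adjoins new worlds to $\E_i$ without removing any. Given this, whenever we have finitely many memberships in $\E_\mathcal{A}$, we can replace the possibly different indices witnessing them by their maximum and then invoke the relevant clause on the next stage.

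First I would handle \textbf{(E2)}. Suppose $w\in\E_\mathcal{A}(s,A)\cup\E_\mathcal{A}(t,A)$; then $w\in\E_i(s,A)$ or $w\in\E_i(t,A)$ for some $i$. By clause 4 of Definition \ref{def:generated evidence function}, $w\in\E_{i+1}(s+t,A)\subseteq \E_\mathcal{A}(s+t,A)$. For \textbf{(E1)}, suppose $w\in\E_\mathcal{A}(s,A\r B)\cap\E_\mathcal{A}(t,A)$, so $w\in\E_i(s,A\r B)$ and $w\in\E_j(t,A)$ for some $i,j$. Setting $k=\max(i,j)$, the monotonicity of the sequence gives $w\in\E_k(s,A\r B)\cap\E_k(t,A)$, and then clause 5 yields $w\in\E_{k+1}(s\cdot t,B)\subseteq \E_\mathcal{A}(s\cdot t,B)$.

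Now assume $\JL_\CS$ contains \textbf{j4}. For \textbf{(E4)}, if $w\in\E_\mathcal{A}(t,A)$, pick $i$ with $w\in\E_i(t,A)$; clause 6 then places $w$ in $\E_{i+1}(!t,t{:}A)$, hence in $\E_\mathcal{A}(!t,t{:}A)$. For \textbf{(E3)}, if $w\in\E_\mathcal{A}(t,A)$ and $w\RR v$, fix $i$ with $w\in\E_i(t,A)$; clause 7 then gives $v\in\E_{i+1}(t,A)\subseteq\E_\mathcal{A}(t,A)$.

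I do not anticipate a serious obstacle: the inductive definition has been crafted so that each closure condition corresponds to one clause, and the union over stages absorbs any mismatch of indices. The only point requiring mild care is to record the monotonicity $\E_i\subseteq\E_{i+1}$ as a preliminary observation (by a short inspection of clauses 2--7) so that in the proof of (E1) the step of aligning indices via a maximum is legitimate.
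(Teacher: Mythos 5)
Your proof is correct and is exactly the routine clause-by-clause verification the paper intends (it states the lemma without proof), including the preliminary observation $\E_i(t,F)\subseteq\E_{i+1}(t,F)$, which the paper itself records immediately after Definition \ref{def:generated evidence function} and which legitimizes your index-alignment step for $(\E1)$. Nothing is missing.
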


Since $\mathcal{A}$ is a possible evidence function on $\W$ for $\JL_\CS$, we have $\mathcal{A}(c,F)=\W$ for every $c:F\in\CS$. By the definition of $\E_\mathcal{A}$ we obtain $\E_\mathcal{A}(c,F)=\W$, for every $c:F\in\CS$. Thus, the evidence function $\E_\mathcal{A}$ is an admissible evidence function for $\JL_\CS$ based on $\mathcal{A}$.

\begin{remark}
Definition \ref{def:generated evidence function} can be extended to justification logics $\JL$ that contain axiom {\bf jB} but not axiom {\bf j5}, i.e. $\JL \in\{{\sf JB}, {\sf JDB}, {\sf JTB} , {\sf JB4}, {\sf JDB4}, {\sf JTB4} \}$. In the definition of $\E_i$, change the base case $i=0$ as follows:
\begin{enumerate}
\item $\E_0(t,F)= \W$, if $t=\bar{?}r$ and $F=\neg r:A$, for some term $r$ and formula $A$; and $\E_0(t,F)=\mathcal{A}(t,F)$, otherwise.
\end{enumerate}
Furthermore, add the following clause:
\begin{description}
\item[$8.$]  $\E_{i+1} (\bar{?}t,F)=\E_{i} (\bar{?}t,F)$.
\end{description}
Now it is easy to show that  $\E_\mathcal{A}$ also satisfies the weak negative introspection $(\E5)$ condition.
\end{remark}

Next we show how Definition \ref{def:generated evidence function} can be extended to all justification logics. First we need some definitions.

\begin{definition}
The rank of a justification term is defined as follows:
\begin{enumerate}
\item $rk(x)=rk(c)=0$, for any justification variable $x$ and any justification constant $c$,
\item $rk(s+t)= rk(s\cdot t)=\max(rk(s),rk(t)) + 1$,
\item $rk(!t)= %rk(\bar{?} t)=
rk(?t)=rk(t) + 1$.
\end{enumerate}
\end{definition}

In the following definition and lemmas the accessibility relation of Kripke frames for {\sf J5} and its extensions is required to be Euclidean. The properties of the accessibility relation of Kripke frames for other justification logics are as before (see Definition \ref{Kripke-Fitting models JL} or Table \ref{table: frame properties of models}).

\begin{definition}
Let $(\W,\RR)$ be a Kripke frame, and $S \subseteq \W$.
\begin{itemize}
\item $S$ is called monotone with respect to $\RR$ if for all $w,v\in\W$, $w\in S$ and $w \RR v$ imply $v\in S$.
\item $S$ is called anti-monotone with respect to $\RR$ if for all $w,v\in\W$, $w\in S$ and $v \RR w$ imply $v\in S$.
\item $S$ is called stable with respect to $\RR$ if for all $w,v\in\W$, $w \RR v$ implies $w\in S$ iff $v\in S$.
\item The monotone (anti-monotone, stable) closure of $S$ is the smallest subset of $\W$ containing $S$ which is monotone (respectively, anti-monotone, stable) with respect to $\RR$.
\end{itemize}
\end{definition}

Note that $S \subseteq \W$ is stable with respect to $\RR$ if and only if it is both monotone and anti-monotone with respect to $\RR$. It is easy to show the following
\begin{itemize}
\item $w$ is in the monotone closure of $S$ if and only if there is a sequence of related worlds $w_0 \RR w_1 \RR$ $\ldots \RR w_n$ ($n\geq 0$) such that $w_n=w$ and $w_0\in S$.
\item $w$ is in the anti-monotone closure of $S$ if and only if there is a sequence of related worlds $w_0 \RR w_1 \RR$ $\ldots \RR w_n$ ($n\geq 0$) such that $w=w_0$ and $w_n\in S$.
\item $w$ is in the stable closure of $S$ if and only if $w$ is in the monotone closure or in the anti-monotone closure of $S$.
\end{itemize}

 The following definition is inspired by the work of Studer \cite{Studer2013} which introduced \textit{inductively generated evidence functions} for singleton Fitting models (although our definition of inductively generated evidence function for {\sf J5} and its extension is not quite the same as Studer's).

\begin{definition}\label{def:Inductively generated evidence function j5}
Let $\JL_\CS$ be a justification logic, $(\W, \RR)$ be a Kripke frame for  $\JL_\CS$, and $\mathcal{A}$ be a possible evidence function on $\W$ for $\JL_\CS$. We construct possible evidence functions $\E_i$ ($i\geq 0$) inductively as follows:
 \begin{enumerate}
\item $\E_0(t,F)= \W$, if $t=\bar{?}r$ and $F=\neg r:A$, for some term $r$ and formula $A$; and $\E_0(t,F)=\mathcal{A}(t,F)$, otherwise.

Note that $\E_0$ is based on $\mathcal{A}$. Moreover, if $\JL$ does not contain axiom {\bf jB} (and hence the term construction $\bar{?}$), then $\E_0(t,F)= \mathcal{A}(t,F)$, for any justification term $t$ and any formula $F$.
\item $\E_{i+1} (x,F)=\E_{i} (x,F)$, for any justification variable $x$.
  \item $\E_{i+1} (c,F)=\E_{i} (c,F)$, for any justification constant $c$.
     \item $\E_{i+1} (s\cdot t,F)=\E_{i} (s\cdot t,F)\cup\{w\in\W~|~w\in\E_i(s,G\r F)\cap\E_i(t,G)$, for some formula $G$, and $rk(s\cdot t)=i+1\}$.
\item $\E_{i+1} (s+t,F)=\E_{i} (s+t,F)\cup\{w\in\W~|~w\in\E_i(t,F)$ and $rk(s+t)=i+1\}\cup\{w\in\W~|~w\in\E_i(s,F)$ and $rk(s+t)=i+1\}$.
  \item $\E_{i+1} (!t,F)=\E_{i} (! t,F)\cup\{w\in\W~|~w\in\E_i(t,G)$, for some formula $G$ such that $F=t:G$, and $rk(! t)=i+1\}$, if {\bf j4} is an axiom of $\JL_\CS$.
\item $\E_{i+1} (\bar{?}t,F)=\E_{i} (\bar{?}t,F)$, if {\bf jB} is an axiom of $\JL_\CS$.
  \item $\E_{i+1} (?t,F)=\E_{i} (? t,F)\cup\{w\in\W~|~w\not\in\E_i(t,G)$, for some formula $G$ such that $F=\neg t:G$, and $rk(? t)=i+1\}$, if {\bf j5} is an axiom of $\JL_\CS$.
     \end{enumerate}
Let $\bar{\E}(t,F)= \bigcup_{i=0}^\infty \E_i(t,F)$, for any term $t$ and any formula $F$. Inductively generated evidence function $\E$ based on $\mathcal{A}$ is defined as follows: for any term $t$ and any formula $F$
\begin{itemize}
  \item  $\E(t,F)=\bar{\E}(t,F)$, if $\JL \in\{ {\sf J}, {\sf JB}, {\sf JD},{\sf JT}, {\sf JDB}, {\sf JTB} \}$.
  \item $\E(t,F)$ is the monotone closure of $\bar{\E}(t,F)$, if $\JL \in\{ {\sf J4}, {\sf JD4}, {\sf JB4}, {\sf JDB4}, {\sf JTB4}, {\sf LP} \}$.
  \item $\E(t,F)$ is the anti-monotone closure of $\bar{\E}(t,F)$, if $\JL \in\{ {\sf J5}, {\sf JD5}, {\sf JT5}, {\sf JB5}, {\sf JTB5}, {\sf JDB5} \}$.
  \item $\E(t,F)$ is the stable closure of $\bar{\E}(t,F)$, if $\JL \in\{ {\sf J45}, {\sf JD45},{\sf JT45}, {\sf JB45}, {\sf JTB45}, {\sf JDB45}\}$.
\end{itemize}
\end{definition}

Note that $\E_i(t,F)\subseteq \E_{i+1}(t,F)$, for any term $t$, any formula $F$, and any $i\geq 0$. Thus, $\E_k(t,F) \subseteq \E_l(t,F)$, for any term $t$, any formula $F$, and any $k\leq l$. Using this fact it is easy to see that if $w\in\bar{\E}(t,F)$ and $rk(t)\leq i$, then $w\in\E_i(t,F)$.

\begin{lemma}\label{lem:Inductively generated evidence function is admissible}
Let $\JL_\CS$ a justification logic, $(\W, \RR)$ be a Kripke frame for  $\JL_\CS$, $\mathcal{A}$ be a possible evidence function on $\W$ for  $\JL_\CS$, and $\E$ be the inductively generated evidence function based on $\mathcal{A}$. Then $\E$ satisfies the application $(\E1)$ and sum $(\E2)$ conditions. If $\JL_\CS$ contains axiom {\bf j4}, then $\E$ also satisfies the monotonicity $(\E3)$ and positive introspection $(\E4)$ conditions. If $\JL_\CS$ contains axiom {\bf jB}, then $\E$ also satisfies the weak negative introspection $(\E5)$ condition. If $\JL_\CS$ contains axiom {\bf j5}, then $\E$ also satisfies the negative introspection $(\E6)$ and anti-monotonicity $(\E8)$ conditions.
\end{lemma}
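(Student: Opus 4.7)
The proof verifies each admissibility condition by cases on the axioms present in $\JL_\CS$. My plan factors the work into two phases: first, prove the required conditions for the auxiliary function $\bar{\E}=\bigcup_i\E_i$ by a rank-based argument; second, lift each condition to $\E$ via the appropriate monotone, anti-monotone, or stable closure.

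For Phase 1 the workhorse is the rank-saturation observation already recorded just before the lemma: each clause of Definition \ref{def:Inductively generated evidence function j5} deposits new worlds into the set indexed by $\star t$ or $s\star t$ only at the matching step, so $\bar{\E}(t,F)=\E_{rk(t)}(t,F)$. For $(\E 1)$, given $w\in\bar{\E}(s,A\r B)\cap\bar{\E}(t,A)$ I take $i=\max(rk(s),rk(t))$; then $w\in\E_i(s,A\r B)\cap\E_i(t,A)$, and since $rk(s\cdot t)=i+1$, clause 4 deposits $w$ into $\E_{i+1}(s\cdot t,B)\subseteq\bar{\E}(s\cdot t,B)$. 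Condition $(\E 2)$ is the same argument with clause 5; $(\E 4)$ uses clause 6 at rank $rk(t)$; and $(\E 6)$ uses clause 8, observing that if $w\notin\bar{\E}(t,A)$ then in particular $w\notin\E_{rk(t)}(t,A)$. Finally $(\E 5)$ is immediate from the base stipulation $\E_0(\bar{?}r,\neg r:A)=\W$, which is preserved by clause 7.

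For Phase 2 the two closure conditions $(\E 3)$ and $(\E 8)$ are built in by construction. Conditions $(\E 5)$ and $(\E 6)$ lift directly because $\bar{\E}\subseteq\E$ and the relevant containment already holds at the $\bar{\E}$-level. For $(\E 2)$ and $(\E 4)$ I trace a witness: if $w\in\E(t,A)$ because some ancestor or descendant $u$ lies in $\bar{\E}(t,A)$, then by Phase 1 the same $u$ belongs to $\bar{\E}(!t,t:A)$ or $\bar{\E}(s+t,A)$ respectively, and the closure transports $w$ into the corresponding $\E$-set.

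The delicate step is $(\E 1)$ for the closed-up $\E$, because the witnesses that put $w\in\E(s,A\r B)$ and $w\in\E(t,A)$ may live at different ancestors or descendants of $w$. This is where the structural properties of $\RR$ enter essentially: transitivity in the j4 cases, and the Euclidean property in the j5 cases, let me slide the two witnesses onto a common world at which Phase 1 applies, so that the combined witness for $s\cdot t$ sits in $\bar{\E}(s\cdot t,B)$ and the closure then propagates it back to $w$. I expect this witness-reconciliation step to be the main technical obstacle; everything else amounts to routine unfolding of Definition \ref{def:Inductively generated evidence function j5}.
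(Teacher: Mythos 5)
The paper states this lemma without proof, so there is nothing to compare against; judging your argument on its own terms, Phase~1 is sound. The rank-saturation identity $\bar{\E}(t,F)=\E_{rk(t)}(t,F)$ does follow from the observation recorded before the lemma, and your derivations of $(\E1)$, $(\E2)$, $(\E4)$, $(\E6)$ at the $\bar{\E}$-level, as well as the base-case argument for $(\E5)$, are correct. The liftings that involve a \emph{single} witness are also fine: for $(\E2)$, $(\E4)$, $(\E5)$, $(\E6)$ and for the built-in closure conditions $(\E3)$, $(\E8)$, the inclusion at the $\bar{\E}$-level transports through the monotone, anti-monotone, or stable closure exactly as you say.

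The gap is the step you yourself flag: $(\E1)$ for the closed-up $\E$. The witness-reconciliation idea cannot work, because the two witnesses certifying $w\in\E(s,A\r B)$ and $w\in\E(t,A)$ carry their evidence only as members of the \emph{unclosed} sets $\bar{\E}(s,A\r B)$ and $\bar{\E}(t,A)$; even when transitivity or Euclideanness relates the witnesses, membership in $\bar{\E}$ does not travel along $\RR$, so there is in general no common world at which clause~4 of Definition~\ref{def:Inductively generated evidence function j5} can fire. Concretely, take $\JL={\sf J4}$, $\W=\{a,b,c\}$, $\RR=\{(a,c),(b,c)\}$ (transitive), justification variables $x,y$, and $\mathcal{A}(x,A\r B)=\{a\}$, $\mathcal{A}(y,A)=\{b\}$, with $\mathcal{A}$ empty otherwise. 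Then $\bar{\E}(x,A\r B)=\{a\}$ and $\bar{\E}(y,A)=\{b\}$ have empty intersection at every stage, so $\bar{\E}(x\cdot y,B)=\emptyset$ and its monotone closure $\E(x\cdot y,B)$ is empty; yet $c\in\E(x,A\r B)\cap\E(y,A)=\{a,c\}\cap\{b,c\}$. So the application condition actually \emph{fails} for the function defined in Definition~\ref{def:Inductively generated evidence function j5} on this frame — no reconciliation argument can close the gap, and the analogous Euclidean example defeats the anti-monotone and stable cases as well. (Note the contrast with Definition~\ref{def:generated evidence function}, where the monotonicity clause is interleaved \emph{inside} the induction, so the application clause can fire at a later stage; here the closure is applied only once, after the induction, which is precisely what breaks the step.) Your proof therefore cannot be completed as written for the logics where a closure is taken, and either the lemma needs a reformulation (e.g., iterating closure and generation to a fixed point) or additional hypotheses on $\mathcal{A}$.
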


\begin{lemma}\label{lem:Inductively generated evidence function j5 is strong}
Let $\JL_\CS$ be one of the justification logics that contains  axiom {\bf j5}, $(\W, \RR)$ be a Kripke frame for  $\JL_\CS$, $\mathcal{A}$ be a possible evidence function on $\W$ for  $\JL_\CS$,  $\E$ be the inductively generated evidence function based on $\mathcal{A}$, $\E_0$ be the possible evidence function defined in Definition \ref{def:Inductively generated evidence function j5}, and $\V$ be a valuation. Consider the possible Fitting model $\M=(\W,\RR,\E,\V)$. If for all terms $r\in Tm_\JL$, all formulas $F\in Fm_\JL$, and all $w\in\W$ we have
\[ w\in \E_0(r,F)\Rightarrow (\M,w)\Vdash r:F,\]
then $\E$ satisfies the strong evidence condition $(\E7)$.
\end{lemma}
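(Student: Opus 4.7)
The plan is to prove the stronger statement that for every index $i$, every term $t$, and every formula $F$, $w\in\E_i(t,F)$ implies $(\M,w)\Vdash t{:}F$, and then to lift the conclusion from $\bar\E$ to the full $\E$ by exploiting the Euclidean structure of $\RR$ (together with transitivity for logics containing {\bf j4}). The case analysis will make essential use of the fact, noted after Definition \ref{def:Inductively generated evidence function j5}, that $\bar\E(t,F)=\E_{rk(t)}(t,F)$, so membership in $\bar\E$ for a term of rank $k$ is already decided at stage $k$.

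The base case $i=0$ is exactly the hypothesis. For the inductive step, I split on the outermost construction of $t$. If $t$ is a variable, a constant, or of the form $\bar{?}r$, then $\E_{i+1}(t,F)=\E_i(t,F)$ and the induction hypothesis transfers at once. If $t=s\cdot r$, $s+r$, or $!r$, the newly added worlds come equipped with explicit semantic data: the induction hypothesis on the immediate subterms yields the required $(\M,w)\Vdash s{:}(G\r F)$ and $(\M,w)\Vdash r{:}G$ (resp.\ the analogous facts for $+$ and $!$), and the usual forcing-at-successors argument produces $(\M,w)\Vdash t{:}F$; membership $w\in\E(t,F)$ is automatic from $\E_{i+1}(t,F)\subseteq\E(t,F)$. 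The decisive case is $t=?r$, where $w$ is added precisely because $w\notin\E_i(r,G)=\bar\E(r,G)$, using $rk(r)=i$. The semantic obligation is to show $(\M,v)\not\Vdash r{:}G$ for every $v$ with $w\RR v$. I would argue by contradiction: assume $(\M,v)\Vdash r{:}G$. Then $v\in\E(r,G)$ and by the definition of the anti-monotone (resp.\ stable) closure there is an $\RR$-chain from $v$ into $\bar\E(r,G)$. Euclideanness of $\RR$ (and transitivity, when available) then forces $w$ itself to reach $\bar\E(r,G)$, contradicting $w\notin\bar\E(r,G)$; the previously established anti-monotonicity $(\E 8)$ of $\E$ from Lemma \ref{lem:Inductively generated evidence function is admissible} streamlines this step.

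Once strong evidence is proved for every $\E_i$, and hence for $\bar\E$, the extension to $\E$ follows by direct analysis of the closure. For the logics whose $\E$ is the anti-monotone closure of $\bar\E$, a world $w\in\E(t,F)$ lies on a chain $w=w_0\RR w_1\RR\cdots\RR w_n$ with $w_n\in\bar\E(t,F)$; a short induction on $n$ using the Euclidean condition shows that $w_n\RR u$ holds for every $u$ with $w\RR u$, so forcing of $F$ at $u$ is inherited from the already-established $(\M,w_n)\Vdash t{:}F$. This yields $(\M,w)\Vdash t{:}F$. For the stable-closure logics ({\sf J45} and its extensions) the same argument handles the anti-monotone part, while transitivity handles the symmetric monotone part by pushing forcing forward along a backward chain.

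The main obstacle is the $?r$ step in the inductive proof of strong evidence for $\bar\E$: converting the purely negative hypothesis ``$w\notin\E_i(r,G)$'' into the positive semantic statement that $r{:}G$ fails at every $\RR$-successor of $w$. This requires a careful coordination between the Euclidean geometry of $\RR$, the definition of the closure operation, and the inductively guaranteed strong evidence for lower-rank terms; it is here that the $\E_0$-hypothesis, propagated through the inductive construction, must be used to rule out a would-be witness $v$ for $r{:}G$ among the successors of $w$.
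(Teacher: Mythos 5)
Your proposal reproduces the paper's proof essentially step for step: the same induction on the stage index $i$ showing $w\in\E_i(t,F)\Rightarrow(\M,w)\Vdash t{:}F$, the same case split on the outermost operation with the $?r$ case settled by contradiction via anti-monotonicity $(\E 8)$ and the rank bound $rk(r)=i$, and the same final lift from $\bar\E$ to the closure $\E$ by walking the $\RR$-chain and invoking Euclideanness (resp.\ transitivity for the stable-closure logics). This is the paper's argument, at the same level of detail, so nothing further is needed.
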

\begin{proof} By induction on $j$ we show that for all $w\in\W$

\begin{equation}\label{eq:Inductively generated evidence function j5 is strong}
w\in \E_j(r,F)\Rightarrow (\M,w)\Vdash r:F.
\end{equation}

 The  base case, $j=0$, follows from the assumption. For the induction hypothesis, suppose that (\ref{eq:Inductively generated evidence function j5 is strong}) is true for all $w\in\W$ and all $0 \leq j \leq i$.
For any $w\in\W$, if $w\in\E_{i+1}(r,F)$ for $i\geq 0$, and $w\in\E_j(r,F)$ for $j< i+1$, then by the induction hypothesis $(\M,w)\Vdash r:F$. Assume now that for an arbitrary $w\in\W$ we have $w\in\E_{i+1}(r,F)$ for $i\geq 0$, and $w\not\in\E_j(r,F)$ for any $j< i+1$. We have the following cases:
 \begin{enumerate}
  \item $r=s\cdot t$ and $w\in\E_{i+1}(s\cdot t,F)$. Then $w\in\E_{i}(s,G\r F)\cap\E_{i}(t,G)$, for some formula $G$, and $rk(s\cdot t)=i+1$. Thus, by the induction hypothesis, $(\M,w)\Vdash s:(G\r F)$ and $(\M,w)\Vdash t:G$. Therefore, $w\in \E(s,G\r F)$ and $w\in \E(t,G)$. By Lemma \ref{lem:Inductively generated evidence function is admissible}, $w \in\E(s\cdot t,F)$. Now it is easy to show that for arbitrary world $v\in\W$ such that $w\RR v$ we have $(\M,v)\Vdash F$. Hence $(\M,w)\Vdash s\cdot t:F$, which is what we wished to show.
   \item $r=s+t$ and $w\in\E_{i+1}(s+t,F)$. Thus $w\in\E_{i}(t,F)$ or $w\in\E_{i}(s,F)$, and $rk(s+t)=i+1$. If $w\in\E_{i}(t,F)$, then by the induction hypothesis, $(\M,w)\Vdash t:F$. Therefore, $w \in\E(t,F)$. By Lemma \ref{lem:Inductively generated evidence function is admissible},  $w \in\E(s+t,F)$. On the other hand, for arbitrary world $v\in\W$ such that $w\RR v$ we have $(\M,v)\Vdash F$. Hence $(\M,w)\Vdash s+ t:F$. Proceed similarly if  $w\in\E_{i}(s,F)$.
   \item {\bf j4} is an axiom of $\JL_\CS$, $r=!t$, and $w\in\E_{i+1}(!t,F)$. Then $w\in\E_{i}(t,G)$, for some formula $G$ such that $F=t:G$, and $rk(!t)=i+1$. Thus, by the induction hypothesis, $(\M,w)\Vdash t:G$. Therefore, $w \in\E(t,G)$. By Lemma \ref{lem:Inductively generated evidence function is admissible},  $w \in\E(!t,F)$. Since $\RR$ is transitive and $\E$ satisfies the monotonicity condition, it is easy to show that for any arbitrary world $v\in\W$ such that $w\RR v$ we have $(\M,v)\Vdash F$. Hence $(\M,w)\Vdash !t:F$.
  \item {\bf j5} is an axiom of $\JL_\CS$, $r=?t$, and $w\in\E_{i+1}(?t,F)$. Then $w\not\in\E_{i}(t,G)$, for some formula $G$ such that $F=\neg t:G$, and $rk(?t)=i+1$. For arbitrary $v\in\W$ such that $w\RR v$, we need to show that $(\M,v)\Vdash \neg t:G$. Suppose towards a contradiction that $v\in\E(t,G)$. By anti-monotonicity, $w\in\E(t,G)$. This, together with $rk(t)=i$, implies that $w\in\E_i(t,G)$, which is a contradiction. Thus $v\not\in\E(t,G)$, and hence $(\M,v)\Vdash \neg t:G$. Therefore, $(\M,w)\Vdash ?t:\neg t:G$.
\end{enumerate}
Finally we show that $\E$ satisfies the strong evidence condition $(\E7)$. Assume that $w\in\E(r,F)$. We will show that $(\M,w)\Vdash r:F$.

First suppose that $\JL \in\{ {\sf J5}, {\sf JD5}, {\sf JT5}, {\sf JB5}, {\sf JTB5}, {\sf JDB5} \}$. Since $\E(r,F)$ is the anti-monotone closure of $\bar{\E}(r,F)$, there is a sequence of related worlds $w_0 \RR w_1 \RR$ $\ldots \RR w_n$ ($n\geq 0$) such that $w=w_0$ and $w_n\in \bar{\E}(r,F)$. Thus, $w_n\in \E_i(r,F)$ for some $i\geq 0$. By (\ref{eq:Inductively generated evidence function j5 is strong}), $(\M,w_n)\Vdash r:F$. Since $\RR$ is Euclidean, it is easy to show that for any arbitrary world $v\in\W$ such that $w\RR v$ we have $w_n \RR v$. Hence $(\M,v)\Vdash F$, and therefore $(\M,w)\Vdash r:F$.

Now suppose that $\JL \in\{ {\sf J45}, {\sf JD45},{\sf JT45}, {\sf JB45}, {\sf JTB45}, {\sf JDB45}\}$. Since $\E(r,F)$ is the stable closure of $\bar{\E}(r,F)$, $w$ is in the monotone closure or in the anti-monotone closure of $\bar{\E}(r,F)$. The latter case  is treated similar to the above argument. For the former case suppose that $w$ is in the monotone closure of $\bar{\E}(r,F)$. Thus there is a sequence of related worlds $w_0 \RR w_1 \RR$ $\ldots \RR w_n$ ($n\geq 0$) such that $w=w_n$ and $w_0\in \bar{\E}(r,F)$. Thus, $w_0\in \E_i(r,F)$ for some $i\geq 0$. By (\ref{eq:Inductively generated evidence function j5 is strong}), $(\M,w_0)\Vdash r:F$. Since $\RR$ is transitive, it is easy to show that for any arbitrary world $v\in\W$ such that $w\RR v$ we have $w_0 \RR v$. Hence $(\M,v)\Vdash F$, and therefore $(\M,w)\Vdash r:F$. \qe
\end{proof}

Lemmas \ref{lem:Inductively generated evidence function is admissible} and  \ref{lem:Inductively generated evidence function j5 is strong} informally states that: every possible evidence function for $\JL$, which does not contain axiom {\bf j5}, can be extended to an admissible evidence function; and every possible evidence function for $\JL$, which does  contain axiom {\bf j5} but not axiom {\bf jB}, that satisfies the strong evidence condition can be extended to an admissible evidence function.

%%%%%%%%%%%%%%%%%%%%%%%%%%%%%%%%%%%%%%%%%%%%%%%%%%%%%%%%%%%%%%%%%%%%%%%%%%%%%%%%%%%%%%%%
\section{Labeled sequent calculus}\label{sec:Labeled sequent calculus}
Negri and von Plato in \cite{Negri Plato1998} proposed a method to
transform universal axioms into rules of Gentzen system. %\footnote{In \cite{Negri2003} the method was extended to transform geometric axioms, I skip the details here.}
 Universal
axioms are first transformed into conjunctions of formulas of the
form $P_1\wedge\ldots\wedge P_m\r Q_1\vee \ldots\vee Q_n$, where
$P_i$ and $Q_j$ are atomic formulas. Then each conjunct is
converted into the \emph{regular rule scheme}:
\[\di{\f{Q_1,\overline{P},\g\R\d \quad\cdots\quad Q_n,\overline{P},\g\R\d}{\overline{P},\g\R\d}Reg}\]
in which $\overline{P}$ abbreviates $P_1,\ldots,P_m$. In \cite{Negri2003} the method was extended to transform geometric axioms of the form $\forall \bar{z}(P_1\wedge\ldots\wedge P_m\r\exists x_1 M_1\vee \ldots\vee \exists x_n M_n)$, where each $M_i$ is a conjunction of atomic formulas $Q_{i_1},\ldots,Q_{i_k}$, to rules by \emph{geometric rule scheme}:
\[\di{\f{\overline{Q_1}(y_1/x_1),\overline{P},\g\R\d \quad\cdots\quad \overline{Q_n}(y_n/x_n),\overline{P},\g\R\d}{\overline{P},\g\R\d}GRS}\]
where $\overline{Q_i}$ abbreviates $Q_{i_1},\ldots,Q_{i_k}$, and the eigenvariables $y_1,\ldots ,y_n$ of the premises are not free in the conclusion. $A(y/x)$ indicates $A$ after the substitution of the variable $y$ for the variable $x$.

Using $Reg$ and $GRS$, Sara Negri in \cite{Negri2005}  proposed cut-free labeled sequent
calculi for a wide variety of modal logics characterized by Kripke
models. By adopting a labeled language including possible worlds
as labels and the accessibility relation, she presented G3-style  sequent calculi (i.e. sequent calculi without structural rules) for normal modal logics. In this section, we
extend this method to present labeled sequent calculi for
justification logics.

Let us first define the language of labeled sequent calculus,
called \textit{labeled language}, for propositional logic. This
language will be extended later in sections 4.1, 4.2 and 4.3. We
need a countably infinite set $L$ of labels $\ww,\vv,\uu, \ldots$
which are used in the labeled systems as possible worlds of Kripke
style models. The labeled language for propositional logic
consists of \textit{labeled formulas} (or \textit{forcing
formulas}) $\ww\Vvdash A$, where $A$ is a formula of propositional
logic. Sequents are expressions of the form $\g\R\d$, where $\g$
and $\d$ are multisets of formulas in the labeled language.
\begin{table}[t]
\centering\renewcommand{\arraystretch}{2.3}
\begin{tabular}{|ll|}
\hline
~\noindent{\bf Initial sequents:}&\\
~$\ww \Vvdash P,\Gamma\R\d,\ww\Vvdash P$ ~~~~~$(Ax)$&
$\ww\Vvdash\perp,\g\R\d$~~~~~$(Ax\bot)$\\
~$P$ is a propositional variable. &\\
~\noindent {\bf Propositional rules:}&\\
~$\di{\f{\g\R\d,\ww\Vvdash A}{\ww\Vvdash \neg A,\g\R\d}}(L\neg)$ & $\di{\f{\ww\Vvdash A,\g\R\d}{\g\R\d,\ww\Vvdash \neg A}}(R\neg)$ \\
~$\di{\f{\ww\Vvdash A,\ww\Vvdash B,\g\R\d}{\ww\Vvdash A\wedge B,\g\R\d}}(L\wedge)$ & $\di{\f{\g\R\d,\ww\Vvdash A ~~~~\g\R\d,\ww\Vvdash B}{\g\R\d,\ww\Vvdash A\wedge B}}(R\wedge)$ \\
~$\di{\f{\ww\Vvdash A,\g\R\d~~~~\ww\Vvdash B,\g\R\d}{\ww\Vvdash A\vee B,\g\R\d}}(L\vee)$ & $\di{\f{\g\R\d,\ww\Vvdash A,\ww\Vvdash B}{\g\R\d,\ww\Vvdash A\vee B}}(R\vee)$ \\
~$\di{\f{\g\R\d,\ww\Vvdash A~~~~\ww\Vvdash B,\g\R\d}{\ww\Vvdash A\r
B,\g\R\d}}(L\r)$ & $\di{\f{\ww\Vvdash A,\g\R\d,\ww\Vvdash
B}{\g\R\d,\ww\Vvdash A\r B}}(R\r)$\\
\hline
\end{tabular}\vspace{0.3cm}
\caption{Labeled sequent calculus {\sf G3c} for propositional
logic.}\label{table: rules of labeled G3c}
\end{table}
Initial sequents (or axioms) and rules for the labeled sequent calculus  {\sf G3c} of propositional logic are similar to the
ordinary sequent calculus which are augmented by labels, see Table
\ref{table: rules of labeled G3c}.

In Sections 4.1-4.3, in order to construct labeled sequent calculi for modal and justification logics, we will extend the underlying language of the formula $A$ in $\ww\Vvdash A$ to formulas in the language of modal and justification logics. Moreover, we will extend the labeled language of {\sf G3c} to an \textit{extended labeled language} with new atoms (e.g. relational, evidence). When we extend a labeled sequent calculus ${\sf G3L}$ of logic $L$ to ${\sf G3L'}$ of logic $L'$ (i.e. ${\sf G3L'}$ contains all initial sequents and rules of ${\sf G3L}$), all formulas $A$ in labeled formulas $\ww\Vvdash A$, in initial sequents and rules of ${\sf G3L'}$, should be considered in the language of $L'$, and also $\g,\d$ may now contain the new atoms of the extended labeled language.

 In all of the rules and initial sequents of this paper, formulas in $\g$ and $\d$ are called \textit{side formulas}. The formula(s) in the premise(s) and conclusion of a rule not in the side formulas are called \textit{active} and \textit{principal}, respectively. Also, the formula(s) in an initial sequent not in the side formulas are called \textit{principal}.

Negri \cite{Negri2005} considered a zero-premise rule $L\bot$
instead of initial sequent $(Ax\bot)$. She used
$w:A$ to denote labeled formulas in the labeled language. To
avoid confusion with justification formulas of the form $t:A$, we
replace it by $\ww\Vvdash A$. In addition, compare to
\cite{Negri2005} we use $\r$ in place of $\supset$ for
implication, $\wedge$ in place of $\&$ for conjunction, and for
simplicity, we deal only with $\b$ and take $\Diamond$ as a
definable modality in the modal language. We also add the rules $(L\neg)$ and $(R\neg)$
to {\sf G3c}. The name of rules given here is also different from
Negri's.
 %%%%%%%%%%%%%%%%%%%%%%%%%%%%%%%%%%%%%%%%%%%%%%%%%%%%%%%%%%%%%%%%%%%%%%
 \subsection{Labeled sequent calculi of modal logics}
Extended labeled language for modal logics consists of labeled formulas $\ww\Vvdash A$,
in which $A$ is a modal formula, and \textit{relational atoms} (or
\textit{accessibility atoms}) $\ww R\vv $. Negri in \cite{Negri2005}
presented the modal rules $(L\b)$ and $(R\b)$ for modal logics
(see Table \ref{table: G3K}). Rule $(R\b)$ has the restriction
that the label $v$ (called \emph{eigenlabel}) must not occur in
the conclusion. To derive the properties of the accessibility
relation (such as reflexivity, symmetric, transitivity, etc.)
initial sequents for $R$ (denoted by $(AxR)$) are added.
\begin{table}[t]
\centering\renewcommand{\arraystretch}{1.6}
\begin{tabular}{|p{8cm}c|}
\hline
~\noindent{\bf Initial sequent:} & \\
 \multicolumn{2}{|c|}{$\ww R\vv,\g\R\d, \ww R\vv $~~~~~$(AxR)$}\\
~\noindent{\bf Modal rules:} & \\ ~$\di{\f{\vv\Vvdash
A,\ww\Vvdash \b A,\ww R\vv,\g\R\d}{\ww\Vvdash \b A,\ww R\vv,\g\R\d}}(L\b)$ &
$\di{\f{\ww R\vv,\g\R\d,\vv\Vvdash A}{\g\R\d,\ww\Vvdash \b A}}(R\b)$~ \\
\multicolumn{2}{|c|}{In $(R\b)$ the eigenlabel $\vv$ must not occur in
the conclusion of rule.}\\
\hline
\end{tabular}\vspace{0.3cm}
\caption{Initial sequent and rules for labeled sequent calculus {\sf G3K} to be added to {\sf G3c}.}\label{table: G3K}
\end{table}
All the axioms and rules in Table \ref{table: rules of labeled
G3c} and \ref{table: G3K} constitute a G3-style labeled system
for the basic modal logic {\sf K}, denoted by {\sf G3K}. Labeled systems for other modal logics
are obtained by adding rules, that correspond to the properties of
the accessibility relation in their Kripke models from Table
\ref{table: rules for accessibility relation}, according to Table \ref{table: G3ML}. For example,
\begin{equation}
\begin{array}{cll}
{\sf G3DB}&=& {\sf G3K} + (Ser) + (Sym).\\
{\sf G3S4}&=& {\sf G3K} + (Ref) + (Trans).\\
{\sf G3S5}&=& {\sf G3K} + (Ref) + (Trans) + (Eucl) + (Eucl_*).
\end{array}
\end{equation}
Note that contracted instances of rule $(Eucl)$:
$$\di{\f{\vv R\vv,\ww R\vv,\g\R\d}{\ww R\vv,\g\R\d}}(Eucl_*)$$
should be added to those labeled systems that contain rule $(Eucl)$. In the seriality rule $(Ser)$ the eigenlabel $\vv$ must not occur in
the conclusion (this rule is obtained by the geometric rule scheme
$GRS$). All fifteen modal logics of the modal cube have labeled sequent calculus.
Negri in \cite{Negri2005,Negri2009} shows that all these
labeled systems are sound and complete with respect to their
Hilbert systems, all of the rules in the modal labeled systems
are invertible and structural rules (weakening and contraction rules) and
cut are admissible. The termination of proof search was proved for {\sf G3K}, {\sf G3T}, {\sf G3KB}, {\sf G3TB}, {\sf G3S4}, and {\sf G3S5}.
\begin{table}[t]
\centering\renewcommand{\arraystretch}{2.5}
\begin{tabular}{|cc|}
\hline
~ $\di{\f{\ww R\ww,\g\R\d}{\g\R\d}}(Ref)$\hspace{2.7cm}& $\di{\f{\vv R\ww,\ww R\vv,\g\R\d}{\ww R\vv,\g\R\d}}(Sym)$~~~ \\
~ $\di{\f{\ww R\vv,\g\R\d}{\g\R\d}}(Ser)$\hspace{2.7cm}& $\di{\f{\ww R\uu,\ww R\vv,\vv R\uu,\g\R\d}{\ww R\vv,\vv R\uu,\g\R\d}}(Trans)$\\
 \multicolumn{2}{|c|}{$\di{\f{\vv R\uu,\ww R\vv,\ww R\uu,\g\R\d}{\ww R\vv,\ww R\uu,\g\R\d}}(Eucl)$}\\
  \multicolumn{2}{|c|}{~In $(Ser)$ the eigenlabel $\vv$ must not occur in the conclusion of the rule.~}  \\
  \hline
\end{tabular}\vspace{0.3cm}
\caption{Rules for relational atoms.}\label{table: rules for accessibility relation}
\end{table}
\begin{table}[t]
\centering
\renewcommand{\arraystretch}{1.2}
\begin{tabular}{|c|c|}
\hline
~Modal axiom & ~Corresponding rule  \\\hline
{\bf T} & $(Ref)$\\
{\bf D} & $(Ser)$\\
{\bf B} & $(Sym)$\\
{\bf 4} &  $(Trans)$\\
{\bf 5} & $(Eucl)$, $(Eucl_*)$\\
\hline
\end{tabular}\vspace{0.3cm}
\caption{Corresponding rules to be added to {\sf G3K} for labeled sequent calculi of modal logics.}\label{table: G3ML}
\end{table}
%%%%%%%%%%%%%%%%%%%%%%%%%%%%%%%%%%%%%%%%%%%%%%%%%%%%%%%%%%%%%%%%%%%%%%%%%%
\subsection{Labeled sequent calculus based on F-models}
Extended labeled language for
justification logics consist of labeled formulas $\ww\Vvdash A$,
relational atoms $\ww R\vv$, and \textit{evidence atoms} $\ww  E(t,A)$,
where $\ww$ and $\vv$ are labels from $L$, $t$ is a justification term
and $A$ is a \JL-formula. These atoms
respectively denote the statements $(\M,w)\Vdash A$, $w\RR v$ and
$w\in \E(t,A)$ in Fitting models. Thus in this language we are
able to give a symbolic presentation of semantical elements of
Fitting models. For example, application $(\E1)$ and negative introspection $(\E6)$ conditions could be expressed in the extended labeled language as the following sequents:

\begin{eqnarray*}
\ww E(s,A\r B), \ww E(t,A) \R \ww E(s\cdot t,B),\\
 \R \ww E(t,A) , \ww E(? t,\neg t:A).
\end{eqnarray*}

If we allow that the propositional connectives are defined on formulas in the extended labeled language, then conditions $\E1$ and  $\E6$ could be expressed as follows:

\begin{eqnarray*}
\ww E(s,A\r B) \wedge \ww E(t,A) \r \ww E(s\cdot t,B),\\
 \ww E(t,A) \vee \ww E(? t,\neg t:A).
\end{eqnarray*}

 By the definition of forcing relation for formulas of the form
$t:A$ in F-models:
\[(\M, w)\Vdash t:A~~ \mbox{if{f}}~~ w\in\E(t,A)~~\mbox{and for every}~~v\in \W~~\mbox{with}~~w \RR v, (\M, v)\Vdash A,\]
and the regular rule scheme $(Reg)$, we obtain the rules $(L:)$
and $(E)$ for justification logics, see Table \ref{table: rules
for G3J}. Rule $(R:)$ is obtained similar to rule $(R\b)$ in modal
labeled systems, and likewise the eigenlabel $v$ in the premise
of the rule $(R:)$ must not occur in the conclusion. Again to
derive the properties of admissible  evidence function, we should add
initial sequents for evidence atoms $(AxE)$ from Table \ref{table: rules
for G3J}.

\begin{table}
\centering\renewcommand{\arraystretch}{2.2}
\begin{tabular}{|lc|}
\hline
~\noindent{\bf Initial sequents}:&\\
 \multicolumn{2}{|c|}{$\ww R\vv,\g\R\d, \ww R\vv$~~~~~$(AxR)$}\\
 \multicolumn{2}{|c|}{$\ww E(t,A),\g\R\d, \ww E(t,A)$~~~~~$(AxE)$}\\
~\noindent{\bf JL rules:}&\\
~ $\di{\f{\vv\Vvdash A,\ww\Vvdash
t:A,\ww R\vv,\g\R\d}{\ww\Vvdash t:A,\ww R\vv,\g\R\d}}(L:)$ & $\di{\f{\ww R\vv,\ww E(t,A),\g\R\d,\vv\Vvdash
A}{\ww E(t,A),\g\R\d,\ww\Vvdash t:A}}(R:) $ ~\\
\multicolumn{2}{|c|}{$\di{\f{\ww E(t,A),\ww\Vvdash
t:A,\g\R\d}{\ww\Vvdash
t:A,\g\R\d}}(E)$}\\
\multicolumn{2}{|c|}{In $(R:)$ the eigenlabel $\vv$ must not occur in
the conclusion of rule.}\\
~\noindent {\bf Rules for evidence atoms:}&\\
 ~   $\di{\f{\ww E(s+t,A),\ww E(t,A),\g\R\d}{\ww E(t,A),\g\R\d}}(El+)$ & $\di{\f{\ww E(t+s,A),\ww E(t,A),\g\R\d}{\ww E(t,A),\g\R\d}}(Er+)$\\
     \multicolumn{2}{|c|}{$\di{\f{\ww E(s\cdot t,B),\ww E(s,A\r B),\ww E(t,A),\g\R\d}{\ww E(s,A\r B),\ww E(t,A),\g\R\d}}(E\cdot)$}\\
~  \noindent {\bf Iterated axiom necessitation rule:}&\\
\multicolumn{2}{|c|}{$\di{\f{\ww E(c_{i_n},c_{i_{n-1}}:\ldots:c_{i_1}:A),\g\R\d}{\g\R\d}}(IAN)$}\\
\hline
\end{tabular}\vspace{0.3cm}
\caption{Initial sequents and rules for labeled sequent calculus {\sf G3J} to be added to {\sf G3c}.}\label{table: rules for G3J}
\end{table}

\begin{table}
\centering\renewcommand{\arraystretch}{2.2}
\begin{tabular}{|lc|}
\hline
~\noindent {\bf Rules for evidence atoms:}&\\
~$\di{\f{\ww E(!t,t:A),\ww E(t,A),\g\R\d}{\ww E(t,A),\g\R\d}}(E!)$ &
   $\di{\f{\vv E(t,A),\ww E(t,A),\ww R\vv,\g\R\d}{\ww E(t,A),\ww R\vv,\g\R\d}}(Mon)$ \\
    \multicolumn{2}{|c|}{$\di{\f{\ww\Vvdash A,\g\R\d \quad \ww E(\bar{?}t,\neg t:A),\g\R\d}{\g\R\d}}(E\bar{?})$}\\
~ $\di{\f{\ww\Vvdash t:A,\ww E(t,A),\g\R\d}{\ww E(t,A),\g\R\d}}(SE)$    & $\di{\f{\ww E(t,A),\g\R\d \quad \ww E(?t,\neg t:A),\g\R\d}{\g\R\d}}(E?)$~ \\
~ \noindent {\bf Axiom necessitation rule:}&\\
\multicolumn{2}{|c|}{$\di{\f{\ww E(c,A),\g\R\d}{\g\R\d}}(AN)$}\\
\hline
\end{tabular}\vspace{0.3cm}
\caption{Rules for labeled sequent calculi based
on F-models.}\label{table: rules for G3JL}
\end{table}
Now we define labeled sequent calculi for various justification
logics according to properties of $\RR$ and $\E$ in their
F-models. Labeled system {\sf G3J} is the extension of {\sf G3c}, Table
\ref{table: rules of labeled G3c}, by the initial sequents and
rules from Table \ref{table: rules for G3J}.
 Table \ref{table: G3JL}
specifies which rules must be added to {\sf
G3J} from Table \ref{table: rules for G3JL} to get labeled systems {\sf G3JL} for various justification logics. All rules of Tables \ref{table: rules for G3J}, \ref{table: rules for G3JL} are obtained from regular rule scheme $Reg$.

Labeled system ${\sf G3J4}$ and its extensions possesses rule
$(AN)$ and other labeled systems rule $(IAN)$. In the rules $(AN)$ and $(IAN)$, the formula $A$ is an axiom of \JL, $c$ and $c_i$'s are
justification constants. The labeled sequent calculus $\J_\CS$ is obtained from \J~by restricting rule $(IAN)/(AN)$ as follows: for each evidence atom $\ww E(c_{i_n},c_{i_{n-1}}:\ldots:c_{i_1}:A)$ (or $\ww E(c,A)$) in the premise of the rule $(IAN)$ (or $(AN)$) we shuold have $c_{i_n}:c_{i_{n-1}}:\ldots:c_{i_1}:A\in\CS$ (or $c:A\in\CS$). \J~denotes $\J_{{\sf TCS_\JL}}$. In the rest of the paper whenever we use $\J_\CS$ it is assumed that \CS~is an arbitrary  constant specification for $\JL$, unless stated otherwise.

\begin{table}[t]
\centering
\renewcommand{\arraystretch}{1.2}
\begin{tabular}{|c|c|}
\hline
~Justification axiom &~ Corresponding rule  \\\hline
{\bf jT} & $(Ref)$\\
{\bf jD} & $(Ser)$\\
{\bf jB} & $(E\bar{?})$,$(Sym)$\\
{\bf j4} & ~$(E!), (Mon), (Trans)$~\\
{\bf j5} & $(SE),(E?)$\\
\hline
\end{tabular}\vspace{0.3cm}
\caption{Corresponding rules to be added to {\sf G3J} for labeled sequent calculi of justification logics.}\label{table: G3JL}
\end{table}

%%%%%%%%%%%%%%%%%%%%%%%%%%%%%%%%%%%%%%%%%%%%%%%%%%%%%%%%%%%%%%%%%%%%%%%%%%%%%%%%%
\section{Basic properties}\label{sec:Basic properties}
 In the rest of the paper, for a justification logic \JL~defined in Definition \ref{def: justification logics}, let \J~denote its corresponding labeled sequent calculus based on F-models. Let \JJ~denote those labeled sequent calculi which do not contain rules $(SE)$ or $(E\bar{?})$ in its formulation. Thus \JJ~could be each of the labeled sequent calculus {\sf G3J}, {\sf G3J4}, {\sf G3JD}, {\sf G3JD4}, {\sf G3JT}, {\sf G3LP}.

The height of a derivation is the maximum number of successive
applications of rules in it. Given a constant specification $\CS$,
we say that a rule is height-preserving $\CS$-admissible if
whenever an instance of its premise(s) is derivable in $\J_\CS$ with height
$n$, then so is the corresponding instance of its
conclusion. In the rest of the paper, $\varphi$ stands for one of the
formulas $\ww\Vvdash A$, $\ww R\vv$, or $\ww E(t,A)$. We begin
with the following simple observation:
\begin{lemma}\label{lem:Initial axiom for arbitrary A}
 Sequents of the form $\ww\Vvdash A,\g\R\d,\ww\Vvdash A$, with $A$ an arbitrary $\JL$-formula, are derivable in $\J_\CS$.
\end{lemma}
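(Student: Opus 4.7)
The plan is to proceed by induction on the structure (subformula complexity) of the $\JL$-formula $A$. For the base cases, if $A$ is a propositional variable $P$, the sequent $\ww\Vvdash P,\g\R\d,\ww\Vvdash P$ is an instance of $(Ax)$; if $A=\bot$, then $\ww\Vvdash\bot,\g\R\d,\ww\Vvdash\bot$ is an instance of $(Ax\bot)$ (since the right-hand side is immaterial). For the propositional connectives $\neg,\wedge,\vee,\r$ the derivation is built by applying the corresponding right rule bottom-up to the right occurrence of $A$ and then the corresponding left rule to the left occurrence of $A$, reducing the task to derivability of sequents of the same shape whose principal formula is an immediate subformula of $A$; the induction hypothesis applies.

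The interesting case is $A = t:B$. To build a derivation of $\ww\Vvdash t:B,\g\R\d,\ww\Vvdash t:B$ I would first apply $(E)$ bottom-up to the left occurrence, reducing the target to
\[\ww E(t,B),\ww\Vvdash t:B,\g\R\d,\ww\Vvdash t:B.\]
Next, apply $(R:)$ bottom-up to the right occurrence (this is the place where having $\ww E(t,B)$ on the left is needed to match the conclusion shape of $(R:)$), picking a fresh label $\vv$ not occurring in the present sequent; this reduces the target to
\[\ww R\vv,\ww E(t,B),\ww\Vvdash t:B,\g\R\d,\vv\Vvdash B.\]
Now apply $(L:)$ bottom-up to the left occurrence of $\ww\Vvdash t:B$ using the accessibility atom $\ww R\vv$ just introduced, which yields
\[\vv\Vvdash B,\ww\Vvdash t:B,\ww R\vv,\ww E(t,B),\g\R\d,\vv\Vvdash B.\]
Since $B$ is an immediate subformula of $t:B$, the induction hypothesis gives a derivation of the latter sequent (with the remaining formulas absorbed into the side contexts).

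The main obstacle is exactly this justification case, and in particular the need to first apply $(E)$ to bring $\ww E(t,B)$ into the antecedent, since $(R:)$ cannot be applied otherwise; the eigenlabel side-condition of $(R:)$ is easily satisfied by picking $\vv$ fresh. No use of $(IAN)$/$(AN)$ or of the extra rules from Table~5 is required, so the argument is uniform across all the labeled systems $\J_\CS$ under consideration.
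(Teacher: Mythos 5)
Your proof is correct and is exactly the argument the paper has in mind: the paper only remarks that the lemma follows by ``a routine induction on the complexity of $A$,'' and your induction (base cases via $(Ax)$/$(Ax\bot)$, propositional cases via the matching right/left rule pair, and the $t{:}B$ case via $(E)$, then $(R:)$ with a fresh eigenlabel, then $(L:)$) is the standard instantiation of that induction. In particular you correctly identified that $(E)$ must be applied first so that the evidence atom $\ww E(t,B)$ is available to license the $(R:)$ step.
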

\begin{proof} The proof involves a routine induction on the complexity of
the formula $A$.\qe \end{proof}

\begin{lemma}
The following rule is \CS-admissible in $\J_\CS$:
$$\di{\f{\ww E(s+t,A),\ww E(t+s,A),\ww E(t,A),\g\R\d}{\ww E(t,A),\g\R\d}}(E+)$$
\end{lemma}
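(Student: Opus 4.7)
The plan is to observe that $(E+)$ is nothing more than a macro built from two successive applications of the already-primitive rules $(El+)$ and $(Er+)$ from Table \ref{table: rules for G3J}. Given a derivation in $\J_\CS$ of its premise
\[\ww E(s+t,A),\ww E(t+s,A),\ww E(t,A),\g\R\d,\]
I intend to extend that derivation by two further inferences to arrive at the conclusion $\ww E(t,A),\g\R\d$.

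Concretely, I first read the premise as an instance of $(El+)$ whose side context is $\ww E(t+s,A),\g$ on the left and $\d$ on the right, taking $\ww E(s+t,A)$ and $\ww E(t,A)$ as the principal formulas. One application of $(El+)$ then yields
\[\ww E(t+s,A),\ww E(t,A),\g\R\d.\]
Next I read this new sequent as an instance of $(Er+)$ with side context $\g$ on the left and $\d$ on the right, taking $\ww E(t+s,A)$ and $\ww E(t,A)$ as principal. A single application of $(Er+)$ produces the desired conclusion $\ww E(t,A),\g\R\d$. No appeal is made to weakening, contraction, or cut (which at this point in the paper have not yet been shown admissible), and no height bound is claimed, since the lemma asks only for plain $\CS$-admissibility rather than height-preserving $\CS$-admissibility.

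The step I expect to be the main obstacle is in fact none: because $\g$ and $\d$ are multisets, the rearrangement of formulas between the two inferences is automatic, and the rules $(El+)$ and $(Er+)$ impose no eigenvariable or side-condition restriction that could be violated by the substitutions above. The lemma will then serve subsequently as a notational convenience, letting one discharge both sum atoms $\ww E(s+t,A)$ and $\ww E(t+s,A)$ in a single derivational step whenever the base evidence atom $\ww E(t,A)$ is available.
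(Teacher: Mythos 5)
Your proposal is correct and coincides with the paper's own proof: the paper likewise derives $(E+)$ by a single application of $(El+)$ (treating $\ww E(t+s,A)$ as part of the context) followed by a single application of $(Er+)$, with no use of weakening, contraction, or cut. Nothing is missing.
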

\begin{proof} We have the derivation
\begin{prooftree}
\AXC{$\ww E(s+t,A),\ww E(t+s,A),\ww E(t,A),\g\R\d$}
\RightLabel{$(El+)$}
\UIC{$\ww E(t+s,A),\ww E(t,A),\g\R\d$}
\RightLabel{$(Er+)$}
\UIC{$\ww E(t,A),\g\R\d$}
\end{prooftree}
\qe \end{proof}
\begin{lemma}\label{lem:Admissibility of Anti-Mon in G3J5}
The following rule is \CS-admissible in ${\sf J5}_\CS$ and its extensions:
$$\di{\f{\ww E(t,A),\vv E(t,A),\ww R\vv,\g\R\d}{\vv E(t,A),\ww R\vv,\g\R\d}}(Anti\textrm{-}Mon)$$
\end{lemma}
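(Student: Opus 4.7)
The plan is to derive the conclusion by applying rule $(E?)$ with principal formulas $\ww E(t,A)$ and $\ww E(?t,\neg t:A)$ and residual context $\vv E(t,A),\ww R\vv,\g\R\d$. This rule has conclusion exactly $\vv E(t,A),\ww R\vv,\g\R\d$, and its left premise is exactly the given premise $\ww E(t,A),\vv E(t,A),\ww R\vv,\g\R\d$ of the rule $(Anti\textrm{-}Mon)$. So no induction on derivation height is needed; the whole task reduces to constructing a derivation of the right premise
\[\ww E(?t,\neg t:A),\vv E(t,A),\ww R\vv,\g\R\d.\]

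The idea for this derivation is to internalize, inside the calculus, the semantic contradiction argument used in the remark that established the anti-monotonicity condition $\E 8$ for {\sf J5}. I would proceed bottom-up. First, apply $(SE)$ to the right premise above with principal formula $\ww E(?t,\neg t:A)$, so that it suffices to derive
\[\ww\Vvdash ?t:\neg t:A,\;\ww E(?t,\neg t:A),\;\vv E(t,A),\;\ww R\vv,\;\g\R\d.\]
Next, apply $(L:)$ using $\ww R\vv$ to introduce $\vv\Vvdash \neg t:A$, then apply $(L\neg)$ to push $\vv\Vvdash t:A$ to the right. Finally, apply $(SE)$ on $\vv E(t,A)$ to reintroduce $\vv\Vvdash t:A$ on the left. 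The topmost sequent then contains $\vv\Vvdash t:A$ both on the left and on the right, so it is an initial sequent by Lemma \ref{lem:Initial axiom for arbitrary A}.

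Since $(SE)$ and $(E?)$ are shared by every extension of ${\sf G3J5}_\CS$ (namely ${\sf G3J5}_\CS$, ${\sf G3JD5}_\CS$, ${\sf G3JT5}_\CS$, ${\sf G3J45}_\CS$, ${\sf G3JD45}_\CS$, ${\sf G3JT45}_\CS$, and the corresponding systems containing axiom {\bf jB}), the same derivation scheme works uniformly. There is no real obstacle here: once one notices that $(E?)$ produces the desired conclusion on the nose and that its left premise is the hypothesis of $(Anti\textrm{-}Mon)$, the only work is to check that the negative introspection content of $(SE)$ and $(L:)$, together with the availability of $\ww R\vv$ in the antecedent, suffices to close the other branch as an initial sequent.
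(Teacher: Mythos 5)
Your proposal is correct and takes essentially the same route as the paper: both close the goal with a single application of $(E?)$ whose left premise is the hypothesis of $(Anti\textrm{-}Mon)$, and both derive the right premise $\ww E(?t,\neg t:A),\vv E(t,A),\ww R\vv,\g\R\d$ using $(SE)$, $(L:)$, $(L\neg)$ and a second $(SE)$ to reach an initial sequent with $\vv\Vvdash t:A$ on both sides, justified by Lemma \ref{lem:Initial axiom for arbitrary A}. The only difference is an inessential permutation in the order of the two $(SE)$ steps.
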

\begin{proof} We have the derivation\\
\scalebox{0.93}{\parbox{\textwidth}{%
\begin{prooftree}
\AXC{$\ww E(t,A),\vv E(t,A),\ww R\vv,\g\R\d$}
\AXC{$\vv\Vvdash t:A,\ww\Vvdash ?t:\neg t:A,\ww E(?t,\neg t:A),\vv E(t,A),\ww R\vv,\g\R\d,\vv\Vvdash t:A$}
\RightLabel{$(L\neg)$}
\UIC{$\vv\Vvdash \neg t:A,\vv\Vvdash t:A,\ww\Vvdash ?t:\neg t:A,\ww E(?t,\neg t:A),\vv E(t,A),\ww R\vv,\g\R\d$}
\RightLabel{$(L:)$}
\UIC{$\vv\Vvdash t:A,\ww\Vvdash ?t:\neg t:A,\ww E(?t,\neg t:A),\vv E(t,A),\ww R\vv,\g\R\d$}
\RightLabel{$(SE)$}
\UIC{$\ww\Vvdash ?t:\neg t:A,\ww E(?t,\neg t:A),\vv E(t,A),\ww R\vv,\g\R\d$}
\RightLabel{$(SE)$}
\UIC{$\ww E(?t,\neg t:A),\vv E(t,A),\ww R\vv,\g\R\d$}
\RightLabel{$(E?)$}
 \insertBetweenHyps{\hskip -25pt}
\BIC{$\vv E(t,A),\ww R\vv,\g\R\d$}
\end{prooftree}}}\\
 with top-sequent in the right branch is derivable by Lemma \ref{lem:Initial axiom for arbitrary A}.\qe \end{proof}

\begin{example}\label{lem:Example 1 in G3J5}
We give a derivation of the sequent $\ww E(t,A),\ww R \vv \R \vv \Vvdash A$ in ${\sf J5}_\CS$:

\begin{prooftree}
\def\extraVskip{3pt}
\AXC{$\vv\Vvdash A,\ww \Vvdash t:A,\ww E(t,A),\ww R \vv \R \vv \Vvdash A$}
   \RightLabel{$(L:)$}
\UIC{$\ww \Vvdash t:A,\ww E(t,A),\ww R \vv \R \vv \Vvdash A$}
  \RightLabel{$(SE)$}
  \UIC{$\ww E(t,A),\ww R \vv \R \vv \Vvdash A$}
\end{prooftree}

where the top-sequent is derivable by Lemma \ref{lem:Initial axiom for arbitrary A}.
\end{example}

\begin{example}\label{lem:Example 2 in G3J5}
We give a derivation of the sequent $\ww \Vvdash \neg t:A \R \ww E(?t,\neg t:A)$ in ${\sf J5}_\CS$:

\noindent
\scalebox{0.92}{\parbox{\textwidth}{%
\begin{prooftree}
\def\extraVskip{3pt}
\AXC{$\ww\Vvdash t:A,\ww E(t,A)\R \ww E(?t,\neg t:A),\ww \Vvdash  t:A $}
  \RightLabel{$(L\neg)$}
\UIC{$\ww\Vvdash t:A,\ww E(t,A),\ww \Vvdash \neg t:A \R \ww E(?t,\neg t:A)$}
  \RightLabel{$(SE)$}
   \UIC{$\ww E(t,A),\ww \Vvdash \neg t:A \R \ww E(?t,\neg t:A)$}
   \AXC{$AxE$}\noLine
\UIC{$\ww E(? t,\neg t:A),\ww \Vvdash \neg t:A \R \ww E(?t,
\neg t:A)$}
  \RightLabel{$(E?)$}
  \BIC{$\ww \Vvdash \neg t:A \R \ww E(?t,\neg t:A)$}
\end{prooftree}}}

wherer the top-sequent in the left branch is derivable by Lemma \ref{lem:Initial axiom for arbitrary A}.
\end{example}

%\begin{definition}
Let $\varphi(\vv/\ww)$ denote the result of simultaneously
substituting label $\vv$ for all occurrences of label $\ww$ in
$\varphi$. For a multiset of labeled formulas $\g$, let $\g(\vv/\ww)=\{\varphi(\vv/\ww)~|~\varphi\in\g\}$. For a derivation ${\mathcal D}$, let ${\mathcal D}(\vv/\ww)$ denote the result of simultaneously
substituting label $\vv$ for all occurrences of label $\ww$ in ${\mathcal D}$. Next, we prove the substitution lemma for labels.
%\end{definition}
\begin{lemma}[Substitution of Labels]\label{lemma: substitution lemma}
 The rule of substitution
 \[\di{\f{\g\R\d}{\g(\vv/\ww)\R\d(\vv/\ww)}} Subs\]
 is height-preserving $\CS$-admissible in $\J_\CS$.
\end{lemma}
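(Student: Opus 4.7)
The plan is to proceed by induction on the height $n$ of the derivation of $\g\R\d$ in $\J_\CS$. The base case, $n=0$, consists of the initial sequents $(Ax)$, $(Ax\bot)$, $(AxR)$, $(AxE)$, and the zero-premise rules $(IAN)$/$(AN)$. In each case the shape of the principal atom is fixed (a forcing formula, a relational atom, or an evidence atom) and simultaneous substitution of $\vv$ for $\ww$ inside every label position transforms the sequent into another instance of the same initial sequent (or zero-premise rule); in particular $(IAN)$/$(AN)$ still produces a well-formed evidence atom associated with a $\CS$-formula, because the constant specification condition concerns only constants and formulas, not labels. Hence the height-$0$ derivation is transformed into a height-$0$ derivation of $\g(\vv/\ww)\R\d(\vv/\ww)$.

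For the inductive step, assume the statement holds for all derivations of height $<n$, and suppose $\g\R\d$ is concluded from one or two premises of height $\leq n-1$ by a rule $(\rho)$. The rules $(\rho)$ of \J~split into two groups. Group~(i): rules without an eigenlabel restriction, namely the propositional rules, $(L:)$, $(E)$, $(E\cdot)$, $(El+)$, $(Er+)$, $(E!)$, $(Mon)$, $(SE)$, $(E\bar{?})$, $(E?)$, $(Ref)$, $(Sym)$, $(Trans)$, $(Eucl)$, $(Eucl_*)$. For these, simply apply the induction hypothesis to each premise to obtain a derivation of its $(\vv/\ww)$-substituted form of the same height, and then re-apply $(\rho)$; the side-conditions depend only on the syntactic shape of the atoms and are preserved under substitution, so the result is a derivation of $\g(\vv/\ww)\R\d(\vv/\ww)$ of height at most $n$.

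Group~(ii): the rules with an eigenlabel, namely $(R:)$ and $(Ser)$. Let $\uu$ be the eigenlabel of the last inference. The freshness condition for $(\rho)$ guarantees that $\uu$ does not appear in the conclusion $\g\R\d$; however, $\uu$ may coincide with the target label $\vv$ of the substitution, which would silently violate the eigenlabel condition after substitution. The standard remedy is to rename $\uu$ first: pick a fresh label $\uu'$ not occurring in the derivation nor equal to $\vv$ or $\ww$, and apply the induction hypothesis (at a lower height) with the substitution $(\uu'/\uu)$ to the premise(s). This yields a derivation of the same height in which $\uu$ has been replaced throughout by $\uu'$; now the induction hypothesis can be applied a second time with $(\vv/\ww)$ to the renamed premise(s), producing derivations whose $(\rho)$-application has $\uu'$ as a safe eigenlabel disjoint from the new conclusion $\g(\vv/\ww)\R\d(\vv/\ww)$. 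Re-applying $(\rho)$ finishes the inductive step.

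The main obstacle, as indicated above, is the eigenlabel case, because a naive substitution can destroy the freshness side-condition of $(R:)$ and $(Ser)$. This is resolved by the double use of the induction hypothesis (first to rename the eigenlabel, then to perform the intended substitution), which is legitimate because each premise has strictly smaller height and because the renaming step is itself an instance of the lemma at a lower height. All other rule-cases are mechanical and do not alter the height of the derivation, so the admissibility is height-preserving as required.
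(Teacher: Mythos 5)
Your proof is correct and follows essentially the same route as the paper's: induction on the height, with the generic commutation argument for rules without label conditions and a preliminary renaming of the eigenlabel (via a second, lower-height appeal to the induction hypothesis) when it clashes with the target label $\vv$ in $(R:)$ or $(Ser)$. One small inaccuracy: in this paper $(AN)$ and $(IAN)$ are one-premise rules that introduce a new evidence atom in the premise, not zero-premise rules, so they belong in your Group~(i) rather than the base case; the paper treats them in the inductive step by splitting on whether the label of the new atom is $\ww$, but your observation that the $\CS$ side-condition is label-independent is exactly what makes that case go through.
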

\begin{proof} The proof is by induction on the height $n$ of the derivation
of $\g\R\d$ in $\J_\CS$. If $n=0$, then $\g\R\d$ is an initial sequent,
and so is $\g(\vv/\ww)\R\d(\vv/\ww)$. If $n>0$, then suppose the last rule
of the derivation is $(R)$. If $(R)$ is any rule without label
condition, then use the induction hypothesis and then apply the rule $(R)$.
Now assume $(R)$ is $(R:)$ or $(Ser)$; we only sketch the
proof for $(R:)$,  the case for $(Ser)$ is similar. Suppose $(R:)$ is the last rule:
 \[ \di{\f{\ww R\uu,\ww E(t,A),\g'\R\d',\uu\Vvdash A}{\ww E(t,A),\g'\R\d',\ww\Vvdash
t:A}}(R:)\]
 If the eigenlabel $\uu$ is not $\vv$, then using the induction
hypothesis (substitute $\vv$ for $\ww$) and rule $(R:)$ we obtain a
derivation of height $n$
 \[ \di{\f{\vv R\uu, \vv E(t,A),\g'(\vv/\ww)\R\d'(\vv/\ww), \uu\Vvdash A}{\vv E(t,A),\g'(\vv/\ww)\R\d'(\vv/\ww),\vv\Vvdash
t:A}}(R:).\]
 If $\uu=\vv$, then by the induction
hypothesis (substitute $\vv'$ for $\uu$) we obtain a derivation of
height $n-1$ of
 \[\ww R\vv',\ww E(t,A),\g'\R\d',\vv'\Vvdash A,\]
 where $\vv'$ is a fresh label, i.e. $\vv'\neq \ww$ and
 $\vv'$ does not occur in $\g'\cup\d'$. Then using the induction
hypothesis (substitute $\vv$ for $\ww$) and rule $(R:)$ we obtain a
derivation of height $n$
 \[ \di{\f{\vv R\vv', \vv E(t,A),\g'(\vv/\ww)\R\d'(\vv/\ww), \vv'\Vvdash A}{\vv E(t,A),\g'(\vv/\ww)\R\d'(\vv/\ww),\vv\Vvdash
t:A}}(R:).\]

 Finally, suppose the rule $(R)$ is the axiom necessitation rule
 $(AN)$:
 \[ \di{\f{\uu E(c,A),\g\R\d}{\g\R\d}}(AN),\]
 where %$u\in\g\cup\d$ and
 $c:A\in\CS$. If $\uu \neq\ww$ then using the induction
hypothesis (substitute $\vv$ for $\ww$) and rule $(AN)$ we obtain a
derivation of height $n$
 \[ \di{\f{\uu E(c,A),\g(\vv/\ww)\R\d(\vv/\ww)}{\g(\vv/\ww)\R\d(\vv/\ww)}}(AN).\]
If $\uu=\ww$ then using the induction hypothesis (substitute $\vv$ for
$\ww$) and rule $(AN)$ we obtain a derivation of height $n$
 \[ \di{\f{\vv E(c,A),\g(\vv/\ww)\R\d(\vv/\ww)}{\g(\vv/\ww)\R\d(\vv/\ww)}}(AN).\]
%In this case note that $v\in\g(v/w)\cup\d(v/w)$.
Proceed similarly
if $(R)$ is $(IAN)$. In the induction step, a cases-by-case
analysis shows that no step (specially $(AN)$ and $(IAN)$)
change the underlying constant specification $\CS$, and hence the rule $Subs$ is
$\CS$-admissible.\qe \end{proof}
%%%%%%%%%%%%%%%%%%%%%%%%%%%%%%%%%%%%%%%%%%%%%%%%%%%%%%%%%%%%%%%%%%%%%%%%%%%%%%%%%%%%%%%%%%%%%%%%%%%%%
\section{Analyticity}\label{sec:Analyticity}
In this section we study the analyticity of our labeled sequent calculi, which is important to prove the termination of proof search. We begin by showing that labeled sequent calculi denoted by \JJ, i.e. {\sf G3J}, {\sf G3J4}, {\sf G3JD}, {\sf G3JD4}, {\sf G3JT}, {\sf G3LP}, enjoy the subformula property.  Let us first define \textit{labeled-subformulas} of a labeled formula.

\begin{definition}\label{def:subformula labeled formula}
Labeled-subformulas of labeled formulas are defined inductively as
follows. The only labeled-subformula of $\ww\Vvdash P$, for propositional variable
$P$, is $\ww\Vvdash P$ itself. The only labeled-subformula of $\ww\Vvdash \bot$ is
$\ww\Vvdash \bot$ itself. The labeled-subformulas of $\ww\Vvdash \neg A$ are
$\ww\Vvdash \neg A$ and all labeled-subformulas of $\ww\Vvdash A$. The labeled-subformulas of $\ww\Vvdash
A\star B$, where $\star$ is a propositional connective,  are $\ww\Vvdash A\star B$ and all labeled-subformulas of $\ww\Vvdash
A$ and $\ww\Vvdash B$. The labeled-subformulas of $\ww\Vvdash t:A$ are $\ww\Vvdash t:A$ and all labeled-subformulas of $\vv\Vvdash A$, for arbitrary label $\vv$.
\end{definition}

Observe that given a derivation in \JJ, if we trace only labeled formulas in the derivation, then they are labeled-subformulas of labeled formulas in the endsequent.
Now an easy inspection of all rules shows that:

\begin{proposition}[Labeled-subformula Property]\label{prop:subformula property}
All labeled formulas in a derivation in $\J^-_\CS$ are labeled-subformulas of labeled formulas in the endsequent.
\end{proposition}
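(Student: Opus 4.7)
The plan is to establish the proposition by induction on the height $n$ of a derivation in $\J^-_\CS$. In the base case ($n=0$) the endsequent is an initial sequent ($Ax$, $Ax\bot$, $AxR$, or $AxE$), the derivation consists solely of this sequent, and every labeled formula occurring in it is trivially a labeled-subformula of itself.

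For the inductive step, suppose the derivation ends with an application of a rule $(R)$ whose premises each admit a derivation of height at most $n$. By the induction hypothesis, every labeled formula occurring above a premise is a labeled-subformula of some labeled formula in that premise. Since the labeled-subformula relation is transitive, it suffices to verify the following local property for every rule of \JJ: each labeled formula appearing in a premise is a labeled-subformula of some labeled formula in the conclusion. Side formulas appear unchanged in the conclusion, so for them the property is immediate; only the active labeled formulas of each rule require inspection.

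This inspection is routine. The propositional rules of {\sf G3c} enjoy the usual subformula property: each active labeled formula in a premise is a labeled-subformula of the principal labeled formula. For $(L:)$ and $(R:)$, Definition \ref{def:subformula labeled formula} explicitly declares the labeled-subformulas of $\ww\Vvdash t:A$ to include all labeled-subformulas of $\vv\Vvdash A$ for every label $\vv$, so the active labeled formula $\vv\Vvdash A$ appearing in the premise is a labeled-subformula of the principal $\ww\Vvdash t:A$ in the conclusion. All the remaining rules available in \JJ---namely $(E)$, the evidence-atom rules $(El+)$, $(Er+)$, $(E\cdot)$, $(E!)$, $(Mon)$, the necessitation rule $(IAN)$ or $(AN)$, and the frame rules $(Ref)$, $(Ser)$, $(Trans)$---introduce no new labeled formula in their premises at all; only evidence atoms or relational atoms are added, so the local property is vacuous for them.

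There is essentially no technical difficulty here; the substantive content of the proposition is the observation that the rules $(SE)$ and $(E\bar{?})$, which are excluded from \JJ~by definition, are precisely what would break the labeled-subformula property. Rule $(SE)$ would introduce $\ww\Vvdash t:A$ into a premise whose only corresponding principal in the conclusion is the evidence atom $\ww E(t,A)$, and $(E\bar{?})$ would introduce $\ww\Vvdash A$ into its left premise whose conclusion contains only evidence atoms as potential principals; since evidence atoms are not labeled formulas and hence admit no labeled-subformulas, the local property would fail for both rules. Their absence in $\J^-_\CS$ is exactly what makes the induction go through cleanly.
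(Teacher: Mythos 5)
Your proof is correct and follows the same route the paper takes: the paper dispatches this proposition with the remark that ``an easy inspection of all rules shows'' the property, and your induction on derivation height together with the rule-by-rule check of the local subformula condition is exactly that inspection made explicit. Your closing observation that $(SE)$ and $(E\bar{?})$ are precisely the rules that would break the property also matches the paper's subsequent discussion and counterexamples for ${\sf G3J5}$ and ${\sf G3JB}$.
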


\begin{proposition}[Weak Subformula Property]
All formulas in a derivation in $\J^-_\CS$ are either labeled-subformulas of labeled formulas in the endsequent or atomic formulas of the form $\ww E(t,A)$ or $\ww R\vv$.
\end{proposition}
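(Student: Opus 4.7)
The plan is to obtain this as a direct corollary of the preceding Labeled-subformula Property (Proposition \ref{prop:subformula property}), together with a short classification of which syntactic objects can appear in a derivation of $\J^-_\CS$ at all.

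First I would observe that in the systems \JJ~(namely ${\sf G3J}$, ${\sf G3J4}$, ${\sf G3JD}$, ${\sf G3JD4}$, ${\sf G3JT}$, ${\sf G3LP}$), every sequent is built from exactly three disjoint kinds of formulas of the extended labeled language: labeled formulas $\ww\Vvdash A$, relational atoms $\ww R\vv$, and evidence atoms $\ww E(t,A)$. A routine inspection of the initial sequents and of the rules listed in Tables \ref{table: rules of labeled G3c}, \ref{table: rules for G3J}, \ref{table: rules for accessibility relation}, and \ref{table: rules for G3JL} confirms that no rule can produce any other kind of expression, so by an immediate induction on height every formula occurring anywhere in a derivation must fall into one of these three categories.

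Given this classification, let $\varphi$ be an arbitrary formula occurring in a derivation in $\J^-_\CS$. If $\varphi$ is a labeled formula $\ww\Vvdash A$, Proposition \ref{prop:subformula property} immediately guarantees that it is a labeled-subformula of some labeled formula in the endsequent. Otherwise $\varphi$ is a relational atom $\ww R\vv$ or an evidence atom $\ww E(t,A)$, which is already of exactly one of the atomic forms permitted by the statement. Thus every formula in the derivation satisfies the required disjunction.

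The only subtle point — which is not really an obstacle, since it has already been handled in Proposition \ref{prop:subformula property} — is the restriction to \JJ: the excluded rules $(SE)$ and $(E\bar{?})$ can introduce labeled formulas $\ww\Vvdash t:A$ and $\ww\Vvdash A$ in their premises that need not be labeled-subformulas of any formula in the conclusion. Once one is inside \JJ, where these rules are absent, the weak subformula property follows from Proposition \ref{prop:subformula property} with no further work.
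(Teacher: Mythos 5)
Your proposal is correct and matches the paper's (essentially implicit) argument: the paper derives both propositions from "an easy inspection of all rules," and your reduction — classify every formula of the extended labeled language as a labeled formula, a relational atom, or an evidence atom, then invoke the Labeled-subformula Property for the first kind — is exactly the intended reasoning. No further comment is needed.
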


The following examples show that the labeled-subformula property does not hold in ${\sf G3J5}$, ${\sf G3JB}$ and their extensions.

\begin{example}\label{ex:counterexample subformula G3J5}
Consider the following derivation in ${\sf G3J5}_\emptyset$:
\begin{prooftree}
\def\extraVskip{3pt}
\AXC{$(Ax)$}\noLine
  \UIC{$\vv\Vvdash P,\ww\Vvdash t+s:P,\ww E(t+s,P),\ww R\vv,\ww E(t,P)\R \vv\Vvdash P$}
  \RightLabel{$(L:)$}
  \UIC{$\ww\Vvdash t+s:P,\ww E(t+s,P),\ww R\vv,\ww E(t,P)\R \vv\Vvdash P$}
  \RightLabel{$(SE)$}
   \UIC{$\ww E(t+s,P),\ww R\vv,\ww E(t,P)\R \vv\Vvdash P$}
  \RightLabel{$(Er+)$}
  \UIC{$\ww R\vv,\ww E(t,P)\R \vv\Vvdash P$}
\end{prooftree}
where $P$ is a propositional variable. The labeled-subformula property does not hold in the above derivation since $\ww\Vvdash t+s:P$ is not a labeled-subformula of any labeled formula in the endsequent.
\end{example}

\begin{example}\label{ex:counterexample CS-subformula G3J5}
Let $\CS$ be a constant specification for {\sf J5} which contains $c:(\neg t:s:P\r(Q\r Q))$. Consider the following derivation $\mathcal{D}$ in ${\sf G3J5}_\CS$:\\
\scalebox{0.86}{\parbox{\textwidth}{%
\begin{prooftree}
\def\extraVskip{5pt}
  \AXC{$\mathcal{D}_1$}\noLine
     \UIC{$\ww E(t, s:P), \ww R\vv,\vv R \uu\R \uu\Vvdash P,\ww E(c\cdot \bar{?}t,Q\r Q)$}
    \AXC{$\mathcal{D}_2$}\noLine
      \UIC{$\ww E(?t,\neg t:s:P),\ww R\vv,\vv R \uu\R \uu\Vvdash P,\ww E(c\cdot ?t,Q\r Q)$}
  \RightLabel{$(E?)$}
 % \insertBetweenHyps{\hskip -1pt}
  \BIC{$\ww R\vv,\vv R \uu\R \uu\Vvdash P, \ww E(c\cdot ?t,Q\r Q)$}
  \end{prooftree}}} \\
  in which the derivation $\mathcal{D}_1$ is as follows:

  \begin{prooftree}
\def\extraVskip{5pt}
  \AXC{$(Ax)$}\noLine
  \UIC{$\uu\Vvdash P,\vv\Vvdash s:P,\ww\Vvdash t:s:P,\ww E(t, s:P),  \ww R\vv,\vv R \uu\R \uu\Vvdash P,\ww E(c\cdot ?t,Q\r Q)$}
  \RightLabel{$(L:)$}
   \UIC{$\vv\Vvdash s:P,\ww\Vvdash t:s:P, \ww E(t, s:P), \ww R\vv,\vv R \uu\R \uu\Vvdash P,\ww E(c\cdot ?t,Q\r Q)$}
  \RightLabel{$(L:)$}
    \UIC{$\ww\Vvdash t:s:P,\ww E(t, s:P), \ww R\vv,\vv R \uu\R \uu\Vvdash P,\ww E(c\cdot ?t,Q\r Q)$}
    \RightLabel{$(SE)$}
    \UIC{$\ww E(t, s:P), \ww R\vv,\vv R \uu\R \uu\Vvdash P,\ww E(c\cdot \bar{?}t,Q\r Q)$}
    \end{prooftree}
  and the derivation $\mathcal{D}_2$ is as follows:\\
    \scalebox{0.93}{\parbox{\textwidth}{%
 \begin{prooftree}
 \AXC{$(AxE)$}\noLine
   \UIC{$\ww E(c\cdot ?t,Q\r Q),\ww E(c,\neg t:s:P\r(Q\r Q)),\ww E(?t,\neg t:s:P),\ww R\vv,\vv R \uu\R \uu\Vvdash P,\ww E(c\cdot ?t,Q\r Q)$}
  \RightLabel{$(E\cdot)$}
  \UIC{$\ww E(c,\neg t:s:P\r(Q\r Q)),\ww E(?t,\neg t:s:P),\ww R\vv,\vv R \uu\R \uu\Vvdash P,\ww E(c\cdot ?t,Q\r Q)$}
  \RightLabel{$(IAN)$}
  \UIC{$\ww E(?t,\neg t:s:P),\ww R\vv,\vv R \uu\R \uu\Vvdash P,\ww E(c\cdot ?t,Q\r Q)$}
    \end{prooftree}}}\\
 The labeled-subformula property does not hold in the above derivation since $\ww\Vvdash t:s:P$ and $\vv\Vvdash s:P$ are not  labeled-subformulas of any labeled formula in the endsequent.
\end{example}

\begin{example}\label{ex:counterexample subformula G3JB}
Consider the following derivation in ${\sf G3JB}_\emptyset$:\\
\scalebox{0.89}{\parbox{\textwidth}{%
\begin{prooftree}
\def\extraVskip{5pt}
  \AXC{$(Ax)$}\noLine
  \UIC{$\vv\Vvdash P,\ww\Vvdash s:P, \ww R\vv\R \vv\Vvdash P,\ww E(\bar{?}t,\neg t:s:P)$}
  \RightLabel{$(L:)$}
    \UIC{$\ww\Vvdash s:P, \ww R\vv\R \vv\Vvdash P,\ww E(\bar{?}t,\neg t:s:P)$}
    \AXC{$(AxE)$}\noLine
  \UIC{$\ww E(\bar{?}t,\neg t:s:P),\ww R\vv\R \vv\Vvdash P,\ww E(\bar{?}t,\neg t:s:P)$}
  \RightLabel{$(E\bar{?})$}
   \insertBetweenHyps{\hskip -8pt}
  \BIC{$\ww R\vv\R \vv\Vvdash P, \ww E(\bar{?}t,\neg t:s:P)$}
  \end{prooftree}}} \\
 The labeled-subformula property does not hold in the above derivation since $\ww\Vvdash s:P$ is not a labeled-subformula of any labeled formula in the endsequent. 
\end{example}

\begin{example}\label{ex:counterexample CS-subformula G3JB}
Let $\CS$ be a constant specification for {\sf JB} which contains $c:(\neg t:s:P\r(Q\r Q))$. Consider the following derivation $\mathcal{D}$ in ${\sf G3JB}_\CS$:\\
\scalebox{0.89}{\parbox{\textwidth}{%
\begin{prooftree}
\def\extraVskip{5pt}
  \AXC{$(Ax)$}\noLine
  \UIC{$\vv\Vvdash P,\ww\Vvdash s:P, \ww R\vv\R \vv\Vvdash P,\ww E(c\cdot \bar{?}t,Q\r Q)$}
  \RightLabel{$(L:)$}
    \UIC{$\ww\Vvdash s:P, \ww R\vv\R \vv\Vvdash P,\ww E(c\cdot \bar{?}t,Q\r Q)$}
    \AXC{$\mathcal{D}'$}\noLine
      \UIC{$\ww E(\bar{?}t,\neg t:s:P),\ww R\vv\R \vv\Vvdash P,\ww E(c\cdot \bar{?}t,Q\r Q)$}
  \RightLabel{$(E\bar{?})$}
  \insertBetweenHyps{\hskip -10pt}
  \BIC{$\ww R\vv\R \vv\Vvdash P, \ww E(c\cdot \bar{?}t,Q\r Q)$}
  \end{prooftree}}} \\
  in which the derivation $\mathcal{D}'$ is as follows:\\
  \scalebox{0.92}{\parbox{\textwidth}{%
 \begin{prooftree}
 \AXC{$(AxE)$}\noLine
   \UIC{$\ww E(c\cdot \bar{?}t,Q\r Q),\ww E(c,\neg t:s:P\r(Q\r Q)),\ww E(\bar{?}t,\neg t:s:P),\ww R\vv\R \vv\Vvdash P,\ww E(c\cdot \bar{?}t,Q\r Q)$}
  \RightLabel{$(E\cdot)$}
  \UIC{$\ww E(c,\neg t:s:P\r(Q\r Q)),\ww E(\bar{?}t,\neg t:s:P),\ww R\vv\R \vv\Vvdash P,\ww E(c\cdot \bar{?}t,Q\r Q)$}
  \RightLabel{$(IAN)$}
  \UIC{$\ww E(\bar{?}t,\neg t:s:P),\ww R\vv\R \vv\Vvdash P,\ww E(c\cdot \bar{?}t,Q\r Q)$}
    \end{prooftree}}}\\
 The labeled-subformula property does not hold in the above derivation since $\ww\Vvdash s:P$ is not a labeled-subformula of any labeled formula in the endsequent. 
\end{example}

Since the regular rule scheme $(Reg)$ and geometric rule scheme $(GRS)$ produce new atoms in their premises, when the rules are read upwardly,  those rules that are instances of $(Reg)$ and $(GRS)$ may violate the analyticity and termination of proof search. For example, in labeled systems {\sf G3JB}, {\sf G3J5}, and their extensions the labeled-subformula property does not hold, as Examples \ref{ex:counterexample subformula G3J5}-\ref{ex:counterexample CS-subformula G3JB} show. The reason is that the rules $(E\bar{?})$ and $(SE)$  are obtained by $(Reg)$ from
\begin{equation}\label{number equation:weak neg intros in labeled language}
\ww \Vvdash A \vee \ww E(\bar{?}t,\neg t:A),
\end{equation}
and
\begin{equation}\label{number equation:strong evidence in labeled language}
\ww E(t,A) \r \ww\Vvdash t:A,
\end{equation}
where (\ref{number equation:weak neg intros in labeled language}) and (\ref{number equation:strong evidence in labeled language}) are formal expressions of weak negative introspection $(\E5)$ and strong evidence $(\E 7)$ conditions in the extended labeled language. Moreover, in contrast to other conditions on evidence function, conditions $(\E5)$ and $(\E7)$ involve forcing assertion. In other words, both (\ref{number equation:weak neg intros in labeled language}) and (\ref{number equation:strong evidence in labeled language}) contain labeled formulas, and these labeled formulas are appeared only in the premise(s) of the rules $(E\bar{?})$ and $(SE)$.

Although $\J^-$ enjoy the labeled-subformula property, since the rules $(E\cdot)$, $(El+)$, $(Er+)$, $(E!)$, $(IAN)/(AN)$ produce new justification terms in evidence atoms in their premises, when the rules are read upwardly, they may violate the analyticity. Since some rules, like $(Ref)$, $(Ser)$, $(E?)$, produce (new) labels in their premise(s), when the rules are read upwardly, they may also violate the analyticity. In the following we shall show the subterm and sublabel property which are needed to prove the analyticity for $\J^-$. First, similar to \cite{DyckhoffNegri2012}, we show the sublabel property for all our labeled sequent calculi  \J.

\begin{definition}
A rule instance has the sublabel property if every label occurring in any premise of the rule is either an eigenlabel or occurs in the conclusion. A derivation has the sublabel property if all rule instances occurring in it has the sublabel property.
\end{definition}

In a derivation with sublabel property all labels are either an eigenlabel or labels in the endsequent. This property is called the \textit{sublabel property}.\footnote{It is called the subterm property in \cite{Negri2005}, and does not to be confused with the subterm property stated in Proposition \ref{prop:subterm property}.} Instances of the rules $(Ref)$, $(Ser)$, $(E\bar{?})$, $(E?)$, $(IAN)/(AN)$ in derivations may produce a derivation which does not have the sublabel property. In the following we try to show the sublabel property. First we need a lemma.

\begin{lemma}\label{lem:substitution for analytic derivations}
Suppose ${\mathcal D}$ is a derivation with the sublabel property for $\g\R\d$  in $\J_\CS$, and labels $\ww,\vv$ occur in ${\mathcal D}$ such that $\ww$ occurs in $\g\cup\d$ and $\vv$ is not an eigenlabel. Then the derivation ${\mathcal D}(\ww/\vv)$ is a derivation with the sublabel property for $\g(\ww/\vv)\R\d(\ww/\vv)$ in $\J_\CS$.
\end{lemma}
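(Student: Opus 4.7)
The plan is to argue by induction on the height $n$ of the derivation $\mathcal{D}$, mimicking the proof of the Substitution of Labels lemma (Lemma~\ref{lemma: substitution lemma}) but carrying along the extra bookkeeping needed for the sublabel property. The key observation that will keep the induction going is that since $\vv$ is not an eigenlabel anywhere in $\mathcal{D}$, the substitution $(\ww/\vv)$ never conflicts with any eigenlabel condition of the rules $(R:)$, $(Ser)$, or the eigenlabel clauses of other rules; the eigenlabels of $\mathcal{D}$ remain eigenlabels in $\mathcal{D}(\ww/\vv)$ without needing to be renamed.

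For the base case $n = 0$, the endsequent $\g \R \d$ is an initial sequent $(Ax)$, $(Ax\bot)$, $(AxR)$, or $(AxE)$; substitution yields another initial sequent of the same form, and the sublabel property holds vacuously. For the inductive step, let $(R)$ be the last rule applied. First I would treat the rules without eigenlabel conditions uniformly: apply the induction hypothesis to each premise to obtain derivations with the sublabel property for the substituted premises, and then re-apply $(R)$. Since $\vv$ is not an eigenlabel in any premise, and since each label occurring in a premise is (by the sublabel property of $\mathcal{D}$) either an eigenlabel or a label of the conclusion, after substitution each label of the new premises is either one of those unchanged eigenlabels or a label of $\g(\ww/\vv) \R \d(\ww/\vv)$, which is exactly the sublabel property for the new derivation.

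The cases of $(R:)$ and $(Ser)$ are the place where I expect the only real care is needed. Suppose the last rule is
\[\di{\f{\uu R\ww', \uu E(t,A), \g'\R\d', \ww'\Vvdash A}{\uu E(t,A), \g'\R\d', \uu\Vvdash t:A}}(R:)\]
with eigenlabel $\ww'$. By hypothesis $\vv \neq \ww'$, so the substitution does not touch the eigenlabel; the induction hypothesis gives a derivation with the sublabel property for the substituted premise, and reapplying $(R:)$ preserves the eigenlabel condition because $\ww'$ remains fresh in $\uu(\ww/\vv) E(t,A), \g'(\ww/\vv) \R \d'(\ww/\vv), \uu(\ww/\vv) \Vvdash t:A$ (by the sublabel property of $\mathcal{D}$, $\ww'$ did not occur in the original conclusion, and substitution cannot introduce $\ww'$). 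The case of $(Ser)$ is analogous. Finally, the rules $(AN)$ and $(IAN)$ just carry a label in an evidence atom; we apply the induction hypothesis and reapply the rule, noting that neither the underlying constant specification $\CS$ nor the eigenlabel status of any label is altered by $(\ww/\vv)$.

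The main potential obstacle is making sure that every label appearing in a premise of each rule instance of $\mathcal{D}(\ww/\vv)$ still classifies as either an eigenlabel or a label of the corresponding conclusion. This reduces to the single principle that substitution commutes with the two categories: eigenlabels of $\mathcal{D}$ are unchanged (because $\vv$ is never one of them), and labels of a conclusion $\g'' \R \d''$ become, after substitution, labels of $\g''(\ww/\vv) \R \d''(\ww/\vv)$; together with the hypothesis that $\ww$ already occurs in $\g \cup \d$ so that the target label is legitimate throughout, these observations close each case of the induction.
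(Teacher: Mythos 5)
Your proposal is correct and follows the same route as the paper, which simply asserts "a routine induction on the height of the derivation"; you have filled in exactly that induction, with the right key observations (eigenlabels are untouched since $\vv$ is not one of them, and freshness of eigenlabels is preserved because the substitution target $\ww$ already occurs in every conclusion and hence differs from every eigenlabel). One tiny quibble: the fact that the eigenlabel $\ww'$ of an $(R:)$ instance does not occur in that rule's conclusion comes from the eigenlabel side condition of the rule itself rather than from the sublabel property, but this does not affect the argument.
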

\begin{proof} The proof involves a routine induction on the height of the derivation ${\mathcal D}$ of $\g\R\d$ in $\J_\CS$.
\qe \end{proof}

\begin{proposition}[Sublabel Property]\label{prop:sublabel property}
Every sequent derivable in $\J_\CS$  has a derivation with the sublabel property; in other words, every derivable sequent $\g\R\d$ has a derivation in which all labels are eigenlabels or labels in $\g\cup\d$.
\end{proposition}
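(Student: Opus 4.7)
The plan is to induct on the height $h$ of the given derivation $\mathcal{D}$ of $\g \R \d$ in \J. The base case $h=0$ is immediate, since an initial sequent contains no rule instances and so vacuously enjoys the sublabel property. For the inductive step, let $(R)$ be the last rule of $\mathcal{D}$, with premises $\Sigma_1, \ldots, \Sigma_k$; by the induction hypothesis each $\Sigma_i$ admits a derivation $\mathcal{D}_i$ with the sublabel property. The task is to combine these $\mathcal{D}_i$ by means of $(R)$ so that the final rule instance itself also respects the sublabel condition.

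A direct inspection of Tables \ref{table: rules of labeled G3c}, \ref{table: rules for G3J}, \ref{table: rules for G3JL}, and \ref{table: rules for accessibility relation} shows that every rule other than $(Ref)$, $(Ser)$, $(E\bar{?})$, $(E?)$, $(IAN)$, and $(AN)$ is \emph{safe}: every label appearing in any premise either already occurs in the conclusion or is declared an eigenlabel of the rule. For a safe rule, grafting $\mathcal{D}_1, \ldots, \mathcal{D}_k$ below $(R)$ at once yields a derivation of $\g \R \d$ with the sublabel property. Each of the six remaining rules has a ``free'' label occurring in the premise that is neither constrained to appear in the conclusion nor declared fresh (for $(Ser)$ this free label is the left-hand side of the introduced relational atom, distinct from the eigenlabel on the right). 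If the free label used in the actual instance of $(R)$ already lies in $\g \cup \d$, we are finished; otherwise I pick some label $\uu \in \g \cup \d$---such a $\uu$ exists because the derivable sequent $\g \R \d$ is non-empty, and every labeled formula, relational atom, and evidence atom carries at least one label---and substitute $\uu$ for the free label throughout each $\mathcal{D}_i$. The resulting instance of $(R)$ with $\uu$ in the free position is still a valid instance (it preserves the constant-specification side-condition required in $(IAN)$ and $(AN)$), and every label of this final rule instance now lies in $\g \cup \d$.

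The main obstacle is carrying out this substitution without destroying the sublabel property already secured by induction and without clashing with an eigenlabel used higher in $\mathcal{D}_i$. This is handled precisely by Lemma \ref{lem:substitution for analytic derivations}, whose hypothesis requires that the label being replaced not be used as an eigenlabel in the subderivation. Should the free label happen to occur as an eigenlabel of some rule higher in $\mathcal{D}_i$, I first apply the height-preserving substitution Lemma \ref{lemma: substitution lemma} to $\alpha$-rename that offending eigenlabel to a fresh one; Lemma \ref{lem:substitution for analytic derivations} then delivers the substituted subderivation with the sublabel property intact, and grafting these subderivations below the adjusted instance of $(R)$ completes the induction.
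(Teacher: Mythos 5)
Your proof is correct and follows essentially the same route as the paper's: both proceed by induction on the height of the derivation and repair each offending rule instance by substituting, for every premise label that is neither an eigenlabel nor present in the conclusion, a label taken from the conclusion, with Lemma \ref{lem:substitution for analytic derivations} guaranteeing that the subderivations retain the sublabel property and that the conclusion is unchanged. Your extra $\alpha$-renaming step is harmless but never actually needed, since the free label occurs in the premise and therefore persists into every sequent above it, so it cannot serve as an eigenlabel anywhere in that subderivation.
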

\begin{proof} By induction on the height of the derivation we transform every derivation into  one with the sublabel property.
The base case is trivial, since every initial sequent has the sublabel property. For the induction step, suppose $(R)$ is a topmost rule which does not have the sublabel property.
%\[\di{\f{\g'\R\d'}{\g\R\d}}(R)\]
For each label, say $\vv$, in any premise of $(R)$ that is not in the conclusion and is not an eigenlabel, we substitute for it (in the derivations of all the premises) any label, say $\ww$, that is in the endsequent. Note that in any derivation in $\J_\CS$ all labels in a rule's conclusion are in all its premises. Thus $\ww$ is already in the conclusion of each premise, and therefore by Lemma \ref{lem:substitution for analytic derivations} the derivations of the premise(s) have still the sublabel property. Moreover, the conclusion of $(R)$ is unchanged under this substitution, and the resulting rule instance has now the sublabel property. 
By repeating the above argument for all rules which do not have the sublabel property we finally obtain a derivation with the sublabel property.\qe \end{proof}

\begin{example}
Here is a derivation of $\ww \Vvdash x:P\R \ww\Vvdash P$ in {\sf G3JT} which does not have the sublabel property:
\begin{prooftree}
\def\extraVskip{3pt}
\AXC{$(Ax)$}\noLine
\UIC{$\ww\Vvdash P,\vv R\vv,\ww R\ww,\ww \Vvdash x:P\R \ww\Vvdash P$}
  \RightLabel{$(L:)$}
 \UIC{$\vv R\vv,\ww R\ww,\ww \Vvdash x:P\R \ww\Vvdash P$}
  \RightLabel{$(Ref)$}
  \UIC{$\ww R\ww,\ww \Vvdash x:P\R \ww\Vvdash P$}
  \RightLabel{$(Ref)$}
   \UIC{$\ww \Vvdash x:P\R \ww\Vvdash P$}
  \end{prooftree}
The topmost instance of rule $(Ref)$ does not have the sublabel property, because the label $\vv$ is not in the conclusion of the rule. By the substitution described in the proof of Proposition \ref{prop:sublabel property}, substitute $\ww$ for $\vv$, the derivation is transformed to one with the sublabel property:
\begin{prooftree}
\def\extraVskip{3pt}
\AXC{$(Ax)$}\noLine
\UIC{$\ww\Vvdash P,\ww R\ww,\ww R\ww,\ww \Vvdash x:P\R \ww\Vvdash P$}
  \RightLabel{$(L:)$}
 \UIC{$\ww R\ww,\ww R\ww,\ww \Vvdash x:P\R \ww\Vvdash P$}
  \RightLabel{$(Ref)$}
  \UIC{$\ww R\ww,\ww \Vvdash x:P\R \ww\Vvdash P$}
  \RightLabel{$(Ref)$}
   \UIC{$\ww \Vvdash x:P\R \ww\Vvdash P$}
  \end{prooftree}
\end{example}

Now we prove the subterm property for {\sf G3J}, {\sf G3J4}, {\sf G3JD}, {\sf G3JD4}, {\sf G3JT}, {\sf G3LP}.
We use the following notations in the rest of this paper. For an arbitrary sequent $\g\R\d$ in the language of \J, and constant specification \CS~for \JL, let
\begin{itemize}
\item $Tm(\g\R\d)$ denote the set of all terms which occur in a labeled formula or an evidence atom  in $\g\cup\d$,
\item $Fm(\g\R\d)$ denote the set of all \JL-formulas which occur in a labeled formula or an evidence atom  in $\g\cup\d$,
\item $Sub_{Tm}(\g\R\d)$ denote the set of all subterms of the terms from $Tm(\g\R\d)$,
\item $Sub_{Fm}(\g\R\d)$ denote the set of all \JL-subformulas of the formulas from $Fm(\g\R\d)$,
\end{itemize}

\begin{definition}\label{def:thread-family-subterm}
By \textit{E-rules} we mean the following rules: $(E\cdot)$, $(El+)$, $(Er+)$, $(E!)$, $(IAN)/(AN)$.
\begin{enumerate}
\item {\bf $(R)$-thread}. Suppose $(R)$ is an instance of an E-rule 
 in a derivation $\mathcal{D}$ in $\J^-_\CS$. A sequence of sequents in $\mathcal{D}$ is called an $(R)$-thread if the sequence begins with an initial sequent and ends with the premise of $(R)$, and every sequent in the sequence (except the last one) is the premise of a rule of $\J^-_\CS$, and is immediately followed by the conclusion of this rule.
\item {\bf Related evidence atoms}. Related evidence atoms in a derivation in $\J^-_\CS$ are defined as follows:
\begin{enumerate}
\item Corresponding evidence atoms in side formulas in premise(s) and conclusion of rules are related.
\item Active and principal evidence atoms in the rules $(Mon)$, $(R:)$, $(El+)$, $(Er+)$, $(E!)$ are related.
\item In an instance of rule $(E\cdot)$
$$\di{\f{\ww E(s\cdot t,B),\ww E(s,A\r B),\ww E(t,A),\g\R\d}{\ww E(s,A\r B),\ww  E(t,A),\g\R\d}}(E\cdot)$$
evidence atoms $\ww E(s\cdot t,B)$ and $\ww E(t,A)$ in the premise are related to $\ww E(t,A)$ in the conclusion, and evidence atoms $\ww E(s\cdot t,B)$ and $\ww E(s,A\r B)$ in the premise are related to $\ww E(s,A\r B)$ in the conclusion of the rule.
\item The relation `related evidence atoms', defined in the above clauses, is extended by transitivity.
\end{enumerate}
\item {\bf Family of evidence atoms}. Suppose $(R)$ is an instance of an E-rule  in a derivation $\mathcal{D}$ in $\J^-_\CS$, and e is an evidence atom in the premise of $(R)$. The set of all evidence atoms related to e in all $(R)$-threads in $\mathcal{D}$ is called the family of e in $\mathcal{D}$.
\item {\bf Subterm property of E-rules}.
\begin{enumerate}
\item An instance of  rule $(El+)$ with active evidence atoms $\ww E(s+t,A)$, $\ww E(t,A)$ in a derivation with endsequent $\g\R\d$ has the subterm property if $s+t\in Sub_{Tm}(\g\R\d)$.
\item An instance of rule $(Er+)$ with active evidence atoms $\ww E(t+s,A)$, $\ww E(t,A)$ in a derivation with endsequent $\g\R\d$ has the subterm property if  $t+s\in Sub_{Tm}(\g\R\d)$.
\item An instance of  rule $(E\cdot)$ with active evidence atoms $\ww E(s\cdot t,B)$, $\ww E(s,A\r B)$, $\ww E(t,A)$ in a derivation with endsequent $\g\R\d$ has the subterm property if  $s\cdot t\in Sub_{Tm}(\g\R\d)$.
\item An instance of  rule $(E!)$ with active evidence atoms $\ww E(!t,t:A)$, $\ww E(t,A)$ in a derivation with endsequent $\g\R\d$ has the subterm property if  $!t\in Sub_{Tm}(\g\R\d)$.
\item An instance of  rule $(AN)$ with active evidence atom $\ww E(c,A)$ in a derivation with endsequent $\g\R\d$ has the subterm property if  $c\in Sub_{Tm}(\g\R\d)$.
\item An instance of rule $(IAN)$ with active evidence atom $\ww E(c_{i_n},c_{i_{n-1}}:\ldots:c_{i_1}:A)$ in a derivation with endsequent $\g\R\d$ has the subterm property if  $c_{i_n},c_{i_{n-1}},\ldots,c_{i_1}\in Sub_{Tm}(\g\R\d)$.
\end{enumerate}

\item {\bf Superfluous applications of rules}.
\begin{enumerate}
\item An application of  rule $(El+)$ in a derivation  with active evidence atoms $\ww E(s+t,A)$, $\ww E(t,A)$ is superfluous if no evidence atom in the family of $\ww E(s+t,A)$ is principal in an initial sequent $(AxE)$ or in a rule $(R:)$.
\item An application of rule $(Er+)$ in a derivation with active evidence atoms $\ww E(t+s,A)$, $\ww E(t,A)$ is superfluous if no evidence atom in the family of $\ww E(t+s,A)$ is principal in an initial sequent $(AxE)$ or in a rule $(R:)$.
\item An application of  rule $(E\cdot)$ in a derivation with active evidence atoms $\ww E(s\cdot t,B)$, $\ww E(s,A\r B)$, $\ww E(t,A)$ is superfluous if no evidence atom in the family of $\ww E(s\cdot t,B)$ is principal in an initial sequent $(AxE)$ or in a rule $(R:)$.
\item An application of  rule $(E!)$ in a derivation with active evidence atoms $\ww E(!t,t:A)$, $\ww E(t,A)$ is superfluous if no evidence atom in the family of $\ww E(!t,t:A)$ is principal in an initial sequent $(AxE)$ or in a rule $(R:)$.
\item An application of  rule $(AN)$ in a derivation with active evidence atom $\ww E(c,A)$ is superfluous if no evidence atom in the family of $\ww E(c,A)$ is principal in an initial sequent $(AxE)$ or in a rule $(R:)$.
\item An application of rule $(IAN)$ in a derivation with active evidence atom $\ww E(c_{i_n},c_{i_{n-1}}:\ldots:c_{i_1}:A)$ is superfluous if no evidence atom in the family of $\ww E(c_{i_n},c_{i_{n-1}}:\ldots:c_{i_1}:A)$ is principal in an initial sequent $(AxE)$ or in a rule $(R:)$.
\item  An application of  rule $(Mon)$ in a derivation with active formulas $\vv E(t,A)$, $\ww E(t,A)$, $\ww R \vv$ is superfluous if no evidence atom in the family of $\vv E(t,A)$ is principal in an initial sequent $(AxE)$ or in a rule $(R:)$.
\end{enumerate}
\end{enumerate}
\end{definition}

 It is easy to verify that for an evidence atom $\ww E(t,A)$ in the premise of an E-rule 
in a derivation in $\J^-_\CS$, the typical form of evidence atoms in its family is $\vv E(r,B)$, where $t$ is a subterm of $r$, $\vv$ is a label,  and $B$ is a formula.

\begin{lemma}\label{lem:subterm property superfluous}
In any derivation in $\J^-_\CS$, every application of an E-rule which does not have the subterm property is superfluous.
\end{lemma}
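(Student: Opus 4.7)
The plan is to argue by contradiction: assume that $(R)$ is an E-rule application in a derivation with endsequent $\g\R\d$ that lacks the subterm property and is not superfluous, and show that one of the ``new'' terms of $(R)$ must lie in $Sub_{Tm}(\g\R\d)$. Let $e$ denote the active evidence atom of $(R)$, and let $r^*$ be one of the new terms witnessing the failure of the subterm property (so $r^*$ is one of $s+t$, $t+s$, $s\cdot t$, $!t$, or a constant from the axiom-necessitation case, and $r^*\notin Sub_{Tm}(\g\R\d)$). By the failure of superfluousness there exists an atom $e'=\vv E(r,B)$ in the family of $e$ which is principal either in an initial sequent $(AxE)$ or in a rule $(R:)$. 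I will derive that $r^*\in Sub_{Tm}(\g\R\d)$ from the existence of such $e'$.

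The central technical step is to verify the invariant recorded informally just before the lemma: every atom $\vv E(r,B)$ in the family of $e$ has $r^*$ as a subterm of $r$ or of some term occurring in $B$. I prove this by induction on the length of the chain of relations (a), (b), (c) from $e$ to $\vv E(r,B)$. Clause (a) preserves the whole atom, and clause (b) at $(Mon)$ and $(R:)$ preserves both the term and the formula. The essential cases are clause (b) at $(El+)$, $(Er+)$, $(E!)$ and clause (c) at $(E\cdot)$: in each of these the active atom's term is a compound ($s+u$, $u+s$, $s\cdot u$, $u\cdot s$, or $!u$) built over the principal atom's term $u$, and its formula either coincides with the principal's formula or contains it as a subformula (as for $(E!)$, where the active's formula $t:A$ extends the principal's formula $A$); hence any term which was a subterm of the principal's term or formula is also a subterm of the active's term or formula. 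Applied inductively this yields the invariant and in particular shows that $r^*$ is a subterm of $r$ or of some term in $B$ for the chosen $e'$.

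Given the invariant it remains to contradict $r^*\notin Sub_{Tm}(\g\R\d)$ in both subcases for $e'$. If $e'$ is principal in an initial sequent $(AxE)$, then $e'$ occurs on the right side of that sequent; inspection of the rules of $\J^-_\CS$ (Tables~\ref{table: rules for G3J} and~\ref{table: rules for G3JL}) shows that no rule activates or introduces a right-side evidence atom (evidence atoms appear on the right only in $(AxE)$ and propagate as side formulas through every rule), so $e'$ descends unchanged to the right side of $\g\R\d$, placing every subterm of $r$ and every subterm of $B$ inside $Sub_{Tm}(\g\R\d)$, and in particular $r^*$. If $e'$ is the principal evidence atom of some $(R:)$ application, that application introduces the labeled formula $\vv\Vvdash r:B$ on the right side of its conclusion; this formula descends through the derivation either as a side formula or as a subformula of a larger labeled formula produced by a right propositional rule $(R\r)$, $(R\wedge)$, $(R\vee)$, or $(R\neg)$, so every subterm of $r$ and every subterm of $B$ persists in some labeled formula on the right side of $\g\R\d$, again giving $r^*\in Sub_{Tm}(\g\R\d)$. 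Both subcases contradict the failure of the subterm property, so $(R)$ is superfluous. The main obstacle is the invariant of the second paragraph: since the related-atom relation is symmetric, one must rule out by a careful case analysis on the rule configurations that a chain from $e$ could reach an atom whose term and formula both omit $r^*$, which is achieved by using the specific shapes of active and principal atoms in the rules of Tables~\ref{table: rules for G3J} and~\ref{table: rules for G3JL}.
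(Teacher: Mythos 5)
Your proposal is correct and takes essentially the same route as the paper's proof: both arguments reduce to showing that every evidence atom in the family of the new active atom carries the offending term, and then rule out the two ways a family member could be principal — in an $(AxE)$ initial sequent (whence the atom persists in the succedent down to the endsequent) or in an $(R:)$ inference (whence $\vv\Vvdash r:B$ is traced to the endsequent via the labeled-subformula property, Proposition \ref{prop:subformula property}) — contradicting the failure of the subterm property. The only difference is that you make explicit, as an induction along the chain of related atoms, the family invariant that the paper states without proof just before the lemma (and your variant, allowing the term to occur in the formula component $B$ rather than only in $r$, still suffices since $Sub_{Tm}$ collects terms from both).
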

\begin{proof}

We detail the proof only in the case of rule $(El+)$, the cases of other E-rules are handled in a similar way. Consider an application of $(El+)$ in a derivation $\mathcal{D}$:
$$
\di{\f{\ww E(s+t,A),\ww E(t,A),\g\R\d}{\ww E(t,A),\g\R\d}}(El+)
$$
which does not have the subterm property. Consider the family of the evidence atom $\ww E(s+t,A)$ (in the premise of $(El+)$) in $\mathcal{D}$. We show that the following possibilities cannot happen, and therefore this application of $(El+)$ is superfluous.
\begin{enumerate}
\item An evidence atom $\vv E(r,B)$ in the family of $\ww E(s+t,A)$ is principal in an initial sequent $(AxE)$.
\item An evidence atom $\vv E(r,B)$ in the family of $\ww E(s+t,A)$ is principal in a rule $(R:)$.
\end{enumerate}

In the first case, the initial sequent is of the form $\vv E(r,B),\g'\R\d',\vv E(r,B)$. Since the rules of $\J^-_\CS$ do not remove any evidence atom from succedent of sequents, the evidence atom $\vv E(r,B)$ would appear in the endsequent. Note that $s+t$ is a subterm of $r$. Therefore, $s+t$ occurs as the subterm of $r$ in the endsequent, which would contradict to the assumption that $(El+)$ does not have the subterm property. Thus, this case cannot happen.

In the second case, the rule $(R:)$ is of the form:
$$\di{\f{\vv R\uu,\vv E(r,B),\g'\R\d',\uu\Vvdash B}{\vv E(r,B),\g'\R\d',\vv\Vvdash r:B}}(R:) $$
By the labeled-subformula property (Proposition \ref{prop:subformula property}) for $\J^-_\CS$, the labeled formula $\vv\Vvdash r:B$ is a labeled-subformula of a labeled formula in the endsequent. Therefore, $s+t$ occurs as the subterm of $r$ in the endsequent, which would contradict to the assumption that $(El+)$ does not have the subterm property. Thus, this case cannot happen.\qe
\end{proof}
Note that the converse of the above lemma does not hold, that means a superfluous application of an E-rule can have the subterm property.

\begin{lemma}\label{lem:subterm property E-rules}
Every sequent derivable in $\J^-_\CS$ has a derivation in which all instances of E-rules  has the subterm property.
\end{lemma}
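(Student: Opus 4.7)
The plan is to eliminate, one at a time, every application of an E-rule in a given derivation of $\J^-_\CS$ that fails the subterm property. By Lemma \ref{lem:subterm property superfluous}, each such application is superfluous, so it suffices to describe a procedure for removing a single superfluous E-rule application without affecting the endsequent or introducing new superfluous applications. Iterating the procedure must terminate because each step strictly decreases the number of superfluous E-rule applications and introduces no new rule instances.

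To remove one such application, pick a topmost E-rule instance $(R)$ that does not have the subterm property, with premise derivation $\mathcal{D}_0$, and let $e$ be the active evidence atom introduced in its premise (for instance $\ww E(s+t,A)$ if $(R)=(El+)$, $\ww E(s\cdot t,B)$ if $(R)=(E\cdot)$, $\ww E(!t,t:A)$ if $(R)=(E!)$, or $\ww E(c,A)$ if $(R)=(AN)$). The key elimination lemma to establish, by a secondary induction on the height of $\mathcal{D}_0$, is the following: if $\mathcal{D}_0$ derives $e,\g'\R\d'$ and no member of the family of $e$ in $\mathcal{D}_0$ is principal in an initial sequent $(AxE)$ or in a rule $(R:)$, then there is a derivation of $\g'\R\d'$ of no greater height. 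The proof traverses $\mathcal{D}_0$ and deletes every family member of $e$ from every sequent in which it occurs; because no family member is principal in $(AxE)$ or $(R:)$, the initial sequents and the $(R:)$-applications remain valid after deletion, while for $(Mon)$ or an E-rule whose principal evidence atom is a family member, either the amended instance is still valid on fewer active formulas, or premise and conclusion coincide so the application is simply dropped. Applying the elimination lemma to $\mathcal{D}_0$ yields a derivation of the conclusion of $(R)$, removing the superfluous application.

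The main obstacle will be the case analysis inside the elimination lemma, especially for rules whose active or principal formulas can be family members of $e$: the E-rules $(E\cdot), (El+), (Er+), (E!), (IAN)/(AN)$, and rule $(Mon)$. In each case one must check carefully, using the transitive closure in the definition of related evidence atoms (Definition \ref{def:thread-family-subterm}(2)), that all the relevant auxiliary evidence atoms are also in the family and can be deleted coherently, so that the remaining rule instance either vanishes or remains a valid instance of the same rule. Side constraints such as the eigenlabel condition on $(R:)$ and the sublabel property of Proposition \ref{prop:sublabel property} are preserved automatically, since the transformation only deletes formulas and rule instances and never introduces new labels, terms, or formulas; in particular the labeled-subformula property of Proposition \ref{prop:subformula property} continues to hold for the transformed derivation.
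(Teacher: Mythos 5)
Your proposal is correct and follows essentially the same route as the paper: invoke Lemma \ref{lem:subterm property superfluous} to see the offending application is superfluous, delete the family of its active evidence atom together with the $(Mon)$/E-rule instances acting on those atoms, and iterate. The only cosmetic differences are that the paper removes the \emph{bottommost} rather than topmost offending application (merely an efficiency choice, noted in a footnote, since it sweeps away the superfluous applications above it in one pass) and does not package the deletion step as a separate height-preserving elimination lemma, though your formulation of that lemma is sound.
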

\begin{proof}
Suppose $\mathcal{D}$ is a derivation of a sequent in $\J^-_\CS$. If all applications of E-rules in $\mathcal{D}$ has the subterm property, then $\mathcal{D}$ is the desired derivation. Otherwise, consider the bottommost application of an E-rule $(R)$ in $\mathcal{D}$ which does not have the subterm property, and let $e$ denote the active evidence atom of $(R)$ that does not occur in the conclusion of $(R)$. By Lemma \ref{lem:subterm property superfluous}, this application of $(R)$ is superfluous. In the following we show how to remove this application of the rule $(R)$ from the derivation.

Find the family of $e$ in the derivation, and all rules which have an active evidence atom from this family. Remove all occurrences of evidence atoms in the family of $e$ from the derivation, and also remove all rules which have these evidence atoms as active formulas (by removing a rule we mean removing the premise of that rule from the derivation). Note that, since rule $(R)$ is superfluous, evidence atoms in the family of $e$ can only be active in $(Mon)$ and E-rules, and they are side formulas in the other rules of the derivation.  Thus, removing these evidence atoms only affect on the rule $(Mon)$ and E-rules, and do not affect the validity of the remaining rules in the derivation. It is easy to see that for a superfluous application of $(R)$ with active evidence atom $e$ in a derivation, all applications of the rule $(Mon)$ and E-rules that are above $(R)$ and have an active evidence atom from the family of $e$ are superfluous. Therefore, the result of removing those rules which has an active evidence atom from the family of $e$ is still a derivation.  Note that, the superfluous application of $(R)$ is also removed from the derivation, and this new derivation produces no new superfluous application of $(R)$ (or other E-rules). By repeating the above argument for all bottommost superfluous applications\footnote{In fact removing the bottommost superfluous application of $(R)$, instead of an arbitrary one, will remove all other superfluous applications of E-rules that are above it and have an active evidence atom from the family of $e$.} of E-rules in the derivation, we finally find a derivation in which all instances of E-rules have the subterm property, and moreover this procedure will finally terminate.\qe
\end{proof}

\begin{example}
Here is an example of a derivation of $\ww \Vvdash x:P\R \ww\Vvdash (x+y):P$ in {\sf G3J4} with a superfluous application of rule $(E!)$:\\
\scalebox{0.91}{\parbox{\textwidth}{%
\begin{prooftree}
\def\extraVskip{3pt}
\AXC{$(Ax)$}\noLine
\UIC{$\vv\Vvdash P,\vv E(!(x+y),x+y:P),\ww R \vv,\ww E(!(x+y),x+y:P),\ww E(x+y,P),\ww E(x,P),\ww \Vvdash x:P\R \vv\Vvdash P$}
  \RightLabel{$(L:)$}
  \UIC{$\vv E(!(x+y),x+y:P),\ww R \vv,\ww E(!(x+y),x+y:P),\ww E(x+y,P),\ww E(x,P),\ww \Vvdash x:P\R \vv\Vvdash P$}
  \RightLabel{$(Mon)$}
\UIC{$\ww R \vv,\ww E(!(x+y),x+y:P),\ww E(x+y,P),\ww E(x,P),\ww \Vvdash x:P\R \vv\Vvdash P$}
  \RightLabel{$(R:)$}
\UIC{$\ww E(!(x+y),x+y:P),\ww E(x+y,P),\ww E(x,P),\ww \Vvdash x:P\R \ww\Vvdash (x+y):P$}
  \RightLabel{$(E!)$}
  \UIC{$\ww E(x+y,P),\ww E(x,P),\ww \Vvdash x:P\R \ww\Vvdash (x+y):P$}
  \RightLabel{$(Er+)$}
  \UIC{$\ww E(x,P),\ww \Vvdash x:P\R \ww\Vvdash (x+y):P$}
  \RightLabel{$(E)$}
   \UIC{$\ww \Vvdash x:P\R \ww\Vvdash (x+y):P$}
  \end{prooftree}}} \\
where $P$ is a propositional variable, and $x,y$ are justification variables. It is clear that the application of rule $(Er+)$ in the derivation is not superfluous (because the evidence atom $\ww E(x+y,P)$ in the family of its active evidence atom $\ww E(x+y,P)$ is principal in the rule $(R:)$), but the application of rule $(E!)$ in the derivation is superfluous. In fact, the rule $(E!)$ does not have the subterm property and the term $!(x+y)$ does not occur in the endsequent. To transform this derivation into one with the subterm property, we first find the family of evidence atoms of $\ww E(!(x+y),x+y:P)$, which are indicated in boxes in the following derivation:
\\
\scalebox{0.87}{\parbox{\textwidth}{%
\begin{prooftree}
\def\extraVskip{3pt}
\AXC{$(Ax)$}\noLine
\UIC{$\vv\Vvdash P,\textit{\fbox{$\vv E(!(x+y),x+y:P)$}}~,\ww R \vv,\textit{\fbox{$\ww E(!(x+y),x+y:P)$}}~,\ww E(x+y,P),\ww E(x,P),\ww \Vvdash x:P\R \vv\Vvdash P$}
  \RightLabel{$(L:)$}
  \UIC{$\textit{\fbox{$\vv E(!(x+y),x+y:P)$}}~,\ww R \vv,\textit{\fbox{$\ww E(!(x+y),x+y:P)$}}~,\ww E(x+y,P),\ww E(x,P),\ww \Vvdash x:P\R \vv\Vvdash P$}
  \RightLabel{$\textit{\fbox{(Mon)}}$}
\UIC{$\ww R \vv,\textit{\fbox{$\ww E(!(x+y),x+y:P)$}}~,\ww E(x+y,P),\ww E(x,P),\ww \Vvdash x:P\R \vv\Vvdash P$}
  \RightLabel{$(R:)$}
\UIC{$\textit{\fbox{$\ww E(!(x+y),x+y:P)$}}~,\ww E(x+y,P),\ww E(x,P),\ww \Vvdash x:P\R \ww\Vvdash (x+y):P$}
  \RightLabel{$\textit{\fbox{(E!)}}$}
  \UIC{$\ww E(x+y,P),\ww E(x,P),\ww \Vvdash x:P\R \ww\Vvdash (x+y):P$}
  \RightLabel{$(Er+)$}
  \UIC{$\ww E(x,P),\ww \Vvdash x:P\R \ww\Vvdash (x+y):P$}
  \RightLabel{$(E)$}
   \UIC{$\ww \Vvdash x:P\R \ww\Vvdash (x+y):P$}
  \end{prooftree}}} \\
Then we find those rules which have an active evidence atom from this family, that are rules $(E!)$ and $(Mon)$ in the derivation. Finally, by removing the evidence atoms in boxes and the rules $(E!)$ and $(Mon)$ from the derivation we obtain a derivation of the same endsequent with the subterm property:
\begin{prooftree}
\def\extraVskip{3pt}
\AXC{$(Ax)$}\noLine
\UIC{$\vv\Vvdash P,\ww R \vv,\ww E(x+y,P),\ww E(x,P),\ww \Vvdash x:P\R \vv\Vvdash P$}
  \RightLabel{$(L:)$}
 \UIC{$\ww R \vv,\ww E(x+y,P),\ww E(x,P),\ww \Vvdash x:P\R \vv\Vvdash P$}
  \RightLabel{$(R:)$}
  \UIC{$\ww E(x+y,P),\ww E(x,P),\ww \Vvdash x:P\R \ww\Vvdash (x+y):P$}
  \RightLabel{$(Er+)$}
  \UIC{$\ww E(x,P),\ww \Vvdash x:P\R \ww\Vvdash (x+y):P$}
  \RightLabel{$(E)$}
   \UIC{$\ww \Vvdash x:P\R \ww\Vvdash (x+y):P$}
  \end{prooftree}
\end{example}

Now the subterm property follows from the labeled-subformula property and Lemma \ref{lem:subterm property E-rules}.
\begin{proposition}[Subterm Property]\label{prop:subterm property}
Every sequent $\g\R\d$ derivable in $\J^-_\CS$ has a derivation in which all terms in the derivation are in $Sub_{Tm}(\g\R\d)$.
\end{proposition}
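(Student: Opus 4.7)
The plan is to obtain the subterm property as a direct consequence of Proposition \ref{prop:subformula property} (the labeled-subformula property) together with Lemma \ref{lem:subterm property E-rules}. Given a sequent $\g\R\d$ derivable in $\J^-_\CS$, I would first apply Lemma \ref{lem:subterm property E-rules} to extract a derivation $\mathcal{D}$ of $\g\R\d$ in which every application of an E-rule has the subterm property. The task then reduces to showing that every term syntactically occurring in $\mathcal{D}$, whether inside a labeled formula $\uu\Vvdash A$ or inside an evidence atom $\uu E(t,A)$, belongs to $Sub_{Tm}(\g\R\d)$; relational atoms contain no terms and can be ignored.

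Terms inside labeled formulas are handled immediately: by Proposition \ref{prop:subformula property}, every labeled formula of $\mathcal{D}$ is a labeled-subformula of some labeled formula in $\g\cup\d$, so its underlying formula is a subformula of a formula in the endsequent and all terms inside it already lie in $Sub_{Tm}(\g\R\d)$.

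For terms inside evidence atoms, I would argue by induction on the depth of a sequent in $\mathcal{D}$, proceeding from the endsequent upwards, with the invariant that every evidence atom $\uu E(t,A)$ in the current sequent contributes only terms in $Sub_{Tm}(\g\R\d)$. The base case is immediate. For the inductive step, side-formula propagation preserves the invariant; the only rules that introduce a fresh evidence atom in their premise are $(E)$, $(Mon)$, and the six E-rules. Rule $(E)$ produces $\uu E(t,A)$ from the principal labeled formula $\uu\Vvdash t:A$, whose terms are already controlled by the previous paragraph. Rule $(Mon)$ preserves both the term and the formula. In each E-rule, the leading term of the active evidence atom lies in $Sub_{Tm}(\g\R\d)$ by the subterm property assumed on $\mathcal{D}$, and the accompanying formula is either identical to the one already present in the conclusion (as in $(El+)$ and $(Er+)$), a subformula of one present there (as in $(E\cdot)$), or built from ingredients already present in the conclusion (as in $(E!)$), so the induction hypothesis forces the remaining terms of the accompanying formula to lie in $Sub_{Tm}(\g\R\d)$ as well.

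The main obstacle I anticipate is the pair of axiom necessitation rules $(AN)$ and $(IAN)$, where the formula component of the introduced evidence atom is an arbitrary axiom instance whose nested terms are not obviously controlled by the endsequent. To close this gap I would further refine $\mathcal{D}$ by iterating the family-based erasure procedure of Lemma \ref{lem:subterm property E-rules} to remove any remaining superfluous $(AN)$- and $(IAN)$-applications; for a non-superfluous $(AN)$ introducing $\ww E(c,A)$, some evidence atom in the family of $\ww E(c,A)$ must be principal either in an initial sequent $(AxE)$, in which case it already sits on the right of the endsequent, or in a rule $(R:)$, whose principal labeled formula is constrained by Proposition \ref{prop:subformula property}. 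A careful bookkeeping along the family relations, whose effect on the accompanying term and formula is severely restricted by the shape of the E-rules and of $(Mon)$, then forces every term occurring inside $A$ to have been in $Sub_{Tm}(\g\R\d)$ already.
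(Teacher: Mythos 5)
Your overall strategy coincides with the paper's: the paper obtains this proposition in one line as a consequence of Proposition \ref{prop:subformula property} and Lemma \ref{lem:subterm property E-rules}, and your first three paragraphs correctly flesh out the induction that makes this combination work for labeled formulas and for the rules $(E)$, $(Mon)$, $(El+)$, $(Er+)$, $(E\cdot)$ and $(E!)$. You are also right to isolate $(AN)$ and $(IAN)$ as the genuinely problematic cases: the subterm condition of Definition \ref{def:thread-family-subterm} constrains only the constants $c, c_{i_n},\ldots,c_{i_1}$ and says nothing about the terms nested inside the axiom instance $A$, so the two cited results do not by themselves control those terms.

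The patch in your last paragraph, however, does not close this gap. The family relation is lossy in exactly the place you need it: when an atom $\ww E(c,A)$ with $A=A_1\r A_2$ is consumed as the left operand of $(E\cdot)$, the family member created in the premise carries only $A_2$, and when it is consumed as the right operand the new formula is dictated by the other active atom; in either case the terms of the discarded part of $A$ need never reach a principal $(AxE)$ or $(R:)$ position, so non-superfluousness does not force them into $Sub_{Tm}(\g\R\d)$. Concretely, in ${\sf G3J}_{\CS}$ with $\CS=\{\,c':(P\r P),\ c'':c':(P\r P),\ c:(c':(P\r P)\r(Q\r Q))\,\}$, the sequent $\R\ww\Vvdash (c\cdot c''):(Q\r Q)$ is derived by two $(IAN)$ steps introducing $\ww E(c'',c':(P\r P))$ and $\ww E(c,c':(P\r P)\r(Q\r Q))$, one $(E\cdot)$ producing $\ww E(c\cdot c'',Q\r Q)$, and one $(R:)$. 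Both $(IAN)$ applications are non-superfluous, since $\ww E(c\cdot c'',Q\r Q)$ lies in both families and is principal in $(R:)$, yet the term $c'$ occurring inside their formula components is not in $Sub_{Tm}(\g\R\d)=\{c\cdot c'',c,c''\}$ --- and no alternative derivation of this endsequent avoids $c'$, because the only way to obtain an evidence atom with leading term $c\cdot c''$ is through these two $\CS$-instances. So the final claim of your proof is false as stated, and the statement can only be saved by weakening it to allow, in addition, the terms occurring in the formulas of $\CS$ --- which is in effect the invariant that the termination argument of Section \ref{section: termination proof search} actually relies on.
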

\begin{definition}
A derivation is called analytic if all labeled formulas in the derivation are labeled-subformulas of labeled formulas in the endsequent, all terms in the derivation are subterms of terms in the endsequent, and all labels are eigenlabels or labels in the endsequent.
\end{definition}
Now from Propositions \ref{prop:subformula property},  \ref{prop:sublabel property}, \ref{prop:subterm property} it follows that:
\begin{corollary}[Analyticity]\label{cor:analyticity}
Every sequent $\g\R\d$ derivable in $\J^-_\CS$  has an anlytic derivation.
\end{corollary}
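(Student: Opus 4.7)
The plan is to combine the three preceding propositions: Proposition~\ref{prop:subformula property} (labeled-subformula property), Proposition~\ref{prop:sublabel property} (sublabel property), and Proposition~\ref{prop:subterm property} (subterm property). The first of these holds automatically for \emph{every} derivation in $\J^-_\CS$, by a direct inspection of the rules, so it requires no transformation of the derivation. The latter two supply genuine transformations: Proposition~\ref{prop:sublabel property} rewrites a given derivation by substituting labels (via Lemma~\ref{lem:substitution for analytic derivations}) so that all non-eigenlabels occurring in it are among the labels of the endsequent, and Proposition~\ref{prop:subterm property}, via Lemma~\ref{lem:subterm property E-rules}, prunes superfluous applications of the E-rules (together with the entire families of evidence atoms they generate) so that every remaining E-rule instance has the subterm property.

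Concretely, starting from an arbitrary derivation $\mathcal D$ of $\g \R \d$ in $\J^-_\CS$, I would first apply the procedure from Lemma~\ref{lem:subterm property E-rules}: locate the bottommost application of an E-rule that lacks the subterm property, which by Lemma~\ref{lem:subterm property superfluous} is superfluous, and delete all evidence atoms in the corresponding family together with the rule instances that have them as active formulas. Iterating yields a derivation $\mathcal D'$ of $\g \R \d$ in which every E-rule instance has the subterm property, so by Proposition~\ref{prop:subterm property} every term in $\mathcal D'$ lies in $Sub_{Tm}(\g \R \d)$. Then I would apply the label-substitution procedure from the proof of Proposition~\ref{prop:sublabel property} to $\mathcal D'$: for each rule instance in $\mathcal D'$ that violates the sublabel property, pick any label from the endsequent and substitute it (via Lemma~\ref{lem:substitution for analytic derivations}) for the offending non-eigenlabel throughout the relevant subderivations. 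This yields a derivation $\mathcal D''$ of $\g \R \d$ with the sublabel property.

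The main thing to check is that the two transformations do not interfere. Pruning superfluous E-rules only removes evidence atoms and rule instances; it introduces no new labels and changes no labeled formulas, so the labeled-subformula property (which held for $\mathcal D$, hence for $\mathcal D'$) is preserved, and the labels that remain are still either eigenlabels or labels of the endsequent up to the subsequent substitution step. Conversely, the label substitution of Proposition~\ref{prop:sublabel property} leaves the set of terms occurring in the derivation unchanged (labels and terms are disjoint syntactic categories), so the subterm property established in $\mathcal D'$ is inherited by $\mathcal D''$. Similarly, label substitution maps labeled-subformulas to labeled-subformulas of the substituted endsequent, and since we only substitute labels that already occur in the endsequent, the labeled-subformula property is preserved verbatim.

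Putting this together, $\mathcal D''$ is a derivation of $\g \R \d$ satisfying all three conditions in Definition of analyticity: the labeled-subformula property from Proposition~\ref{prop:subformula property}, the subterm property from Proposition~\ref{prop:subterm property}, and the sublabel property from Proposition~\ref{prop:sublabel property}. The potentially delicate step is the commutation argument in the previous paragraph, but since the two transformations act on disjoint syntactic data (evidence atoms and rule instances vs.\ labels), and neither introduces material that could violate the other's target property, the combination goes through without obstacle.
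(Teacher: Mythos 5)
Your proposal is correct and takes essentially the same route as the paper, which derives the corollary directly by combining Propositions~\ref{prop:subformula property}, \ref{prop:sublabel property}, and \ref{prop:subterm property}. Your explicit check that the subterm-pruning and label-substitution transformations do not interfere (since they act on disjoint syntactic data) is a point the paper leaves implicit, and it is handled correctly.
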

The analyticity will be used in the proof search procedure described in the proof of Theorem \ref{thm:reduction tree} (in fact we search an analytic derivation for a derivable sequent).

%%%%%%%%%%%%%%%%%%%%%%%%%%%%%%%%%%%%%%%%%%%%%%%%%%%%%%%%%%%%%%%%%%%%%%%%%%%%%%%%%%%%%%%%%%%%%%%%%%%%%%%%%%%
\section{Structural properties}\label{sec: Admissibility of structural rules}
In this section, we show that all structural rules (weakening and contraction) and cut are admissible in \J. All proofs are similar to those of
modal logic \cite{Negri2005,Negri Plato2001} adapted for justification logics, and
so details are omitted safely. Again in this section, $\varphi$ stands for one of the
formulas $\ww\Vvdash A$, $\ww R\vv$, or $\ww E(t,A)$.

\begin{theorem}\label{thm:admiss weak.}
 The rules of weakening
\begin{eqnarray*}
&\di{\f{\g\R\d}{ \varphi,\g\R\d}}(LW)&\hspace{2cm}
\di{\f{\g\R\d}{\g\R\d, \varphi}}(RW)
\end{eqnarray*}
are height-preserving $\CS$-admissible in $\J_\CS$ .
\end{theorem}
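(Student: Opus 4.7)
The plan is to prove both weakening rules simultaneously by straightforward induction on the height $n$ of the given derivation of $\g\R\d$ in $\J_\CS$, following the standard pattern for G3-style calculi as in Negri's treatment of modal labeled systems.

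For the base case $n=0$, the sequent $\g\R\d$ is one of the initial sequents $(Ax)$, $(Ax\bot)$, $(AxR)$, or $(AxE)$. In each case, the principal formula(s) of the initial sequent remain present in $\varphi,\g\R\d$ and in $\g\R\d,\varphi$, so these are still instances of the same initial sequent, and hence have derivations of height $0$.

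For the inductive step, suppose $\g\R\d$ is derived by a rule $(R)$ with height $n>0$. I would split into two groups. First, if $(R)$ has no eigenlabel side condition (all propositional rules, $(L:)$, $(E)$, $(E\cdot)$, $(El+)$, $(Er+)$, $(E!)$, $(Mon)$, $(E\bar{?})$, $(SE)$, $(E?)$, $(IAN)$, $(AN)$, $(Ref)$, $(Sym)$, $(Trans)$, $(Eucl)$, $(Eucl_*)$), I apply the induction hypothesis to weaken each premise by $\varphi$, and then reapply $(R)$; the conclusion is $\varphi,\g\R\d$ or $\g\R\d,\varphi$ as required, still of height $n$.

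The main obstacle is the rules with an eigenlabel condition, namely $(R:)$ and $(Ser)$. In these cases the eigenlabel $\vv$ in the premise must not appear in the conclusion, and if $\varphi$ happens to contain $\vv$, then naively applying the induction hypothesis and reapplying the rule would violate the eigenlabel condition. The standard remedy is to invoke the height-preserving admissibility of the substitution rule established in Lemma~\ref{lemma: substitution lemma}: first rename the eigenlabel $\vv$ in the premise to a completely fresh label $\vv'$ not occurring in $\g$, $\d$, or $\varphi$, obtaining a derivation of the renamed premise at the same height $n-1$; then apply the induction hypothesis to weaken by $\varphi$, and finally reapply $(R)$ with $\vv'$ as the new eigenlabel. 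The resulting derivation of $\varphi,\g\R\d$ (or $\g\R\d,\varphi$) has height $n$, and since the substitution lemma and the inductive steps preserve the underlying constant specification, the construction is genuinely $\CS$-admissible.

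No step requires modifying any rule instance of $(IAN)$ or $(AN)$ in a way that enlarges $\CS$, since these rules are only reapplied verbatim with weakened side formulas. Thus both $(LW)$ and $(RW)$ are height-preserving $\CS$-admissible in $\J_\CS$. \qe
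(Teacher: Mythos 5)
Your proof is correct and follows essentially the same route as the paper's: induction on the height of the derivation, with the only delicate point being the eigenlabel rules $(R:)$ and $(Ser)$, handled by first renaming the eigenlabel to a fresh one via the height-preserving substitution of Lemma~\ref{lemma: substitution lemma} before applying the induction hypothesis and reapplying the rule. The paper's proof does exactly this, so no further comment is needed.
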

\begin{proof} By induction on the height $n$ of the derivation of the
premise. If $n=0$, then $\g\R\d$ is an initial sequent, and so are
$\varphi,\g\R\d$ and $\g\R\d, \varphi$. If $n>0$, then suppose the
last rule of the derivation is $(R)$. If $(R)$ is any rule without
label condition or $\varphi$ does
not contain any eigenlabel of $(R)$, then use the induction hypothesis
and apply rule $(R)$. Now suppose $(R)$ is $(R:)$ or $(Ser)$, and $\varphi$ contains the eigenlabel of $(R)$. We only
sketch the proof for $(R:)$
 \[ \di{\f{\ww R\vv,\ww E(t,A),\g'\R\d',\vv\Vvdash A}{\ww E(t,A),\g'\R\d',\ww\Vvdash
t:A}}(R:)\]
 By height-preserving substitution (substitute $\uu$ for $\vv$) we obtain a
derivation of height $n-1$ of
\[ \ww R\uu,\ww E(t,A),\g'\R\d',\uu\Vvdash A,\]
 where $\uu\neq \ww$, $\uu$ does not occur in $\g'\cup\d'$, and $\uu$ does not occur in
$\varphi$. Then by the induction hypothesis we obtain a derivation
of height $n-1$ of
\[ \varphi, \ww R\uu,\ww E(t,A),\g'\R\d',\uu\Vvdash A,\] or of
\[ \ww R\uu,\ww E(t,A),\g'\R\d',\uu\Vvdash A,\varphi.\]
 Then by $(R:)$ we obtain a
derivation of height $n$ of
\[\varphi,\ww E(t,A),\g'\R\d',\ww\Vvdash t:A,\] or of
\[\ww E(t,A),\g'\R\d',\ww\Vvdash t:A,\varphi.\]
 The case for  $(Ser)$ is similar.\qe \end{proof}

 A rule is said to be height-preserving $\CS$-\textit{invertible}
 if whenever an instance of its conclusion is derivable in $\J_\CS$ with height
$n$, then so is the corresponding instance of its premise(s).
\begin{proposition}\label{prop:inversion lemma}
 All the rules of $\J_\CS$ are height-preserving $\CS$-invertible.
\end{proposition}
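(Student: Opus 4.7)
The plan is to prove invertibility rule by rule, by induction on the height $n$ of the derivation of the conclusion, splitting the rules of $\J_\CS$ into three groups. The first group comprises those rules in which each premise contains every formula of the conclusion together with some additional atoms, namely $(L:)$, $(E)$, the rules for evidence atoms $(El+)$, $(Er+)$, $(E\cdot)$, $(E!)$, $(Mon)$, $(SE)$, $(E\bar{?})$, $(E?)$, the (iterated) axiom necessitation rules $(IAN)$, $(AN)$, and the relational rules $(Ref)$, $(Sym)$, $(Trans)$, $(Eucl)$, $(Eucl_*)$, $(Ser)$. For such rules, invertibility is immediate from the height-preserving $\CS$-admissibility of weakening (Theorem \ref{thm:admiss weak.}): one simply weakens the given derivation of the conclusion by the extra atoms occurring in the premise. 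In the case of $(Ser)$ we first pick a label $\vv$ fresh to the given derivation and then weaken by $\ww R\vv$.

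For the second group, consisting of the propositional rules $(L\neg)$, $(R\neg)$, $(L\wedge)$, $(R\wedge)$, $(L\vee)$, $(R\vee)$, $(L\r)$, $(R\r)$, I would use the standard inductive argument on the height $n$ of the derivation of the conclusion. In the base case, the conclusion is an instance of $(Ax)$ or $(Ax\bot)$; since the principal formula of an initial sequent is atomic, it cannot coincide with the compound principal formula of the rule being inverted, and hence the desired premise(s) are likewise initial sequents. In the inductive step, if the last rule of the derivation has as principal formula the one being inverted, then its premise(s) are (up to a height-preserving weakening) exactly the sequent(s) we need; otherwise, the induction hypothesis applies to each premise of the last rule, and reapplying that rule yields derivations of the desired sequent(s) of height at most $n$.

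The third group consists of the single rule $(R:)$, which carries an eigenvariable condition. The argument combines the inductive pattern of the previous group with height-preserving substitution of labels (Lemma \ref{lemma: substitution lemma}): given a derivation of height $n$ of $\ww E(t,A),\g\R\d,\ww\Vvdash t:A$ and a label $\vv$ fresh to this sequent, I want a derivation of height $\leq n$ of $\ww R\vv,\ww E(t,A),\g\R\d,\vv\Vvdash A$. If the last rule of the derivation is an $(R:)$ application with principal $\ww\Vvdash t:A$ and eigenlabel $\vv'$, Lemma \ref{lemma: substitution lemma} lets us substitute $\vv$ for $\vv'$ in the premise; otherwise we apply the induction hypothesis and reapply the last rule. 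The main subtlety, and the one I expect to be the most delicate, is ensuring that the chosen fresh label $\vv$ does not clash with eigenlabels of subsequent applications of $(R:)$ or $(Ser)$ in the derivation; this is handled by renaming those eigenlabels in advance via Lemma \ref{lemma: substitution lemma}, so that the case analysis goes through without conflict.
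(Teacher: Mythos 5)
Your proposal is correct and follows essentially the same route as the paper: invertibility by height-preserving weakening for all rules whose premises merely add formulas to the conclusion, the standard {\sf G3c}-style induction for the propositional rules, and an induction on derivation height combined with height-preserving label substitution (Lemma \ref{lemma: substitution lemma}) for $(R:)$. Your extra care about eigenlabel clashes (renaming in advance) corresponds to the paper's choice of the fresh label in the induction hypothesis so as to avoid the eigenlabel of a final $(Ser)$ or $(R:)$ application, so no gap remains.
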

\begin{proof} The invertibility of propositional rules is proved similar to
that in the ordinary sequent calculus {\sf G3c} in \cite{TS}. The
invertibility of other rules, except rule $(R:)$, are obtained by admissible weakening. We establish
the height-preserving invertibility of $(R:)$, by induction on the
height $n$ of the derivation $\ww E(t,A),\g\R\d,\ww\Vvdash t:A$. If
$n=0$, then $\ww E(t,A),\g\R\d,\ww\Vvdash t:A$, and consequently
$\ww R\vv,\ww E(t,A),\g\R\d,\vv\Vvdash A$ are initial sequents (for any
fresh label $v$). If $n>0$, then suppose the last rule of the
derivation is $(R)$. If $(R)$ is any rule without label condition, then
use the induction hypothesis and apply rule $(R)$. For example, suppose the last rule is $(AN)$
\[\di{\f{\uu E(c,B),\ww E(t,A),\g\R\d,\ww\Vvdash t:A}{\ww E(t,A),\g\R\d,\ww\Vvdash t:A}\,(AN)}\]
 where $c:B\in\CS$, and possibly $\uu=\ww$. By the induction hypothesis we obtain a derivation of height $n-1$ of
\[\uu E(c,B),\ww R\vv,\ww E(t,A),\g\R\d,\vv\Vvdash A,\]
 for any fresh label $v$. Then by applying the rule $(AN)$ we obtain a
derivation of height $n$ of
\[\ww R\vv,\ww E(t,A),\g\R\d,\vv\Vvdash A.\]
The case for $(IAN)$ is similar. We now
check the case in which $\ww E(t,A),\g\R\d,\ww\Vvdash t:A$ is the
conclusion of rule $(Ser)$.
\[\di{\f{\uu R\vv,\ww E(t,A),\g\R\d,\ww\Vvdash t:A}{\ww E(t,A),\g\R\d,\ww\Vvdash t:A}\,(Ser)}\]
where the eigenlabel $\vv$ is not in the conclusion. By the
induction hypothesis we obtain a derivation of height $n-1$ of
\[\ww R\ww',\uu R\vv,\ww E(t,A),\g\R\d,\ww'\Vvdash A,\]
 for any fresh
label $\ww'$ (specially $\ww'\neq \vv$). Then by applying the rule $(Ser)$ we obtain a
derivation of height $n$ of
 \[\ww R\ww',\ww E(t,A),\g\R\d,\ww'\Vvdash A,\]
 as desire.
 \qe \end{proof}

In order to show the admissibility of contraction we need to show that contracted instances of $(Trans)$ are admissible (the proof is similar to the proof
of Proposition 3 in \cite{HakliNegri2011}).
\begin{lemma}\label{lemma: admissibility of Trans*}
The rule
\[\di{\f{\ww R\ww,\ww R\ww,\g\R\d}{\ww R\ww,\g\R\d}\,(Trans_*)}\]
is height-preserving $\CS$-admissible in all labeled systems $\J_\CS$
which contain $(Trans)$.
\end{lemma}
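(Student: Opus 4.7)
The plan is to proceed by induction on the height $n$ of the given derivation of $\ww R\ww, \ww R\ww, \g \R \d$ in $\J_\CS$. For the base case $n=0$, the sequent is initial, and since the principal formula of any initial sequent ($(Ax)$, $(Ax\bot)$, $(AxR)$, $(AxE)$) survives the deletion of a single duplicate $\ww R\ww$ from the antecedent, the contracted sequent is itself initial.

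For the inductive step I would analyse the last rule $(R)$. If neither copy of $\ww R\ww$ is principal in $(R)$, both copies appear as side formulas in every premise; apply the induction hypothesis to each premise (each of height at most $n-1$) and re-apply $(R)$, obtaining a derivation of $\ww R\ww, \g \R \d$ of height at most $n$. The non-trivial subcases are those in which the principal relational atom of $(R)$ is one of the $\ww R\ww$'s; these arise for $(L:)$, $(Mon)$, $(Sym)$, $(Trans)$, $(Eucl)$, and $(Eucl_*)$. In the generic such subcase, e.g.\ $(L:)$ or $(Mon)$ with $\vv = \ww$, the premise contains exactly two copies of $\ww R\ww$ (one as principal, one in the side context), so a single application of the induction hypothesis followed by $(R)$ suffices.

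The main obstacle is the subcase $(R) = (Trans)$ in which both principal relational atoms collapse onto $\ww R\ww$, i.e.\ $\vv = \uu = \ww$:
\[\di{\f{\ww R\ww,\ww R\ww,\ww R\ww,\g'\R\d}{\ww R\ww,\ww R\ww,\g'\R\d}}(Trans),\]
where the premise carries three copies of $\ww R\ww$ at height at most $n-1$. Here re-applying $(Trans)$ after a single contraction would fail to restore the needed copies, so I would apply the induction hypothesis twice to the premise, first contracting two of the three copies and then contracting the resulting pair, each step producing a derivation of height at most $n-1$. The final output $\ww R\ww, \g' \R \d$ then has height at most $n-1 \leq n$, as required. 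The same double-contraction pattern resolves the analogous subcases of $(Sym)$ (when $\vv = \ww$), $(Eucl_*)$ (when $\vv = \ww$), and $(Eucl)$ (when $\vv = \uu = \ww$). Since no step of the transformation alters any $(AN)$ or $(IAN)$ instance in the derivation, the underlying constant specification $\CS$ is preserved throughout, yielding the claimed $\CS$-admissibility.
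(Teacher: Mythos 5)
Your proposal is correct and follows essentially the same route as the paper: induction on the height of the premise, re-applying the last rule when neither copy of $\ww R\ww$ is principal, and handling the collapsed instances of $(Trans)$ (and $(Sym)$) by a double application of the induction hypothesis, with single applications sufficing for $(L:)$ and $(Mon)$. The only cosmetic differences are that the paper first dispatches the systems containing $(Ref)$ by a direct appeal to that rule, and that your extra cases for $(Eucl)$ and $(Eucl_*)$ do not actually arise in the justification systems $\J_\CS$ (those rules occur only in the modal and modal-justification calculi), though including them is harmless.
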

\begin{proof} The rule $(Trans_*)$ is obviously admissible in {\sf G3LP}, {\sf G3T45}, {\sf G3TB45}, {\sf G3TB4} due
to the fact that they contain rule $(Ref)$. Now by induction on
the height of the derivation of the premise of $(Trans_*)$,  $\ww R\ww,\ww R\ww,\g\R\d$, we
show that $(Trans_*)$ is admissible in {\sf G3J4}, {\sf G3JD4}, {\sf G3JB4},
{\sf G3J45}, {\sf G3JB45}, {\sf G3JD45}, {\sf G3JDB45}, and  {\sf G3JDB4}.

If the premise is an initial sequent, then so is the conclusion.
If the premise is obtained by a rule $(R)$, we have two cases:
(i) None of the occurrences of $wRw$ is principal in $(R)$.
For example, suppose $(R)$ is a one premise rule and consider the following derivation of height $n$
\[\di{\f{
\begin{array}{c}
{\mathcal D}\\
\ww R\ww,\ww R\ww,\g'\R\d'
\end{array}
}{\ww R\ww,\ww R\ww,\g\R\d}\,(R)}\]
 By the induction hypothesis we have a derivation of
height $n-1$ of $wRw, \g'\R\d'$. Then by applying the rule $(R)$
we obtain a derivation of height $n$ of $\ww R\ww, \g\R\d$. (ii) One of
the occurrences of $\ww R\ww$ is principal in $(R)$. Then $(R)$ may be
$(Trans)$, $(Sym)$, $(L:)$, or $(Mon)$. Suppose $(R)$ is $(Trans)$ and consider the
following derivation of height $n$
\[\di{\f{
\begin{array}{c}
{\mathcal D}\\
\ww R\ww, \ww R\ww,\ww R\ww,\g\R\d
\end{array}
}{\ww R\ww,\ww R\ww,\g\R\d}\,(Trans)}\]
 Then by the induction hypothesis (applied twice) we get a
derivation of height $n-1$ of $\ww R\ww,\g\R\d$. The case for $(Sym)$ is similar. Suppose $(R)$ is $(L:)$ and
consider the following derivation of height $n$
\[\di{\f{
\begin{array}{c}
{\mathcal D}\\
\ww\Vvdash A,\ww\Vvdash t:A, \ww R\ww,\ww R\ww,\g\R\d
\end{array}
}{\ww\Vvdash t:A,\ww R\ww,\ww R\ww,\g\R\d}\,(L:)}\]
 By the induction hypothesis we get a
derivation of height $n-1$ of $\ww\Vvdash A,\ww\Vvdash t:A,
\ww R\ww,\g\R\d$. Then by applying $(L:)$ we obtain a derivation of
height $n$ of $\ww\Vvdash t:A, \ww R\ww,\g\R\d$. Suppose $(R)$ is $(Mon)$
and consider the following derivation of height $n$
\[\di{\f{
\begin{array}{c}
{\mathcal D}\\
\ww E(t,A), \ww E(t,A), \ww R\ww,\ww R\ww,\g\R\d
\end{array}
}{\ww E(t,A),\ww R\ww,\ww R\ww,\g\R\d}\,(Mon)}\]
 By the induction hypothesis we get a
derivation of height $n-1$ of $\ww E(t,A), \ww E(t,A),
\ww R\ww,\g\R\d$. Then by applying $(Mon)$ we obtain a derivation of
height $n$ of  $ \ww E(t,A), \ww R\ww,\g\R\d$. \qe \end{proof}

\begin{theorem}\label{thm:admiss Contr.}
 The rules of contraction
\[\di{\f{\varphi,\varphi,\g\R\d}{ \varphi,\g\R\d}}(LC)\hspace{2cm} \di{\f{\g\R\d,\varphi,\varphi}{\g\R\d, \varphi}}(RC)\]
are height-preserving $\CS$-admissible in $\J_\CS$.
\end{theorem}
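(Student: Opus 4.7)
The plan is to prove $(LC)$ and $(RC)$ simultaneously by induction on the height $n$ of the derivation of the premise; I detail only $(LC)$, since $(RC)$ is symmetric. In the base case $n = 0$, the premise $\varphi,\varphi,\g\R\d$ is one of the initial sequents $(Ax)$, $(Ax\bot)$, $(AxR)$, $(AxE)$, and the conclusion $\varphi,\g\R\d$ remains an initial sequent of the same form at height $0$.

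For the inductive step, let $(R)$ denote the last rule of the derivation. If neither occurrence of $\varphi$ is principal in $(R)$, then each premise of $(R)$ contains both copies of $\varphi$; I apply the induction hypothesis to each premise (contracting a side occurrence of $\varphi$) and then reapply $(R)$, noting that the eigenlabel conditions on rules such as $(R:)$ and $(Ser)$ remain satisfied because $\varphi$ is not active. If one copy of $\varphi$ is principal in $(R)$, I distinguish two subcases. For rules that carry the principal formula into the premise, namely $(L:)$, the evidence rules $(E)$, $(Mon)$, $(E\cdot)$, $(El+)$, $(Er+)$, $(E!)$, $(SE)$, the axiom necessitation rules $(AN)$/$(IAN)$, and the relational rules $(Sym)$, $(Trans)$, $(Eucl)$, $(Eucl_*)$, the premise already has the duplicate of $\varphi$ in its context, so a single invocation of the induction hypothesis to the premise reduces the multiplicity, after which $(R)$ is reapplied. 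For the propositional rules and $(R:)$, where the principal formula is discarded by the rule, I invoke the height-preserving $\CS$-invertibility of $(R)$ (Proposition \ref{prop:inversion lemma}) on the premise to replace the remaining copy of $\varphi$ by its active components, apply the induction hypothesis to contract the residual duplications that arise on those active components, and then reapply $(R)$.

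Two subtle points require extra care. First, when $(R) = (R:)$ with $\varphi = \ww\Vvdash t:A$ principal and the second copy also in the succedent of the conclusion, inverting $(R:)$ on the premise produces a fresh eigenlabel $\uu$ alongside the original eigenlabel $\vv$; to merge them I invoke the height-preserving label substitution of Lemma \ref{lemma: substitution lemma}, then contract the resulting duplicates $\ww R\vv,\ww R\vv$ and $\vv\Vvdash A,\vv\Vvdash A$ by the induction hypothesis, and finally reapply $(R:)$. The same device takes care of $(Ser)$ whenever the contracted $\varphi$ interacts with its eigenlabel. Second, when $(R) = (Trans)$ and $\varphi = \ww R\ww$, the reapplication of $(Trans)$ after contraction collapses into an instance where all three relational atoms coincide; this gap is filled precisely by the height-preserving admissibility of $(Trans_*)$ from Lemma \ref{lemma: admissibility of Trans*}, which was proved in advance for this purpose. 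The remainder of the argument is routine G3-style bookkeeping.
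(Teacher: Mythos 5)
Your proposal is correct and follows essentially the same route as the paper's proof: simultaneous induction on the height of derivation for $(LC)$ and $(RC)$, using height-preserving invertibility (Proposition \ref{prop:inversion lemma}) together with the label substitution of Lemma \ref{lemma: substitution lemma} to handle the case where $\varphi=\ww\Vvdash t:A$ is principal in $(R:)$, and appealing to Lemma \ref{lemma: admissibility of Trans*} for the contracted instance of $(Trans)$. The paper details only these two cases and refers the remaining ones to Negri's Theorem 4.12; your write-up simply makes those routine cases explicit.
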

\begin{proof} The proof is by simultaneous induction on the height of
derivation for left and right contraction. We consider in detail
only the case in which contraction formula is principal and the
last rule is $(R:)$. Consider the following derivation of height
$n$
\[\di{\f{
\begin{array}{c}
{\mathcal D}\\
\ww E(t,A),\ww R\vv,\g\R\d,\ww\Vvdash t:A,\vv\Vvdash A
\end{array}
}{\ww E(t,A),\g\R\d,\ww\Vvdash t:A,\ww\Vvdash t:A}\,(R:)}. \] By
Proposition \ref{prop:inversion lemma}, we have a derivation of
height $n-1$ of
 \[ \ww E(t,A) ,\ww R\vv,\ww R\uu,\g\R\d,\uu\Vvdash A,\vv\Vvdash A,\]
 for any fresh
label $\uu$. Then by height-preserving substitution, we obtain a
derivation of height $n-1$ of
\[ \ww E(t,A),\ww R\vv,\ww R\vv,\g\R\d,\vv\Vvdash A,\vv\Vvdash A.\]
 By the induction hypothesis  for left and right contraction, we get $\ww E(t,A),\ww R\vv,\g\R\d,\vv\Vvdash A$. Finally, by applying the rule
$(R:)$, we obtain a derivation of height $n$ for $\ww E(t,A),\g\R\d,\ww\Vvdash t:A$.

If the last rule is $(Trans)$ and $\ww=\vv=\uu$ then
use Lemma \ref{lemma: admissibility of Trans*}. The other cases of the
proof is similar to the proof of Theorem 4.12 in \cite{Negri2005},
and is omitted here.\qe \end{proof}

\begin{theorem}\label{thm:cut elimination}
 The $Cut$ rule
\[\di{\f{\g\R\d,\varphi\qquad \varphi,\g'\R\d'}{\g,\g'\R\d,\d'}\,Cut}\]
 is \CS-admissible in $\J_\CS$.
\end{theorem}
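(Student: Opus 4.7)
The plan is to prove cut admissibility by the standard double induction of Negri's method: the primary induction is on the weight of the cut formula $\varphi$ (where labels play no role in the weight, so $\ww\Vvdash A$ has the weight of $A$, and the atomic forms $\ww R\vv$ and $\ww E(t,A)$ have weight $0$), and the secondary induction is on the cut-height, i.e.\ the sum of the heights of the derivations of the two premises. I will split into three principal cases, following the pattern already established for $\mathsf{G3K}$ in \cite{Negri2005}: (i) one of the premises is an initial sequent; (ii) the cut formula $\varphi$ is not principal in at least one of the two premises; (iii) $\varphi$ is principal in both premises.

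Case (i) is handled by direct inspection: if the left premise is an axiom, either $\varphi$ is not the principal formula (so the conclusion is already an axiom after weakening) or $\varphi$ is principal, in which case $\varphi$ occurs in $\g'$ and the conclusion follows from the right premise by the height-preserving admissibility of weakening (Theorem~\ref{thm:admiss weak.}); symmetrically for the right premise. Case (ii) is the ``permutation'' case: we permute the cut upward over the last rule of the premise in which $\varphi$ is only a side formula. Here we use height-preserving invertibility (Proposition~\ref{prop:inversion lemma}) when needed to restore suitable premises, and we use height-preserving substitution (Lemma~\ref{lemma: substitution lemma}) to rename eigenlabels of $(R:)$ and $(Ser)$ away from labels appearing in the other premise of the cut, so that the rule can be reapplied below the new cut of smaller cut-height.

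Case (iii) splits according to the shape of $\varphi$. If $\varphi=\ww\Vvdash A$ with $A$ compound, the standard reduction replaces the cut by cuts on the immediate subformulas of $A$, whose weights are strictly smaller; for the key case $\varphi=\ww\Vvdash t:A$ principal in $(R:)$ on the left and $(L:)$ on the right, the reduction produces cuts on $\ww\Vvdash t:A$ (of smaller cut-height, using invertibility to pull $\ww R\vv$ up in the right premise), on $\ww E(t,A)$ (weight $0$), and on $\vv\Vvdash A$ (of strictly smaller weight), after which contraction (Theorem~\ref{thm:admiss Contr.}) is used to merge the duplicated contexts. When $\varphi$ is an atomic formula $\ww R\vv$ or $\ww E(t,A)$, the primary induction hypothesis does not decrease the weight, so in this case I would trace the principal occurrences of $\varphi$ up through the derivations: every derivation of $\g\R\d,\varphi$ has $\varphi$ either in an initial sequent (handled by case (i) variant) or introduced by a rule that places $\varphi$ in the antecedent of its premise (such as $(AxR)$, $(AxE)$, $(IAN)$/$(AN)$, $(Ref)$, $(Ser)$, and the closure rules on $E$ and $R$), so the right-premise cut can be eliminated by combining these derivations with the one of $\varphi,\g'\R\d'$, again using weakening and contraction.

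The main obstacle will be the case where $\varphi$ is principal on the right in one of the rules that are really instances of the geometric or regular rule scheme, in particular $(Mon)$, $(E?)$, $(E\bar{?})$, $(SE)$, $(Trans)$, $(Sym)$, $(Eucl)$, and the $(IAN)/(AN)$ rules, since these can reintroduce the cut atom from nowhere or from its own subterms; here the contracted instance $(Trans_*)$ of Lemma~\ref{lemma: admissibility of Trans*} and its analogues for $(Eucl_*)$ play the role that ensures admissible contraction survives the cut-reduction, and one must verify that the constant specification $\CS$ is respected throughout, since each permutation or reduction step only rearranges existing applications of $(IAN)/(AN)$ and never introduces new formulas into $\CS$, giving the $\CS$-admissibility claim of the theorem.
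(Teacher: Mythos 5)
Your proposal is correct and follows essentially the same route as the paper: a primary induction on the size of the cut formula with a subinduction on cut-height, permutation of the cut when $\varphi$ is not principal (renaming eigenlabels of $(R:)$/$(Ser)$ via the substitution lemma), and the standard two-cut reduction with contraction for the principal--principal case $(R:)$--$(L:)$. The only remark worth making is that your worry about $(Mon)$, $(E?)$, $(E\bar{?})$, $(SE)$ and the relational rules is dissolved more simply by the paper's observation that relational and evidence atoms are never principal in the \emph{succedent} of any rule, only in the initial sequents $(AxR)$/$(AxE)$, so a cut on such an atom always either permutes up on the left or reduces to weakening --- no tracing of families of occurrences is needed.
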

\begin{proof} 
The proof is by induction on the size of cut formula $\varphi$
with a subinduction on the level of $Cut$. The \textit{size} of
$\ww\Vvdash A$ is defined as the size of the formula $A$, and formulas $\ww E(t,A)$ and $\ww R\vv$ is considered to be of size
1. The \textit{level} of a $Cut$ is the sum of the heights of the
derivations of the premises. Let us first consider the cases that
the cut formula is of the form $\ww\Vvdash t:A$. Suppose $\ww\Vvdash
t:A$ is principal in both premises of the cut $(R:)-(L:)$:
\[ \di{\f{
\di{ \f{
\begin{array}{c}
{\mathcal D}_1\\
\ww R\vv,\ww E(t,A),\g\R\d,\vv\Vvdash A
\end{array}
}{\ww E(t,A),\g\R\d,\ww\Vvdash t:A}\,(R:)} \quad\quad
 \di{
 \f{
\begin{array}{c}
{\mathcal D}_2\\
\uu\Vvdash A,\ww\Vvdash t:A,\ww R\uu,\g'\R\d'
\end{array}
 }{\ww\Vvdash t:A,\ww R\uu,\g'\R\d'}\,(L:)}}
 {\ww E(t,A),\ww R\uu,\g,\g'\R\d,\d'}\,Cut} \]
where $\vv$ is not in the conclusion of $(R:)$. First we construct the derivation ${\mathcal D}_3$ as follows:
\[\di{\f{
\di{ \f{
\begin{array}{c}
{\mathcal D}_1\\
\ww R\vv,\ww E(t,A),\g\R\d,\vv\Vvdash A
\end{array}
}{\ww E(t,A),\g\R\d,\ww\Vvdash t:A}\,(R:)} \quad
\begin{array}{c}
 {\mathcal D}_2\\
\uu\Vvdash A,\ww\Vvdash t:A,\ww R\uu,\g'\R\d'
\end{array}
}{\ww E(t,A),\uu\Vvdash A,\ww R\uu,\g,\g'\R\d,\d'}\,Cut_1}\]
 And then we have
\[ \di{\f{
\begin{array}{c}
{\mathcal D}_1(\uu/\vv)\\
\ww R\uu,\ww E(t,A),\g\R\d,\uu\Vvdash A
\end{array}
\quad
\begin{array}{c}
{\mathcal D}_3\\
\ww E(t,A),\uu\Vvdash A,\ww R\uu,\g,\g'\R\d,\d'
\end{array}
}{ \di{\f{\ww E(t,A),\ww E(t,A),\ww R\uu,\ww R\uu,\g,\g,\g'\R\d,\d,\d'}{\ww E(t,A),\ww R\uu,\g,\g'\R\d,\d'}\,Ctr} }\,Cut_2}\]
where $Ctr$ denotes
repeated applications of left and right contraction rules, and the
premise $$\ww R\uu,\ww E(t,A),\g\R\d,\uu\Vvdash A$$ of $Cut_2$ is obtained
by the substitution lemma (Lemma \ref{lemma: substitution lemma})
from $\ww R\vv,\ww E(t,A),\g\R\d,\vv\Vvdash A$. The $Cut_1$ has smaller
level and $Cut_2$ has smaller cut formula. Hence, by the induction
hypothesis, they are admissible.

Now, consider the cut $(R:)-(E)$ as follows:
\[ \di{\f{
\di{ \f{
\begin{array}{c}
{\mathcal D}_1\\
\ww R\vv,\ww E(t,A),\g\R\d,\vv\Vvdash A
\end{array}
}{\ww E(t,A),\g\R\d,\ww\Vvdash t:A}\,(R:)} \quad\quad
 \di{
 \f{
\begin{array}{c}
{\mathcal D}_2\\
\ww E(t,A),\ww\Vvdash t:A,\g'\R\d'
\end{array}
 }{\ww\Vvdash t:A,\g'\R\d'}\,(E)}}
 {\ww E(t,A),\g,\g'\R\d,\d'}\,Cut} \]
where $\vv$ is not in the conclusion of $(R:)$. This derivation is transformed into
\[\di{\f{
\di{ \f{
\begin{array}{c}
{\mathcal D}_1\\
\ww R\vv,\ww E(t,A),\g\R\d,\vv\Vvdash A
\end{array}
}{\ww E(t,A),\g\R\d,\ww\Vvdash t:A}\,(R:)} \quad
\begin{array}{c}
 {\mathcal D}_2\\
\ww E(t,A),\ww\Vvdash t:A,\g'\R\d'
\end{array}
}{\di{\f{\ww E(t,A),\ww E(t,A),\g,\g'\R\d,\d'}{\ww E(t,A),\g,\g'\R\d,\d'}}\,(LC)}\,Cut}\]
 where the above $Cut$ is of smaller level, and thus by the induction hypothesis is admissible.

Now suppose the cut formula is of the form $\ww E(t,A)$ or $\ww R\vv$. We study two cases here. First note that, by a simple
inspection of all rules, we find out that no relational atoms
$\ww R\vv$  and evidence atoms $\ww E(t,A)$ can be
principal in the succedent of sequents of rules (so as a cut formula they might be a side
formula in a rule, or a principal formula in an initial sequent). Let us consider the following $Cut$ with cut
formula $\ww E(t,A)$:
\[ \di{\f{
\di{ \f{
\begin{array}{c}
{\mathcal D}_1\\
\g''\R\d'',\ww E(t,A)
\end{array}
}{\g\R\d,\ww E(t:A)}\,(R)} \quad\quad
 \di{
 \f{
\begin{array}{c}
{\mathcal D}_2\\
\ww R\vv,\ww E(t,A),\g'\R\d',\vv\Vvdash A
\end{array}
 }{\ww E(t,A),\g'\R\d',\ww\Vvdash t:A}\,(R:)}}
 {\g,\g'\R\d,\d',\ww\Vvdash t:A}\,Cut} \]
where the rule $(R)$ in the left premise of the $Cut$ can be any
rule. We permute the cut upward to obtain a $Cut$ with lower level,
and then we apply the rule $(R)$:
\[ \di{\f{
\begin{array}{c}
{\mathcal D}_1\\
\g''\R\d'',\ww E(t:A)
\end{array}
\quad \di{
 \f{
\begin{array}{c}
{\mathcal D}_2\\
\ww R\vv,\ww E(t,A),\g'\R\d',\vv\Vvdash A
\end{array}
 }{\ww E(t,A),\g'\R\d',\ww\Vvdash t:A}\,(R:)}
}{\di{\f{\g'',\g'\R\d'',\d',\ww\Vvdash t:A}{\g,\g'\R\d,\d',\ww\Vvdash
t:A}}\,(R)}\,Cut} \]
 In the case that the rule $(R)$ is a rule
with label condition, such as $(R:)$, first we apply a
suitable substitution on labels (substitute the eigenlabel of
$(R:)$ with a fresh label not used in the derivation), and then we
proceed as the above argument. For example, consider the following
$Cut$ on $\ww R\vv$:
\[ \di{\f{
\di{ \f{
\begin{array}{c}
{\mathcal D}_1\\
\uu R\uu',\uu E(s,B),\g\R\d,\ww R\vv,\uu'\Vvdash B
\end{array}
}{\uu E(s,B),\g\R\d,\ww R\vv,\uu\Vvdash s:B}\,(R:)} \quad\quad
 \di{
 \f{
\begin{array}{c}
{\mathcal D}_2\\
\vv\Vvdash A,\ww R\vv,\ww\Vvdash t:A,\g'\R\d'
\end{array}
 }{\ww R\vv,\ww\Vvdash t:A,\g'\R\d'}\,(L:)}}
 {\uu E(s,B),\ww\Vvdash t:A,\g,\g'\R\d,\d',\uu\Vvdash s:B}\,Cut} \]
It is transformed into the following $Cut$ with lower level:
\[ \di{\f{
\begin{array}{c}
{\mathcal D}_1(\uu''/\uu')\\
\uu R\uu'',\uu E(s,B),\g\R\d,\ww R\vv,\uu''\Vvdash B
\end{array}
\quad \di{
 \f{
\begin{array}{c}
{\mathcal D}_2\\
\vv\Vvdash A,\ww R\vv,\ww\Vvdash t:A,\g'\R\d'
\end{array}
 }{\ww R\vv,\ww\Vvdash t:A,\g'\R\d'}\,(L:)}
}{\di{\f{\uu R\uu'',\uu E(s,B),\ww\Vvdash t:A,\g,\g'\R\d,\d',\uu''\Vvdash
B}{\uu E(s,B),\ww\Vvdash t:A,\g,\g'\R\d,\d',\uu\Vvdash
s:B}}\,(R:)}\,Cut}
\]
 where $\uu''$ is a fresh label not used in the derivation.\qe \end{proof}

 %%%%%%%%%%%%%%%%%%%%%%%%%%%%%%%%%%%%%%%%%%%%%%%%%%%%%%%%%%%%%%%%%%%%%%%%%%%%%%%%%%%%%%%%%%%%%%
\section{Soundness and completeness}\label{sec: Soundness Completeness}
In this section, we shall prove the soundness and completeness of
our labeled sequent calculi with respect to F-models. The proofs of soundness (Theorem \ref{Soundness
labeled systems}) and completeness (Theorem \ref{thm:reduction tree}) are similar to those of modal logics given by
Negri in \cite{Negri2009}. The proof of completeness presents a
derivation of a given valid sequent or a countermodel in the case
of failure of proof search. Hence, it helps us to give a proof search for sequents. We start with the
definition of an interpretation (offered by Negri in
\cite{Negri2009}) which provides a translation between labels used in derivations of a labeled sequent calculus and possible worlds in F-models.
\begin{definition}
let $\M=(\W,\RR,\E,\V)$ be a ${\sf JL}_\CS$-model and $L$ be the set of all labels. An interpretation
$[\cdot]$ of the labels $L$ in model $\M$, or simply an $\M$-interpretation, is a function $[\cdot]:L\r \W$. An
$\M$-interpretation $[\cdot]$ validates a formula of the extended labeled
language in the following sense:
\begin{itemize}
 \item $[\cdot]$ validates the labeled formula $\ww\Vvdash A$, provided that $(\M,[\ww])\Vdash A$,
  \item $[\cdot]$ validates the relational atom $\ww R\vv$, provided that $[\ww]\RR [\vv]$,
   \item $[\cdot]$ validates the evidence atom $\ww E(t,A)$, provided that $[\ww]\in\E(t,A)$.
\end{itemize}
A sequent $\g\R\d$ is valid for an $\M$-interpretation $[\cdot]$, if whenever $[\cdot]$ validates all the formulas in $\g$
then it validates at least one formula in $\d$. A sequent is valid
in a model $\M$ if it is valid for every $\M$-interpretation. %A sequent is valid in $\JL^*$ (or it is $\JL^*$-valid), if it is valid in every $\JL^*$-model $\M$.
\end{definition}
 The following lemma is helpful in the
rest of the paper.
\begin{lemma}\label{validity of formula and sequence}
Given a \JL-formula $A$ and a ${\sf JL}_\CS$-model $\M$, the formula $A$
is true in $\M$ (i.e. $\M\Vdash A$) if and only if the sequent $\R \ww\Vvdash A$ is
valid in $\M$, for arbitrary label $\ww$.
\end{lemma}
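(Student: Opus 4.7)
The plan is to unpack the definitions on both sides and observe that the only non-trivial content is a choice of interpretation mapping the single label $\ww$ to an arbitrary world. Both directions follow directly from the definition of $\M$-interpretation and the fact that the antecedent of $\R \ww\Vvdash A$ is empty, hence vacuously satisfied.

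First I would handle the forward direction. Assume $\M\Vdash A$, so that $(\M,w)\Vdash A$ for every $w\in\W$. Let $[\cdot]$ be an arbitrary $\M$-interpretation. Since the antecedent of the sequent $\R\ww\Vvdash A$ is empty, the validity condition reduces to requiring that $[\cdot]$ validates the single succedent formula $\ww\Vvdash A$, i.e.\ that $(\M,[\ww])\Vdash A$. This holds because $[\ww]\in\W$ and $A$ is true at every world of $\M$. As $[\cdot]$ was arbitrary, the sequent is valid in $\M$.

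For the converse, suppose $\R\ww\Vvdash A$ is valid in $\M$ (for some fixed label $\ww$). Pick any world $w\in\W$. Define an $\M$-interpretation $[\cdot]_w$ by setting $[\ww]_w := w$ and choosing $[\vv]_w$ arbitrarily (say, also $w$) for every other label $\vv\in L$; this is a well-defined function $L\to\W$. By the assumed validity, $[\cdot]_w$ validates $\ww\Vvdash A$, so $(\M,w)=(\M,[\ww]_w)\Vdash A$. Since $w$ was arbitrary, $\M\Vdash A$.

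There is no real obstacle; the lemma is essentially a sanity check that the semantic notion of truth in a model matches the labeled-sequent notion of validity when the sequent has empty antecedent and a single labeled succedent. The only point that needs mentioning explicitly is that $\M$-interpretations are required to be total functions on $L$, but this is harmless since we may always extend any partial assignment of interest to a total one without affecting the values at the labels actually occurring in the sequent.
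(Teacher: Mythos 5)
Your proof is correct and follows essentially the same route as the paper's: the forward direction uses that any interpretation sends $\ww$ into $\W$ where $A$ holds everywhere, and the converse constructs, for each world $w$, an interpretation with $[\ww]=w$ and appeals to validity. The only difference is your explicit remark about totality of the interpretation, which the paper leaves implicit.
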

\begin{proof} Suppose that $A$ is true in the model $\M=(\W,\RR,\E,\V)$. Then for
every $\M$-interpretation $[\cdot]$, and every label $\ww$
we have $[\ww]\in\W$, and therefore $(\M,[\ww])\Vdash A$. Thus, the
interpretation $[\cdot]$ validates the sequent $\R \ww\Vvdash A$.
Since the interpretation $[\cdot]$ is arbitrary, the sequent $\R \ww\Vvdash A$ is valid in $\M$.

Conversely, suppose the sequent $\R \ww\Vvdash A$ is valid in $\M$,
i.e., it is valid for every $\M$-interpretation. For an
arbitrary world $w\in\W$, define the interpretation $[\cdot]$ on
$\M$ such that $[\ww]=w$. Since $[\cdot]$ validates $\R \ww\Vvdash A$,
we have $(\M,w)\Vdash A$. Thus, $A$ is true in $\M$.\qe \end{proof}

Now we show the soundness of labeled sequent calculi with respect to F-models.
\begin{theorem}[Soundness]\label{Soundness labeled systems}
If the sequent $\g\R\d$ is derivable in $\J_\CS$, then it is
valid in every ${\sf JL}_\CS$-model.
\end{theorem}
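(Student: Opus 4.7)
The plan is to proceed by induction on the height $n$ of the derivation of $\g \R \d$ in $\J_\CS$, showing that for every ${\sf JL}_\CS$-model $\M$ and every $\M$-interpretation $[\cdot]$, if $[\cdot]$ validates every formula in $\g$ then it validates at least one formula in $\d$. For the base case, I would inspect the four kinds of initial sequents: $(Ax)$ and $(Ax\bot)$ are handled by the definition of the forcing relation (recall that $\bot$ is never forced), while $(AxR)$ and $(AxE)$ are immediate since a relational or evidence atom occurs on both sides. For the inductive step, I would go through each rule of $\J_\CS$ and argue that it preserves validity in any ${\sf JL}_\CS$-model.

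The propositional rules are standard, relying only on the classical interpretation of the connectives. For the rule $(L:)$, under any interpretation $[\cdot]$ validating the antecedents, the fact that $[\ww]\Vvdash t:A$ and $[\ww]\RR[\vv]$ forces $[\vv]\Vvdash A$ by Definition~\ref{def:forcing relation}, which makes the premise of $(L:)$ validated and the induction hypothesis applicable. The rule $(E)$ is justified by clause (3) of the forcing definition. For the rules on evidence atoms $(El+)$, $(Er+)$, $(E\cdot)$, $(E!)$, $(Mon)$, $(SE)$, $(E?)$, and for the frame rules $(Ref)$, $(Sym)$, $(Trans)$, $(Eucl)$, $(Eucl_*)$, validity preservation amounts exactly to the corresponding admissibility conditions $\E1$--$\E8$ on the evidence function and the corresponding frame properties of $\RR$ guaranteed by the definition of a ${\sf JL}_\CS$-model. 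For $(E\bar{?})$ the argument uses the weak negative introspection condition $(\E5)$ together with a case split on whether $[\cdot]$ validates $\ww \Vvdash A$. For the necessitation rules $(IAN)$ and $(AN)$, the premise's newly added evidence atom is automatically validated because $\mathcal{A}(c,F)=\W$ whenever $c:F\in\CS$ (and the iterated version follows from the fact that every formula of $\CS$ is forced at every world, so the corresponding atoms are validated by any interpretation).

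The main obstacle will be the two rules that introduce an eigenlabel, namely $(R:)$ and $(Ser)$, where the premise contains a label not present in the conclusion. For $(R:)$, suppose an interpretation $[\cdot]$ validates $\ww E(t,A)$ and all formulas in $\g$ while falsifying every formula in $\d$; I need $[\ww]\Vvdash t:A$, that is $[\ww]\in\E(t,A)$ (which is given) and $v\Vvdash A$ for every $v$ with $[\ww]\RR v$. For each such $v$, I would \emph{extend} $[\cdot]$ to an interpretation $[\cdot]'$ sending the eigenlabel $\vv$ to $v$ and agreeing with $[\cdot]$ on all other labels; since $\vv$ does not occur in the conclusion, $[\cdot]'$ still validates $\ww E(t,A),\g$ and still falsifies $\d$, and it additionally validates $\ww R\vv$, so the induction hypothesis applied to the premise yields $v\Vvdash A$, as required. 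The case of $(Ser)$ is handled in the same style using the seriality of $\RR$ to choose a witness for the eigenlabel.

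Finally, by the inductive hypothesis the derivation of $\g\R\d$ forces validity in every ${\sf JL}_\CS$-model $\M$ under every $\M$-interpretation, which is the conclusion of the theorem; Lemma~\ref{validity of formula and sequence} then lets us recast this as truth of the corresponding formula when $\g$ is empty and $\d$ consists of a single labeled formula $\ww\Vvdash A$.
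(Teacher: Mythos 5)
Your proposal is correct and follows essentially the same route as the paper's proof: induction on derivation height, with the eigenlabel rules $(R:)$ and $(Ser)$ handled by modifying the interpretation on the fresh label, the two-premise evidence rules handled by a case split matched to the conditions $\E5$/$\E6$, and $(AN)/(IAN)$ handled via $\E(c,A)=\W$ for $c{:}A\in\CS$. No gaps.
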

\begin{proof} By induction on the height of the derivation of $\g\R\d$ in
$\J_\CS$. Initial sequents are obviously valid in every ${\sf
JL_\CS}$-model. We only check the induction step for the rules
$(E)$, $(R:)$, $(E?)$, $(AN)$, and $(Ser)$. For the case of propositional
rules and the rules of the accessibility relation, refer
to Theorem 5.3 in \cite{Negri2009}. The case of other rules
are similar or simpler.

Suppose $\g\R\d$ is $\ww\Vvdash t:A,\g'\R\d$, the conclusion of
rule $(E)$, with the premise $\ww E(t,A),\ww\Vvdash t:A,\g'\R\d$,
and assume by the induction hypothesis that the premise is valid
in every ${\sf JL}_\CS$-model. Let $\M=(\W,\RR,\E,\V)$ be a ${\sf
JL}_\CS$-model and $[\cdot]$ be an arbitrary
$\M$-interpretation which validates $\ww\Vvdash t:A$ and all the formulas in
$\g'$. In particular, $(\M,[\ww])\Vdash t:A$. We have to
prove that this interpretation validates one of the formulas in
$\d$. Since $(\M,[\ww])\Vdash t:A$, by the definition of forcing
relation $\Vdash$, we have $[\ww]\in\E(t,A)$. Thus, $[\cdot]$
validates all the formulas in the antecedent of the premise. Hence
by the induction hypothesis, it validates one of the formulas in
$\d$, as desire.

Suppose $\g\R\d$ is $\ww E(t,A),\g'\R\d',\ww\Vvdash t:A$, the
conclusion of  rule $(R:)$, with the premise $\ww R\vv,\ww E(t,A),\g'\R\d',\vv\Vvdash A$, and assume by the induction
hypothesis that the premise is valid in every ${\sf JL}_\CS$-model. Let $\M=(\W,\RR,\E,\V)$ be a ${\sf JL}_\CS$-model and $[\cdot]$ be an arbitrary $\M$-interpretation
which validates $\ww E(t,A)$ and all the formulas in $\g'$. In particular,
$[\ww]\in\E(t,A)$. We have to prove that this interpretation
validates one of the formulas in $\d'$ or validates $\ww\Vvdash t:A$. Suppose
$w$ is an arbitrary element of $\W$ such that $[\ww]\RR w$ (if such
a world $w$ does not exist, then obviously we have $(\M,[\ww])\Vdash
t:A$) and $[\cdot]'$ be the interpretation identical to $[\cdot]$
except possibly on $\vv$, where we put $[\vv]'= w$. Clearly,
$[\cdot]'$ validates all the formulas in the antecedent of the
premise, so it validates a formula in $\d'$ or validates $\vv\Vvdash A$. In the former case, since $\vv$ is not in $\d'$,
$[\cdot]$ validates a formula in $\d'$. In the latter case, we have $(\M,w)\Vdash A$. Since by the assumption $[\ww]\in\E(t,A)$ and $w$ is arbitrary, $[\cdot]$
validates $\ww\Vvdash t:A$.

Among rules for evidence atoms we only check the induction step for $(E?)$ in
{\sf G3J5} and its extensions. Let \JL~be {\sf J5} or one of its extensions. Suppose $\g\R\d$ is the conclusion of the rule $(E?)$, with
the premises $\ww E(t,A),\g\R\d$ and $\ww E(?t,\neg t:A),\g\R\d$, and assume
by the induction hypothesis that the premises are valid in every
${\sf JL}_\CS$-model. Let $\M=(\W,\RR,\E,\V)$ be a ${\sf JL}_\CS$-model and $[\cdot]$ be an arbitrary $\M$-interpretation which validates all the formulas in $\g$. We have to
prove that this interpretation validates one of the formulas in
$\d$. There are two cases: (i) If $[\ww]\in\E(t,A)$ then the antecedent of the premise $\ww E(t,A),\g\R\d$ is
validated in $[\cdot]$, and
hence one of the formulas in
$\d$ is validated. (ii) If $[\ww]\not\in\E(t,A)$, then by negative introspection condition $(\E6)$ of ${\sf JL}_\CS$-models we have
$[\ww]\in \E(?t,\neg t:A)$. Thus the antecedent of the premise $\ww E(?t,\neg t:A),\g\R\d$ is
validated in $[\cdot]$. Hence, $[\cdot]$ validates a formula in
$\d$. Therefore the conclusion is valid in $\M$.

For axiom necessitation rules we only consider $(AN)$ (rule
$(IAN)$ is treated similarly). Suppose $\g\R\d$ is the conclusion
of the rule $(AN)$, with the premise $\ww E(c,A),\g\R\d$, and
assume by the induction hypothesis that the premise is valid in
every ${\sf JL}_\CS$-model, and $c:A\in \CS$. Let
$\M=(\W,\RR,\E,\V)$ be a ${\sf JL}_\CS$-model and $[\cdot]$ be an
arbitrary $\M$-interpretation which validates all the
formulas in $\g$. We
have to prove that this interpretation validates one of the
formulas in $\d$. Since $\M$ is a ${\sf JL}_\CS$-model, we have $\E(c,A)=\W$. Thus $[\ww]\in\E(c,A)$,
and $[\cdot]$ validates the antecedent of the premise. Hence
$[\cdot]$ validates a formula in $\d$, as desire.

Let \JL~be {\sf JD} or one of its extensions. Suppose $\g\R\d$ is the
conclusion of rule $(Ser)$, with the premise $\ww R\vv,\g\R\d$, and assume by the induction
hypothesis that the premise is valid in every ${\sf JL}_\CS$-model. Let $\M=(\W,\RR,\E,\V)$ be a ${\sf JL}_\CS$-model and $[\cdot]$ be an arbitrary $\M$-interpretation
which validates all the formulas in $\g$. We have to prove that $[\cdot]$
validates one of the formulas in $\d$. Since $[\ww]\in\W$ and $\RR$ is serial, there exists $w\in\W$ such that $[\ww]\RR w$. Let $[\cdot]'$ be the interpretation identical to $[\cdot]$
except possibly on $\vv$, where we put $[\vv]'= w$. Since the eigenlabel $\vv$ is not in $\g$, $[\cdot]'$ validates all the formulas in $\g$. Thus, $[\cdot]'$ validates all the formulas in the antecedent of the
premise, so it validates a formula in $\d$. Since the eigenlabel $\vv$ is not in $\d$, $[\cdot]$ validates all the formulas in $\d$.
\qe \end{proof}
Now we show that theorems of Hilbert systems of justification logics are derivable in their labeled systems.
\begin{proposition}\label{derivability of theorems in labels systems}
If $A$ is a theorem of ${\sf JL}_{\CS}$, then $\R \ww\Vvdash A$ is derivable in $\J_\CS$, for arbitrary label $\ww$.
\end{proposition}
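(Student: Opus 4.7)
The plan is to proceed by induction on the length of a Hilbert-style derivation of $A$ in $\JL_\CS$. In the base case each axiom instance $A$ of $\JL_\CS$ must be shown to yield a derivable sequent $\R \ww\Vvdash A$ in $\J_\CS$ for an arbitrary label $\ww$. Propositional tautologies reduce to sequents of the form $\ww\Vvdash B,\g\R\d,\ww\Vvdash B$ by repeated application of the propositional rules of Table \ref{table: rules of labeled G3c}, using Lemma \ref{lem:Initial axiom for arbitrary A} to close with initial sequents on arbitrary formulas. Each justification axiom is then treated by an explicit derivation tailored to the rules of Tables \ref{table: rules for G3J} and \ref{table: rules for G3JL}. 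Typical examples: {\bf Sum} reduces via $(R{\r}), (E), (Er+), (R:), (L:)$; {\bf jK} via $(R{\r})$ twice, $(E), (E\cdot), (R:)$, two $(L:)$ and $(L{\r})$; {\bf jT} via $(R{\r}), (Ref), (L:)$; {\bf jD} via $(R{\r}), (Ser), (L:)$, closing on $(Ax\bot)$; {\bf j4} via $(R{\r}), (E), (E!), (R:), (Mon), (Trans), (L:)$; {\bf jB} via $(R{\r}), (L\neg), (E\bar{?}), (R:), (R\neg), (Sym), (L:)$; and {\bf j5} via $(R{\r}), (L\neg), (E?)$, closing one premise by $(SE)$ and handling the other by $(R:), (R\neg)$ together with the admissible anti-monotonicity rule of Lemma \ref{lem:Admissibility of Anti-Mon in G3J5}.

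For the inductive step, two rules of inference must be accommodated. Modus Ponens is absorbed by cut-admissibility: given derivations of $\R \ww\Vvdash A\r B$ and $\R \ww\Vvdash A$, height-preserving invertibility of $(R{\r})$ (Proposition \ref{prop:inversion lemma}) yields $\ww\Vvdash A \R \ww\Vvdash B$, and a single $Cut$ on $\ww\Vvdash A$ with the derivation of $\R \ww\Vvdash A$ produces $\R \ww\Vvdash B$; the cut is then eliminated by Theorem \ref{thm:cut elimination}. The $(I)AN$ rule is handled by a secondary induction on the length $n$ of the sequence of constants. For $n=1$, when $c{:}A \in \CS$ with $A$ an axiom, the axiom case supplies a derivation of $\R \vv\Vvdash A$; admissible weakening (Theorem \ref{thm:admiss weak.}) lifts this to $\ww R\vv, \ww E(c,A) \R \vv\Vvdash A$, to which $(R:)$ gives $\ww E(c,A) \R \ww\Vvdash c{:}A$, and a final application of $(AN)/(IAN)$ discharges the evidence atom. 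The step case exploits the downward closure of $\CS$: every prefix $c_{i_j}{:}\cdots{:}c_{i_1}{:}A$ lies in $\CS$, so the same construction iterates, giving the required sequent by $n$ successive applications of $(R:)$ followed by $(IAN)$.

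The main obstacle lies in the introspection axioms {\bf j5} and {\bf jB}. In both derivations one reaches a goal of the shape $\vv\Vvdash t{:}A,\ww R\vv,\Gamma \R \ww\Vvdash t{:}A$, which cannot be closed by a direct $(R:)$ application since no evidence atom $\ww E(t,A)$ is available in the antecedent. For {\bf jB} this is resolved by one use of $(Sym)$ to obtain $\vv R\ww$ and a subsequent $(L:)$; for {\bf j5} the same obstruction is overcome by the admissible anti-monotonicity rule of Lemma \ref{lem:Admissibility of Anti-Mon in G3J5}, whose derivation itself combines $(SE)$ and $(E?)$. Once these two cases are settled, all remaining axioms and both inductive steps are routine, completing the proof.
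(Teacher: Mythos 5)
Your proposal is correct and follows essentially the same route as the paper: induction on the Hilbert derivation, root-first construction for each axiom closed via Lemma \ref{lem:Initial axiom for arbitrary A}, Modus Ponens absorbed by invertibility of $(R\r)$ plus admissible $Cut$, and the iterated $(R:)$/$(IAN)$ construction exploiting downward closure of $\CS$ for the necessitation rules. You merely spell out the individual axiom cases (including the $(Sym)$/$(L:)$ and anti-monotonicity resolutions for {\bf jB} and {\bf j5}) that the paper leaves as "similar", so there is nothing further to add.
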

\begin{proof}
By induction on the derivation of $A$ in ${\sf JL}_\CS$. If $A$ is an axiom of ${\sf JL}_\CS$, then apply root-first the rules of $\J_\CS$ and possibly use Lemma \ref{lem:Initial axiom for arbitrary A}.

For example, we derive the axiom {\bf jD}, $t:\bot\r\bot$, in ${\sf
G3JD}_\CS$:
\begin{prooftree}
\AXC{$( Ax\bot)$} \noLine
\UIC{$\vv\Vvdash \bot,\ww R\vv,\ww\Vvdash t:\bot
\fCenter \ww\Vvdash \bot$}
 \RightLabel{$(L:)$}
 \UIC{$\ww R\vv,\ww\Vvdash t:\bot
\fCenter \ww\Vvdash \bot$}
 \RightLabel{$(Ser)$}
 \UIC{$\ww\Vvdash t:\bot \fCenter \ww\Vvdash \bot$}
 \RightLabel{($R\r)$}
  \UIC{$\fCenter \ww\Vvdash t:\bot\r  \bot$}
\end{prooftree}
If $A$ is obtained by Modus Ponens from $B\r A$ and $B$, then by
the induction hypothesis and admissibility of rule $Cut$, the sequent $\R
\ww\Vvdash A$ is derivable in \J. 
If $A=c:B\in\CS$ is obtained by Axiom
Necessitation rule, then we have:
\[\di{\f{
\begin{array}{c}
{\mathcal D}\\
\ww E(c,B),\ww R\vv\R \vv\Vvdash B
\end{array}
}{\di{\f{\ww E(c,B)\R \ww\Vvdash c:B}{\R \ww\Vvdash
c:B}\,(AN)}}\,(R:)}\]
 where ${\mathcal D}$ is the standard
derivation of axiom $B$ in $\J_\CS$. Now, suppose $A=c_{i_n}:c_{i_{n-1}}:\ldots:c_{i_1}:B\in\CS$ is obtained by Iterated Axiom
Necessitation rule. Note that
constant specifications are downward closed. Thus, in order to
prove $\R \ww\Vvdash c_{i_n}:c_{i_{n-1}}:\ldots:c_{i_1}:B$, we can add $\ww E(c_{i_m}:c_{i_{m-1}}:\ldots:c_{i_1}:B)$, for every $1\leq m\leq n$, to the
antecedent of the premise of $(IAN)$, when we apply it upward. We have:
\scalebox{0.95}{\parbox{\textwidth}{%
\begin{prooftree}
\AXC{${\mathcal D}$}
 \noLine
 \UIC{$\g\fCenter \ww_n \Vvdash B$}
 \noLine
 \UIC{$\vdots$} \noLine
 \UIC{$\ww_1 R \ww_2,\ww_1  E(c_{i_{n-1}},c_{i_{n-2}}\ldots:c_{i_1}:B), \ww R \ww_1 ,
 \ww_1  E(c_{i_n},c_{i_{n-1}}:\ldots:c_{i_1}:B)\fCenter \ww_2\Vvdash
 c_{i_{n-2}}\ldots:c_{i_1}:B$}
 \RightLabel{$(R:)$}
 \UIC{$\ww_1  E(c_{i_{n-1}},c_{i_{n-2}}\ldots:c_{i_1}:B), \ww R \ww_1 ,
 \ww E(c_{i_n},c_{i_{n-1}}:\ldots:c_{i_1}:B)\fCenter \ww_1 \Vvdash
 c_{i_{n-1}}:\ldots:c_{i_1}:B$}
 \RightLabel{$(IAN)$}
 \UIC{$\ww R \ww_1 ,
 \ww E(c_{i_n},c_{i_{n-1}}:\ldots:c_{i_1}:B)\fCenter \ww_1 \Vvdash
 c_{i_{n-1}}:\ldots:c_{i_1}:B$}
 \RightLabel{$(R:)$}
 \UIC{$\ww E(c_{i_n},c_{i_{n-1}}:\ldots:c_{i_1}:B)\fCenter \ww\Vvdash
 c_{i_n}:c_{i_{n-1}}:\ldots:c_{i_1}:B$}
 \RightLabel{$(IAN)$}
 \UIC{$\fCenter \ww\Vvdash
 c_{i_n}:c_{i_{n-1}}:\ldots:c_{i_1}:B$}
\end{prooftree}}}

where
\[\g=\{\ww_{n-1} R \ww_n , \ww_{n-2} R \ww_{n-1}, \ldots, \ww R\ww_1 , \ww_{n-1}\in
E(c_{i_1},B),\] \[ \ww_{n-2} E(c_{i_2},c_{i_1}:B),\ldots,\ww_1 \in
E(c_{i_{n-1}},c_{i_{n-2}}\ldots:c_{i_1}:B), \ww E(c_{i_n},c_{i_{n-1}}:\ldots:c_{i_1}:B)\},\]
 and ${\mathcal D}$ is the standard derivation of axiom $B$ in $\J_\CS$.\qe \end{proof}

Now we prove that the labeled sequent calculi of justification
logics are equivalent to their Hilbert systems.
\begin{corollary}\label{cor:equivalence JL and G3JL}
Let \JL~be a justification logic, and $\CS$ be a constant specification for \JL, with the requirement that if \JL~contains axiom scheme {\bf jD} then \CS~should be axiomatically appropriate. Then $A$ is true in every $\JL_\CS$-model if and only if the sequent $\R \ww\Vvdash A$ is provable in $\J_\CS$, for arbitrary label $\ww$.
\end{corollary}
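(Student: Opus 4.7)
The plan is to derive this corollary by chaining together three results already established in the paper: the completeness of the Hilbert-style system $\JL_\CS$ with respect to $\JL_\CS$-models (Theorem \ref{Sound Compl JL}), the soundness of the labeled calculus $\J_\CS$ with respect to $\JL_\CS$-models (Theorem \ref{Soundness labeled systems}), and the simulation of Hilbert proofs by labeled derivations (Proposition \ref{derivability of theorems in labels systems}). The bridge between the two notions of validity is Lemma \ref{validity of formula and sequence}, which identifies truth of $A$ in $\M$ with validity of the one-sided sequent $\R \ww\Vvdash A$ in $\M$, for any label $\ww$.

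For the forward direction, I would assume that $A$ is true in every $\JL_\CS$-model. Since \JL~is one of the justification logics of Definition \ref{def: justification logics}, and the hypothesis on $\CS$ in the corollary matches exactly the hypothesis of Theorem \ref{Sound Compl JL} (axiomatic appropriateness is required precisely when $\JL$ contains {\bf jD}), the completeness theorem yields $\JL_\CS \vdash A$. Proposition \ref{derivability of theorems in labels systems} then converts this Hilbert-style proof into a derivation of $\R \ww\Vvdash A$ in $\J_\CS$ for any label $\ww$.

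For the converse direction, I would assume that $\R \ww\Vvdash A$ is derivable in $\J_\CS$. By the soundness Theorem \ref{Soundness labeled systems}, the sequent $\R \ww\Vvdash A$ is then valid in every $\JL_\CS$-model $\M$. Applying Lemma \ref{validity of formula and sequence} to each such $\M$, it follows that $\M \Vdash A$. Since $\M$ was arbitrary, $A$ is true in every $\JL_\CS$-model, as required.

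There is no real obstacle here; the only mildly delicate point is verifying that the two hypotheses on $\CS$ coincide, so that Theorem \ref{Sound Compl JL} may be invoked without a gap, and noting that the choice of the label $\ww$ in the statement is genuinely immaterial (either by Lemma \ref{lemma: substitution lemma} applied to the derivation, or directly from Lemma \ref{validity of formula and sequence} which makes the labeled sequent equivalent to a label-free semantic statement).
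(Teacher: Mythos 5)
Your proof is correct and follows exactly the paper's own argument: one direction chains the Hilbert-completeness Theorem \ref{Sound Compl JL} with Proposition \ref{derivability of theorems in labels systems}, and the other chains the soundness Theorem \ref{Soundness labeled systems} with Lemma \ref{validity of formula and sequence}. Your added remarks on matching the hypotheses on $\CS$ and on the irrelevance of the choice of label are accurate but not needed beyond what the cited results already give.
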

\begin{proof}  Suppose that the sequent $\R \ww\Vvdash A$ is provable in
$\J_\CS$. Then by Soundness Theorem \ref{Soundness labeled
systems}, $\R \ww\Vvdash A$ is valid in every ${\sf
JL}_\CS$-model, and therefore by Lemma
\ref{validity of formula and sequence}, the formula $A$ is true in
every ${\sf JL}_\CS$-model.

Conversely, suppose that the formula $A$ is true in every
$\JL_\CS$-model. Then by Completeness Theorem
\ref{Sound Compl JL}, the formula $A$ is provable in $\JL_\CS$, and therefore by Proposition \ref{derivability of theorems
in labels systems}, the sequent $\R \ww\Vvdash A$ is provable in
$\J_\CS$. \qe \end{proof}

We present another proof for the completeness theorem of labeled sequent calculi $\J^-$ with respect to F-models, by describing a procedure (in fact a backward proof search for a sequent) that produces a derivation for valid sequents and a countermodel for non-valid  sequents. In backward proof search of a sequent $\g\R\d$, some rules repeat the main formula in their premise(s), and therefore they can be applied infinitely many times. For example, applying rule $(L:)$
backwardly on the sequent
\[ \ww\Vvdash t:A, \ww R\vv,\g'\R\d'\]
we get
\[\vv\Vvdash A ,\ww\Vvdash t:A, \ww R\vv,\g'\R\d'\]
Because of the formulas $\ww\Vvdash t:A$ and $\ww R\vv$ in the antecedent of the premise, we can apply this rule again, and indeed  infinitely many
times, backwardly. Since the rules of contraction are height-preserving admissible, it seems that the rule $(L:)$ does not need to apply on each pair of formulas $\ww\Vvdash t:A$
and $\ww R\vv$ more than once. To show this fact, we first show that applications of $(L:)$ on the same pair of principal formulas can be made consecutive by the permutation of rule $(L:)$ over other rules.

\begin{lemma}\label{lem:permut down L:}
Rule $(L:)$ permutes down with respect to all rules of $\J_\CS$. The permutability with respect to $(R:)$, and rules for relational atoms (Table \ref{table: rules for accessibility relation}) have the condition that the principal formulas of $(L:)$ are not active in them.
\end{lemma}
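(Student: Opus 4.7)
The plan is a routine case analysis on the rule $(R)$ applied immediately below the instance of $(L:)$ that is being permuted. Schematically, I begin with a derivation fragment
\[
\di{\f{\vv\Vvdash A,\;\ww\Vvdash t:A,\;\ww R\vv,\;\g\R\d}{\ww\Vvdash t:A,\;\ww R\vv,\;\g\R\d}\,(L:)}
\]
followed by an application of $(R)$ that takes the displayed conclusion as (one of) its premise(s), and aim to rearrange it into a derivation of the same endsequent in which $(R)$ is applied first and $(L:)$ last, with the same principal pair $\ww\Vvdash t:A,\;\ww R\vv$.

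The easy cases are the propositional rules, $(E)$, $(Mon)$, the evidence-atom rules $(E\cdot)$, $(El+)$, $(Er+)$, $(E!)$, $(E?)$, $(E\bar{?})$, $(SE)$, and the (iterated) axiom-necessitation rules $(AN)$ and $(IAN)$. For any such $(R)$, both $\ww\Vvdash t:A$ and $\ww R\vv$ appear as side formulas in every premise of $(R)$ and remain present in its conclusion, so $(L:)$ can always be re-applied below $(R)$. One applies $(R)$ to the premise of $(L:)$ (using the height-preserving admissibility of left weakening, Theorem \ref{thm:admiss weak.}, to insert $\vv\Vvdash A$ into any extra premises of $(R)$), and then applies $(L:)$ to recover the original endsequent. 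For the multi-premise rules in this list, namely $(L\vee)$, $(R\wedge)$, $(L\r)$, $(E?)$, $(E\bar{?})$, the argument is repeated in each premise independently.

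The delicate cases are $(R:)$ and the relational rules of Table \ref{table: rules for accessibility relation}, which introduce an eigenlabel or generate a new relational atom in the premise. The side condition in the statement ensures that neither $\ww\Vvdash t:A$ nor $\ww R\vv$ is active in $(R)$, so both survive as side formulas after permutation. To prevent the eigenlabel of $(R)$ (present in $(R:)$ and $(Ser)$) from clashing with the label $\vv$ occurring in $\vv\Vvdash A$, I first apply the height-preserving substitution lemma (Lemma \ref{lemma: substitution lemma}) to rename the eigenlabel to a fresh one; after this renaming, $(R)$ is applied to the premise of $(L:)$ and $(L:)$ is applied last.

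The main obstacle, and the reason the side condition is required, is precisely the configuration where $\ww R\vv$ is the active relational atom of $(R)$: when $(R)$ is $(R:)$ with eigenlabel $\vv$ introducing $\ww R\vv$ in its premise, or when $(R)$ is one of $(Trans)$, $(Sym)$, $(Ref)$, $(Eucl)$, $(Ser)$ producing $\ww R\vv$ as its newly active relational atom. In these configurations $\ww R\vv$ is absent from the conclusion of $(R)$, so $(L:)$ cannot legitimately be applied below $(R)$ with the same principal pair; the hypothesis of the lemma excludes exactly these situations, completing the case analysis.
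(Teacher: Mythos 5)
Your proof is correct and follows essentially the same route as the paper's: a case analysis on the rule applied immediately below $(L:)$, with the principal pair $\ww\Vvdash t:A,\ww R\vv$ surviving as side formulas in the unproblematic cases and the stated side condition ruling out exactly the configurations where $(R:)$ or a relational rule consumes the atom $\ww R\vv$. The paper only displays the cases $(E)$ and $(R:)$ and defers the rest to Negri's treatment of $(L\Box)$; your explicit appeal to admissible weakening for multi-premise rules and to the substitution lemma for eigenlabel clashes is consistent with, and slightly more careful than, what the paper leaves implicit (in fact for $(R:)$ and $(Ser)$ no renaming is ever needed, since the eigenlabel is barred from the conclusion, which already contains $\vv$).
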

\begin{proof} The proof is similar to that in \cite{Negri2005} (rule $(L:)$ is treated like rule $(L\b)$ in modal logics). We only show the permutation with respect to $(E)$ and $(R:)$. In the derivation
\begin{prooftree}
\AXC{$\mathcal{D}$}\noLine
\UIC{$\vv\Vvdash A, \uu E(s,B),\uu\Vvdash s:B,\ww\Vvdash t:A,\ww R\vv,\g\R\d$}
\RightLabel{$(L:)$}
\UIC{$ \uu E(s,B),\uu\Vvdash s:B,\ww\Vvdash t:A,\ww R\vv,\g\R\d$}
\RightLabel{$(E)$}
\UIC{$ \uu\Vvdash s:B,\ww\Vvdash t:A,\ww R\vv,\g\R\d$}
\end{prooftree}
$(L:)$ can permute down as follows
\begin{prooftree}
\AXC{$\mathcal{D}$}\noLine
\UIC{$ \uu E(s,B),\vv\Vvdash A,\uu\Vvdash s:B,\ww\Vvdash t:A,\ww R\vv,\g\R\d$}
\RightLabel{$(E)$}
\UIC{$ \vv\Vvdash A,\uu\Vvdash s:B,\ww\Vvdash t:A,\ww R\vv,\g\R\d$}
\RightLabel{$(L:)$}
\UIC{$ \uu\Vvdash s:B,\ww\Vvdash t:A,\ww R\vv,\g\R\d$}
\end{prooftree}
Note that in this case the principal formula of $(L:)$ could be active in $(E)$, i.e. in the above derivations $\uu\Vvdash s:B$ can be equal to $\ww\Vvdash t:A$. For $(R:)$, in the derivation
\begin{prooftree}
\AXC{$\mathcal{D}$}\noLine
\UIC{$\vv\Vvdash A,\uu R\uu',\uu E(s,B),\ww\Vvdash t:A,\ww R\vv,\g\R\d, \uu'\Vvdash B$}
\RightLabel{$(L:)$}
\UIC{$ \uu R\uu',\uu E(s,B),\ww\Vvdash t:A,\ww R\vv,\g\R\d, \uu'\Vvdash B$}
\RightLabel{$(R:)$}
\UIC{$ \uu  E(s,B),\ww\Vvdash t:A,\ww R\vv,\g\R\d,\uu\Vvdash s:B$}
\end{prooftree}
where principal formulas of $(L:)$, i.e. $\ww\Vvdash t:A,\ww R\vv$, are not active in $(R:)$, the permutation of $(L:)$ over $(R:)$ is as follows
\begin{prooftree}
\AXC{$\mathcal{D}$}\noLine
\UIC{$\uu R\uu',\vv\Vvdash A,\uu E(s,B),\ww\Vvdash t:A,\ww R\vv,\g\R\d, \uu'\Vvdash B$}
\RightLabel{$(R:)$}
\UIC{$\vv\Vvdash A,\uu E(s,B),\ww\Vvdash t:A,\ww R\vv,\g\R\d, \uu\Vvdash s:B$}
\RightLabel{$(L:)$}
\UIC{$ \uu  E(s,B),\ww\Vvdash t:A,\ww R\vv,\g\R\d,\uu\Vvdash s:B$}
\end{prooftree}
\qe \end{proof}

\begin{corollary}
In  a branch of a derivation in $\J_\CS$,  it is enough to apply rule $(L:)$ only once on the same pair of principal formulas.
\end{corollary}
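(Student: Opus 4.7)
My plan is to derive the corollary from Lemma~\ref{lem:permut down L:} together with the height-preserving admissibility of contraction (Theorem~\ref{thm:admiss Contr.}). Suppose, for contradiction, that in some branch of a derivation $\mathcal{D}$ in $\J_\CS$ the rule $(L:)$ is applied more than once on the same pair of principal formulas $\ww\Vvdash t:A$ and $\ww R\vv$. I will show how to transform $\mathcal{D}$ into a derivation of the same endsequent in which this pair is used at most once, so that by iterating the construction every multiple use can be eliminated.

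First I would pick two applications of $(L:)$ with principal formulas $\ww\Vvdash t:A$, $\ww R\vv$ in the branch and consider the upper one together with all rules appearing strictly between these two $(L:)$-applications. Because $(L:)$ repeats its principal formulas in its premise, both $\ww\Vvdash t:A$ and $\ww R\vv$ occur in every sequent between the two applications, and hence they are side formulas in every intermediate rule. In particular, they are not active in any intermediate rule of the form $(R:)$ or in any relational rule with an eigenlabel side-condition; thus the conditions of Lemma~\ref{lem:permut down L:} are met, and I can permute the upper instance of $(L:)$ downward past each intermediate rule. Repeating this, I obtain a derivation of the same branch in which the two applications of $(L:)$ on the pair $\ww\Vvdash t:A$, $\ww R\vv$ are consecutive.

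Once the two applications are consecutive, the configuration looks like
\[
\di{\f{\di{\f{\vv\Vvdash A,\vv\Vvdash A,\ww\Vvdash t:A,\ww R\vv,\g\R\d}{\vv\Vvdash A,\ww\Vvdash t:A,\ww R\vv,\g\R\d}(L:)}}{\ww\Vvdash t:A,\ww R\vv,\g\R\d}(L:)}
\]
The topmost sequent contains two copies of the active formula $\vv\Vvdash A$. By the height-preserving $\CS$-admissibility of left contraction (Theorem~\ref{thm:admiss Contr.}), I can contract the two copies of $\vv\Vvdash A$ into one and obtain a derivation of $\vv\Vvdash A,\ww\Vvdash t:A,\ww R\vv,\g\R\d$ of no greater height. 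Then a single application of $(L:)$ yields $\ww\Vvdash t:A,\ww R\vv,\g\R\d$, so the second $(L:)$ step is simply dropped. Applying this reduction to each excessive occurrence of $(L:)$ on the same pair of principal formulas along the branch produces a derivation in which that pair is used at most once.

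The only delicate point is the permutation step, where one has to verify that none of the intermediate rules turns either $\ww\Vvdash t:A$ or $\ww R\vv$ into an active formula; but as noted above this is automatic because both are preserved up the branch by $(L:)$ itself, so Lemma~\ref{lem:permut down L:} applies uniformly. Everything else is routine: the permutations preserve the endsequent and the admissibility of contraction does the final compression. Hence, without loss of generality, rule $(L:)$ need be applied only once on each pair of principal formulas in a branch.
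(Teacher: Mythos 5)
Your proposal is correct and follows essentially the same route as the paper: permute the upper $(L:)$ down via Lemma~\ref{lem:permut down L:} (justified, as you note, by the persistence of the principal pair $\ww\Vvdash t:A$, $\ww R\vv$ through the intermediate segment) until the two applications are adjacent, then use height-preserving admissible contraction on the duplicated active formula $\vv\Vvdash A$ to discard the redundant application. The only cosmetic difference is your framing as a proof by contradiction with explicit iteration, whereas the paper presents the reduction directly.
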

\begin{proof} Suppose $(L:)$ is applied twice on the same principal formula, for example
\begin{prooftree}
\AXC{$\mathcal{D}$}\noLine
\UIC{$\vv\Vvdash A,\ww\Vvdash t:A,\ww R\vv,\g'\R\d'$}
\RightLabel{$(L:)$}
\UIC{$\ww\Vvdash t:A,\ww R\vv,\g'\R\d'$}
\noLine
\UIC{$\vdots~\mathcal{D}'$}
\noLine
\UIC{$\vv\Vvdash A,\ww\Vvdash t:A,\ww R\vv,\g\R\d$}
\RightLabel{$(L:)$}
\UIC{$\ww\Vvdash t:A,\ww R\vv,\g\R\d$}
\end{prooftree}
Then, since the upper application of $(L:)$ is applied on the same principal formulas $\ww\Vvdash t:A,\ww R\vv$, in the part $\mathcal{D}'$ of the derivation there is no rule $(R:)$, or rules for relational atoms with active formulas  $\ww\Vvdash t:A,\ww R\vv$. Therefore, by Lemma \ref{lem:permut down L:}, by permuting down the upper $(L:)$ we obtain
\begin{prooftree}
\AXC{$\mathcal{D}$}\noLine
\UIC{$\vv\Vvdash A,\ww\Vvdash t:A,\ww R\vv,\g'\R\d'$}\noLine
\UIC{$\vdots~\mathcal{D}'$}
\noLine
\UIC{$\vv\Vvdash A,\vv\Vvdash A,\ww\Vvdash t:A,\ww R\vv,\g\R\d$}
\RightLabel{$(L:)$}
\UIC{$\vv\Vvdash A,\ww\Vvdash t:A,\ww R\vv,\g\R\d$}
\RightLabel{$(L:)$}
\UIC{$\ww\Vvdash t:A,\ww R\vv,\g\R\d$}
\end{prooftree}
By applying height-preserving contraction, we see that the upper application of $(L:)$ is redundant.\qe \end{proof}

Since the rules of contraction are height-preserving admissible, we consider the
following requirement for all rules:

\begin{quote}
$(\dagger)$ ~~\textit{In backward proof search, apply a rule backwardly only when it does not produce a formula in the antecedent of the premise which exists there already.}
\end{quote}

In the following theorem we give a procedure that determines the derivability of sequents in $\J^-$.

\begin{theorem}\label{thm:reduction tree}
Let $\JL$ be one of the justification logics {\sf J}, {\sf J4}, {\sf JD}, {\sf JD4}, {\sf JT}, {\sf LP},  and $\CS$ be a finite constant specification for \JL. Every sequent
$\g\R\d$ in the language of \J~is either derivable in $\J_\CS$ or it has a ${\sf
JL_\CS}$-countermodel.
\end{theorem}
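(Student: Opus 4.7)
The plan is to prove the statement by constructing a suitable \emph{reduction tree} for the sequent $\g\R\d$ and then showing a dichotomy: either the reduction tree is finite and closed (producing a derivation in $\J_\CS$), or it contains an open branch from which a $\JL_\CS$-countermodel can be read off. This is the standard Sch\"utte--Kanger--Negri strategy adapted to the labeled setting.

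First, I would fix a systematic, fair stage-by-stage procedure for building the reduction tree. At each stage one applies all applicable rules of $\J_\CS$ root-first, subject to the requirement $(\dagger)$ so that no rule is applied when its principal formulas are already present in the required form in the antecedent of the premise. Rules $(L\!:\!)$, the rules for relational atoms, the E-rules, and $(IAN)/(AN)$ are applied only on combinations of principal parameters that have not already been treated on the branch; rules $(R\!:\!)$ and $(Ser)$ introduce a fresh eigenlabel each time they are applied, but by the labeled-subformula property they only need to be applied to the finitely many labeled $:$-formulas that can appear. The key input here is Corollary \ref{cor:analyticity}: since $\JL$ is one of {\sf J}, {\sf J4}, {\sf JD}, {\sf JD4}, {\sf JT}, {\sf LP}, a reduction tree can be confined to labeled-subformulas of the endsequent, subterms of $Sub_{Tm}(\g\R\d)\cup Sub_{Tm}(\CS)$, and to labels that are eigenlabels of $(R\!:\!)$ or $(Ser)$ applications or labels of the endsequent. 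Together with finiteness of $\CS$, these properties bound the stock of potentially active parameters on each branch and prevent an infinite recurrence of the same reduction. Standard fairness bookkeeping (a round-robin over pending reductions) then guarantees that every applicable reduction is eventually performed.

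Next comes the dichotomy. If every branch terminates in an initial sequent, the tree itself is a derivation in $\J_\CS$ and we are done. Otherwise, fix an open saturated branch $\mathcal B$, i.e.\ a branch in which no further reduction subject to $(\dagger)$ applies and no sequent on the branch is an initial sequent. From $\mathcal B$ I would define a candidate $\JL_\CS$-model $\M=(\W,\RR,\E,\V)$ by taking $\W$ to be the set of labels occurring on $\mathcal B$; $[\ww]=\ww$; $\ww\RR\vv$ iff $\ww R\vv$ occurs in some antecedent on $\mathcal B$; $\ww\in\V(P)$ iff $\ww\Vvdash P$ occurs in some antecedent on $\mathcal B$; and $\ww\in\mathcal A(t,A)$ iff $\ww E(t,A)$ occurs in some antecedent on $\mathcal B$. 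Then I would extend the possible evidence function $\mathcal A$ to an admissible evidence function $\E$ as in Definition \ref{def:generated evidence function}; by Lemma \ref{lem: generated evidence function is admissible} this yields an admissible $\E$ satisfying $(\E1),(\E2)$, and, if {\bf j4}$\in\JL$, also $(\E3),(\E4)$. The frame conditions corresponding to axioms {\bf jT}, {\bf jD}, {\bf j4} follow from saturation under $(Ref),(Ser),(Trans)$ respectively, and the constraint that $\mathcal A(c,F)=\W$ whenever $c\!:\!F\in\CS$ follows from saturation under $(AN)/(IAN)$ (the premise of the rule introduces $\ww E(c,F)$ for every $\ww\in\W$ used on the branch, so after extension via Definition \ref{def:generated evidence function} we get $\E(c,F)=\W$).

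The heart of the argument is the \emph{Truth Lemma}: for every $\ww\in\W$ and every labeled formula $\ww\Vvdash A$,
\begin{itemize}
\item if $\ww\Vvdash A$ occurs in the antecedent of some sequent on $\mathcal B$, then $(\M,\ww)\Vdash A$;
\item if $\ww\Vvdash A$ occurs in the succedent of some sequent on $\mathcal B$, then $(\M,\ww)\not\Vdash A$.
\end{itemize}
This is proved by induction on the complexity of $A$, using saturation to handle each connective and the critical case $A=t\!:\!B$: on the antecedent side, saturation under $(L\!:\!)$ gives $(\M,\vv)\Vdash B$ for every $\vv$ with $\ww\RR\vv$, and saturation under $(E)$ plus the definition of $\E$ guarantee $\ww\in\E(t,B)$, so $(\M,\ww)\Vdash t\!:\!B$; on the succedent side, $(R\!:\!)$-saturation provides either a fresh $\vv$ with $\ww\RR\vv$ and $\vv\Vvdash B$ on the succedent (so by induction $(\M,\vv)\not\Vdash B$) or the evidence atom $\ww E(t,B)$ is absent from antecedents on $\mathcal B$, giving $\ww\notin\mathcal A(t,B)$, and I must verify that $\ww\notin\E(t,B)$ is preserved by the closure in Definition \ref{def:generated evidence function}. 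This last preservation is the delicate step and is the main obstacle: one has to check, using the saturation of the E-rules on $\mathcal B$, that every application of one of the inductive clauses generating $\E$ from $\mathcal A$ would have already forced the corresponding evidence atom into some antecedent on the branch, so no ``new'' worlds are added to $\E(t,B)$ beyond $\mathcal A(t,B)$ at labels where the relevant antecedent atoms are missing. Once the Truth Lemma is established, the branch being open at its root forces every formula in $\g$ to be true at its label in $\M$ and every formula in $\d$ to be false, so $\M$ is the required $\JL_\CS$-countermodel to $\g\R\d$, completing the proof. \qe
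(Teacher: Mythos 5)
Your proposal follows essentially the same route as the paper's own proof: a fair root-first reduction tree subject to condition $(\dagger)$ and confined by analyticity, followed by reading off a countermodel from a saturated branch, extending the possible evidence function via Definition \ref{def:generated evidence function} and Lemma \ref{lem: generated evidence function is admissible}, and proving a Truth Lemma by induction on formula complexity. The ``delicate step'' you isolate --- that the closure generating $\E$ from $\mathcal A$ adds no new worlds to $\E(t,B)$ for subterms $t$ of the endsequent, because saturation under the E-rules already forces the corresponding evidence atoms into the antecedents --- is exactly the paper's Lemma \ref{lem:E in completeness countermodel}, so your plan is correct and matches the paper's argument.
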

\begin{proof} Following the proof of completeness of labeled systems of
modal logics (Theorem 5.4 in \cite{Negri2009}), we present a procedure which constructs a
finitely branching \textit{reduction tree} with the root $\g\R\d$
by applying the rules of $\J^-_\CS$ in all possible ways. If all branches
of the reduction tree reach initial sequents we obtain a proof for
$\g\R\d$. Otherwise we have a branch in which its topmost sequent is not an initial sequent and no reduction step can be applied or it is an infinite branch. In this case, we construct a countermodel by means of this branch. In constructing the reduction tree the procedure always obey the condition $(\dagger)$. Moreover, by Corollary \ref{cor:analyticity}, the procedure for ${\sf G3JL}^-$ obeys the subterm and sublabel properties.

 {\bf Reduction tree}: In stage 0, we put $\g\R\d$ at the root of
the tree. For each branch, in stage $n>0$, if the top-sequent of the branch is an initial sequent, i.e. $(Ax)$, $(Ax\bot)$, $(AxR)$, $(AxE)$, then we terminate the
construction of the branch. Otherwise, we continue the construction of the branch by writing, above its top-sequent, other sequent(s) that are obtained by applying the following stages, depend on the rules of the logic. The number of stages in
the construction of the reduction tree depends on the number of
rules which $\J^-_\CS$ contains. In general, the following
list (which determines an order on the rules of the system) can be used for various systems dealt here:

\begin{equation}\label{number equation: order of rules}
\begin{array}{ccccccccccc}
(L\neg), &(R\neg), &(L\wedge), &(R\wedge), &(L\vee), &(R\vee),&
(L\r), & (R\r), &(L:), &(R:),\\ (E),& (AN)/(IAN), & (El+), &(Er+),&
 (E\cdot), &(E!),  &(Mon),  & (Ref),  &(Ser),& (Trans).
\end{array}
\end{equation}

There are 15 rules common in all labeled systems in the list (\ref{number equation: order of rules}): $(L\neg), (R\neg), \ldots, (E\cdot)$. Thus each $\J^-_\CS$ system has $15+r$ stages, for $r\geq 0$. At stage $15+r+1$ we come back to stage 1, and continue until an initial sequent  is found or the branch becomes saturated.\footnote{Another alternative to define the reduction tree is to stipulate that, instead of applying  the stages consecutively according to list (\ref{number equation: order of rules}), at each stage apply one of the stages non-deterministically.} A branch is called \textit{saturated} if it is an infinite branch or if its top-sequent is not an initial sequent and no more stage of the reduction tree can be applied. Otherwise it is said to be \textit{unsaturated}.\footnote{The terminology is due to Dyckhoff and Negri \cite{DyckhoffNegri2013}, but our definition is a bit different. In fact in the reduction tree described in \cite{DyckhoffNegri2013} all branches are finite.} We first consider 15 common
stages in all of the justification logics.

Stage $n=1$ of rule $(L\neg)$: If the top-sequent is of the form:
\[ \ww_1 \Vvdash \neg A_1,\ldots,\ww_m \Vvdash \neg A_m, \g'\R\d'\]
where $\ww_1 \Vvdash \neg A_1,\ldots,\ww_m \Vvdash \neg A_m$ are all the
formulas in the antecedent with a negation as the outermost
logical connective, we write
\[  \g'\R\d',\ww_1 \Vvdash  A_1,\ldots,\ww_m \Vvdash  A_m\]
on top of it. This stage corresponds to applying $m$ times rule
$(L\neg)$.

Stage $n=2$ of rule $(R\neg)$: If the top-sequent is of the form:
\[  \g'\R\d',\ww_1 \Vvdash \neg A_1,\ldots,\ww_m \Vvdash \neg A_m\]
where $\ww_1 \Vvdash \neg A_1,\ldots,\ww_m \Vvdash \neg A_m$ are all the
formulas in the succedent with a negation as the outermost logical
connective, we write
\[ \ww_1 \Vvdash  A_1,\ldots,\ww_m \Vvdash  A_m, \g'\R\d'\]
on top of it. This stage corresponds to applying $m$ times rule
$(R\neg)$. For the stages $n=3,4,\ldots,8$, correspond to other
propositional rules in the list (\ref{number equation:
order of rules}), refer to the proof of Theorem 5.4 in
\cite{Negri2009}.

Stage $n=9$ of rule $(L:)$: If the top-sequent is of the form:
\[ \ww_1 \Vvdash t_1:A_1,\ldots,\ww_m \Vvdash t_m:A_m,\ww_1 R\vv_1,\ldots,\ww_m R\vv_m,\g'\R\d'\]
where all pairs $\ww_i\Vvdash t_i:A_i$ and $\ww_i R\vv_i$ from the
antecedent of the topmost sequent are listed,  we write the following node on top of it (regarding
condition $(\dagger)$):
\[  \vv_1\Vvdash A_1,\ldots,\vv_m\Vvdash A_m,\ww_1 \Vvdash t_1:A_1,\ldots,\ww_m \Vvdash t_m:A_m,\ww_1 R\vv_1,\ldots,\ww_m R\vv_m,\g'\R\d'.\]
This stage corresponds to applying $m$ times rule $(L:)$.

Stage $n=10$ of rule $(R:)$: If the top-sequent is of the form:
\[\ww_1  E(t_1,A_1),\ldots,\ww_m  E(t_m,A_m), \g'\R\d',\ww_1 \Vvdash t_1:A_1,\ldots,\ww_m \Vvdash t_m:A_m\]
where all pairs $\ww_i E(t_i,A_i)$ and $\ww_i\Vvdash t_i:A_i$ from
the topmost sequent are listed, we write the following node
on top of it:
\[ \ww_1 R\vv_1,\ldots,\ww_m R\vv_m,\ww_1  E(t_1,A_1),\ldots,\ww_m  E(t_m,A_m),\g'\R\d',\vv_1\Vvdash A_1,\ldots,\vv_m\Vvdash A_m\]
where $\vv_1,\dots,\vv_m$ are fresh labels, not yet used in the
reduction tree. This stage corresponds to applying $m$ times rule
$(R:)$.

Stage $n=11$ of rule $(E)$: If the top-sequent is of the form:
\[ \ww_1 \Vvdash t_1:A_1,\ldots,\ww_m \Vvdash t_m:A_m,\g'\R\d'\]
where all labeled formulas $\ww_i\Vvdash t_i:A_i$ from
the topmost sequent are listed, we write the following node on
top of it (regarding condition $(\dagger)$):
\[\ww_1  E(t_1,A_1),\ldots,\ww_m  E(t_m,A_m),\ww_1 \Vvdash t_1:A_1,\ldots,\ww_m \Vvdash t_m:A_m, \g'\R\d'\]
This stage corresponds to applying $m$ times rule $(E)$.

Stage $n=12$ of rule $(IAN)/(AN)$:  If $\g'\R\d'$ is the top-sequent of the branch, then write a similar sequent on top of it (regarding condition $(\dagger)$) with additional evidence atoms of the form $\ww E(c,F)$ in the antecedent,  for every formula $c:F$ in $\CS$ and every $\ww$ in $\g'\cup\d'$.

Stage $n=13$ of rule $(El+)$: If the top-sequent is of the form:
\[\ww_1  E(t_1,A_1),\ldots,\ww_m  E(t_m,A_m),\g'\R\d'\]
where $\ww_i E(t_i,A_i)$ are all evidence atoms for which $s_i+t_i\in Sub_{Tm}(\g'\R\d')$ for some term $s_i$ (Proposition \ref{prop:subterm property}) and $\ww_i E(s_i+t_i,A_i)$ is not in $\g'$ (condition
$(\dagger)$), then we add the following node on top of it:
\[
\ww_1  E(s_1+t_1,A_1),\ldots,\ww_m  E(s_m+t_m,A_m),\ww_1  E(t_1,A_1),\ldots,\ww_m  E(t_m,A_m),\g'\R\d'
\]
 This corresponds to applying  $m$ times rule $(El+)$. The stage $n=14$ of rule $(Er+)$ is similar.

 Stage $n=15$ of rule $(E\cdot)$: If the top-sequent is of the form:
\[\ww_1  E(s_1,A_1\r B_1),\ldots,\ww_m  E(s_m,A_m\r B_m),\ww_1  E(t_1,A_1),\ldots,\ww_m  E(t_m,A_m),\g'\R\d'\]
where $\ww_i E(s_i,A_i\r B_i)$ and $\ww_i  E(t_i,A_i)$ are all pairs of evidence atoms for which $s_i\cdot t_i\in Sub_{Tm}(\g'\R\d')$ and $\ww_i E(s_i\cdot t_i,B_i)$ is not in $\g'$, then we add the following node on top of it:
\[
\ww_1 E(s_1\cdot t_1,B_1),\ldots,\ww_m E(s_m\cdot t_m,B_m),\ww_1  E(s_1,A_1\r B_1),\ldots,\ww_m  E(s_m,A_m\r B_m),\] \[\ww_1  E(t_1,A_1),\ldots,\ww_m  E(t_m,A_m),\g'\R\d'
\]
 This corresponds to applying  $m$ times rule $(E\cdot)$.

 Stage of rule $(E!)$: If the labeled system $\J^-$ contains rule $(E!)$, and the top-sequent is of the form:
\[\ww_1  E(t_1,A_1),\ldots,\ww_m  E(t_m,A_m),\g'\R\d'\]
where $w_i E(t_i,A_i)$ are all evidence atoms for which $!t_i\in Sub_{Tm}(\g'\R\d')$  and $w_i E(!s_i,A_i)$ is not in $\g'$, then we add the following node on top of it:
\[
\ww_1  E(!t_1,A_1),\ldots,\ww_m  E(!t_m,A_m),\ww_1  E(t_1,A_1),\ldots,\ww_m  E(t_m,A_m),\g'\R\d'
\]
 This corresponds to applying  $m$ times rule $(E!)$.

 If the labeled system $\J^-$ contains rule $(Mon)$, and the top-sequent is of the form:
\[ \ww_1 R\vv_1,\ldots,\ww_m R\vv_m,\ww_1  E(t_1,A_1),\ldots,\ww_m  E(t_m,A_m),\g'\R\d'\]
where all pairs $\ww_i R\vv_i$ and $\ww_i E(t_i,A_i)$ from the
antecedent of the topmost sequent are listed, then, regarding
condition $(\dagger)$, we write the following node on top of it:
\begin{eqnarray*}
& \vv_1 E(t_1,A_1),\ldots,\vv_m E(t_m,A_m),
 \ww_1 R\vv_1,\ldots,\ww_m R\vv_m,\ww_1  E(t_1,A_1),\ldots,\ww_m  E(t_m,A_m),& \\
 &\ww_1 R\vv_1,\ldots,\ww_m R\vv_m,\g'\R \d'.&
 \end{eqnarray*}
This stage corresponds to applying  $m$ times rule $(Mon)$.

If the labeled system $\J^-$ contains rule $(Ref)$, add to the
antecedent of the top-sequent $\g'\R\d'$ all the relational
atoms $\ww R\ww$, for $\ww$ in $\g'\cup\d'$, that are not in $\g'$ yet (condition
$(\dagger)$).

If the labeled system $\J^-$ contains rule $(Ser)$, add to the
antecedent of the top-sequent $\g'\R\d'$ all the relational
atoms $\ww R\vv$, for $\ww$ in $\g'\cup\d'$ and fresh label $\vv$.

If the labeled system $\J^-$ contains rule $(Trans)$, and the top-sequent is of the form:
\[ \ww_1 R \vv_1,\ldots,\ww_m R \vv_m,\vv_1 R \uu_1,\ldots,\vv_m R \uu_m,\g'\R\d'\]
where all pairs $\ww_iR \vv_i$ and $\vv_i R \uu_i$ from the antecedent
of the topmost sequent are listed, then, regarding condition
$(\dagger)$, we write the following node on top of it:
\[ \ww_1 R \uu_1,\ldots,\ww_m R \uu_m,\ww_1 R \vv_1,\ldots,\ww_m R \vv_m,\ww_1 R \uu_1,\ldots,\ww_m R \uu_m,\g'\R\d'\]

 If all the topmost sequents of the reduction tree are initial sequents, then we terminate the construction of the reduction tree. In this case by transforming each stage of the reduction tree to (possibly more than one application of) the corresponding rule, we can write a derivation for $\g\R\d$. Otherwise, the reduction tree has at least one saturated branch.\\

 {\bf Countermodel}: Suppose the reduction tree has a (finite or infinite) saturated branch,
 say $\g_0\R\d_0,\g_1\R\d_1,\ldots$
where $\g_0\R\d_0$ is the root sequent $\g\R\d$. Let
\[ \overline{\g}=\bigcup_{i\geq 0} \g_i, ~~~~~~~ \overline{\d}=\bigcup_{i\geq 0} \d_i.\]
We shall define a model $\M$ and an $\M$-interpretation $[\cdot]$ in which $[\cdot]$ validates all the formulas in $\overline{\g}$ and no formulas in $\overline{\d}$. The construction of the Fitting countermodel $\M=(\W,\RR,\E_\mathcal{A},\V)$ for
$\g\R\d$ is as follows:
\begin{enumerate}
 \item The set of possible worlds $\W$ is all the labels that occur in $\overline{\g}\cup\overline{\d}$.
  \item The accessibility relation $\RR$ is determined by relational atoms in $\overline{\g}$ as follows: if $\ww R\vv$ is in $\overline{\g}$, then $\ww\RR \vv$ (otherwise $\ww \RR \vv$ does not hold).
   \item For the evidence function $\E$, we first construct a possible evidence function $\mathcal{A}$ on $\W$ for $\JL_\CS$ as follows.
  $\mathcal{A}$ is determined by evidence atoms in $\overline{\g}$: if $\ww E(t,A)$ is in $\overline{\g}$, then $\ww\in\mathcal{A}(t,A)$ (otherwise $\ww\not\in\mathcal{A}(t,A)$).  Since in stage 12 we add evidence atoms $\ww E(c,F)$, for $c:F\in\CS$ and label $\ww$ in the reduction tree, to the antecedent of sequents, we have $\ww\in\mathcal{A}(c,F)$ for every $c:F\in\CS$ and $\ww\in\W$. Therefore, $\mathcal{A}$ is a possible evidence function on $\W$ for $\JL_\CS$. Now let $\E_\mathcal{A}$ be the admissible evidence function based on $\mathcal{A}$ as defined in Definition \ref{def:generated evidence function}.
      \item The valuation $\V$ is determined by labeled formulas in $\overline{\g}$ and $\overline{\d}$ as follows: if $\ww\Vvdash P$ is in $\overline{\g}$, then $\ww\in\V(P)$, and if $\ww\Vvdash P$ is in $\overline{\d}$, then $\ww\not\in\V(P)$ (where $P$ is a propositional variable).
\end{enumerate}

The stages used in the reduction tree ensure that the accessibility relation $\RR$  for $\M$ satisfies those conditions needed for $\JL$-models. Moreover,  by Lemma \ref{lem: generated evidence function is admissible}, $\E_\mathcal{A}$ is an admissible evidence function for $\JL_\CS$. In order to show that $\M=(\W,\RR,\E_\mathcal{A},\V)$ is the desired countermodel we need the following lemmas.

\begin{lemma}\label{lem:E in completeness countermodel}
If $\ww\in\E_\mathcal{A}(r,F)$ and $r\in Sub_{Tm}(\g\R\d)$, then $\ww E(r,F)$ is in $\overline{\g}$.
\end{lemma}
\begin{proof}
Suppose that $\ww\in\E_\mathcal{A}(r,F)$ and $r\in Sub_{Tm}(\g\R\d)$. Then $\ww\in\E_j(r,F)$, for some $j\geq 0$. By induction on $j$ we show that for all $\ww\in\W$

\begin{equation}\label{eq:E in completeness countermodel}
\ww\in\E_j(r,F) \R \ww E(r,F) \in \overline{\g}
\end{equation}

The base case, $j=0$, follows from the definition of $\E_0$. For the induction hypothesis, suppose that (\ref{eq:E in completeness countermodel}) is true for all $\ww\in\W$ and all $0\leq j \leq i$. If $\ww\in\E_{i+1}(r,F)$ for $i\geq 0$, and $\ww\in\E_j(r,F)$ for $j< i+1$, then by the induction hypothesis $\ww E(r,F)$ is in $\overline{\g}$. Assume now that we have $\ww\in\E_{i+1}(r,F)$ for $i\geq 0$, and $\ww\not\in\E_j(r,F)$ for any $j< i+1$. We have the following cases:

 \begin{enumerate}
 \item $r=s+t$, and $\ww\in\E_{i+1}(s+t,F)$. Thus $\ww\in\E_{i}(t,F)$ or $\ww\in\E_{i}(s,F)$. If $\ww\in\E_{i}(t,F)$, then by the induction hypothesis, $\ww E(t,F)$ is in $\overline{\g}$. Therefore, $\ww E(t,F)$ is in $\g_k$, for some $k\geq 0$. Since $r\in Sub_{Tm}(\g\R\d)$, after the stage of rule $(El+)$,  evidence atom $\ww E(s+t,F)$ is added to the antecedent of the sequent. Hence $\ww E(s+t,F)$ is in $\g_l$, for some $l\geq k$. Therefore, $\ww E(s+t,F)$ is in $\overline{\g}$. Proceed similarly if  $\ww\in\E_{i}(s,F)$.

 \item $r=s\cdot t$, and $\ww\in\E_{i+1}(s\cdot t,F)$. Then $\ww\in\E_{i}(s,G\r F)\cap\E_{i}(t,G)$, for some formula $G$. Thus, by the induction hypothesis, $\ww E(s,G\r F)$ and $\ww E(t,G)$ are in $\overline{\g}$. Therefore, $\ww E(s,G\r F)$ and $\ww E(t,G)$ are in $\g_k$, for some $k\geq 0$. Since $r\in Sub_{Tm}(\g\R\d)$, after the stage of rule $(E\cdot)$, evidence atom $\ww E(s\cdot t,F)$ is added to the antecedent of the sequent. Hence $\ww E(s\cdot t,F)$ is in $\g_l$, for some $l\geq k$. Therefore, $\ww E(s\cdot t,F)$ is in $\overline{\g}$.

    \item {\bf j4} is an axiom of $\JL_\CS$, $r=!t$, and $\ww\in\E_{i+1}(!t,F)$. Then $\ww\in\E_{i}(t,G)$, for some formula $G$ such that $F=t:G$. Thus, by the induction hypothesis, $\ww E(t,G)$ is in $\overline{\g}$. Therefore, $\ww E(t,G)$ is in $\g_k$, for some $k\geq 0$. Since $r\in Sub_{Tm}(\g\R\d)$, after the stage of rule $(E!)$, evidence atom $\ww E(!t,F)$ is added to the antecedent of the sequent. Hence $\ww E(!t,F)$ is in $\g_l$, for some $l\geq k$. Therefore, $\ww E(!t,F)$ is in $\overline{\g}$.

  \item  {\bf j4} is an axiom of $\JL_\CS$, $\ww\in\E_{i+1}(r,F)$, $\vv\RR \ww$, $\vv\in\E_{i}(r,F)$.  Thus, by the induction hypothesis, $\vv E(r,F)$ is in $\overline{\g}$. By definition of $\RR$ for countermodel $\M$, $\vv R \ww$ is in $\overline{\g}$. Therefore, $\vv E(r,F)$ and $\vv R \ww$ are in $\g_k$, for some $k\geq 0$. After the stage of rule $(Mon)$, evidence atom $\ww E(r,F)$ is added to the antecedent of the sequent. Hence $\ww E(r,F)$ is in $\g_l$, for some $l\geq k$. Therefore, $\ww E(r,F)$ is in $\overline{\g}$.
 \qe
  \end{enumerate}
 \end{proof}

\begin{lemma}\label{lemma: in completeness countermodel}
Given an arbitrary $\JL$-formula $A$, if $\ww\Vvdash A$ is in
$\overline{\g}$, then $(\M,\ww)\Vdash A$, and if $\ww\Vvdash A$ is in $\overline{\d}$, then $(\M,\ww)\not\Vdash  A$.
\end{lemma}
\begin{proof} By induction on the complexity of $A$. The base case follows from the definition of $\V$. We consider in details
only the case that $A$ is of the form $t:B$ (the proof of other cases
are stated in details in \cite{Negri2009}).

Suppose $\ww\Vvdash t:B$ is in $\overline{\g}$. Then, there is some
$i$ such that $\ww\Vvdash t:B$ appears first in $\g_i$. Therefore,
after the stage of rule $(E)$, $\ww E(t,B)$ is added to $\g_j$ for some $j>i$. Hence, by definition of evidence function $\mathcal{A}$ for
countermodel $\M$, we have $\ww\in\mathcal{A}(t,B)$, and hence $\ww\in\E_\mathcal{A}(t,B)$. Now consider an
arbitrary world $\vv$ in $\W$ such that $\ww\RR \vv$. By definition
of accessibility relation $\RR$ for $\M$, the relational atom $\ww R\vv$ should be in
$\g_l$, for some $l$. Then we can find $k$ such that both $\ww R\vv$
and $\ww\Vvdash t:B$ are in $\g_k$. Hence, in the stage of rule $(L:)$, we add $\vv\Vvdash B$ to $\overline{\g}$. Thus,
by the induction hypothesis, we can conclude that $(\M,\vv)\Vdash
B$. Therefore, $(\M,\ww)\Vdash t:B$.

If $\ww\Vvdash t:B$ is in $\overline{\d}$. Then $\ww\Vvdash t:B$ is in
$\d_i$, for some $i$. We have two cases:
 \begin{description}
  \item[(i)] $\ww E(t,B)$ is in $\overline{\g}$. Then $\ww E(t,B)$ is in $\g_j$, for some $j$. Find minimum index $k$ such
that $\g_k\R\d_k$ is of the form $\ww E(t,B),\g'\R\d',\ww\Vvdash
t:B$. After the stage of rule $(R:)$, we obtain a sequent
$\g_l\R\d_l$ of the form $\ww R\vv,\ww E(t,B),\g''\R\d'',\vv\Vvdash B$, for some
fresh label $\vv$ and $l> k$. Thus, by the induction hypothesis,
$(\M,\vv)\not\Vdash B$ and, by the definition of $\RR$, $\ww\RR \vv$. Hence, $(\M,\ww)\not\Vdash
t:B$.
    \item[(ii)] $\ww E(t,B)$ is not in $\overline{\g}$. Since $\ww\Vvdash t:B$ is in $\overline{\d}$, by the labeled-subformula property for $\J^-_\CS$, we have $t:B\in Sub_{Fm}(\g\R\d)$, and hence $t\in Sub_{Tm}(\g\R\d)$.  By Lemma \ref{lem:E in completeness countermodel}, we have $\ww\not\in\E_\mathcal{A}(t,B)$. Hence $(\M,\ww)\not\Vdash t:B$. \qe
  \end{description}
  \end{proof}

 Finally, let $\M$-interpretation $[\cdot]$ be the
identity function. Thus, $[\ww]=\ww$ for every $\ww\in\W$. We have the following facts:

\begin{enumerate}
\item Lemma \ref{lemma: in completeness countermodel},  together with $\g\subseteq \overline{\g}$ and $\d\subseteq\overline{\d}$ implies that: given an arbitrary $\JL$-formula $A$, if $\ww\Vvdash A$ is in $\g$, then $(\M,[\ww])\Vdash A$, and if $\ww\Vvdash A$ is in $\d$, then $(\M,[\ww])\not\Vdash  A$.

\item By the definition of $\RR$, if $\ww R\vv$ is in $\g$, then $[\ww]\RR [\vv]$. And if $\ww R\vv$ is in $\d$, then $[\ww] \RR [\vv]$ does not hold.

\item  If $\ww E(r,F)$ is in $\g$, then it is in $\overline{\g}$. Therefore, $\ww \in\mathcal{A}(r,F)$, and hence $[\ww] \in\E_\mathcal{A}(r,F)$. And if $\ww E(r,F)$ is in $\d$, then it is in $\overline{\d}$. Therefore, $r\in Sub_{Tm}(\g\R\d)$, since no stages of the reduction tree produces evidence atoms in the succedent of  sequents. Hence, by Lemma \ref{lem:E in completeness countermodel},  $[\ww] \not\in\E_\mathcal{A}(r,F)$.
\end{enumerate}

These facts together imply that the interpretation
$[\cdot]$ validates all the formulas in $\g$ but none of the
formulas in $\d$. Thus $\M$ is a $\JL_\CS$-countermodel for
$\g\R\d$.\qe \end{proof}

Completeness of labeled sequent calculi $\J^-_\CS$ with respect to F-models follows from the above theorem.

\begin{corollary}[Completeness]\label{corolary label compl}
Let $\JL$ be one of the justification logics {\sf J}, {\sf J4}, {\sf JD}, {\sf JD4}, {\sf JT}, {\sf LP}, and $\CS$ be a finite constant specification for \JL. If sequent $\g\R\d$
is valid in every $\JL_\CS$-model, then it is
derivable in $\J_\CS$.
\end{corollary}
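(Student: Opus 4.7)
The plan is to derive this corollary as an almost immediate consequence of Theorem~\ref{thm:reduction tree}, which has already done all the heavy lifting (construction of the reduction tree, saturated branches, and extraction of a countermodel via the inductively generated admissible evidence function from Definition~\ref{def:generated evidence function}). I would proceed by contraposition.

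First I would assume, for contradiction, that $\g\R\d$ is not derivable in $\J^-_\CS$. Since $\JL$ is one of ${\sf J}, {\sf J4}, {\sf JD}, {\sf JD4}, {\sf JT}, {\sf LP}$, none of which contains axiom {\bf jB} or {\bf j5}, and $\CS$ is finite, we are exactly in the scope of Theorem~\ref{thm:reduction tree}. Hence the theorem applies directly, and its dichotomy yields a $\JL_\CS$-countermodel $\M=(\W,\RR,\E_\mathcal{A},\V)$ for $\g\R\d$, together with an $\M$-interpretation $[\cdot]$ (the identity on labels occurring in the saturated branch) that validates every formula in $\g$ but no formula in $\d$.

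Next I would invoke the hypothesis: since $\g\R\d$ is valid in every $\JL_\CS$-model, it must in particular be valid in $\M$, and hence valid for the interpretation $[\cdot]$. But then $[\cdot]$ would have to validate some formula in $\d$, contradicting the property of the countermodel produced by the reduction tree. This contradiction forces $\g\R\d$ to be derivable in $\J^-_\CS$, completing the proof.

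There is essentially no obstacle here, since Theorem~\ref{thm:reduction tree} already packages both the derivation-or-countermodel dichotomy and the verification (via Lemmas~\ref{lem:E in completeness countermodel} and~\ref{lemma: in completeness countermodel}) that the extracted $\M$ is indeed a $\JL_\CS$-model. The only care needed is to check that the hypotheses of Theorem~\ref{thm:reduction tree} are met: $\JL$ must be one of the six listed justification logics (so that the admissible evidence function construction in Definition~\ref{def:generated evidence function} applies and Lemma~\ref{lem: generated evidence function is admissible} yields admissibility), and $\CS$ must be finite (so that stage~12 of the reduction tree, which adds evidence atoms $\ww E(c,F)$ for each $c:F\in\CS$, terminates for each label). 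Both conditions are explicit in the statement of the corollary, so the reduction to Theorem~\ref{thm:reduction tree} is immediate.
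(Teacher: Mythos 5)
Your proposal is correct and matches the paper's own treatment: the paper derives this corollary immediately from Theorem~\ref{thm:reduction tree} by exactly the dichotomy argument you give (a countermodel would contradict validity in every $\JL_\CS$-model, so the sequent must be derivable). Note only that for the six listed logics $\J_\CS$ and $\J^-_\CS$ coincide, so your phrasing in terms of $\J^-_\CS$ is harmless.
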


For these labeled systems, if the reduction tree of a sequent  has a saturated branch, then the sequent has a countermodel, and hence, by Theorem \ref{Soundness labeled systems}, it is not derivable.

\begin{corollary}\label{cor:saturated branch}
Let $\JL$ be one of the justification logics {\sf J}, {\sf J4}, {\sf JD}, {\sf JD4}, {\sf JT}, {\sf LP}, and $\CS$ be a finite constant specification for \JL. If the reduction tree of a sequent in $\J_\CS$ has a saturated branch then the sequent is not derivable in $\J_\CS$.
 \end{corollary}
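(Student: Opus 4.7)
The plan is to derive this corollary as an immediate consequence of the countermodel construction embedded in the proof of Theorem~\ref{thm:reduction tree} together with the soundness theorem (Theorem~\ref{Soundness labeled systems}). The key observation is that the countermodel construction in Theorem~\ref{thm:reduction tree} does not depend on \emph{all} branches being saturated; a single saturated branch is enough to produce a falsifying model. So I would simply extract that construction and invoke soundness contrapositively.

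More concretely, first I would fix a saturated branch $\g_0\R\d_0,\g_1\R\d_1,\ldots$ of the reduction tree, where $\g_0\R\d_0 = \g\R\d$, and form $\overline{\g} = \bigcup_i \g_i$ and $\overline{\d} = \bigcup_i \d_i$. Then I would apply verbatim the construction from the ``Countermodel'' portion of the proof of Theorem~\ref{thm:reduction tree}: take $\W$ to be the labels appearing in $\overline{\g}\cup\overline{\d}$, let $\RR$ read off the relational atoms in $\overline{\g}$, define a possible evidence function $\mathcal A$ from the evidence atoms in $\overline{\g}$, close it to the admissible evidence function $\E_\mathcal A$ via Definition~\ref{def:generated evidence function} (using Lemma~\ref{lem: generated evidence function is admissible} to confirm admissibility for the six logics listed), and define $\V$ from labeled atomic formulas. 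Saturation of the branch together with the stages of the reduction tree yields the frame conditions needed for $\JL_\CS$-models, and stage $n=12$ ensures $\mathcal A(c,F)=\W$ for every $c{:}F\in\CS$.

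Next I would invoke Lemma~\ref{lemma: in completeness countermodel} (proved inside Theorem~\ref{thm:reduction tree}) to conclude that under the identity interpretation $[\cdot]$ every labeled formula of $\overline{\g}$ is true in $\M$ and every labeled formula of $\overline{\d}$ is false, with the analogous read-off for relational and evidence atoms (the latter using Lemma~\ref{lem:E in completeness countermodel} together with the subterm property of Proposition~\ref{prop:subterm property} to handle the evidence atoms appearing in the succedent). In particular, since $\g\subseteq \overline{\g}$ and $\d\subseteq\overline{\d}$, the identity interpretation validates every formula in $\g$ and no formula in $\d$, so $\g\R\d$ is \emph{not} valid in $\M$.

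Finally, I would close by contraposition of the soundness theorem: if $\g\R\d$ were derivable in $\J_\CS$, then by Theorem~\ref{Soundness labeled systems} it would be valid in every $\JL_\CS$-model, contradicting the existence of the countermodel $\M$. Hence $\g\R\d$ is not derivable in $\J_\CS$. There is no genuine obstacle here; the only point requiring care is to observe that the countermodel construction in the proof of Theorem~\ref{thm:reduction tree} is entirely \emph{local} to one saturated branch and does not presuppose that all other branches fail to close, so it applies unchanged in the present setting.
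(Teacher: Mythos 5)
Your proposal is correct and follows exactly the route the paper takes: the countermodel construction in the proof of Theorem~\ref{thm:reduction tree} is applied to the given saturated branch to obtain a $\JL_\CS$-countermodel, and then the contrapositive of the soundness theorem (Theorem~\ref{Soundness labeled systems}) yields non-derivability. The paper states this in a single sentence immediately before the corollary; your write-up just makes the same argument explicit.
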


 In the next section, using the reduction tree and above corollary, we show how to decide whether a sequent is derivable. Let us now give some examples. The first example shows that the reduction tree of each sequent (except the empty sequent) in ${\sf G3JD}_\CS$ and its extensions (always) contains an infinite saturated branch.

 \begin{example}
Let us try to construct a proof tree for the sequent $\R \ww\Vvdash P$ in ${\sf G3JD}_\emptyset$, where $P$ is a propositional variable.
\begin{eqnarray*}
&\vdots&\\
&\uparrow&\\
\ww R \vv_3,\vv_1 R \vv_3,\vv_2 R \vv_3,\ww R \vv_2,\vv_1 R \vv_2, \ww R \vv_1 &\R& \ww\Vvdash P\\
&\uparrow&\\
\ww R \vv_2,\vv_1 R \vv_2, \ww R \vv_1 &\R& \ww\Vvdash P\\
&\uparrow& \\
 \ww R \vv_1&\R& \ww\Vvdash P\\
 &\uparrow&\\
&\R& \ww\Vvdash P
\end{eqnarray*}
where all sequents (except the root sequent) are obtained by stages of rule $(Ser)$, and $\vv_1, \vv_2,\ldots$ are distinct labels different from $\ww$. Since the branch is infinite, it is saturated. Thus, we can construct a ${\sf JD}_\emptyset$-countermodel $\M=(\W,\RR,\E,\V)$ for the root sequent as follows:
\begin{itemize}
\item $\W=\{\ww,\vv_1, \vv_2,\ldots\}$.
\item $\RR=\{(\ww,\vv_i)~|~i\geq 1\}\cup\{(\vv_i,\vv_j)~|~i<j\}$.
\item $\E(t,A)=\emptyset$, for any term $t$ and any formula $A$.
\item $\V(Q)=\emptyset$, for any propositional variable $Q$.
\end{itemize}
Obviously, $\RR$ is serial and $\E$ satisfies application $(\E1)$ and sum $(\E2)$ conditions. Moreover, by Lemma \ref{lemma: in completeness countermodel}, we have $(\M,\ww)\not\Vdash P$. By letting $[\cdot]$ be the identity $\M$-interpretation, we conclude that $\M$ is a ${\sf JD}_\emptyset$-countermodel for $\R \ww\Vvdash P$.
\end{example}

\begin{example}
Let us try to construct a proof tree for a justification version of
L\"{o}b principle, $x:(y:A\r A)\r z:A$, in ${\sf G3J4}_\emptyset$. Here, to have a
simple reduction tree we replace occurrences of $\b$ in the
L\"{o}b principle $\b(\b A\r A)\r \b A$ by
justification variables $x,y$ and $z$, respectively.
\begin{eqnarray*}
%&\vdots&\\
%&\uparrow&\\
\ww E(x,y:A\r A), \ww\Vvdash x:(y:A\r A)&\R& \ww\Vvdash z:A\\
&\uparrow&\\
 \ww\Vvdash x:(y:A\r A)&\R& \ww\Vvdash z:A\\
 &\uparrow&\\
&\R& \ww\Vvdash x:(y:A\r A)\r z:A
\end{eqnarray*}
The second and third sequent in the reduction tree are obtained
from the root sequent by applying $(R\r)$ and $(E)$, respectively.
Since no other rules can be applied on the third sequent, and it is not an initial sequent, the branch is saturated. Thus, we can construct a ${\sf J4}_\emptyset$-countermodel $\M$ for the root sequent. Let $\W=\{\ww\}$ and $\RR$ be the empty set. For the evidence function $\E$, we have $\ww\in\mathcal{A}(x,y:A\r A)$. Now let $\E_\mathcal{A}$ be the evidence function based on $\mathcal{A}$. Finally let $\V(P)=\emptyset$, for any propositional variable $P$.  By Lemma \ref{lemma: in completeness countermodel} we obtain $(\M,\ww)\Vdash x:(y:A\r A)$ and
$(\M,\ww)\not\Vdash z:A$ (these facts can also be proved directly using the model $\M$). Thus $(\M,\ww)\not\Vdash x:(y:A\r A)\r z:A$.
\end{example}

We close this section with a discussion on the correspondence theory for justification logics. Following the correspondence theory of modal logics, one can expect that justification axioms defines classes of suitable frames. For simplicity, in the rest of this section, we assume that the language of all justification logics contain all term operations $+$, $\cdot$, $!$, $\bar{?}$, $?$. The set of all terms and formulas are denoted by $Tm$ and $Fm$ respectively.

\begin{definition}
\begin{enumerate}
\item A Fitting frame is a triple $\mathcal{F}=(\W,\RR,\E)$ such that $\W$ is a non-empty set, $\RR$ is the accessibility relation on $\W$, and $\E$ is a possible evidence function on $\W$, i.e. $\E:Tm \times Fm\r 2^\W$.
\item A possible Fitting model is an Fitting frame enriched by a valuation $\V$. For a possible Fitting model $\mathcal{M}=(\W,\RR,\mathcal{A},\V)$, where $\mathcal{F}=(\W,\RR,\mathcal{A})$ is a Fitting frame, we say that $\M$ is based on $\mathcal{F}$.
\item A \JL-formula $A$ is true in a possible Fitting model $\mathcal{M}$, denoted $\mathcal{M}\Vdash A$, if $(\mathcal{M},w)\Vdash A$ for every $w\in\W$.
%\item A \JL-formula $A$ is valid in a class of possible Fitting models $\mathsf{M}$, denoted $\mathsf{M}\Vdash A$, if $\mathcal{M}\Vdash A$ for every possible Fitting model $\mathcal{M}\in\mathsf{M}$.
\item A \JL-formula $A$ characterizes a class of possible Fitting models $\mathsf{M}$, if for all possible Fitting models $\mathcal{M}$: $$\mathcal{M}\in\mathsf{M}~\textrm{iff}~ \mathcal{M}\Vdash A.$$
\item A \JL-formula $A$ is valid in a world $w$ in a Fitting frame $\mathcal{F}$, denoted $(\mathcal{F},w)\Vdash A$, if $(\M,w)\Vdash A$ for every possible Fitting model $\M=(\mathcal{F},\V)$ based on $\mathcal{F}$.
\item A \JL-formula $A$ is valid in a Fitting frame $\mathcal{F}$, denoted $\mathcal{F}\Vdash A$, if $(\mathcal{F},w)\Vdash A$ for every $w\in\W$.
\item A \JL-formula $A$ characterizes a class of Fitting frames $\mathsf{F}$, if for all Fitting frames $\mathcal{F}$: $$\mathcal{F}\in\mathsf{F}~\textrm{iff}~ \mathcal{F}\Vdash A.$$
\end{enumerate}
\end{definition}
Soundness and completeness of justification logics with respect to F-models (Theorem \ref{Sound Compl JL}) naturally propose the following \textit{naive characterizations} (cf. also Table \ref{table: frame properties of models}):
\begin{enumerate}
%\item $x:(P\r Q)\r(y:P\r x\cdot y:P)$ characterizes the class of Fitting frames in which the evidence function satisfies the application condition $(\E1)$.
%\item $x:P\vee y:P\r x+y:P$ characterizes the class of Fitting frames in which the evidence function satisfies the sum condition $(\E2)$.
\item $x:P\r P$ characterizes the class of Fitting frames in which the accessibility relation is  reflexive.
\item $x:\bot\r \bot$ characterizes the class of Fitting frames in which the accessibility relation is serial.
\item $x:P\r !x:x:P$ characterizes the class of Fitting frames in which the accessibility relation is transitive and the evidence function satisfies the monotonicity $(\E3)$ and positive introspection  $(\E4)$ conditions.
\item $\neg P\r \bar{?}x:\neg x:P$ characterizes the class of possible Fitting models in which the accessibility relation is symmetric and the evidence function satisfies the weak negative introspection condition $(\E5)$.
\item $x:P\r ?x:\neg x:P$ characterizes the class of possible Fitting models in which the evidence function satisfies the negative introspection $(\E6)$ and strong evidence  $(\E7)$ conditions.
\end{enumerate}

The above characterizations are used implicitly in the literature of justification logics, but no proofs have been found for them so far. In fact we can expect problems in proving of the above naive characterizations. For example see the inconvenient form of items 6 and 7, which are formulated in terms of possible Fitting models instead of Fitting frames. In addition, the behavior of evidence functions in Fitting models is similar to valuation functions. Consider for example the evidence function $\E:Tm \times Fm \r 2^\W$ and the valuation function $\V:\mathcal{P}\r 2^\W$, where $\mathcal{P}$ denotes the set of all propositional variables. $\V(P)$, for $P\in\mathcal{P}$, is the set of all worlds in which $P$ is true. Likewise $\E(t,A)$, for a term $t$ and a formula $A$, is the set of all worlds where $t$ is considered admissible evidence for $A$. Thus, it may be more convenient to not incorporate evidence functions into Fitting \textit{frames}.

In the following theorem we give a model correspondence in the context of labeled systems, which is different from the above  characterizations.

\begin{theorem}[Fitting Model Correspondence]\label{thm:Fitting Model Correspondence}
 Suppose $x$ is a justification variable and $P$ is a propositional variable.
\begin{enumerate}
\item The sequent $\R\ww\Vvdash x:P\r P$ is derivable in ${\sf G3JL}_\CS$ if and only if ${\sf G3JL}_\CS$ contains the rule $(Ref)$.
\item The sequent $\R\ww\Vvdash x:\bot\r \bot$ is derivable in ${\sf G3JL}_\CS$ if and only if ${\sf G3JL}_\CS$ contains the rule $(Ser)$.
\item The sequent $\R\ww\Vvdash x:P\r !x:x:P$ is derivable in ${\sf G3JL}_\CS$ if and only if ${\sf G3JL}_\CS$ contains the rules  $(E!), (Mon), (Trans)$.
\item The sequent $\R\ww\Vvdash \neg P\r \bar{?}x:\neg x:P$ is derivable in ${\sf G3JL}_\CS$ if and only if ${\sf G3JL}_\CS$ contains the rules $(E\bar{?}), (Sym)$.
\item The sequent $\R\ww\Vvdash \neg x:P\r ?x:\neg x:P$ is derivable in ${\sf G3JL}_\CS$ if and only if ${\sf G3JL}_\CS$ contains the rules $(SE), (E?)$.
\end{enumerate}
\end{theorem}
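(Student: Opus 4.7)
The plan is to treat each of the five clauses as an independent biconditional and prove both directions separately. The forward direction (the rule implies derivability) amounts to producing an explicit cut-free derivation of $\R\ww\Vvdash A$ in ${\sf G3JL}_\CS$ using the stated rule(s). The backward direction (derivability implies the rule is present) proceeds semantically: if the rule is absent, then by Table \ref{table: G3JL} the corresponding justification axiom is absent from \JL, and Theorem \ref{thm: Realization Theorem} together with soundness produces the required obstruction.

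For the forward direction, I would write each derivation root-first, in analogy with the proof of Proposition \ref{derivability of theorems in labels systems}. Clause 1: apply $(R\r)$, then $(Ref)$, then $(L:)$, closing the single branch by Lemma \ref{lem:Initial axiom for arbitrary A}. Clause 2 is the derivation already displayed in the proof of Proposition \ref{derivability of theorems in labels systems}. For clause 3, I would apply $(R\r)$, followed by two uses of $(R:)$ that introduce successor labels $\vv$ and $\uu$ with $\ww R\vv$ and $\vv R\uu$; then $(Trans)$ yields $\ww R\uu$, $(Mon)$ propagates $\ww E(x,P)$ to $\uu E(x,P)$, and $(E!)$ produces $\ww E(!x,x:P)$ and $\vv E(!x,x:P)$; closure of both branches is by $(L:)$ and Lemma \ref{lem:Initial axiom for arbitrary A}. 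Clauses 4 and 5 follow the patterns of Examples \ref{lem:Example 1 in G3J5} and \ref{lem:Example 2 in G3J5}: after $(R\r)$, $(R:)$, $(L\neg)$, $(R\neg)$ the rule $(E\bar{?})$ (resp.\ $(E?)$) introduces a binary branching, one branch of which is closed using $(Sym)$ with $(L:)$ (resp.\ $(SE)$ with $(L:)$), and the other by the initial sequent $(AxE)$.

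For the backward direction, I would argue uniformly by contrapositive using the realization theorem. Suppose the listed rule is not in ${\sf G3JL}_\CS$; then by Table \ref{table: G3JL} the corresponding justification axiom is not in \JL, so by Theorem \ref{thm: Realization Theorem} the modal logic $\JL^\circ$ does not contain the corresponding modal axiom {\bf T}, {\bf D}, {\bf 4}, {\bf B}, {\bf 5}. If $\R\ww\Vvdash A$ were derivable in ${\sf G3JL}_\CS$, then by Corollary \ref{cor:equivalence JL and G3JL} the formula $A$ would be a theorem of $\JL_\CS$, hence of \JL, and therefore its forgetful projection $A^\circ$ would be a theorem of $\JL^\circ$. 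But $A^\circ$ is exactly the missing modal axiom, and standard Kripke-frame countermodels refute it, a contradiction.

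The main obstacle is clause 3, where three rules $(E!)$, $(Mon)$, $(Trans)$ are jointly associated with axiom {\bf j4}: the biconditional asserts that derivability of $\R\ww\Vvdash x:P\r!x:x:P$ is equivalent to the joint presence of all three, and the forward derivation must carefully respect the eigenlabel side condition on each application of $(R:)$ while the $(Mon)$ and $(E!)$ applications are threaded between them. A secondary subtlety affects clause 2: since $(Ref)$ semantically implies seriality, the sequent $\R\ww\Vvdash x:\bot\r\bot$ is already derivable in ${\sf G3JT}_\CS$ without $(Ser)$; the backward direction of clause 2 must therefore be read relative to the base system without $(Ref)$, i.e.\ as the statement that within the family of ${\sf G3JL}_\CS$ for \JL not containing {\bf jT}, derivability of $x:\bot\r\bot$ requires $(Ser)$. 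This caveat should be spelled out explicitly, and the countermodels furnished for the backward direction chosen accordingly (non-reflexive, non-serial $\RR$ to refute clause 1; non-serial $\RR$ to refute clause 2; etc.).
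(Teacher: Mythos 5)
Your forward direction is fine and coincides with what the paper does (it simply declares the ``if'' directions easy), and your caveat about clause 2 --- that ${\sf G3JT}_\CS$ derives $\R\ww\Vvdash x:\bot\r\bot$ via $(Ref)$ without $(Ser)$ --- is a genuine catch that the paper's proof does not address. But the backward direction, as you propose it, has a real gap. You pass to the forgetful projection and try to refute $A^\circ$ in a Kripke frame for $\JL^\circ$. This erases exactly the term-level information the theorem is about: the modal projection cannot distinguish $x:P\r !x:x:P$ from $x:P\r t_!(x):x:P$, and by Lemma \ref{lem:operation replacement} the latter \emph{is} provable in, e.g., ${\sf JT5}$ even though ${\sf G3JT5}_\CS$ contains none of $(E!),(Mon),(Trans)$. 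Since ${\sf JT5}^\circ={\sf S5}\vdash\b P\r\b\b P$, there is no Kripke countermodel to be had, and your contradiction never materializes; yet clause 3 still asserts non-derivability of the sequent in ${\sf G3JT5}_\CS$. The same failure occurs for clause 5 with ${\sf JB4}$ (whose projection ${\sf KB5}$ proves {\bf 5}), for clause 4 with ${\sf JT45}$ (whose projection ${\sf S5}$ proves {\bf B}), and for clause 1 with ${\sf JDB4}$, ${\sf JDB5}$ (whose projections prove {\bf T}). Your premise ``the axiom is absent from the axiomatization, hence its projection is not a theorem of $\JL^\circ$'' is simply false in these cases.

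The paper avoids this by staying entirely inside the labeled calculus: for the contrapositive it runs the backward proof search (the reduction tree of Theorem \ref{thm:reduction tree}) on the concrete sequent in ${\sf G3JL}_\emptyset$, does a case analysis on which of the remaining rules the system happens to contain, and shows that every branch either saturates finitely, saturates infinitely, or visibly never reaches an initial sequent; non-derivability then follows from Corollary \ref{cor:saturated branch} or from direct inspection of which rule conclusions the sequent can match. That argument is sensitive to the specific terms and evidence atoms occurring in the sequent (it sees that no rule ever produces the atom $\ww E(!x,x:P)$ needed to close the $(R:)$ branch), which is precisely the sensitivity your modal detour discards. To repair your proof you would need either to replace the modal countermodels by Fitting countermodels tailored to the specific terms (essentially reconstructing the paper's reduction-tree argument semantically), or to restrict the claim to those $\JL$ for which the projection argument happens to work --- which excludes most of the interesting cases.
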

\begin{proof}
The if directions are easy. Let us prove the only if directions, or rather their contrapositives. For item (1), suppose that ${\sf G3JL}_\emptyset$ does not contain rule $(Ref)$. If ${\sf G3JL}_\emptyset$ does not contain rules  $(Ser)$, $(E?)$, and $(E\bar{?})$. Then the reduction tree of $\R\ww\Vvdash x:P\r P$ in ${\sf G3JL}_\emptyset$ is as follows:
\begin{eqnarray*}
%&\vdots&\\
%&\uparrow&\\
\ww E(x,P),\ww\Vvdash x:P &\R& \ww\Vvdash P\\
 &\uparrow&\\
 \ww\Vvdash x:P &\R& \ww\Vvdash P\\
 &\uparrow&\\
&\R& \ww\Vvdash x:P\r P
\end{eqnarray*}
where the second and third sequents are obtained by the stages of rules $(R\r)$ and $(E)$ respectively. Since this branch is saturated, by Corollary \ref{cor:saturated branch}, the sequent $\R\ww\Vvdash x:P\r P$ is not derivable in ${\sf G3JL}_\emptyset$.  For labeled systems that contain $(Ser)$ or $(E?)$, but does not contain $(E\bar{?})$, we obtain an infinite saturated branch, which again shows that the sequent is not derivable. If ${\sf G3JL}_\emptyset$  contains the rules $(E\bar{?})$, then it is easy to verify that  applying any of rules of ${\sf G3JL}_\emptyset$ upwardly, in any step of the previous argument do not yield to an initial sequent. Thus again $\R\ww\Vvdash x:P\r P$ is not derivable in ${\sf G3JL}_\emptyset$. The proof for other clauses, except (4), are similar.

For clause (4), Suppose ${\sf G3JL}_\emptyset$ does not contain rules $(E\bar{?})$ and $(Sym)$. If we try to find a derivation for the sequent $\R\ww\Vvdash \neg P\r \bar{?}x:\neg x:P$ in ${\sf G3JL}_\emptyset$, which does not contain rules $(Ref)$, $(Ser)$, $(E?)$, we see that this sequent only fit into the conclusion of rule $(R\r)$:
\begin{prooftree}
\AXC{$\ww\Vvdash \neg P\R\ww\Vvdash \bar{?}x:\neg x:P$}
\RightLabel{$(R\r)$}
\UIC{$\R\ww\Vvdash \neg P\r \bar{?}x:\neg x:P$}
\end{prooftree}
and the sequent $\ww\Vvdash \neg P\R\ww\Vvdash \bar{?}x:\neg x:P$ only fit into the conclusion of rule $(L\neg)$:
\begin{prooftree}
\AXC{$\R\ww\Vvdash \bar{?}x:\neg x:P,\ww\Vvdash  P$}
\RightLabel{$(L\neg)$}
\UIC{$\ww\Vvdash \neg P\R\ww\Vvdash \bar{?}x:\neg x:P$}
\RightLabel{$(R\r)$}
\UIC{$\R\ww\Vvdash \neg P\r \bar{?}x:\neg x:P$}
\end{prooftree}
Now the the sequent $\R\ww\Vvdash \bar{?}x:\neg x:P,\ww\Vvdash  P$ do not fit into the conclusion of any rules. Hence $\R\ww\Vvdash \neg P\r \bar{?}x:\neg x:P$ is not derivable in ${\sf G3JL}_\emptyset$. If ${\sf G3JL}_\emptyset$  contains the rules $(Ref)$, $(Ser)$, $(E?)$, then it is easy to verify that  applying any of rules $(Ref)$, $(Ser)$, $(E?)$, upwardly, in any step of the previous argument do not yield to an initial sequent. Thus again $\R\ww\Vvdash \neg P\r \bar{?}x:\neg x:P$ is not derivable in ${\sf G3JL}_\emptyset$.

The above results can be extended to the case that $\CS$ is not empty (we leave it to the reader to verify the details of this).\qe
\end{proof}

The kind of correspondences that our labeled sequent calculi  gives us is different from the aforementioned naive characterizations. Firstly, it is indeed formulated using Fitting models rather than Fitting frames (and thus it is better to call it Fitting Model Correspondence instead of Fitting Frame Correspondence, as we did). Let us compare item 1 of the naive characterization with item 1 of the above theorem. The former informally says: a Fitting \textit{frame} validates $x:P\r P$ if{f} it is reflexive. Whereas  item 1 of the above theorem informally says (using Corollary \ref{cor:equivalence JL and G3JL}): $x:P\r P$ is true in all $\JL$-\textit{models} if{f} ${\sf G3JL}$ contains $(Ref)$, meaning that all $\JL$-\textit{models} are reflexive.

Thus, it seems that the only correspondences of Theorem \ref{thm:Fitting Model Correspondence} which describe the naive characterizations in the context of labeled sequent calculus appropriately are items 4 and 5, for axioms {\bf jB} and {\bf j5} respectively. However, there is another drawback in these formulations. All correspondences of Theorem \ref{thm:Fitting Model Correspondence} are stated on all models rather than a particular model. For example, let us compare item 5 of the naive characterization with item 5 of the above theorem. The former informally says: $\neg x:P\r ?x:\neg x:P$ is true in \textit{a given} Fitting model   if{f} the evidence function of the model satisfies the negative introspection $(\E6)$ and strong evidence $(\E7)$ conditions. Whereas  item 5 of the above theorem informally says (again using Corollary \ref{cor:equivalence JL and G3JL}): $\neg x:P\r ?x:\neg x:P$ is true in \textit{all} $\JL$-models if{f} ${\sf G3JL}$ contains $(SE)$ and $(E?)$, meaning that the evidence function of \textit{all} $\JL$-models satisfies the conditions $\E6$ and  $\E7$.

Nonetheless, it seems that Theorem \ref{thm:Fitting Model Correspondence} is the only result (at least to the best of my knowledge) on the correspondence theory in justification logics and the issue have yet to be investigated more.
%%%%%%%%%%%%%%%%%%%%%%%%%%%%%%%%%%%%%%%%%%%%%%%%%%%%%%%%%%%%%%%%%%%%%%%%%%%%%%%%%5
\section{Termination of proof search}\label{section: termination proof search}
In this section, we establish the termination of proof search for
labeled systems ${\sf G3J}_\CS$, ${\sf G3JT}_\CS$, ${\sf G3LP}_\CS$, for finite $\CS$. In this respect, it is
useful to consider the reduction tree of a sequent, which was constructed in the
proof of Theorem \ref{thm:reduction tree}.\footnote{In \cite{Ghari2012-Thesis} termination of proof search for these systems are proved using \textit{minimal} derivations (that was first defined by Negri in \cite{Negri2005}).} We show that the reduction tree of every sequent in ${\sf G3J}_\CS$, ${\sf G3JT}_\CS$, ${\sf G3LP}_\CS$, for finite $\CS$, is finite. Thus, it is decidable whether the reduction tree has a saturated branch, and hence it is decidable whether the sequent is derivable. In order to show that the reduction tree of a sequent is finite, we find bounds on
the number of applications of rules.

First we define the negative and positive parts of a sequent
$\g\R\d$. Let
\[ \g^f=\{A~|~\textrm{$\ww\Vvdash A$ occurs in $\g$}\}.\]
Now the negative and positive parts of a sequent $\g\R\d$ is defined as the negative and positive parts of the formula $\bigwedge\g^f\r\bigvee\d^f$. We use the
following notations in the remaining of this section. For any
given sequent, let $n(:)$ and $p(:)$ be the number of occurrences
of : in the negative and positive part of the
sequent respectively. For any given derivation
\begin{itemize}
    \item   $l$ denotes the number of labels in the endsequent.
    \item   $r$ denotes the number of relational atoms in the antecedent of the endsequent.
    \item   $e$ denotes the number of evidence atoms in the antecedent of the endsequent.
    \item   For $*\in\{+,\cdot,!\}$, $n(*)$ denotes the number of subterms of terms that contain $*$ in the endsequent.
\end{itemize}
It should be noted that, since we use the reduction tree from the
proof of Theorem \ref{thm:reduction tree}, the
termination of proof search is proved for finite constant
specifications.
\begin{theorem}\label{thm:termination proof search G3J}
Given any finite constant specification $\CS$, and any sequent
$\g\R\d$ in the language of {\sf G3J}, it is decidable whether the sequent is derivable in ${\sf G3J}_\CS$.
\end{theorem}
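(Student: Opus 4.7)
The plan is to show that the reduction tree of any sequent $\g\R\d$ in ${\sf G3J}_\CS$ is finite. Once this is established, we can mechanically inspect the tree: either every branch ends in an initial sequent, in which case the procedure from the proof of Theorem~\ref{thm:reduction tree} yields a derivation, or some branch is saturated without terminating in an initial sequent, and by Corollary~\ref{cor:saturated branch} the sequent is not derivable. Decidability then follows.

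First I would bound the set of labels occurring in the reduction tree. In ${\sf G3J}$ the only rule that introduces a fresh label is $(R:)$, triggered by a labeled formula of the form $\ww\Vvdash t:A$ in the succedent. By the labeled-subformula property (Proposition~\ref{prop:subformula property}), every such labeled formula is a labeled-subformula of some labeled formula in the endsequent, so its formula $t:A$ lies in $Sub_{Fm}(\g\R\d)$; moreover condition $(\dagger)$ and the height-preserving admissibility of contraction (Theorem~\ref{thm:admiss Contr.}) guarantee that for each pair $(\ww, t:A)$ the stage of $(R:)$ is triggered at most once. Hence the total number of labels introduced is bounded by $l + L\cdot p(:)$, where $L$ is itself bounded recursively by the same quantity; a standard fixed-point argument gives a finite bound on the set of labels in the tree.

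Next I would bound the atoms appearing in antecedents. By analyticity (Corollary~\ref{cor:analyticity}), every formula of the form $\vv\Vvdash B$ in the tree satisfies $B\in Sub_{Fm}(\g\R\d)$, every term is in $Sub_{Tm}(\g\R\d)$, and every label is among the (already finite) pool of available labels. Hence the set of potential labeled formulas, relational atoms, and evidence atoms that could ever appear is finite. Since condition $(\dagger)$ forbids any stage from duplicating an atom already present in the antecedent, and since none of the rules of ${\sf G3J}$ remove antecedent atoms, the antecedent of each sequent along a branch is a subset of a fixed finite set. Each stage either strictly enlarges this antecedent or reduces the succedent by decomposing a propositional formula. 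Consequently every branch has bounded length, and since each stage is finitely branching, the reduction tree is finite.

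The main obstacle is the interaction between the $(IAN)$ stage (which adds evidence atoms $\ww E(c,F)$ for every $c{:}F\in\CS$ and every label $\ww$ currently in the sequent) and the $(R:)$ stage (which introduces fresh labels): each new label triggers potentially new $(IAN)$-evidence atoms, which in turn are legitimate side formulas that could participate in $(E\cdot)$, $(El+)$, $(Er+)$. Here finiteness of $\CS$ is essential, and the subterm property (Proposition~\ref{prop:subterm property}) is the key to keeping the $(El+)$, $(Er+)$, $(E\cdot)$ stages from producing unboundedly many new evidence atoms: these stages are only triggered when the resulting compound term already lies in $Sub_{Tm}(\g\R\d)$. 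Once one verifies carefully that the feedback loop between fresh labels and new evidence atoms closes after finitely many rounds — using that $|\CS|$, $|Sub_{Fm}(\g\R\d)|$, and $|Sub_{Tm}(\g\R\d)|$ are all finite — the bound on branch length follows, and K\"onig's lemma (applied to the finitely branching tree) then yields finiteness of the whole reduction tree, hence decidability.
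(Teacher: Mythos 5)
Your overall strategy is the same as the paper's: build the reduction tree of Theorem \ref{thm:reduction tree}, use analyticity (labeled-subformula, subterm and sublabel properties), condition $(\dagger)$ and the finiteness of $\CS$ to show the tree is finite, and then decide derivability by inspecting whether some branch is saturated. Your packaging of the second half as a saturation argument (the antecedent is a monotonically growing subset of a fixed finite set of atoms, the succedent work is bounded by propositional decomposition) is a clean alternative to the paper's explicit rule-by-rule counting of applications of $(E)$, $(IAN)$, $(El+)$, $(Er+)$, $(E\cdot)$, and that part is sound once the pool of labels is known to be finite.

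The gap is precisely in your bound on the labels. You count one application of $(R:)$ per pair $(\ww, t{:}A)$, obtaining at most $L\cdot p(:)$ applications where $L$ is the number of available labels, and hence the recursion $L \le l + L\cdot p(:)$. This recursion has no finite fixed point as soon as $p(:)\ge 1$, so the ``standard fixed-point argument'' you invoke does not close, and with it the whole finiteness claim collapses (everything downstream quantifies over ``the already finite pool of labels''). The appeal to condition $(\dagger)$ does not help here either: $(\dagger)$ constrains the antecedent, while what actually limits $(R:)$ is that the rule \emph{consumes} its principal formula $\ww\Vvdash t{:}A$ from the succedent, replacing it by the strictly smaller $\vv\Vvdash A$. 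Since in ${\sf G3J}$ every succedent formula is traced by decomposition from the endsequent and each positive occurrence of $:$ is decomposed at most once along a branch, the number of applications of $(R:)$ per branch is bounded by $p(:)$ \emph{outright}, independently of the number of labels; this is exactly the bound the paper uses, and it yields at most $l+p(:)$ labels with no recursion at all. With that correction your argument goes through.
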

\begin{proof} Suppose $\g\R\d$ is any sequent to be shown derivable.
Construct the reduction tree with the root $\g\R\d$. In the reduction tree the stages of propositional rules reduce the complexity of
formulas in the sequents (propositional rules have premises in which the active formulas are strictly simpler than the principal formula). In fact, in each branch of the reduction tree the number of applications of propositional rules are bounded by the number of the corresponding connective in the endsequent. Rule $(R:)$ decreases the complexity of
its principal labeled formula but adds a relational atom. The number of
applications of this rule (and therefore the number of eigenlabels
introducing by this rule) is bounded by $p(:)$. Rule $(L:)$ adds a
new labeled formula to the sequent. Regarding condition
$(\dagger)$, rule $(L:)$ in the reduction tree could apply only
once on a pair of formulas $\ww\Vvdash t:A$ and $\ww R\vv$. Thus the
number of applications of $(L:)$ with principal formula $\ww\Vvdash
t:A$ is bounded by the number of relational atoms of the form
$\ww R\vv$, which can be found in the antecedent of the root sequent or
may be introduced by rule $(R:)$ in the antecedent of sequents.
Hence the number of applications of $(L:)$ is bounded by
$n(:)(p(:)+r)$.

Rules $(E)$, $(IAN)$, $(El+)$, $(Er+)$ and $(E\cdot)$ add one new
evidence atom to the sequent, and increase the size of the
sequent. Thus, we have to find bounds on the number of
applications of each one. For $(E)$, since moving upward no rule
omit any evidence atom from the antecedent of sequents, the number
of applications of $(E)$ is bounded by $n(:)$. For $(IAN)$, since constant specification $\CS$ is finite, by the sublabel property, the number of applications of $(IAN)$ is
bounded by $|\CS|(p(:)+l)$ (by $|\CS|$ we mean the number of
elements of $\CS$). By the construction of the reduction tree, we
add an evidence atom $\ww E(t+s,A)$ to the antecedent of the
topsequent provided that $t+s\in Sub_{Tm}(\g\R\d)$.
Therefore, the number of applications of $(El+)$ (and similarly
$(Er+)$) is bounded by the number of evidence atoms that are in
the antecedent of the root sequent or may be introduced by the
rules $(E)$ or $(IAN)$. Hence, the number of applications of
$(El+)$ or $(Er+)$ is bounded by $n(+)(e+n(:)+|\CS|(p(:)+l))$. By
a similar argument, we get a bound on the number of applications
of rule $(E\cdot)$ by $n(\cdot)(e+n(:)+|\CS|(p(:)+l))$.\qe \end{proof}

Note that, as the above proof shows, the number of applications of
$(L:)$ and $(IAN)$ (or $(AN)$) depends on the number of
applications of $(R:)$ and the number of applications of $(El+)$,
$(Er+)$, and $(E\cdot)$ depends on the number of applications of $(E)$ and
$(IAN)$.
\begin{theorem}\label{thm:termination proof search G3JT}
Given any finite constant specification $\CS$, and any sequent
$\g\R\d$ in the language of {\sf G3JT}, it is decidable whether the sequent is derivable in ${\sf G3JT}_\CS$.
\end{theorem}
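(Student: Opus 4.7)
The plan is to adapt the argument used in Theorem~\ref{thm:termination proof search G3J} for ${\sf G3J}_\CS$ to the new system ${\sf G3JT}_\CS$, the only difference being the presence of the stage corresponding to rule $(Ref)$. As before, I would take an arbitrary sequent $\g\R\d$, construct the reduction tree from the proof of Theorem~\ref{thm:reduction tree}, and show this tree is finite. Finiteness plus the equivalence (by that theorem plus Corollary~\ref{cor:saturated branch}) between ``every topmost sequent is initial'' and ``derivable in ${\sf G3JT}_\CS$'' gives decidability: one mechanically inspects each finite branch.

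First I would bound the labels appearing in the tree. By Proposition~\ref{prop:sublabel property} (the sublabel property), every label in a branch is either in the endsequent or an eigenlabel introduced by some application of $(R:)$. Since condition $(\dagger)$ prevents repetition of $(R:)$ on the same pair of active formulas, the number of eigenlabels is bounded by $p(:)$, so the total number of labels in any branch is at most $l+p(:)$. Condition $(\dagger)$ also forces the $(Ref)$-stage to add $\ww R\ww$ for each label $\ww$ at most once, so the number of applications of $(Ref)$ on any branch is bounded by $l+p(:)$.

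Next I would bound the remaining rules exactly as in the proof of Theorem~\ref{thm:termination proof search G3J}, but with the updated count of relational atoms. The relational atoms that can appear in any branch come from three sources: the antecedent of the endsequent (count $r$), the eigenpremises of $(R:)$ (count at most $p(:)$), and the applications of $(Ref)$ (count at most $l+p(:)$); in total at most $r+2p(:)+l$. Therefore the $(L:)$-stage, which by $(\dagger)$ fires at most once per pair $(\ww\Vvdash t{:}A,\;\ww R\vv)$, is applied at most $n(:)(r+2p(:)+l)$ times. The bounds for the propositional stages follow from the strict decrease of formula complexity, the bound for $(E)$ is still $n(:)$, for $(IAN)$ still $|\CS|(p(:)+l)$, and for $(El+),(Er+),(E\cdot)$ one again multiplies $n(+)$ or $n(\cdot)$ by the (now larger but still finite) bound on the number of evidence atoms that can appear in the antecedent. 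Combining these finite bounds with the finite branching of every stage gives a finite reduction tree, and hence decidability.

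The main obstacle to watch for is the feedback between $(R:)$ and $(Ref)$: an application of $(R:)$ creates a fresh eigenlabel $\vv$, and a subsequent $(Ref)$-stage adds $\vv R\vv$, which could in principle trigger a new application of $(L:)$ on a formula $\vv\Vvdash s{:}B$ (obtained previously from some $(L:)$), then a further $(R:)$, and so on. I would rule out any such unbounded cascade by noting that every $(L:)$ application with principal pair $(\ww\Vvdash t{:}A,\ww R\vv)$ produces a labeled-subformula $\vv\Vvdash A$ of a labeled formula already present, so by Proposition~\ref{prop:subformula property} (labeled-subformula property for $\J^-$) the set of labeled formulas of the form $\ww\Vvdash t{:}A$ that can ever appear is already a subset of the (finite) set of labeled-subformulas of the endsequent indexed by the (finitely many) labels. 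Thus both the pool of possible principal formulas for $(L:)$ and the pool of labels are bounded, and condition $(\dagger)$ then terminates all the interlocking stages.
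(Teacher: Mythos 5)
Your proposal is correct and follows essentially the same route as the paper: bound the applications of $(Ref)$ by $l+p(:)$ using the sublabel property and condition $(\dagger)$, update the count of relational atoms accordingly so that $(L:)$ is bounded by $n(:)(2p(:)+r+l)$, and observe that all other bounds from the ${\sf G3J}$ case carry over unchanged. Your extra paragraph on the $(R:)$--$(Ref)$--$(L:)$ feedback only makes explicit a point the paper leaves implicit, and does not change the argument.
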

\begin{proof} Rule $(Ref)$ add a relational atom to the sequent. By the
construction of the reduction tree, $(Ref)$ only introduces atoms
$\ww R\ww$ in which $\ww$ is a label in the root sequent or is an eigenlabel. Thus, the
number of applications of $(Ref)$ is bounded by $l+p(:)$. Since the
atoms $\ww R\ww$ introduced by $(Ref)$, may produce new applications of
the rule $(L:)$, in this case, the number of applications of
$(L:)$ is bounded by $n(:)(2p(:)+r+l)$. The other bounds remain
unchanged.\qe \end{proof}

Rules $(Trans)$ and $(Mon)$ in {\sf G3LP} may
produce infinitely many applications of  rules $(L:)$ and
$(R:)$. For example, suppose we try to find a derivation
for sequent $\vv R\ww,\ww E(t,A)\R \vv\Vvdash \neg s:\neg~ t:A$ in {\sf
G3LP}:
\begin{prooftree}
\def\extraVskip{3pt}
\AXC{$\ww R\ww_1 ,\vv R\ww,\ww E(t,A),\vv\Vvdash s:\neg t:A\R \ww_1 \Vvdash
A$}\RightLabel{$(R:)$}
 \UIC{$\vv R\ww,\ww E(t,A),\vv\Vvdash s:\neg t:A\R \ww\Vvdash t:A$}
 \RightLabel{$(L\neg)$}
  \UIC{$\ww\Vvdash \neg t:A,\vv R\ww,\ww E(t,A),\vv\Vvdash s:\neg t:A\R$}
  \RightLabel{$(L:)$}
  \UIC{$\vv R\ww,\ww E(t,A),\vv\Vvdash s:\neg t:A\R$}
  \end{prooftree}
   Now, by applying $(Trans)$ and $(Mon)$ upwardly on the topsequent  we have:
\begin{equation}\label{sequent 1}
\vv R\ww_1 ,\ww_1  E(t,A),\ww R\ww_1 ,\vv R\ww,\ww E(t,A),\vv\Vvdash s:\neg t:A\R
\ww_1 \Vvdash A.
\end{equation}
  Next, by applying $(L:)$ and then $(L\neg)$ upwardly we obtain:
\begin{equation}\label{sequent 2}
\vv R\ww_1 ,\ww_1  E(t,A),\ww R\ww_1 ,\vv R\ww,\ww E(t,A),\vv\Vvdash s:\neg t:A\R
\ww_1 \Vvdash A,\ww_1 \Vvdash t:A.
\end{equation}
   Again, by applying $(R:)$ upwardly we have:
\begin{equation}\label{sequent 3}
\ww_1 R\ww_2,\vv R\ww_1 ,\ww_1  E(t,A),\ww R\ww_1,\vv R\ww,\ww E(t,A),\vv\Vvdash s:\neg
t:A\R \ww_1 \Vvdash A,\ww_2\Vvdash A.
\end{equation}
We can repeat the steps (\ref{sequent 1})-(\ref{sequent 3}) and
the applications of $(R:)$ infinitely  many times. But these stages are redundant. Indeed, steps
(\ref{sequent 1})-(\ref{sequent 3}) can be shortened in the
following way. By applying the substitution $(\ww_1 /\ww_2)$ on sequent
(\ref{sequent 3}), we obtain  a derivation of the same height of
\[\ww_1 R\ww_1 ,\vv R\ww_1 ,\ww_1  E(t,A),\ww R\ww_1 ,\vv R\ww,\ww E(t,A),\vv\Vvdash s:\neg t:A\R \ww_1 \Vvdash A,\ww_1 \Vvdash A.\]
Then, by height-preserving contraction and $(Ref)$ we obtain a
derivation of
\[\vv R\ww_1 ,\ww_1  E(t,A),wR\ww_1 ,\vv R\ww,\ww E(t,A), s:\neg t:A\R \ww_1 \Vvdash A.\]
But this sequent is the same as sequent (\ref{sequent 1}), which
has been obtained in two steps shorter than (\ref{sequent 1}).
In fact, we can bound the number of applications of $(R:)$ in derivations, as follows:
\begin{proposition}\label{bound R: in LP}
In a derivation of a sequent in ${\sf G3LP}_\CS$ for each formula of the form $t:A$ in the positive part of the sequent, it is enough to have at most $n(:)$ applications of $(R:)$ iterated on
a chain of accessible worlds $\ww R\ww_1 ,\ww_1 R\ww_2,\ldots$, with
principal formula $\ww_i\Vvdash t:A$.
\end{proposition}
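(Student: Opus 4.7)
The plan is to bound the length of any chain of iterated $(R:)$ applications (with the same principal formula $\ww_i\Vvdash t{:}A$) by exhibiting a shortening procedure whenever the length exceeds $n(:)$. The shortening will use height-preserving substitution of labels (Lemma \ref{lemma: substitution lemma}), height-preserving contraction (Theorem \ref{thm:admiss Contr.}), the reflexivity rule $(Ref)$ which is available in \textsf{G3LP}, and the presence of $(Trans)$, $(Mon)$ which propagate accessibility and evidence atoms along the chain.

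First I would analyze what the antecedent looks like at the $i$-th stage of the chain, just after the $i$-th application of $(R:)$ has introduced a fresh eigenlabel $\ww_i$ together with the atom $\ww_{i-1}R\ww_i$. By repeated applications of $(Trans)$ we obtain $\vv R\ww_i$ for every earlier $\vv\in\{\ww,\ww_1,\ldots,\ww_{i-1}\}$; by $(Mon)$ we obtain $\ww_i E(t,A)$; and by $(L:)$ applied to each negative-part formula $\vv\Vvdash s{:}B$ (there are at most $n(:)$ of them) together with the newly available $\vv R\ww_i$, we populate $\ww_i$ with at most $n(:)$ labeled formulas of the form $\ww_i\Vvdash B$. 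Thus the only ``new'' labeled formulas living at $\ww_i$ are drawn from a fixed finite family determined by the immediate subformulas of the negative-part formulas $s{:}B$.

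The key lemma I would then prove is a pigeonhole-style statement: if the chain has length strictly greater than $n(:)$, there exist indices $i<j$ such that the set of labeled formulas produced at $\ww_j$ by the above mechanism coincides with the set produced at $\ww_i$, and moreover the top sequent above the $j$-th $(R:)$ is an alphabetic variant, modulo the substitution $(\ww_i/\ww_j)$, of the top sequent above the $i$-th $(R:)$. Applying Lemma \ref{lemma: substitution lemma} to the subderivation above the $j$-th application yields a derivation of the same height in which $\ww_j$ is replaced by $\ww_i$; the now-duplicated accessibility atoms (including $\ww_i R\ww_i$, which is absorbed using $(Ref)$), duplicated evidence atoms, and duplicated labeled formulas are then removed using height-preserving contraction, producing a derivation with strictly fewer iterated applications of $(R:)$ on the chain with principal formula $t{:}A$. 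Iterating this procedure shows that one can always reduce to a derivation in which the chain has length at most $n(:)$.

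The main obstacle will be the careful bookkeeping in the pigeonhole step: one must ensure that every labeled, accessibility, and evidence atom appearing at $\ww_j$ in the top sequent has a counterpart at $\ww_i$ (modulo the substitution), not merely that the ``new'' labeled formulas match. This requires tracking how $(Trans)$, $(Mon)$, and $(L:)$ interleave with the intervening $(R:)$ steps and checking that no formula unique to $\ww_j$ survives. A secondary subtlety is ensuring that the substitution $(\ww_i/\ww_j)$ respects the eigenlabel side-conditions of any $(R:)$ instances introduced above the $j$-th step for other principal formulas; this is handled by first renaming those eigenlabels to fresh ones via Lemma \ref{lemma: substitution lemma} before performing the critical substitution. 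Once these bookkeeping details are in place, the shortened derivation satisfies the bound and the proposition follows by induction on the excess length of the chain.
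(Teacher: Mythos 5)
Your proposal is correct and follows essentially the same route as the paper: both arguments observe that the labeled formulas accumulating at the worlds $\ww_1,\ww_2,\ldots$ of the chain are drawn from the at most $n(:)$ unwindings of the negative occurrences of $:$ (propagated forward by $(Trans)$, $(Mon)$ and $(L:)$), so that after $n(:)$ steps the content of consecutive worlds coincides, and then collapse the redundant $(R:)$ step by the label substitution $(\ww_i/\ww_j)$ followed by height\hyphenation{preserving}-preserving contraction and $(Ref)$ to absorb $\ww_i R\ww_i$. The only presentational difference is that the paper argues directly on the worst case of a single nested block $\ww\Vvdash t_1{:}\cdots{:}t_m{:}B$ and shows the $(m+1)$-st world duplicates the $m$-th, whereas you phrase the saturation as a pigeonhole; for your bound of $n(:)$ (rather than $2^{n(:)}$) to come out you do need the monotone nesting of the formula sets along the chain, which your use of $(Trans)$ already supplies.
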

\begin{proof} Let $n(:)=m$, and assume that $t:A$ is a formula in the
positive part of the endsequent of the derivation. Consider the
worst case, in which all of the $m$ negative occurrences of $:$
appear in a block in a formula of the form $\ww\Vvdash
t_1:t_2:\ldots:t_m:B$ in the antecedent of the endsequent (in
which $B$ contains no negative occurrence of $:$). After the first
application of $(R:)$ on $t:A$, we have the accessibility atom
$\ww R\ww_1 $ in the antecedent. Then an application of $(L:)$ produces
the formula $\ww_1 \Vvdash t_2:\ldots:t_m:B$. After the second
application of $(R:)$, we have the new accessibility atom
$\ww_1 R\ww_2$, and by $(Trans)$ we get $\ww R \ww_2$. Then applications of
$(L:)$ add to the antecedent the formulas $\ww_2\Vvdash
t_2:\ldots:t_m:B$ and $\ww_2\Vvdash t_3:\ldots:t_m:B$. After $m$
applications of $(R:)$, the antecedent contains in addition:
\[\ww_m \Vvdash t_2:\ldots:t_m:B, \ww_m \Vvdash t_3:\ldots:t_m:B,
\ldots, \ww_m \Vvdash B.\]
 If we apply $(R:)$ one more time, and then apply
 $(L:)$, we have also in the antecedent the formulas
\[\ww_{m+1}\Vvdash t_2:\ldots:t_m:B, \ww_{m+1}\Vvdash t_3:\ldots:t_m:B,
\ldots, \ww_{m+1}\Vvdash B.\]
 Similar to the example discussed before the proposition, by the substitution $(\ww_{m}/\ww_{m+1})$
and then applying height-preserving contraction and $(Ref)$, we
can shorten the derivation. Thus, the last step of $(R:)$
is superfluous.\qe \end{proof}

Thus, if $n(:)>0$ then the number of applications of $(R:)$ is
bounded by $p(:)n(:)$, and if $n(:)=0$ then the number of
applications of $(R:)$ is bounded by $p(:)$. Hence, in general, the number of
applications of $(R:)$ is bounded by $p(:)(n(:)+1)$.

Regarding Proposition \ref{bound R: in LP}, in the construction of the reduction tree in Theorem
\ref{thm:reduction tree}, we need to restrict the
number of applications of $(R:)$ on each subformula
$t:A$ in the positive part of the root sequent. To this end, let $p(:)=p$ and $t_1:A_1,\ldots,t_p:A_p$
be all subformulas of the form $t:A$ in the positive part of the
endsequent $\g\R\d$. In order to calculate the number of
applications of $(R:)$, we employ counters
$k_1,\ldots,k_p$ for the formulas $t_1:A_1,\ldots,t_p:A_p$,
respectively. In stage $n=0$ of the construction of the reduction
tree, let $k_i=0$ for each $i=1,\ldots,p$. Each time we apply rule $(R:)$ on the formula $t_i:A_i$, we update the
value of $k_i$ by one, i.e. $k_i=k_i+1$. Rule $(R:)$
can be applied on $t_i:A_i$ provided that $k_i\leq n(:)$.

\begin{theorem}\label{terminating proof search LP}
Given any finite constant specification $\CS$, and any sequent
$\g\R\d$ in the language of {\sf G3LP}, it is decidable whether the sequent is derivable in ${\sf G3LP}_\CS$.
\end{theorem}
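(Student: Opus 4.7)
The plan is to mimic the termination arguments for ${\sf G3J}_\CS$ and ${\sf G3JT}_\CS$ (Theorems \ref{thm:termination proof search G3J} and \ref{thm:termination proof search G3JT}), but carried out on the modified reduction tree in which rule $(R:)$ is regulated by the counters $k_1, \ldots, k_p$ introduced after Proposition \ref{bound R: in LP}. So first I would fix the root sequent $\g\R\d$, construct the reduction tree stage-by-stage as in the proof of Theorem \ref{thm:reduction tree}, but enforcing the extra side condition $k_i \leq n(:)$ on each application of $(R:)$ to a positive subformula $t_i{:}A_i$. By Proposition \ref{bound R: in LP}, this restriction does not affect the existence of a derivation, so the reduction tree is derivation-complete.

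Next, I would establish bounds, in order, on the number of applications of each rule, arguing that the reduction tree is finitely branching and of finite depth. The counter restriction immediately gives a bound of $p(:)(n(:)+1)$ on applications of $(R:)$, and hence on the number of new eigenlabels; combined with the labels already in $\g\R\d$, this yields a bound $L$ on the total number of labels in any branch. Since $(Ref)$ only produces atoms $\ww R\ww$ and $(Trans)$ only produces atoms $\ww R\uu$ with $\ww,\uu$ labels already present, the total number of relational atoms is bounded by $L^2$; together with condition $(\dagger)$ this bounds the applications of $(Ref)$ and $(Trans)$. Rule $(L:)$ applies at most once per pair $(\ww\Vvdash t{:}A,\,\ww R\vv)$, giving a bound of $n(:) \cdot L^2$. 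Applications of $(E)$ and $(AN)$ are bounded, respectively, by $n(:)$ and $|\CS|\cdot L$. The E-rules $(El+)$, $(Er+)$, $(E\cdot)$, $(E!)$ only add evidence atoms whose terms lie in $Sub_{Tm}(\g\R\d)$ (by the subterm property, Proposition \ref{prop:subterm property}, preserved by the reduction procedure), so each is bounded by a polynomial in $n(+), n(\cdot), n(!)$, $L$, $|\CS|$, and the initial quantities $e, n(:)$. Finally, $(Mon)$ applies at most once per pair of evidence atom and relational atom, so it too is bounded by a product of the previous bounds.

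Since every stage acts only on finitely many principal formulas and the total number of applicable stages is finite, every branch of the reduction tree terminates, making the tree finite. Therefore, it can be decided in finitely many steps whether all leaves are initial sequents (in which case $\g\R\d$ is derivable in ${\sf G3LP}_\CS$, by reading off a derivation as in Theorem \ref{thm:reduction tree}) or some branch is saturated without reaching an initial sequent (in which case the construction in the second half of the proof of Theorem \ref{thm:reduction tree} yields an ${\sf LP}_\CS$-countermodel, and the sequent is not derivable by the admissibility of cut and soundness).

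The main obstacle is justifying that the counter restriction on $(R:)$ does not lose completeness in the presence of $(Trans)$ and $(Mon)$: without some such restriction the interplay $(R:)$–$(Trans)$–$(Mon)$–$(L:)$–$(R:)$ produces ever longer $\RR$-chains, as illustrated by the worked example preceding Proposition \ref{bound R: in LP}. This is precisely what Proposition \ref{bound R: in LP} settles, by showing that any derivation using more than $n(:)$ iterated $(R:)$ steps along one chain can be shortened via label substitution, height-preserving contraction, and $(Ref)$ — so the counter bound $k_i \leq n(:)$ is sound, and the finiteness bounds on all other rules go through as in the ${\sf G3JT}_\CS$ case.
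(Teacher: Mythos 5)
Your proposal is correct and follows essentially the same route as the paper: the counter-regulated reduction tree with the bound $p(:)(n(:)+1)$ on $(R:)$ from Proposition \ref{bound R: in LP}, followed by propagating finiteness bounds to $(L:)$, $(AN)$, $(E)$, $(Trans)$, $(Mon)$ and the E-rules using condition $(\dagger)$ and the subterm restriction built into the reduction stages. The paper's own proof is just a terser version of the same bookkeeping, so nothing further is needed.
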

\begin{proof} Since the number of applications of $(R:)$ is bounded by
$p(:)(n(:)+1)$, the number of applications of rules $(L:)$ and
$(AN)$ are bounded by $n(:)(p(:)(n(:)+1)+r+l+p(:))$ and
$|\CS|(p(:)(n(:)+1)+l)$, respectively. Rule $(Mon)$ may produce
new applications of rules $(El+)$, $(Er+)$, $(E\cdot)$ and $(E!)$,
and vise versa. However, the number of applications of these rules
are bounded, since there are a finite number of relational and
evidence atoms in the antecedent of the endsequent and the
reduction tree of a sequent complying condition $(\dagger)$. The
number of applications of $(Trans)$ depends on $r$ and the number
of applications of $(R:)$, and thus is bounded. The number of
applications of $(E)$ is bounded by $n(:)$ as before.\qe \end{proof}

%%%%%%%%%%%%%%%%%%%%%%%%%%%%%%%%%%%%%%%%%%%%%%%%%%%%%%%%%%%%%%%%%%%%%%%%%%%%%
\section{Other labeled systems}\label{sec:Other labeled systems}
In this section we will briefly introduce other variants of labeled sequent calculus for justification logics. We first give some admissible rules.

\begin{lemma}\label{lem:admissible rules in G3J5}
The following rules are \CS-admissible in ${\sf G3J5}_\CS$ and its extensions:
$$\di{\f{\g\R\d,\ww E(t,A)}{\g\R\d,\ww\Vvdash t:A}}(R1) \qquad \di{\f{\ww E(?t,\neg t:A),\g\R\d,\ww\Vvdash t:A}{\g\R\d,\ww\Vvdash t:A}}(R2)$$
$$ \di{\f{\ww\Vvdash t:A,\g\R\d}{\g\R\d,\ww\Vvdash ?t:\neg t:A}}(R3)\qquad \di{\f{\vv\Vvdash A,\ww E(t,A),\ww R\vv,\g\R\d}{\ww E(t,A),\ww R\vv,\g\R\d}}(R4)$$
\end{lemma}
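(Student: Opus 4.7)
The plan is to prove all four rules by exhibiting, for each, a small derivable auxiliary sequent and then closing via admissible cut (Theorem \ref{thm:cut elimination}) together with admissible contraction (Theorem \ref{thm:admiss Contr.}) and height-preserving invertibility (Proposition \ref{prop:inversion lemma}). The key observation is that Examples \ref{lem:Example 1 in G3J5} and \ref{lem:Example 2 in G3J5} already give us the two pivotal derivable sequents
\[
\ww E(t,A),\ww R\vv \R \vv\Vvdash A \qquad \text{and} \qquad \ww\Vvdash\neg t:A \R \ww E(?t,\neg t:A),
\]
and from these, together with the derivability of axiom {\bf j5} in $\J_\CS$ (via Proposition \ref{derivability of theorems in labels systems}), all four rules will follow.

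For $(R4)$, given a derivation of the premise $\vv\Vvdash A,\ww E(t,A),\ww R\vv,\g\R\d$, I would cut it on $\vv\Vvdash A$ against the derivable sequent $\ww E(t,A),\ww R\vv\R\vv\Vvdash A$ from Example \ref{lem:Example 1 in G3J5}; the resulting sequent $\ww E(t,A),\ww E(t,A),\ww R\vv,\ww R\vv,\g\R\d$ reduces to the desired conclusion after two applications of admissible contraction. For $(R1)$, I first build the derivation
\[
\AxiomC{(Lemma \ref{lem:Initial axiom for arbitrary A})}\noLine\UnaryInfC{$\vv\Vvdash A,\ww\Vvdash t:A,\ww E(t,A),\ww R\vv\R\vv\Vvdash A$}
\RightLabel{$(L:)$}\UnaryInfC{$\ww\Vvdash t:A,\ww E(t,A),\ww R\vv\R\vv\Vvdash A$}
\RightLabel{$(SE)$}\UnaryInfC{$\ww E(t,A),\ww R\vv\R\vv\Vvdash A$}
\RightLabel{$(R:)$}\UnaryInfC{$\ww E(t,A)\R\ww\Vvdash t:A$}
\DisplayProof
\]
and then cut the premise $\g\R\d,\ww E(t,A)$ against $\ww E(t,A)\R\ww\Vvdash t:A$ on $\ww E(t,A)$ to obtain $\g\R\d,\ww\Vvdash t:A$.

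For $(R2)$, I apply height-preserving invertibility of $(L\neg)$ to the sequent from Example \ref{lem:Example 2 in G3J5} to obtain $\R\ww E(?t,\neg t:A),\ww\Vvdash t:A$, then cut it against the premise $\ww E(?t,\neg t:A),\g\R\d,\ww\Vvdash t:A$ on $\ww E(?t,\neg t:A)$; a single contraction on the duplicated $\ww\Vvdash t:A$ yields the conclusion. For $(R3)$, by Proposition \ref{derivability of theorems in labels systems} the axiom {\bf j5} lifts to a derivable sequent $\R\ww\Vvdash\neg t:A\r ?t:\neg t:A$ in $\J_\CS$; successive applications of the invertibility of $(R\r)$ and $(L\neg)$ give $\R\ww\Vvdash?t:\neg t:A,\ww\Vvdash t:A$, and cutting this against the premise $\ww\Vvdash t:A,\g\R\d$ on $\ww\Vvdash t:A$ produces the conclusion $\g\R\d,\ww\Vvdash?t:\neg t:A$.

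There is no real obstacle here beyond bookkeeping: once the auxiliary derivable sequents are identified, every case is a one-step cut followed (where necessary) by a contraction, and all of the ingredients (cut, contraction, invertibility, and the two example derivations) are already available for \J~and hence for every extension of ${\sf G3J5}_\CS$. The only point requiring mild care is ensuring that the constant specification is preserved throughout, which is automatic since cut and contraction are $\CS$-admissible (Theorems \ref{thm:cut elimination} and \ref{thm:admiss Contr.}) and the auxiliary sequents do not invoke any $\CS$-dependent rules.
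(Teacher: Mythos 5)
Your proposal is correct, and for $(R1)$ and $(R3)$ it is essentially the paper's argument: both cut the given premise against an auxiliary derivable sequent ($\ww E(t,A)\R\ww\Vvdash t:A$, respectively $\R\ww\Vvdash ?t:\neg t:A,\ww\Vvdash t:A$), the only cosmetic difference being that the paper obtains the first by a single $(SE)$ step from Lemma \ref{lem:Initial axiom for arbitrary A} rather than via your detour through $(R:)$, and obtains the second as a ``standard derivation'' rather than via Proposition \ref{derivability of theorems in labels systems} plus invertibility. For $(R2)$ and $(R4)$, however, you take a genuinely different route. The paper avoids cut entirely in these two cases: for $(R2)$ it uses the given premise directly as the right premise of an $(E?)$ inference whose left premise $\ww E(t,A),\g\R\d,\ww\Vvdash t:A$ is obtained by $(SE)$ from an instance of Lemma \ref{lem:Initial axiom for arbitrary A}; for $(R4)$ it applies admissible weakening to add $\ww\Vvdash t:A$ to the premise and then concludes by $(L:)$ followed by $(SE)$. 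Your versions instead invoke the invertibility of $(L\neg)$ on the sequent of Example \ref{lem:Example 2 in G3J5} (for $(R2)$) and the sequent of Example \ref{lem:Example 1 in G3J5} (for $(R4)$) and close with cut plus contraction. Since cut and contraction are $\CS$-admissible by Theorems \ref{thm:cut elimination} and \ref{thm:admiss Contr.}, both routes are equally valid for establishing admissibility; the paper's treatment of $(R2)$ and $(R4)$ has the mild advantage of producing the conclusion by direct rule applications on the given derivation, so it yields an explicit cut-free transformation rather than one mediated by the cut-elimination procedure.
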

\begin{proof} For $(R1)$, using admissible $Cut$, we have:
\begin{prooftree}
\def\extraVskip{3pt}
\AXC{$\g\R\d,\ww E(t,A)$}
\AXC{${\mathcal D}$} \noLine
  \UIC{$\ww\Vvdash t:A,\ww E(t,A)\R\ww\Vvdash t:A$}
  \RightLabel{$(SE)$}
  \UIC{$\ww E(t,A)\R\ww\Vvdash t:A$}
  \RightLabel{$Cut$}
  \BIC{$\g\R\d,\ww\Vvdash t:A$}
\end{prooftree}
where ${\mathcal D}$ is the standard derivation of $\ww\Vvdash t:A,\ww E(t,A)\R\ww\Vvdash t:A$ from Lemma \ref{lem:Initial axiom for arbitrary A}. F

or $(R2)$  we have:
\begin{prooftree}
\def\extraVskip{3pt}
\AXC{${\mathcal D}$} \noLine
  \UIC{$\ww\Vvdash t:A,\ww E(t,A),\g\R\d,\ww\Vvdash t:A$}
  \RightLabel{$(SE)$}
  \UIC{$\ww E(t,A),\g\R\d,\ww\Vvdash t:A$}
\AXC{$\ww E(?t,\neg t:A),\g\R\d,\ww\Vvdash t:A$}
  \RightLabel{$(E?)$}
  \BIC{$\g\R\d,\ww\Vvdash t:A$}
\end{prooftree}
where ${\mathcal D}$ is the standard derivation of $\ww\Vvdash t:A,\ww E(t,A),\g\R\d,\ww\Vvdash t:A$ from Lemma \ref{lem:Initial axiom for arbitrary A}. For $(R3)$, using admissible $Cut$, we have:
\begin{prooftree}
\def\extraVskip{3pt}
\AXC{${\mathcal D'}$} \noLine
\UIC{$\R\ww\Vvdash ?t:\neg t:A, \ww\Vvdash t:A$}
    \AXC{$\ww\Vvdash t:A,\g\R\d$}
  \RightLabel{$(Cut)$}
  \BIC{$\g\R\d,\ww\Vvdash ?t:\neg t:A$}
\end{prooftree}
where ${\mathcal D'}$ is the standard derivation of $\R\ww\Vvdash ?t:\neg t:A, \ww\Vvdash t:A$ in {\sf G3J5}. For $(R4)$, using admissible $(LW)$, we have:
\begin{prooftree}
\def\extraVskip{3pt}
\AXC{$\vv\Vvdash A,\ww E(t,A),\ww R\vv,\g\R\d$}
  \RightLabel{$(LW)$}
\UIC{$\ww\Vvdash t:A,\vv\Vvdash A,\ww E(t,A),\ww R\vv,\g\R\d$}
  \RightLabel{$(L:)$}
\UIC{$\ww\Vvdash t:A,\ww E(t,A),\ww R\vv,\g\R\d$}
  \RightLabel{$(SE)$}
  \UIC{$\ww E(t,A),\ww R\vv,\g\R\d$}
\end{prooftree}
 \qe \end{proof}
 \begin{lemma}
The following rule is \CS-admissible in ${\sf G3JB}_\CS$ and its extensions:
$$ \di{\f{\ww E(\bar{?}t,\neg t:A),\g\R\d,\ww\Vvdash A}{\g\R\d,\ww\Vvdash A}}(Ew\bar{?})$$
\end{lemma}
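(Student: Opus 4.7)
The plan is to derive the desired conclusion $\g\R\d,\ww\Vvdash A$ by a single bottom-up application of the rule $(E\bar{?})$ in which the ``context'' (the $\g\R\d$ slot of that rule) is instantiated to our target $\g\R\d,\ww\Vvdash A$. Concretely, the instance of $(E\bar{?})$ to be used reads
\[
\di{\f{\ww\Vvdash A,\g\R\d,\ww\Vvdash A \qquad \ww E(\bar{?}t,\neg t:A),\g\R\d,\ww\Vvdash A}{\g\R\d,\ww\Vvdash A}}(E\bar{?})
\]
The right-hand premise is exactly the hypothesis of the admissibility statement, and the left-hand premise is an instance of Lemma \ref{lem:Initial axiom for arbitrary A}, which guarantees that sequents of the form $\ww\Vvdash A,\g'\R\d',\ww\Vvdash A$ are derivable in $\J_\CS$ for arbitrary $\JL$-formulas $A$ and arbitrary multisets $\g',\d'$.

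The only things to check are that the rule $(E\bar{?})$ may legitimately be applied with this context, and that no issue arises with the underlying constant specification. The schematic form of $(E\bar{?})$ permits arbitrary side multisets in $\g,\d$, so taking the succedent to be $\d,\ww\Vvdash A$ is allowed. The formula $A$ appearing inside the principal evidence atom $\ww E(\bar{?}t,\neg t:A)$ must match the formula in the labeled succedent $\ww\Vvdash A$ in order for the two premises to line up as above; this matching is guaranteed by the very form of the rule $(Ew\bar{?})$ whose admissibility we are establishing. Since neither Lemma \ref{lem:Initial axiom for arbitrary A} nor the single step $(E\bar{?})$ enlarges the constant specification, the derivation lives in $\J_\CS$ for the same $\CS$, so the rule is indeed $\CS$-admissible; and since $(E\bar{?})$ is shared by every labeled sequent calculus extending ${\sf G3JB}_\CS$, the argument transfers uniformly to all such systems.

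There is essentially no substantive obstacle here: the claim reduces to a one-step derivation once one observes that the ``extra'' left premise required by $(E\bar{?})$, namely $\ww\Vvdash A,\g\R\d,\ww\Vvdash A$, already has the labeled formula $\ww\Vvdash A$ on both sides and is therefore automatically derivable. In this sense $(Ew\bar{?})$ is simply the admissible form of $(E\bar{?})$ in which the $\ww\Vvdash A$-premise has been absorbed as an instance of Lemma \ref{lem:Initial axiom for arbitrary A}. No cut, no induction on derivation height, and no appeal to invertibility is needed.
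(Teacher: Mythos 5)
Your proposal is correct and coincides with the paper's own proof: the paper derives $\g\R\d,\ww\Vvdash A$ by one application of $(E\bar{?})$ whose left premise $\ww\Vvdash A,\g\R\d,\ww\Vvdash A$ is discharged by Lemma \ref{lem:Initial axiom for arbitrary A} and whose right premise is the hypothesis. Nothing further is needed.
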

\begin{proof}
We have:
\begin{prooftree}
\def\extraVskip{3pt}
\AXC{${\mathcal D}$} \noLine
   \UIC{$\ww\Vvdash A,\g\R\d,\ww\Vvdash A$}
\AXC{$\ww E(\bar{?} t,\neg t:A),\g\R\d,\ww\Vvdash A$}
  \RightLabel{$(E\bar{?})$}
  \BIC{$\g\R\d,\ww\Vvdash A$}
\end{prooftree}
where ${\mathcal D}$ is the standard derivation of $\ww\Vvdash A,\g\R\d,\ww\Vvdash A$ from Lemma \ref{lem:Initial axiom for arbitrary A}.
 \qe \end{proof}

 Labeled sequent calculi {\sf G3JB} and its extensions can be formulated by rule $(Ew\bar{?})$ instead of $(E\bar{?})$. If ${\sf G3JL}_\CS$ contains $(E\bar{?})$, then let ${\sf G3JL}^{w}_\CS$ denote the resulting system where the rule $(E\bar{?})$ is replaced by the rule $(Ew\bar{?})$ (these systems were studied in \cite{Ghari2012-Thesis}). Although these systems enjoy the labeled-subformula property, we fail to show the  admissibility of $Cut$. Here are some counterexamples.
\begin{example}
The sequent $\R \ww\Vvdash A \r \bar{?} t:\neg t:\neg A$ can  only be proved by rule $Cut$  (e.g., a $Cut$ on the formula $\ww\Vvdash \neg\neg A$) in ${\sf G3JB}^{w}_\CS$ and its extensions.
 \end{example}
\begin{example}
The sequent $\R \ww E(x,A),\ww E(\bar{?} s,\neg s:x:A)$, where $x$ is a justification variable, can  only be proved by rule $Cut$ (e.g., a $Cut$ on the formula $\ww\Vvdash x: A$) in ${\sf G3JB}^{w}_\CS$ and its
extensions.
\end{example}
%%%%%%%%%%%%%%%%%%%%%%%%%%%%%%%%%%%%%%%%%%%%%%%%%%%%%%%%%%%%%%%%%%%%%%%%%%%%%
\subsection{Labeled sequent calculus based on Fp-models}
Pacuit in \cite{Pacut2005} show that the negative introspection axiom {\bf j5} can be characterized by the following conditions:
\begin{description}
   \item[$\E 9.$] \textit{Anti-monotonicity}: If $v\in\E(t,A)$ and $w\RR v$, then
  $w\in\E(t,A)$.
    \item[$\E 10.$]  \textit{Negative proof checker}: If $(\M,w)\Vdash\neg t:A$, then $w\in\E(?t,\neg t:A)$.
\end{description}
and Euclideanness of $\RR$.  We call these models \textit{Fp-models} (for more details cf. \cite{Pacut2005}).
\begin{theorem}(\cite{Pacut2005})
Let \JL~be {\sf J5} or one of its extensions, and $\CS$ be a constant specification for \JL. Then justification logics
$\JL_\CS$ are sound and complete with respect to their $\JL_\CS$-Fp-models.
\end{theorem}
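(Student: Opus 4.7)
The plan is to adapt the soundness and completeness proof of Theorem \ref{Sound Compl JL} to the Fp-model setting. The essential difference is that axiom {\bf jB} and conditions $(\E5)$ are still handled as before (for extensions containing {\bf jB}), but the negative introspection axiom {\bf j5} is now validated via the triple of conditions Euclideanness, $(\E 9)$, and $(\E 10)$ rather than via $(\E 6)$ and $(\E 7)$. Most of the verifications are already contained in Theorem \ref{Sound Compl JL} and the Remarks following it.

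For \emph{soundness}, I would proceed by induction on the derivation of $F$ in $\JL_\CS$, with all cases except {\bf j5} handled exactly as in the soundness direction of Theorem \ref{Sound Compl JL}. For axiom {\bf j5}, I would argue as follows. Let $\M=(\W,\RR,\E,\V)$ be an $\JL_\CS$-Fp-model and assume $(\M,w)\Vdash \neg t:A$. By $(\E 10)$, $w\in \E(?t,\neg t:A)$. Given any $v\in\W$ with $w\RR v$, I need $(\M,v)\Vdash \neg t:A$. Suppose for contradiction that $(\M,v)\Vdash t:A$; then $v\in\E(t,A)$, and by $(\E 9)$, $w\in\E(t,A)$. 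Moreover, for any $u\in\W$ with $w\RR u$, Euclideanness yields $v\RR u$, so $(\M,u)\Vdash A$. Combining these gives $(\M,w)\Vdash t:A$, contradicting $(\M,w)\Vdash \neg t:A$. Hence $(\M,w)\Vdash ?t:\neg t:A$.

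For \emph{completeness}, I would reuse the canonical model construction from the proof of Theorem \ref{Sound Compl JL}: $\W$ is the set of maximal $\JL_\CS$-consistent sets, $\g\RR\d$ iff $\g^\flat\subseteq\d$, $\E(t,A)=\{\g\in\W \mid t:A\in\g\}$, and $\V$ is determined by propositional variables. The Truth Lemma and the verification that $\E$ is a possible evidence function for $\JL_\CS$ (and satisfies $(\E 1),(\E 2)$ and, depending on the extension, $(\E 3),(\E 4),(\E 5)$), together with the properties of $\RR$ corresponding to {\bf jT}, {\bf jD}, {\bf jB}, {\bf j4}, carry over verbatim. What remains is to show that the canonical model is actually an Fp-model, that is, that $\RR$ is Euclidean and $\E$ satisfies $(\E 9)$ and $(\E 10)$. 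The Euclideanness of $\RR$ and anti-monotonicity $(\E 9)$ have already been established in the Remark following Theorem \ref{Sound Compl JL}, so I would only need to verify $(\E 10)$: suppose $(\M,\g)\Vdash \neg t:A$; by the Truth Lemma, $\neg t:A\in\g$, and since $\neg t:A \r\, ?t:\neg t:A$ is an axiom instance contained in $\g$, we have $?t:\neg t:A\in\g$, whence $\g\in\E(?t,\neg t:A)$ by definition of $\E$.

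The main obstacle, such as it is, lies in the soundness direction: one must carefully use all three Fp-conditions (Euclideanness, $(\E 9)$, $(\E 10)$) simultaneously to establish the truth of {\bf j5}, whereas in the original F-model setting the strong evidence condition $(\E 7)$ does a lot of the work by itself. Once this single axiom is handled, the remainder of the proof is mostly a rearrangement of existing arguments, and the extensions (${\sf JD5}$, ${\sf JT5}$, ${\sf JB5}$, ${\sf JTB5}$, ${\sf JDB5}$, ${\sf J45}$, ${\sf JD45}$, ${\sf JT45}$, ${\sf JB45}$, ${\sf JTB45}$, ${\sf JDB45}$) are obtained by merely adjoining the corresponding frame conditions to the canonical frame, exactly as in Theorem \ref{Sound Compl JL}.
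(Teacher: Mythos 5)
This theorem is stated in the paper only as an imported result (attributed to \cite{Pacut2005}); the paper gives no proof of its own, so there is nothing to compare against line by line. Your reconstruction is correct and fits exactly into the machinery the paper does develop: the soundness verification of {\bf j5} via the combination of Euclideanness, anti-monotonicity $(\E 9)$, and the negative proof checker $(\E 10)$ is the right argument and is carried out correctly, and for completeness you rightly observe that the canonical model of Theorem \ref{Sound Compl JL} already does almost all the work, since the Remark following that theorem establishes Euclideanness of $\RR$ and anti-monotonicity for the canonical model of {\sf J5} and its extensions, leaving only the easy verification of $(\E 10)$ from the Truth Lemma. One small caveat, inherited from the theorem statement rather than introduced by you: for the extensions containing {\bf jD} (e.g.\ {\sf JD5}, {\sf JD45}), seriality of the canonical $\RR$ still requires $\CS$ to be axiomatically appropriate, exactly as in Theorem \ref{Sound Compl JL}, so the blanket phrase ``$\CS$ be a constant specification'' should carry that proviso (or those cases should instead use the consistent-evidence condition of Fk-models); your closing sentence, which defers the extensions to Theorem \ref{Sound Compl JL}, implicitly assumes this.
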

 Regarding Fp-models, there is another formulation of labeled systems for {\sf J5} and its extensions by replacing the rules $(E?)$ and $(SE)$ with the following rules:
$$\di{\f{\ww E(?t,\neg t:A),\ww\Vvdash \neg t:A,\g\R\d}{\ww\Vvdash \neg t:A,\g\R\d}}(E?')\qquad \di{\f{\ww E(t,A),\vv E(t,A),\ww R\vv,\g\R\d}{\vv  E(t,A),\ww R\vv,\g\R\d}}(Anti\mbox{-}Mon)$$
$$\di{\f{\vv R\uu,\ww R\vv,\ww R \uu,\g\R\d}{\ww R\vv,\ww R\uu,\g\R\d}}(Eucl)$$%\qquad\di{\f{\vv R\vv,\ww R\vv,\g\R\d}{\ww R\vv,\g\R\d}}(Eucl_*)$$
If ${\sf G3JL}_\CS$ contains $(E?)$, $(SE)$, then let ${\sf G3JL}^{Fp}_\CS$ (the labeled sequent calculus based on Fp-models) denote the resulting system where the rules $(E?)$, $(SE)$ are replaced by the rules $(E?')$, $(Anti$-$Mon)$, $(Eucl)$. %, and $(Eucl_*)$.
Thus labeled systems ${\sf G3JL}^{Fp}_\CS$ are defined only for justification logics \JL~that contain axiom {\bf j5}. All the definitions of Sections 4-8 can be adopted for labeled systems ${\sf G3JL}^{Fp}_\CS$ (these systems first appeared in \cite{Ghari2012-Thesis}). Main properties of labeled systems ${\sf G3JL}^{Fp}_\CS$ are listed in the following theorem (the proof is similar to that of labeled systems based on F-models).
\begin{theorem}
Suppose \JL~is a justification logic with axiom {\bf j5}, \CS~is a constant specification for \JL, and ${\sf G3JL}^{Fp}_\CS$ is its labeled sequent calculus based on Fp-models.
\begin{enumerate}
\item All sequents of the form $\ww\Vvdash A,\g\R\d,\ww\Vvdash A$, with $A$ an arbitrary $\JL$-formula, are derivable in ${\sf G3JL}^{Fp}_\CS$.
\item  All labeled formulas in a derivation
in ${\sf G3JL}^{Fp}_\CS$  are labeled-subformulas of labeled formulas in
the endsequent.
\item  The rules of substitution $(Subs)$, and weakening are height-preserving \CS-admissible in ${\sf G3JL}^{Fp}_\CS$.
\item If the sequent $\g\R\d$ is derivable in ${\sf G3JL}^{Fp}_\CS$, then it is valid in every $\JL_\CS$-Fp-model.
\end{enumerate}
 \end{theorem}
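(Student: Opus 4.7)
The plan is to establish each of the four clauses by adapting the corresponding proofs for labeled systems based on F-models, with only the adjustments required by the fact that the rules $(SE)$ and $(E?)$ have been replaced by $(E?')$, $(Anti\textrm{-}Mon)$ and $(Eucl)$.

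For clause (1), I would proceed by induction on the complexity of the $\JL$-formula $A$, exactly as in Lemma \ref{lem:Initial axiom for arbitrary A}. The atomic case is $(Ax)$, and the Boolean cases use the usual propositional rules of Table \ref{table: rules of labeled G3c} together with the induction hypothesis. The only nontrivial case is $A = t:B$: apply $(R:)$ upward, then $(L:)$ to extract $\vv\Vvdash B$ from $\ww\Vvdash t:B$ together with the $\ww R\vv$ introduced by $(R:)$, and invoke the induction hypothesis on $B$; the remaining evidence atom $\ww E(t,B)$ is absorbed by $(AxE)$. This proof does not touch the rules that are specific to F-models, so it transfers verbatim.

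For clause (2), the labeled-subformula property is proved by a straightforward rule-by-rule inspection, as in Proposition \ref{prop:subformula property}. Among the rules inherited from $\J^-_\CS$ plus the standard relational and evidence rules, the property is immediate. The three new rules deserve a separate check: $(Anti\textrm{-}Mon)$ and $(Eucl)$ introduce no new labeled formulas at all (they act only on relational and evidence atoms), and $(E?')$ has its principal labeled formula $\ww\Vvdash \neg t:A$ already present in the conclusion. Hence no rule produces a labeled formula in its premise that is not a labeled-subformula of one in its conclusion.

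For clause (3), I would prove height-preserving admissibility of $(Subs)$ and of weakening by simultaneous induction on the height of the derivation, following the patterns of Lemma \ref{lemma: substitution lemma} and Theorem \ref{thm:admiss weak.}. The only rules with label side-conditions are $(R:)$ and $(Ser)$, and for those the usual trick applies: if substitution would collide with an eigenlabel, first use height-preserving substitution to rename the eigenlabel to a fresh label, then perform the desired substitution and reapply the rule. The new rules $(E?')$, $(Anti\textrm{-}Mon)$ and $(Eucl)$ have no eigenlabel and no label side-condition, so they commute trivially with substitution and with the addition of a side formula $\varphi$.

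For clause (4), soundness is shown by induction on the height of the derivation in $\mathsf{G3JL}^{Fp}_\CS$, as in Theorem \ref{Soundness labeled systems}. All the inherited cases go through unchanged, so the main obstacle is to verify soundness of the three new rules with respect to $\JL_\CS$-Fp-models. For $(Eucl)$, this uses directly the Euclideanness of $\RR$ in Fp-models. For $(Anti\textrm{-}Mon)$, it uses condition $\E9$: if $[\ww]\RR[\vv]$ and $[\vv]\in\E(t,A)$, then $[\ww]\in\E(t,A)$, so whenever the conclusion's antecedent is validated, the premise's antecedent is too. The most delicate case is $(E?')$: given an $\M$-interpretation validating the antecedent of the conclusion, in particular $(\M,[\ww])\Vdash \neg t:A$, we invoke the negative proof checker condition $\E10$ to conclude $[\ww]\in\E(?t,\neg t:A)$, which yields a validating interpretation of the premise's antecedent; the induction hypothesis then supplies a validated formula in the succedent. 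This last case is where the characterization of the negative introspection axiom by $(\E9, \E10)$ and Euclideanness, rather than by the strong evidence condition $\E7$, is essential, and it is the step I expect to require the most care.
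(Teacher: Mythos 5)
Your proposal is correct in substance and follows the same route the paper takes: the paper's own proof of this theorem consists of the remark that it is ``similar to that of labeled systems based on F-models'', and your clause-by-clause adaptation, with the three new rules $(E?')$, $(Anti\textrm{-}Mon)$ and $(Eucl)$ checked against Euclideanness and the Fp-conditions $\E 9$ and $\E 10$, is exactly the intended argument; your treatment of clauses (3) and (4) is sound as written.

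Two points deserve attention. First, a minor slip in clause (1): for $A=t:B$ the rule $(R:)$ is only applicable once $\ww E(t,B)$ occurs in the antecedent, so the derivation must begin with a bottom-up application of the rule $(E)$ to the left occurrence of $\ww\Vvdash t:B$; the initial sequent $(AxE)$ plays no role here, since no evidence atom appears in the succedent. Second, and more substantively, your claim in clause (2) that the labeled-subformula property is ``immediate'' for the inherited rules is only true for those extensions of ${\sf J5}$ that do not contain axiom {\bf jB}. For ${\sf JB5}$, ${\sf JTB5}$, ${\sf JDB5}$, ${\sf JB45}$, ${\sf JTB45}$, ${\sf JDB45}$ the system ${\sf G3JL}^{Fp}_\CS$ still contains the rule $(E\bar{?})$, whose left premise introduces a labeled formula $\ww\Vvdash A$ that need not be a labeled-subformula of anything in the endsequent, and the counterexample of Example \ref{ex:counterexample subformula G3JB} goes through verbatim in the Fp-system. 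This is a defect of the theorem as stated (the paper inherits it too), but a complete write-up of clause (2) must either restrict to the {\bf jB}-free extensions of ${\sf J5}$ or handle $(E\bar{?})$ explicitly rather than declaring the inherited rules unproblematic.
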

It seems the rules of ${\sf G3JL}^{Fp}_\CS$ are not invertible, and the rules of contraction are not admissible. Although the systems ${\sf G3JL}^{Fp}_\CS$ enjoys the subformula property, we fail to show the  admissibility of $Cut$. Here are some counterexamples.
\begin{example}
It is not hard to verify that  in the system ${\sf G3JT5}^{Fp}_\CS$ (and its extensions)  the sequent $\R \ww\Vvdash\neg A\r ?t:\neg t: A$ can only be  proved by rule  $Cut$ (e.g., a $Cut$ on the formula $\ww\Vvdash t: A$).
\end{example}
\begin{example}
The sequent $\R \ww E(t,A),\ww E(?t,\neg t:A)$ can  only be proved by rule $Cut$  (e.g., a $Cut$ on the formula $\ww\Vvdash t:A$) in ${\sf G3J5}^{Fp}_\CS$ and its extensions.
\end{example}
With regard to the above facts for the completeness we have:
\begin{theorem}
Suppose \JL~is a justification logic with axiom {\bf j5}, \CS~is a constant specification for \JL, and ${\sf G3JL}^{Fp}_\CS$ is its labeled sequent calculus based on Fp-models. A \JL-formula~$A$ is provable in $\JL_\CS$ iff $\R\ww\Vvdash A$ is provable in ${\sf G3JL}^{Fp}_\CS+(LC)+(RC)+Cut$.
 \end{theorem}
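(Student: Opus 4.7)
The plan is to prove both directions separately, using the given soundness theorem for ${\sf G3JL}^{Fp}_\CS$ together with Pacuit's completeness of $\JL_\CS$ with respect to Fp-models.

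For the direction $\JL_\CS \vdash A \Rightarrow {\sf G3JL}^{Fp}_\CS + (LC)+(RC)+Cut \vdash {}\R\ww\Vvdash A$, I would proceed by induction on the Hilbert-style derivation of $A$, in the style of Proposition \ref{derivability of theorems in labels systems}. The propositional axioms, the application axiom {\bf jK}, and the sum axiom {\bf Sum} are derivable by essentially the same standard derivations as before (using Lemma \ref{lem:Initial axiom for arbitrary A}); the rules $(E?')$, $(Anti\mbox{-}Mon)$, $(Eucl)$ are not needed for these. For axiom {\bf jT}, {\bf jD}, {\bf j4}, {\bf jB} (when present) the derivations from the F-model case carry over, since those axioms don't involve $(SE)$ or $(E?)$. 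The applications of modus ponens and of the (iterated) axiom necessitation rule are treated exactly as in the proof of Proposition \ref{derivability of theorems in labels systems}, using the admissible $Cut$ for modus ponens and the standard $(AN)/(IAN)$-construction for constant specifications.

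The central obstacle is the derivation of axiom {\bf j5}, $\R\ww\Vvdash \neg t{:}A \r ?t{:}\neg t{:}A$, which in the F-model calculi was handled comfortably by $(SE)$ together with $(E?)$ (Example \ref{lem:Example 2 in G3J5}), but which now must be forged from $(E?')$, $(R{:})$, $(Anti\mbox{-}Mon)$, $(Eucl)$, and $Cut$. I would derive it as follows: after $(R\r)$, $(E?')$ applied upward on the antecedent formula $\ww\Vvdash\neg t{:}A$ introduces $\ww E(?t,\neg t{:}A)$; then $(R{:})$ with this evidence atom reduces the goal to $\ww R\vv,\ww E(?t,\neg t{:}A),\ww\Vvdash\neg t{:}A \R \vv\Vvdash\neg t{:}A$, which by $(R\neg)$ and $(L\neg)$ becomes $\vv\Vvdash t{:}A,\ww R\vv,\ww E(?t,\neg t{:}A)\R\ww\Vvdash t{:}A$. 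To close this, apply $Cut$ on $\vv E(t,A)$ using the easy derivation $\vv\Vvdash t{:}A\R\vv E(t,A)$ (obtained by $(E)$-inversion on an initial sequent), then $(Anti\mbox{-}Mon)$ upward gives $\ww E(t,A)$; $(R{:})$ upward on the $\ww\Vvdash t{:}A$ in the succedent produces $\ww R\uu$ and goal $\uu\Vvdash A$; finally $(Eucl)$ upward produces $\vv R\uu$, and $(L{:})$ on $\vv\Vvdash t{:}A$ with $\vv R\uu$ yields $\uu\Vvdash A$, matching the initial sequent (Lemma \ref{lem:Initial axiom for arbitrary A}). This sequence exactly mirrors the semantic argument (negative proof checker + anti-monotonicity + Euclideanness) that justifies {\bf j5} in Fp-models.

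For the converse direction, I would first extend the soundness theorem stated above to $(LC)$, $(RC)$, and $Cut$: contraction clearly preserves validity of sequents in every $\JL_\CS$-Fp-model, and a $Cut$ on a formula $\varphi$ preserves validity by a standard case split on whether the chosen $\M$-interpretation validates $\varphi$ (so that one of the two premises is applicable). Consequently, if ${\sf G3JL}^{Fp}_\CS+(LC)+(RC)+Cut \vdash{}\R\ww\Vvdash A$, then $\R\ww\Vvdash A$ is valid in every $\JL_\CS$-Fp-model, so by Lemma \ref{validity of formula and sequence} the formula $A$ is true in every $\JL_\CS$-Fp-model, and Pacuit's completeness theorem (cited just above the statement) gives $\JL_\CS\vdash A$. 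The only delicate ingredient beyond the axiom-by-axiom derivation is therefore the verification that $Cut$ preserves Fp-validity, but this proceeds as in the proof of Theorem \ref{Soundness labeled systems} without any new subtleties.
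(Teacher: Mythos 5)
Your proposal is correct and follows exactly the route the paper intends (the paper states this theorem without an explicit proof, relying on the preceding soundness clause, Pacuit's completeness for Fp-models, and the observation that axiom {\bf j5} needs $Cut$): one direction by extending soundness to $(LC)$, $(RC)$ and $Cut$ and invoking Pacuit, the other by deriving the Hilbert axioms and rules, with $Cut$ used essentially for {\bf j5}. Your explicit derivation of $\R\ww\Vvdash \neg t{:}A \r ?t{:}\neg t{:}A$ via $(E?')$, $(R{:})$, a $Cut$ on $\vv E(t,A)$, $(Anti\mbox{-}Mon)$, $(Eucl)$ and $(L{:})$ checks out and correctly mirrors the semantic argument from the negative proof checker, anti-monotonicity and Euclideanness conditions.
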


%%%%%%%%%%%%%%%%%%%%%%%%%%%%%%%%%%%%%%%%%%%%%%%%%%%%%%%%%%%%%%%%%%%%%%%%%%%%%
\subsection{Labeled sequent calculus based on Fk-models}
Models for {\sf JD} and its extensions can be also characterized by the class of F-models with the following condition instead of seriality of $\RR$:
\begin{description}
  \item[$\E 8.$] \textit{Consistent evidence}: $\E(t,\bot) =\emptyset$, for all terms $t$.
\end{description}
These models are introduced by Kuznets in \cite{Kuznets2008}, and were
called Fk-models there. Completeness of {\sf JD} (and its extensions) with respect to Fk-models can be proved without the requirement that constant specifications should be axiomatically appropriate.
\begin{theorem}(\cite{Kuznets2008})
Let \JL~be {\sf JD} or one of its extensions, and $\CS$ be a constant specification for \JL. Then justification logics
$\JL_\CS$ are sound and complete with respect to their $\JL_\CS$-Fk-models.
\end{theorem}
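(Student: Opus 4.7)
The plan is to adapt the proof of Theorem \ref{Sound Compl JL} to Fk-models, with the key modification being that the consistent evidence condition $(\E 8)$ replaces the seriality of $\RR$. For soundness, one proceeds by induction on the derivation of a theorem in $\JL_\CS$. The only new case worth checking is the {\bf jD} axiom $t:\bot \r \bot$: in any Fk-model $\M = (\W, \RR, \E, \V)$, if $(\M, w) \Vdash t:\bot$ then $w \in \E(t,\bot) = \emptyset$, which is impossible, so the antecedent is never forced and the implication holds vacuously. Soundness of the other axioms and rules is exactly as in Theorem \ref{Sound Compl JL}.

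For completeness, I would construct the canonical model $\M = (\W, \RR, \E, \V)$ precisely as in Theorem \ref{Sound Compl JL}: $\W$ is the set of all maximal $\JL_\CS$-consistent sets, $\g \RR \d$ iff $\g^\flat \subseteq \d$, $\E(t,A) = \{\g \in \W : t:A \in \g\}$, and $\g \in \V(P)$ iff $P \in \g$. Since $\CS \subseteq \g$ for each $\g \in \W$, the function $\E$ is a possible evidence function for $\JL_\CS$. The Truth Lemma, proved by induction on formulas, carries over verbatim from Theorem \ref{Sound Compl JL}. The admissible evidence function conditions inherited from the base logic {\sf J} (application $\E 1$, sum $\E 2$) and from optional axioms {\bf j4}, {\bf jB}, {\bf j5} (conditions $\E 3$--$\E 7$) are verified exactly as before, using the corresponding axioms of $\JL_\CS$.

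The crucial new step is verifying the consistent evidence condition $\E(t,\bot) = \emptyset$ in the canonical model. Suppose towards a contradiction that $\g \in \E(t,\bot)$ for some $\g \in \W$. Then $t:\bot \in \g$. Since $\JL_\CS$ contains the {\bf jD} axiom, $t:\bot \r \bot \in \g$, and since $\g$ is maximally consistent and closed under Modus Ponens, $\bot \in \g$, contradicting consistency of $\g$. Hence $\E(t,\bot) = \emptyset$. Standard argument then concludes: if $\JL_\CS \not\vdash F$, the set $\{\neg F\}$ extends to some maximal consistent $\g \in \W$, and by the Truth Lemma $(\M, \g) \not\Vdash F$, so $\M \not\Vdash F$.

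The main point worth emphasizing, and the only place the Fk-model proof genuinely differs from the F-model proof, is that no axiomatic appropriateness of $\CS$ is required: the property $\E(t,\bot) = \emptyset$ follows directly from the single axiom {\bf jD} in each individual $\g$, whereas deriving seriality of $\RR$ in the F-model canonical construction requires, for each $\g \in \W$, producing some accessible $\d \in \W$, which in turn relies on the internalization of axioms by constants — hence the axiomatic appropriateness hypothesis in Theorem \ref{Sound Compl JL}. Thus the statement holds for \emph{any} constant specification $\CS$, which is the principal strengthening over the F-model completeness result.
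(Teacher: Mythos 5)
Your proposal is correct and follows the standard argument for this result, which the paper does not prove itself but cites from \cite{Kuznets2008}: the canonical model is built exactly as in the F-model completeness proof, and the seriality verification (the only step needing axiomatic appropriateness) is replaced by checking $\E(t,\bot)=\emptyset$, which follows from {\bf jD} and Modus Ponens closure inside each maximal consistent set. You also correctly isolate the payoff --- that the hypothesis of axiomatic appropriateness on $\CS$ can be dropped --- which is precisely the point the paper emphasizes in the surrounding text.
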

Regarding Fk-models, there is another formulation of labeled systems for {\sf JD} and
its extensions, by replacing the rule for seriality $(Ser)$ with the following initial sequent $(AxE\bot)$:
$$\ww E(t,\bot),\g\R\d~~~~~ (AxE \bot).$$
If ${\sf G3JL}_\CS$ contains $(Ser)$, then let ${\sf G3JL}^{Fk}_\CS$ (the labeled sequent calculus based on Fk-models) denote the resulting system where the rule $(Ser)$ is replaced by the initial sequent $(AxE \bot)$. All the definitions of Sections 4-8 can be adopted for labeled systems ${\sf G3JL}^{Fk}_\CS$ (these systems  first appeared in \cite{Ghari2012-Thesis}). Main properties of labeled systems ${\sf G3JL}^{Fk}_\CS$ are listed in the following theorem (the proof is similar to that for labeled systems based on F-models).
\begin{theorem}
Suppose \JL~is a justification logic with axiom {\bf jD}, \CS~is a constant specification for \JL, and ${\sf G3JL}^{Fk}_\CS$ is its labeled sequent calculus based on Fk-models.
\begin{enumerate}
\item All sequents of the form $\ww\Vvdash A,\g\R\d,\ww\Vvdash A$, with $A$ an arbitrary $\JL$-formula, are derivable in ${\sf G3JL}^{Fk}_\CS$.
\item  All rules of ${\sf G3JL}^{Fk}_\CS$ are height-preserving \CS-invertible.
\item  The rules of substitution $(Subs)$, weakening, contraction
and $Cut$ are height-preserving \CS-admissible in ${\sf G3JL}^{Fk}_\CS$.
\item If the sequent $\g\R\d$ is derivable in ${\sf G3JL}^{Fk}_\CS$, then it is valid in every $\JL_\CS$-Fk-model.
\item A \JL-formula $A$ is provable in $\JL_\CS$ iff $\R\ww\Vvdash A$ is provable in ${\sf G3JL}^{Fk}_\CS$.
\end{enumerate}
 \end{theorem}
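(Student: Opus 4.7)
The plan is to prove the five claims in order, adapting the corresponding arguments developed in Sections 6--8 for labeled systems based on F-models. The only genuine novelty is the replacement of the seriality rule $(Ser)$ by the initial sequent $(AxE\bot)$, so most steps carry over unchanged and the remaining ones need only a local check.

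First, for claim 1 I would proceed by induction on the complexity of $A$, exactly as in Lemma~\ref{lem:Initial axiom for arbitrary A}: the atomic base cases are the initial sequents $(Ax)$ and $(Ax\bot)$; the propositional cases apply the corresponding left and right rule once; and the case $A=t:B$ applies $(R{:})$ bottom-up followed by $(L{:})$ to reach a premise to which the induction hypothesis applies. Neither $(Ser)$ nor $(AxE\bot)$ plays any role. For claim 2, the invertibility of propositional rules and of the rules on evidence atoms follows from height-preserving admissible weakening (part of claim 3), while invertibility of $(R{:})$ is established by the same induction as in Proposition~\ref{prop:inversion lemma}; the new initial sequent $(AxE\bot)$ has no labeled formula principal, so the base case of that induction remains trivial.

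For claim 3, substitution of labels is height-preserving $\CS$-admissible by the routine induction of Lemma~\ref{lemma: substitution lemma}, now with one fewer case ($(Ser)$ is gone). Weakening and contraction are proved as in Theorems~\ref{thm:admiss weak.} and~\ref{thm:admiss Contr.}, again with the elimination of the $(Ser)$ case; the admissibility of contracted $(Trans)$ (Lemma~\ref{lemma: admissibility of Trans*}) is unaffected and remains the only subtle auxiliary fact. Cut elimination is proved by the double induction of Theorem~\ref{thm:cut elimination} on cut-formula size with subinduction on cut-level; the only new base cases arise when one of the premises of a $Cut$ is an instance of $(AxE\bot)$, and in each such case the evidence atom $\ww E(t,\bot)$ appears already in the conclusion, so the conclusion is itself an instance of $(AxE\bot)$ and the cut is immediately removable.

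For claim 4, I would revisit Theorem~\ref{Soundness labeled systems}. The only new case is the validity of $(AxE\bot)$ in every $\JL_\CS$-Fk-model: any $\M$-interpretation $[\cdot]$ validating $\ww E(t,\bot)$ would require $[\ww]\in\E(t,\bot)$, but by the consistent evidence condition $\E(t,\bot)=\emptyset$, so no such interpretation exists and the sequent is vacuously valid. For claim 5, the forward direction requires an Fk-analogue of Proposition~\ref{derivability of theorems in labels systems}; every case transfers verbatim except the derivation of axiom {\bf jD}, where the previous use of $(Ser)$ is replaced by a short derivation whose topmost sequent is an instance of $(AxE\bot)$, continued by $(E)$ and $(R{\r})$. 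The converse direction combines the soundness statement of claim 4 with the Fk-completeness of $\JL_\CS$ cited above, which crucially does not require axiomatic appropriateness of $\CS$.

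The main obstacle is the cut-elimination argument. One must verify that every new critical pair involving $(AxE\bot)$ admits either a direct reduction (when the cut formula is $\ww E(t,\bot)$ itself) or a permutation downward of the cut (when $\ww E(t,\bot)$ is merely a side formula). This is where the structural effect of replacing a rule by an initial sequent must be checked most carefully, but since $(AxE\bot)$ introduces no eigenvariables and no discharged formulas, these reductions are straightforward rather than delicate.
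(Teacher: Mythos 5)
Your proposal is correct and follows essentially the same route as the paper, which itself only states that the proof is obtained by adapting the arguments of Sections 5--8 for the F-model systems; your expansion correctly identifies the only points needing fresh checks (the new initial sequent $(AxE\bot)$ in the cut and soundness arguments, the consistent-evidence condition $\E(t,\bot)=\emptyset$, and the rederivation of axiom {\bf jD} via $(AxE\bot)$, $(E)$, $(R\r)$ in place of $(Ser)$). The only slight looseness is in the cut case where the cut formula is $\ww E(t,\bot)$ itself and principal in a right-premise instance of $(AxE\bot)$: there the conclusion need not already contain $\ww E(t,\bot)$, and one instead uses the standard observation that no rule has an evidence atom principal in the succedent, so the left premise either is an $(AxE)$ instance (making the conclusion an $(AxE\bot)$ instance) or permits permuting the cut upward.
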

\begin{theorem}
\begin{enumerate}
\item  All labeled formulas in a derivation
in ${\sf G3JD}^{Fk}_\CS$ (or in ${\sf G3JD4}^{Fk}_\CS$) are labeled-subformulas of labeled formulas in
the endsequent.
\item Every sequent derivable in ${\sf G3JD}^{Fk}_\CS$ (or in ${\sf G3JD4}^{Fk}_\CS$) has a derivation with the sublabel property.
\item  The rule  $(Trans_*)$ is admissible in ${\sf G3JD4}^{Fk}_\CS$ and its extensions.
\end{enumerate}
 \end{theorem}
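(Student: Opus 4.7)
The plan is to mimic the corresponding proofs already established for the F-model labeled systems (Propositions \ref{prop:subformula property}, \ref{prop:sublabel property}, and Lemma \ref{lemma: admissibility of Trans*}), taking care only where the replacement of $(Ser)$ by the zero-premise initial sequent $(AxE\bot)$ alters the case analysis. The point is that $(AxE\bot)$ introduces neither labeled formulas nor relational atoms in its principal formula, so almost all the old arguments transfer verbatim.

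\textbf{For clause (1)}, I would proceed by a routine inspection of all rules of ${\sf G3JD}^{Fk}_\CS$ and ${\sf G3JD4}^{Fk}_\CS$, showing that every labeled formula appearing in a premise is a labeled-subformula of some labeled formula appearing in the conclusion. The rules inherited from $\J^-_\CS$ were already checked in Proposition \ref{prop:subformula property}. The only novelty is the initial sequent $(AxE\bot)$, whose principal formula is the evidence atom $\ww E(t,\bot)$: it creates no labeled formulas at all and so vacuously satisfies the property. Removing $(Ser)$ only simplifies matters, as $(Ser)$ was neutral with respect to labeled formulas anyway. A straightforward induction on the height of the derivation then yields the labeled-subformula property.

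\textbf{For clause (2)}, I would follow the argument of Proposition \ref{prop:sublabel property} together with the substitution lemma analogue of Lemma \ref{lem:substitution for analytic derivations}. By induction on height, whenever the topmost rule instance fails the sublabel property, pick any label $\vv$ occurring in a premise that is neither an eigenlabel nor present in the conclusion, and substitute for it some label $\ww$ that does occur in the endsequent. Height-preserving admissibility of $(Subs)$ (Lemma \ref{lemma: substitution lemma}) and the fact that this substitution leaves the conclusion of the rule intact then give a derivation with the sublabel property at that step. The rules in ${\sf G3JD}^{Fk}_\CS$ and ${\sf G3JD4}^{Fk}_\CS$ that could produce fresh labels upwards are the axiom necessitation rules $(IAN)/(AN)$, and these are precisely the rules one substitutes away; $(Ser)$, which would have been the principal offender, is no longer present, and $(AxE\bot)$ trivially satisfies the sublabel property since it contains no fresh labels compared to the conclusion side.

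\textbf{For clause (3)}, I would follow the proof of Lemma \ref{lemma: admissibility of Trans*} by induction on the height of the derivation of the premise $\ww R\ww,\ww R\ww,\g\R\d$ in ${\sf G3JD4}^{Fk}_\CS$ and its extensions. If the premise is an initial sequent of the form $(Ax)$, $(Ax\bot)$, $(AxR)$, $(AxE)$, or the new $(AxE\bot)$, then so is the conclusion $\ww R\ww,\g\R\d$, since in each of these cases the principal formulas do not involve the contracted relational atom $\ww R\ww$. If the last rule $(R)$ does not have $\ww R\ww$ as a principal formula, apply the induction hypothesis to the premise(s) of $(R)$ and then reapply $(R)$. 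If $\ww R\ww$ is principal in $(R)$, the rule must be one of $(Trans)$, $(Sym)$, $(L:)$, or $(Mon)$, and the argument is carried out exactly as in Lemma \ref{lemma: admissibility of Trans*}. The hard part is purely bookkeeping: confirming that the replacement $(Ser)\rightsquigarrow(AxE\bot)$ does not introduce a new principal position for $\ww R\ww$, which is immediate since $(AxE\bot)$ has evidence atoms rather than relational atoms as its principal formulas.
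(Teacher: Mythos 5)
Your proposal is correct and matches the paper's (implicit) approach: the paper states this theorem without proof, relying on the observation that the arguments for Propositions \ref{prop:subformula property} and \ref{prop:sublabel property} and Lemma \ref{lemma: admissibility of Trans*} carry over once one checks that replacing $(Ser)$ by the zero-premise $(AxE\bot)$ adds no labeled formulas, no fresh labels, and no new principal position for relational atoms. Your case analysis covers exactly these points, so nothing is missing.
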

The subterm property of E-rules (Lemma \ref{lem:subterm property E-rules}), and consequently the subterm property (Proposition \ref{prop:subterm property}), does not hold for these systems. For example, the sequent $\ww E(x,P\r \bot),\ww E(y,P)\R$ is derivable in ${\sf G3JD}^{Fk}_\emptyset$:
\begin{prooftree}
\def\extraVskip{5pt}
  \AXC{$(AxE\bot)$}\noLine
  \UIC{$\ww E(x\cdot y,\bot),\ww E(x,P\r \bot),\ww E(y,P)\R$}
  \RightLabel{$(E\cdot)$}
    \UIC{$\ww E(x,P\r \bot),\ww E(y,P)\R$}
    \end{prooftree}
 but has no derivation in which the rule $(E\cdot)$ has the subterm property. Also the countermodel construction of Theorem \ref{thm:reduction tree} cannot be used to produce Fk-models (in this respect, see  Note 3.3.35 and Example 3.3.36 of \cite{Kuznets2008}).
%%%%%%%%%%%%%%%%%%%%%%%%%%%%%%%%%%%%%%%%%%%%%%%%%%%%%%%%%%%%%%%%%%%%%%%%%%%%%%%%%%
\subsection{Labeled sequent calculus for modal-justification logics}\label{sec:Labeled modal-justification}
 Modal-justification logics are combinations of modal and justification logics. We combine the modal logic {\sf ML} and justification logic \JL~provided that $\JL^\circ=\ML$, and in this case the respective modal-justification logic is denoted by {\sf MLJL}. The language of \MLJL~is an extension of the language of \JL~and {\sf ML}. Thus formulas of \MLJL~are constructed by the following grammar:
\[ A::= P~|~\bot~|~\neg A~|~A\wedge A~|~A\vee A~|~A\rightarrow A~|~t:A~|~\b A,\]
where $t\in Tm_{\JL}$. Axioms and rules of \MLJL~are the axioms and rules of {\sf ML} and \JL~and the \textit{connection axiom} $``t:A\r \b A"$, i.e.
 \[ \MLJL = \ML + \JL + (t:A\r\b A).\]
 We consider also the logic \N~which has the same language, axioms, and rules as \4~with one additional axiom $``\neg t:A\r\b\neg t:A"$, \textit{implicit-explicit negative introspection axiom}, i.e.
 \[ \N = \4 + (\neg t:A\r\b\neg t:A).\]
 Constant specifications for \MLJL~and \N, and the system $\MLJL_\CS$ and $\N_\CS$ are defined in the usual way.

The first modal-justification logic \4~was introduced by Artemov and Nogina in \cite{AN2004,AN2005b}, where \N~was also introduced there. The system {\sf KJ} was considered by Fitting in \cite{Fitting2012}\footnote{He also introduced the logic {\sf S5LPN} which is obtained from \N~by adding the modal axiom {\bf 5}.}, and by Artemov in \cite{A2012}.  Many of the modal-justification logics introduced here, are already considered in \cite{KuznetsStuder2012}, with the name logics of justifications and belief. However, the following modal-justification logics are new:
\begin{equation}\label{new MLJL logics}
{\sf S5JT5}, {\sf KB5JB4}, {\sf KB5JB45}, {\sf S5JTB5}, {\sf S5JDB5}, {\sf S5JTB45}, {\sf S5JDB45}, {\sf S5JTB4}, {\sf S5JDB4}.
\end{equation}
In fact we use the requirement $\JL^\circ=\ML$ in the definition of \MLJL~which is more general than the definition of  logics of justifications and belief in \cite{KuznetsStuder2012}.\footnote{It is worth noting that the logics {\sf TLP}, {\sf S4LP}, {\sf S5LP}, and the logic {\sf S5JT45} were introduced respectively by Artemov in \cite{A2006} and by Rubtsova in \cite{Rubtsova2006b}, with the difference that they consider multi-agent modal logics for its modal part, and thus these logics cannot be considered neither as  modal-justification logics nor as logics of justifications and belief.} The following lemma from \cite{GoetschiKuznets2012} is our justification to define the modal-justification logics in (\ref{new MLJL logics}).
\begin{lemma}\label{lem:operation replacement}(\cite{GoetschiKuznets2012})
There exist terms $t_!(x)$, $t'_!(x)$, $t_?(x)$ such that for any term $s$ and formula $A$ we have
\begin{enumerate}
\item ${\sf JT5} \vdash s:A\r t_!(s): s: A$.
\item ${\sf JB5} \vdash s:A\r t'_!(s): s: A$.
\item ${\sf JB4} \vdash \neg s:A\r t_?(s):\neg s: A$.
\item ${\sf JDB4} \vdash s:A\r  A$.
\item ${\sf JDB5} \vdash s:A\r  A$.
\end{enumerate}
\end{lemma}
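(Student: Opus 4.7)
The plan is to establish each clause by simulating, at the justification level, the derivation of the corresponding modal axiom in {\sf S5} (equivalently, in {\sf KT5}, {\sf KB5}, {\sf KB4}, {\sf KDB4}, or {\sf KDB5}). The main tools will be the Internalization Lemma (Lemma \ref{lemma:Internalization Lemma}), applied with ${\sf TCS}_\JL$, which converts each propositional theorem of the ambient $\JL$ into a ground witness term, together with the application axiom {\bf jK}, which composes witnesses into compound terms.

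I would handle items 4 and 5 (factivity) first by a short contradiction argument. In {\sf JDB4}, assume $s:A$ and $\neg A$: by {\bf j4}, $s:A \r !s:s:A$; by {\bf jB} instantiated with term $s$, $\neg A \r \bar{?}s:\neg s:A$, where $\neg s:A$ reads as $s:A \r \bot$; applying {\bf jK} to $\bar{?}s:(s:A \r \bot)$ and $!s:s:A$ yields $(\bar{?}s \cdot !s):\bot$; and {\bf jD} then gives $\bot$. Hence $s:A, \neg A \vdash \bot$, so $s:A \r A$. Item 5 follows identically once we replace the use of $!s$ (via {\bf j4}, which is unavailable in {\sf JDB5}) by the term $t'_!(s)$ provided by item 2, since {\sf JDB5} extends {\sf JB5}.

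For items 1--3, I would mimic the modal derivations of the missing introspection axiom. For item 1, the derivation of {\bf 4} in {\sf KT5}, namely $\Box A \to \neg\Box\neg\Box A \to \Box\neg\Box\neg\Box A \to \Box\Box A$, translates into three steps: (i) $s:A \r \neg ?s:\neg s:A$, the contrapositive of the {\bf jT} instance $?s:\neg s:A \r \neg s:A$; (ii) $\neg ?s:\neg s:A \r ??s:\neg ?s:\neg s:A$, an instance of {\bf j5}; and (iii) $??s:\neg ?s:\neg s:A \r (u \cdot ??s):s:A$, obtained by applying {\bf jK} to a ground witness $u$ of the theorem $\neg ?s:\neg s:A \r s:A$ (itself the contrapositive of {\bf j5}), where $u$ exists by Lemma \ref{lemma:Internalization Lemma}. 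Chaining (i)--(iii) gives $s:A \r (u \cdot ??s):s:A$, so $t_!(x) := u \cdot ??x$. Items 2 and 3 proceed analogously, following the derivations of {\bf 4} in {\sf KB5} (using {\bf jB} in place of {\bf jT}, producing $t'_!$) and of {\bf 5} in {\sf KB4} (using both {\bf j4} and {\bf jB}, producing $t_?$). The hard part will be the bookkeeping in these latter two items: the modal derivations involve nested $\Box$s whose justification realizations are distinct compound terms, and one must carefully verify that each intermediate implication is a theorem of the ambient logic before invoking internalization and {\bf jK}. Once this bookkeeping is in place, the witness terms emerge mechanically.
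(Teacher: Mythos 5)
The paper does not actually prove this lemma; it is imported verbatim from \cite{GoetschiKuznets2012}, so there is no internal proof to measure you against. Your argument is the standard one and is correct in outline: each clause realizes the corresponding derivable modal axiom ({\bf 4} in {\sf KT5} and {\sf KB5}, {\bf 5} in {\sf KB4}, {\bf T} in {\sf KDB4} and {\sf KDB5}) by chaining one instance of {\bf jT}/{\bf jB}/{\bf j4}/{\bf j5}, an internalized contrapositive, and {\bf jK}, and the resulting terms $u\cdot{?}{?}x$, $u'\cdot\bar{?}{?}x$, $u''\cdot\bar{?}{!}x$ have the right shape and use only operations available in the respective languages. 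Two points need tightening. First, $\neg$ is a primitive connective in this paper's grammar, not an abbreviation of $A\r\bot$; so in items 4--5 the passage from $\bar{?}s:\neg s{:}A$ together with $!s:s{:}A$ (respectively $t'_!(s):s{:}A$) to a justification of $\bot$ requires one further internalized tautology $d:(\neg s{:}A\r(s{:}A\r\bot))$ and an extra application of {\bf jK} before {\bf jD} can fire; routine, but it should be said. Second, and more substantively, the lemma quantifies the terms $t_!(x)$, $t'_!(x)$, $t_?(x)$ \emph{before} $s$ and $A$, whereas your ground witness $u$ of, say, $\neg ?s:\neg s{:}A\r s{:}A$ is produced by the Internalization Lemma applied to a formula that mentions $s$ and $A$, so as written it could depend on them. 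This is repaired by observing that the theorem in question has a derivation of fixed shape (Modus Ponens on one instance of {\bf j5} and one instance of a propositional tautology scheme) and that the logics here carry their total constant specification, which is schematic, so the same constants --- hence a single ground term such as $u=d\cdot c$ --- serve all instances simultaneously. With that remark the construction is genuinely uniform in $s$ and $A$ and your proof goes through.
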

 For example, although the logic {\sf S5JT5} has the modal axiom {\bf 4}, $\b A\r\b\b A$,  and does not have the justification axiom {\bf j4}, by the above lemma $s:A\r t_!(s): s: A$ is a theorem of {\sf S5JT5}.
 \begin{example}
If finite or injective constant specifications are used in proofs, it is easy to show that
\begin{eqnarray*}
  \4 &\rightleftharpoons& \s~+~\LP~+~ t:A\r \b t:A, \\
  \N &\rightleftharpoons& \s~+~\LP~+~ \b t:A\vee \b \neg t:A.
\end{eqnarray*}
\end{example}
In \cite{Ghari2011-JLC} it is shown that theorems of \4~and \N~can be realized respectively in \LP~and ${\sf JT45}$.
F-models for \MLJL~(except those listed in (\ref{new MLJL logics})) were presented in \cite{Fitting2004,KuznetsStuder2012}. We introduce F-models for others, as well as for \N, and show their completeness.
\begin{definition}\label{def: F-model MLJL}
 An $\MLJL_\CS$-model $\M=(\W,\RR,\E,\V)$ is an $\JL_\emptyset$-model, where now $\E$ is a possible evidence function on $\W$ for $\MLJL_\CS$. In addition, if \MLJL~contains the modal axiom {\bf 5} then the accessibility relation $\RR$ should be Euclidean. A $\N_\CS$-model is a $\4_\emptyset$-model such that  $\E$ is a possible evidence function on $\W$ for $\N_\CS$ which meets the strong evidence ($\E 8$) and anti-monotonicity ($\E 9$) conditions. The definition of forcing relation (Definition \ref{def:forcing relation}) for these models has the following additional clause:
\[ (\M,w)\Vdash \b A~~~ \mbox{iff for every $v\in \W$ with $w \RR v$,} ~~~(\M, v)\Vdash A.\]
\end{definition}
We first show soundness and completeness of \N.
\begin{theorem}\label{compl S4LPN-F-model}
 $\N_\CS$ are sound and complete with respect to their $\N_\CS$-models.
\end{theorem}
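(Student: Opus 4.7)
The plan is to follow the standard soundness/completeness template used in Theorem~\ref{Sound Compl JL} for $\JL_\CS$, adapted to handle both the modal operator $\b$ and the extra axiom $\neg t:A\r\b\neg t:A$. For soundness, I would proceed by induction on the length of derivations in $\N_\CS$. The propositional tautologies and \s-axioms follow from the reflexivity and transitivity of $\RR$ as in standard modal logic; the \LP-axioms follow as in Theorem~\ref{Sound Compl JL}. The two genuinely new axioms require brief checks: for the connection axiom $t:A\r\b A$, if $(\M,w)\Vdash t:A$ then by Definition~\ref{def:forcing relation} we have $(\M,v)\Vdash A$ for every $v$ with $w\RR v$, hence $(\M,w)\Vdash\b A$; for $\neg t:A\r\b\neg t:A$, suppose $(\M,w)\Vdash\neg t:A$ and pick any $v$ with $w\RR v$, and suppose for contradiction that $(\M,v)\Vdash t:A$, so $v\in\E(t,A)$. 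Anti-monotonicity then gives $w\in\E(t,A)$, and strong evidence yields $(\M,w)\Vdash t:A$, contradicting the assumption. The rules \textbf{MP} and \textbf{Nec} preserve truth in a standard way, and \textbf{AN} is handled by the clause that $\E(c,A)=\W$ for $c:A\in\CS$.

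For completeness, I would build a canonical model $\M=(\W,\RR,\E,\V)$ in direct analogy with the proof of Theorem~\ref{Sound Compl JL}: $\W$ is the set of all $\N_\CS$-maximal consistent sets; $\g\RR\d$ iff $\{A\mid\b A\in\g\}\subseteq\d$; $\E(t,A)=\{\g\in\W\mid t:A\in\g\}$; and $\g\in\V(P)$ iff $P\in\g$. Reflexivity and transitivity of $\RR$ follow from the presence of modal axioms {\bf T} and {\bf 4} by the usual maximal-consistent-set arguments. The Truth Lemma
\[(\M,\g)\Vdash F\Longleftrightarrow F\in\g\]
is proved by induction on $F$; the Boolean cases and the $\b$-case are standard, and for $F=t:A$ one uses the connection axiom $t:A\r\b A\in\g$ to get that $t:A\in\g$ forces $A$ at all $\RR$-successors of $\g$, so that $\g\in\E(t,A)$ combined with the $\b$-part yields $(\M,\g)\Vdash t:A$; the converse is immediate from the definition of $\E$. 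As in Theorem~\ref{Sound Compl JL}, underivability of $F$ in $\N_\CS$ yields a maximal consistent set containing $\neg F$, which by the Truth Lemma refutes $F$ in $\M$.

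The core technical step is verifying that $\E$ is a genuine admissible evidence function for $\N_\CS$, i.e.\ satisfies application, sum, monotonicity, positive introspection (all as in Theorem~\ref{Sound Compl JL}), plus the two conditions distinctive to $\N_\CS$: strong evidence and anti-monotonicity. Strong evidence is immediate from the Truth Lemma: if $\g\in\E(t,A)$ then $t:A\in\g$, so $(\M,\g)\Vdash t:A$. The one place where the new axiom enters essentially is \emph{anti-monotonicity}, which is the main obstacle and the whole reason the axiom $\neg t:A\r\b\neg t:A$ is needed. To verify it, suppose $\g\RR\d$ and $\d\in\E(t,A)$, i.e.\ $t:A\in\d$. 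If $t:A\notin\g$, then by maximal consistency $\neg t:A\in\g$, and the implicit-explicit negative introspection axiom forces $\b\neg t:A\in\g$; the definition of $\RR$ then gives $\neg t:A\in\d$, contradicting $t:A\in\d$. Hence $t:A\in\g$, i.e.\ $\g\in\E(t,A)$, which is exactly anti-monotonicity. Together with the strong evidence check, this completes the verification that $\M$ is an $\N_\CS$-model and hence finishes the proof.
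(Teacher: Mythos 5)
Your proposal is correct and follows essentially the same route as the paper: soundness by checking the new axioms directly (your contradiction argument for $\neg t:A\r\b\neg t:A$ is just the contrapositive form of the paper's forward argument via strong evidence and anti-monotonicity), and completeness via the same canonical model with $\g\RR\d$ iff $\g^\sharp\subseteq\d$, the same Truth Lemma cases using the connection axiom, and the same verification of anti-monotonicity from the implicit-explicit negative introspection axiom. No gaps.
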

\begin{proof} For soundness direction of \N, let us show the
validity of implicit-explicit negative introspection axiom $\neg t:A\r\b\neg t:A$. Suppose $(\M,w)\Vdash \neg t:A$ and $w\RR v$. By the strong
evidence condition, we have $w\not\in\E(t,A)$. Hence by the
anti-monotonicity condition, $v\not\in\E(t,A)$. Thus
$(\M,v)\Vdash\neg t:A$, and therefore $(\M,w)\Vdash\b \neg t:A$.

The proof of completeness is similar to that of \4 in \cite{Fitting2004}. Given a constant specification $\CS$ for \N, we
construct a canonical F-model $\M=(\W,\RR,\E,\V)$ for
$\N_\CS$ as follows: Let $\W$ be all
$\N_\CS$-maximally consistent sets, and define the accessibility
relation $\RR$ on $\W$ by, $\g\RR\d$ if{f}
$\g^\sharp\subseteq\d$, where $\g^\sharp=\{A~|~\b A\in\g\}$.
The evidence function $\E$ and the valuation $\V$ are
defined similar to the canonical model of \JL~in the proof of
Theorem \ref{Sound Compl JL}.
 Truth Lemma can be proved easily: for each formula
$F$ and each $\g\in\W$,
\[(\M,\g) \Vdash F \Longleftrightarrow F\in\g.\]
We only verify the case in which $F$ is of the form $t:A$ and $\b A$.

If $(\M,\g) \Vdash t:A$, then $\g\in\E(t,A)$, and therefore $t:A\in\g$. Conversely, suppose $t:A\in\g$. Then $\g\in\E(t,A)$. Since $t:A\r\b A\in\g$, we have $\b A\in\g$. By the definition of $\RR$, $ A\in\d$ for each $\d$ such that $\g\RR\d$. By the induction hypothesis, $(\M,\d) \Vdash A$. Therefore, $(\M,\g) \Vdash t:A$.

If $\b A\in\g$, then $A\in\d$ for each $\d$ such that $\g\RR\d$. By the induction hypothesis, $(\M,\d) \Vdash A$. Therefore, $(\M,\g) \Vdash \b A$. Conversely, suppose $\b A\not\in\g$. Then $\g^\sharp\cup \{\neg A\}$ is a consistent set. If it were not consistent, then $\N_\CS\vdash  B_1\wedge B_2\wedge\ldots\wedge B_n\r A$ for some $\b B_1,\b B_2,\ldots,\b B_n\in\g$. By reasoning in \N~we have $\N_\CS\vdash \b B_1\wedge\b B_2\wedge\ldots\wedge\b B_n\r \b A$, hence $\b A\in\g$ a contradiction. Thus $\g^\sharp\cup \{\neg A\}$ is consistent. Extend it to a maximal consistent $\d$. It is obvious that $\d\in\W$, $\g\RR \d$ and $A\not\in\d$.
By the induction hypothesis, $(\M,\d) \not\Vdash A$. Therefore, $(\M,\g) \not\Vdash \b A$.

Obviously, $\RR$ is reflexive and transitive, and $\E$ satisfies
$\E1$-$\E4$. The proof of the strong evidence condition ($\E 8$) for $\E$ is similar  to that  given in Remark \ref{Remark: Strong Evidence}. We verify the anti-monotonicity  condition ($\E9$).

\textit{$\E$ satisfies $\E 9$:} suppose $\g\RR\d$ and
$\d\in\E(t,A)$. By the definition of $\E$, we have $t:A\in\d$.
Let us suppose $\g\not\in\E(t,A)$, or equivalently
$t:A\not\in\g$, and derive a contradiction. Since $t:A$ is not in
$\g$, and $\g$ is a maximal consistent set, we have $\neg
t:A\in\g$. By implicit-explicit negative introspection axiom, $\b\neg t:A\in\g$, and
so $\neg t:A\in\g^\sharp$. Now $\g\RR\d$ yields $\neg
t:A\in\d$, which is a contradiction.\qe \end{proof}
\begin{theorem}\label{compl MLJL-F-model}
Let \MLJL~be a modal-justification logic, and $\CS$ be a constant specification for \MLJL, with the requirement that if \JL~includes axiom scheme {\bf jD} then \CS~should be also axiomatically appropriate.
$\MLJL_\CS$ are sound and complete with respect to their F-models.
\end{theorem}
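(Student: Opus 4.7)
The plan is to adapt the canonical model constructions of Theorems \ref{Sound Compl JL} and \ref{compl S4LPN-F-model} to the full range of modal-justification logics. Soundness will be obtained by direct verification of each axiom, and completeness by building a canonical model in which a single accessibility relation simultaneously witnesses both $\b$ and the explicit justifications $t:$.

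For soundness, the modal axioms are validated by the standard Kripke argument on the underlying frame $(\W,\RR)$, and the justification axioms are validated as in Theorem \ref{Sound Compl JL} using the conditions on $\E$. The connection axiom $t:A\r\b A$ is immediate from the forcing clause: if $(\M,w)\Vdash t:A$ then by definition $(\M,v)\Vdash A$ for every $v$ with $w\RR v$, so $(\M,w)\Vdash\b A$.

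For completeness, I would take $\W$ to be the set of $\MLJL_\CS$-maximally consistent sets, define $\g\RR\d$ iff $\g^\sharp\subseteq\d$ with $\g^\sharp=\{A\mid\b A\in\g\}$, and define $\E(t,A)=\{\g\in\W\mid t:A\in\g\}$ and $\g\in\V(P)$ iff $P\in\g$, as in the JL canonical model. The Truth Lemma is proved by induction on formula complexity. The propositional cases are standard; the $\b A$ case follows the argument of Theorem \ref{compl S4LPN-F-model} (extending $\g^\sharp\cup\{\neg A\}$ to a maximally consistent successor). The case $t:A$ uses the connection axiom: if $t:A\in\g$ then $\g\in\E(t,A)$ by definition of $\E$, and $\b A\in\g$ forces $A\in\d$ for every $\d$ with $\g\RR\d$, so by the induction hypothesis $(\M,\g)\Vdash t:A$; the converse direction is direct from the definition of $\E$.

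The main task is then verifying that the canonical structure is an $\MLJL_\CS$-model in the sense of Definition \ref{def: F-model MLJL}. The frame conditions on $\RR$ corresponding to the modal axioms (reflexivity, seriality, symmetry, transitivity, Euclideanness) follow from the standard maximal-consistency arguments, and they automatically subsume the frame requirements imposed by the justification axioms through the correspondence $\ML=\JL^\circ$. The evidence-function conditions $\E 1$--$\E 7$ are obtained exactly as in the proof of Theorem \ref{Sound Compl JL}, with the strong evidence condition $\E 7$ falling out of the Truth Lemma as in Remark \ref{Remark: Strong Evidence}. The most delicate step, and the expected main obstacle, is confirming that for the new combinations in (\ref{new MLJL logics}) the canonical model still meets the intended definition: here one checks case-by-case that the modal axioms of \ML~together with the justification axioms of \JL~force the canonical $\RR$ to be Euclidean where required and the canonical $\E$ to satisfy all the \JL~conditions, invoking Lemma \ref{lem:operation replacement} when the justification side lacks a direct syntactic counterpart to a modal axiom of \ML.
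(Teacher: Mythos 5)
Your proposal follows essentially the same route as the paper's proof: a canonical model over $\MLJL_\CS$-maximally consistent sets with $\RR$ defined from $\b$ via $\g^\sharp\subseteq\d$ as in Theorem \ref{compl S4LPN-F-model}, $\E(t,A)=\{\g\mid t:A\in\g\}$, the Truth Lemma with the $t:A$ case handled through the connection axiom, and a final check that the resulting structure satisfies Definition \ref{def: F-model MLJL}. The one point worth making explicit, which the paper flags on the soundness side, is that for the combinations in (\ref{new MLJL logics}) validating a modal axiom may require deriving a frame property not directly imposed by Definition \ref{def: F-model MLJL} from those that are (e.g.\ transitivity from reflexivity and Euclideanness for ${\sf S5JT5}$).
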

\begin{proof} The proof is similar to the proof of Theorem \ref{Sound Compl JL}.
Soundness follows from soundness of modal and justification logics. Soundness of logics listed in (\ref{new MLJL logics}) needs attention. For example,  since reflexivity and Euclideanness of the accessibility relation of ${\sf S5JT5}_\CS$-models implies transitivity, axiom {\bf 4} is valid in ${\sf S5JT5}_\CS$-models.

 For completeness, we construct a canonical model $\M=(\W, \RR,
\E, \V)$ for each $\MLJL_\CS$. Let $\W$ be the set of all maximal
consistent sets in $\MLJL_\CS$, and define the accessibility
relation $\RR$, the evidence function $\E$ and the valuation $\V$ are
defined similar to the canonical model of \N~in the proof of
Theorem \ref{compl S4LPN-F-model}. The forcing relation $\Vdash$ on arbitrary formulas are defined as Definitions \ref{def:forcing relation} and \ref{def: F-model MLJL}. The Truth Lemma can
be shown for the canonical model of $\MLJL_\CS$ similar to the proof of Theorem \ref{compl S4LPN-F-model}. For each modal-justification logic $\MLJL_\CS$, it is easy to see that the canonical model $\M$ of $\MLJL_\CS$ is an $\MLJL_\CS$-model.  \qe \end{proof}

In order to develop labeled systems for \MLJL~and \N, we extend the extended labeled language of justification logics to include labeled
formulas $\ww\Vvdash A$, in which $A$ is a formula in the language
of \MLJL~and \N, respectively. Now, labeled systems for \MLJL~and \N~based on F-models are defined as follows:
\begin{eqnarray*}
  {\sf G3\MLJL} &=& {\sf G3\ML} + {\sf G3JL}. \\
  {\sf G3\N} &=& {\sf G3\4}+ (Anti\mbox{-}Mon) + (SE),
\end{eqnarray*}
Put differently, if {\bf 5} is not an axiom of \ML, then {\sf G3MLJL} is obtained by adding the rules $(L\b)$, $(R\b)$ to \J. Otherwise, if {\bf 5} is an axiom of \ML, {\sf G3MLJL} is obtained by adding the rules $(L\b)$, $(R\b)$, $(Eucl)$, and $(Eucl_*)$ to \J. Labeled systems ${\sf G3MLJL}_\CS$ and ${\sf G3\N}_\CS$ are defined in the usual way.

We now state the main results of the labeled systems of \MLJL~and \N. First note that  the definition of a labeled-subformula of a labeled formula (Definition \ref{def:subformula labeled formula}) can be extended as follows: the labeled-subformulas of
$\ww\Vvdash\b A$ are $\ww\Vvdash\b A$ and all labeled-subformulas of $\vv\Vvdash A$ for arbitrary label $\vv$.
\begin{theorem}\label{thm:properties of G3MLJL}
Let ${\sf G3MLJL}^-$ denote any of the labeled systems {\sf G3KJ}, {\sf G3K4J4}, {\sf G3DJD}, {\sf G3D4JD4}, {\sf G3TJT}, {\sf G3S4LP}.
\begin{enumerate}
\item All sequents of the form $\ww\Vvdash A,\g\R\d,\ww\Vvdash A$, with $A$ an arbitrary $\MLJL$-formula, are derivable in ${\sf G3MLJL}_\CS$.
\item  All labeled formulas in a derivation
in ${\sf G3MLJL}^-_\CS$ are labeled-subformulas of labeled formulas in
the endsequent.
\item Every sequent derivable in ${\sf G3MLJL}^-_\CS$ has an anlytic derivation.
\item  All rules of ${\sf G3MLJL}_\CS$ are height-preserving \CS-invertible.
\item  The rule $(Trans_*)$ is admissible in those systems ${\sf G3MLJL}_\CS$ which contain $(Trans)$.
\item  The rules of substitution $(Subs)$,  weakening, contraction
and $(Cut)$ are \CS-admissible in ${\sf G3MLJL}_\CS$.
\item If the sequent $\g\R\d$ is derivable in ${\sf G3MLJL}_\CS$, then it is
valid in every $\MLJL_\CS$-model.
\item Let $A$ be a formula in the language of \MLJL, and \CS~be a constant specification for \MLJL~with the requirement that if \MLJL~contains axiom scheme {\bf jD} then \CS~should be also axiomatically appropriate. Then $A$ is provable in $\MLJL_\CS$ iff $\R\ww\Vvdash A$ is provable in ${\sf G3\MLJL}_\CS$.
\item Suppose \MLJL~is a modal-justification logic that does not contain axioms {\bf jB} and {\bf j5}, and \CS~is a finite constant specification for ${\sf MLJL}$. Then every sequent
$\g\R\d$ in the language of ${\sf G3MLJL}$ is either derivable in ${\sf G3MLJL}_\CS$ or it has a $\MLJL_\CS$-countermodel.
\end{enumerate}
All the above results, except clauses 2, 3,  hold if we replace \MLJL~and ${\sf G3MLJL}$  with \N~and ${\sf G3\N}$ respectively.
 \end{theorem}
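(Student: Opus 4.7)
The plan is to obtain the nine clauses by adapting, item by item, the corresponding results proved earlier for the pure justification systems $\J_\CS$, since ${\sf G3MLJL}_\CS$ is obtained from ${\sf G3JL}_\CS$ by adding only the modal rules $(L\b)$, $(R\b)$ (plus $(Eucl)$, $(Eucl_*)$ when {\bf 5} is an axiom of \ML), and ${\sf G3\N}_\CS$ is ${\sf G3\4}_\CS$ augmented by the rules $(Anti\mbox{-}Mon)$ and $(SE)$. For clause 1 one extends the induction of Lemma \ref{lem:Initial axiom for arbitrary A} by adding the case $A=\b B$: from the derivable instance $\vv\Vvdash B,\ww R\vv,\ww\Vvdash \b B,\g\R\d,\vv\Vvdash B$ (induction hypothesis) apply $(L\b)$ and then $(R\b)$. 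For clause 2, inspection of $(L\b)$ and $(R\b)$ shows that active labeled formulas in the premises are labeled-subformulas (as extended in the paragraph before the theorem) of principal labeled formulas in the conclusion, and for systems ${\sf G3MLJL}^-$ none of the problematic rules $(SE)$, $(E\bar{?})$ are present; clause 3 then follows from clause 2 together with the obvious reading of Propositions \ref{prop:sublabel property} and \ref{prop:subterm property}, since the modal rules introduce no new terms and $(R\b)$ is treated like $(R:)$ with respect to the sublabel argument.

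For clauses 4, 5, 6, one repeats the inductions on derivation height from Proposition \ref{prop:inversion lemma}, Lemma \ref{lemma: admissibility of Trans*}, Theorems \ref{thm:admiss weak.}, \ref{thm:admiss Contr.}, \ref{thm:cut elimination}, adding the two modal cases. The only genuinely new work for the cut-elimination argument is the case in which the cut formula is $\ww\Vvdash \b A$ with principal occurrences in both premises (reduction to a cut on $\vv\Vvdash A$ for a suitable eigenlabel, exactly parallel to the $(R:)$-$(L:)$ case spelled out in the proof of Theorem \ref{thm:cut elimination}), and the observation that $(L\b)$ with $\ww R\ww$ principal can reinstate $\ww R\ww$ in the height-preserving admissibility of $(Trans_*)$, handled exactly as for $(L:)$. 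Clause 7 extends Theorem \ref{Soundness labeled systems} by adding the two standard clauses for $(L\b)$ and $(R\b)$, using the clause for $\b$ in Definition \ref{def: F-model MLJL}; for \N~one additionally verifies the soundness of $(Anti\mbox{-}Mon)$ and $(SE)$ using conditions $(\E 8)$ and $(\E 9)$. Clause 8 follows by combining Proposition \ref{derivability of theorems in labels systems}, Theorem \ref{compl MLJL-F-model} (respectively Theorem \ref{compl S4LPN-F-model}) and clause 7; the Hilbert-to-labeled direction requires an explicit derivation of the connection axiom $t:A\r\b A$ (apply $(R\b)$, $(L:)$ and use Lemma \ref{lem:Initial axiom for arbitrary A}), and for \N~of $\neg t:A\r\b\neg t:A$ (apply $(R\b)$, $(L\neg)$, $(R\neg)$, then $(Anti\mbox{-}Mon)$ and $(SE)$; this is the step I expect to be the one new non-trivial axiom derivation).

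Clause 9 extends the reduction-tree procedure of Theorem \ref{thm:reduction tree} by adding two further stages corresponding to the rules $(L\b)$ and $(R\b)$: a stage that for every pair $\ww\Vvdash\b A$, $\ww R\vv$ in the antecedent appends $\vv\Vvdash A$ (respecting $(\dagger)$), and a stage that for every $\ww\Vvdash\b A$ in the succedent appends $\ww R\vv', \vv'\Vvdash A$ with a fresh label $\vv'$. Saturation now yields, in addition to the evidence function constructed as in the proof of Theorem \ref{thm:reduction tree}, the verification that Lemma \ref{lemma: in completeness countermodel} carries through to the new case $A=\b B$ in the same way as the case $A=t:B$. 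Throughout, the restriction to logics without axioms {\bf jB} and {\bf j5} is needed precisely so that the admissible-evidence-function construction of Definition \ref{def:generated evidence function} (in its extended form, Lemma \ref{lem: generated evidence function is admissible}) supplies the $\MLJL_\CS$-countermodel.

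The clause-by-clause transfer to \N~repeats the same pattern, with two main points of attention. First, the statement deliberately drops clauses 2 and 3 for \N~because the rule $(SE)$ (as already discussed around Examples \ref{ex:counterexample subformula G3J5}--\ref{ex:counterexample CS-subformula G3J5}) destroys the labeled-subformula and subterm properties. Second, the hardest part of the argument is the countermodel construction in clause 9 for \N: I need to verify that the generated evidence function $\E_\mathcal{A}$ on the saturated branch satisfies both $(\E 8)$ and $(\E 9)$, so that the argument of Lemma \ref{lem:Inductively generated evidence function j5 is strong} (applied in the present anti-monotone, non-Euclidean setting that is appropriate for \N) gives the required strong evidence property; this is where the interaction of the new rules $(SE)$, $(Anti\mbox{-}Mon)$ with the modal rules $(L\b)$, $(R\b)$ must be checked carefully, and is the single place in the proof where I do not expect a purely mechanical extension of an earlier argument.
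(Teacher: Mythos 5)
Your proposal is correct and follows essentially the same route as the paper: each clause is obtained by rerunning the corresponding argument from Sections 5--9 with $(L\b)$ and $(R\b)$ treated in parallel with $(L:)$ and $(R:)$, the connection axiom derived explicitly for clause 8, and the reduction tree extended with stages for the modal (and, for \N, the $(Anti\mbox{-}Mon)$) rules in clause 9. If anything you are more explicit than the paper about the one genuinely delicate point, namely verifying the strong evidence and anti-monotonicity conditions for the generated evidence function in the \N-countermodel, which the paper's proof leaves implicit under ``similar to the proof of Theorem \ref{thm:reduction tree}''.
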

\begin{proof}
The proofs are similar to the respective proofs in Sections 5-9, and the proofs for modal logics in \cite{Negri2005}. In fact, the cases for rules $(L\b)$ and $(R\b)$ are similar to rules $(L:)$ and $(R:)$, respectively.
\begin{enumerate}
\item Obvious.
\item Obvious.
\item We first need to show  that every sequent derivable in ${\sf G3MLJL}^-_\CS$ has a derivation with the subterm and sublabel property. The proof of the subterm  and sublabel property is similar to that given for Propositions  \ref{prop:sublabel property}, \ref{prop:subterm property}.
\item  The proof is similar to that of Proposition \ref{prop:inversion lemma}. We only check the invertibility of rule $(R:)$ where in the induction step $w E(t,A),\g\R\d,w\Vvdash t:A$ is the
conclusion of the rule $(R\b)$:
\[\di{\f{\uu R\uu',\ww E(t,A),\g\R\d',\uu'\Vvdash A,\ww\Vvdash t:A}{\ww E(t,A),\g\R\d',\uu\Vvdash \b A,\ww\Vvdash t:A}\,(R\b)}\]
where the eigenlabel $\uu'$ is not in the conclusion. By the
induction hypothesis we obtain a derivation of height $n-1$ of
\[\ww R\vv,\uu R\uu',\ww E(t,A),\g\R\d',\uu'\Vvdash A,\vv\Vvdash A,\]
 for any fresh
label $\vv$. Then by applying the rule $(R\b)$ we obtain a
derivation of height $n$ of
 \[\ww R\vv,\ww E(t,A),\g\R\d,\uu\Vvdash \b A,\vv\Vvdash A,\]
 as desire. The proof of invertibility of rule $(R\b)$ is similar to that for $(R:)$.
 \item The proof is by induction on the height of the derivation of $\ww R\ww,\ww R\ww,\g\R\d$ in $\MLJL_\CS$. In the induction step, $\ww R\ww$ may be principal in the rules $(Eucl)$, $(Eucl_*)$, $(L\b)$. The proof for cases  $(Eucl)$ and $(Eucl_*)$ is similar to the case of rule $(Trans)$, and for the case $(L\b)$ is similar to the case of rule $(L:)$ in the proof of Lemma \ref{lemma: admissibility of Trans*}. The case where $\ww R\ww$ is principal in $(Anti$-$Mon)$ in ${\sf G3\N}_\CS$ is similar to the case of rule $(Mon)$ in the proof of Lemma \ref{lemma: admissibility of Trans*}.
 \item The proofs are similar to the respective proofs in Sections 5-9, and the proofs for modal logics in \cite{Negri2005}.
 \item Similar to the proof of Theorem \ref{Soundness labeled systems}. The validity-preserving of  rules $(L\b)$ and $(R\b)$ have already been shown in Theorem 5.3 in \cite{Negri2009}, and that of rules $(Eucl)$, $(Eucl_*)$ (and $(Anti$-$Mon)$ in ${\sf G3\N}_\CS$) are obvious.
 \item Similar to the proof of Corollary \ref{cor:equivalence JL and G3JL}. As an example, we prove the connection principle:
\begin{prooftree}
\def\extraVskip{3pt}
\AXC{${\mathcal D}$}\noLine
 \UIC{$\vv\Vvdash A,\ww R\vv,\ww\Vvdash t:A\fCenter \vv\Vvdash A$}
 \RightLabel{$(L:)$}
  \UIC{$\ww R\vv,\ww\Vvdash t:A\R \vv\Vvdash A$}
  \RightLabel{$(R\b)$}
  \UIC{$\ww\Vvdash t:A\R \ww\Vvdash \b A$}
  \RightLabel{$(R\r)$}
  \UIC{$\R \ww\Vvdash t:A\r \b A$}
\end{prooftree}
where the eigenlabel $\vv$ is different from $\ww$ and ${\mathcal D}$
is the derivation of the topmost sequent by clause 1.
 \item Similar to the proof of Theorem \ref{thm:reduction tree}. Additional stages for rules $(L\b)$, $(R\b)$, $(Eucl)$, $(Eucl_*)$ (and $(Anti$-$Mon)$ for ${\sf G3\N}_\CS$) should be added to the construction of the reduction tree.\qe
\end{enumerate}
\end{proof}
We close this section by showing the termination of proof search for ${\sf G3KJ}_\CS$, ${\sf G3TJT}_\CS$, ${\sf G3\4}_\CS$, for finite $\CS$.
%Subformulas of $\b A$ are  $\bA$ and all the subformulas of $A$.
For any given sequent, let $n(\b)$ and $p(\b)$ be the number of occurrences of
$\b$ in the negative and positive part of the
sequent.

Termination of proof search for ${\sf G3KJ}_\CS$ and ${\sf G3TJT}_\CS$ follows from Theorems \ref{thm:termination proof search G3J}, \ref{thm:termination proof search G3JT}
\begin{theorem}
Given any finite constant specification $\CS$, and any sequent
$\g\R\d$ in the language of {\sf G3KJ}, it is decidable whether the sequent is derivable in ${\sf G3KJ}_\CS$.
\end{theorem}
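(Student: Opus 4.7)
The plan is to follow the strategy of Theorem \ref{thm:termination proof search G3J}, extending the reduction tree construction of Theorem \ref{thm:reduction tree} to the modal rules $(L\b)$ and $(R\b)$, and then to bound the number of applications of every rule in any branch of the reduction tree. Since ${\sf G3KJ}_\CS$ contains none of the rules for accessibility relations (no $(Ref)$, $(Ser)$, $(Trans)$, $(Eucl)$, $(Mon)$) and none of the problematic rules $(SE)$, $(E\bar{?})$, the analyticity of Corollary \ref{cor:analyticity} (which was established for $\J^-_\CS$) transfers to ${\sf G3KJ}_\CS$ by essentially the same proof, once the modal rules are treated analogously to $(L:)$ and $(R:)$.

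First I would add two further stages to the reduction tree of Theorem \ref{thm:reduction tree}: one stage for $(L\b)$, in which for every pair $\ww\Vvdash\b A$, $\ww R\vv$ in the antecedent of the top-sequent we add $\vv\Vvdash A$ (respecting $(\dagger)$); and one stage for $(R\b)$, in which for every $\ww\Vvdash\b A$ in the succedent we introduce a fresh label $\vv$ together with $\ww R\vv$ in the antecedent and $\vv\Vvdash A$ in the succedent. Then I would verify, by the same permutation argument as in Lemma \ref{lem:permut down L:}, that $(L\b)$ permutes down over all other rules (except $(R\b)$ when its principal formulas are active), so that it suffices to apply $(L\b)$ at most once on each pair of principal formulas, exactly as for $(L:)$.

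Next I would bound the applications of each rule in a branch. Propositional rules are bounded by the number of occurrences of the respective connectives in the endsequent. Rule $(R\b)$ decreases the complexity of its principal labeled formula and is applied at most $p(\b)$ times; similarly $(R:)$ is applied at most $p(:)$ times. Hence the total number of labels in a branch is bounded by $l+p(:)+p(\b)$, and the total number of relational atoms is bounded by $r+p(:)+p(\b)$ (no other rule introduces relational atoms). Therefore, under condition $(\dagger)$, $(L\b)$ is applied at most $n(\b)(r+p(:)+p(\b))$ times and $(L:)$ at most $n(:)(r+p(:)+p(\b))$ times. Rule $(E)$ is bounded by $n(:)$ and $(IAN)$ by $|\CS|(l+p(:)+p(\b))$; finally, by the subterm property (Proposition \ref{prop:subterm property}), which extends verbatim to ${\sf G3KJ}_\CS$, the rules $(El+)$, $(Er+)$, $(E\cdot)$ are bounded by $n(+)$, $n(+)$, $n(\cdot)$ respectively times the total number of evidence atoms, which is in turn bounded by $e+n(:)+|\CS|(l+p(:)+p(\b))$.

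Consequently every branch of the reduction tree of $\g\R\d$ is finite, and the reduction tree itself is finite (as each stage is finitely branching). Thus either every branch ends in an initial sequent, giving a derivation, or some branch is saturated, and by the adaptation of Theorem \ref{thm:reduction tree} to ${\sf G3KJ}_\CS$ it yields a countermodel. In either case the procedure terminates and decidability follows. The main obstacle is making sure that the subterm property and the analyticity arguments of Section \ref{sec:Analyticity} carry over when the modal rules are present; but since neither $(L\b)$ nor $(R\b)$ produces new justification terms or new evidence atoms, and since the family/superfluous application analysis for E-rules in Lemma \ref{lem:subterm property E-rules} depends only on labeled-subformulas of forcing formulas of the shape $\ww\Vvdash t:A$ (which are unaffected by $(L\b)$, $(R\b)$), this transfer is straightforward.
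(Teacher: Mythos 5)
Your proposal is correct and follows essentially the same route as the paper: the paper simply states that termination for ${\sf G3KJ}_\CS$ follows from the bounds already established for ${\sf G3J}_\CS$ in Theorem \ref{thm:termination proof search G3J}, since the only additions are $(L\b)$ and $(R\b)$, which behave exactly like $(L:)$ and $(R:)$ and introduce no new terms or evidence atoms. Your write-up just makes explicit the adjusted bounds (adding $p(\b)$ to the label and relational-atom counts and $n(\b)$ for $(L\b)$) that the paper leaves implicit.
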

\begin{theorem}
Given any finite constant specification $\CS$, and any sequent
$\g\R\d$ in the language of {\sf G3TJT}, it is decidable whether the sequent is derivable in ${\sf G3TJT}_\CS$.
\end{theorem}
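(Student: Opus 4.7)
The plan is to mimic the proof of Theorem \ref{thm:termination proof search G3JT} for the justification fragment, and additionally bound the contributions of the modal rules $(L\b)$ and $(R\b)$, which behave in parallel to $(L:)$ and $(R:)$. Concretely, I would construct the reduction tree of $\g\R\d$ as in the proof of Theorem \ref{thm:reduction tree}, augmented with stages for $(L\b)$ and $(R\b)$ analogous to those for $(L:)$ and $(R:)$ (cf.\ the sketch of Theorem \ref{thm:properties of G3MLJL}(9)), and with the stage for $(Ref)$ from Theorem \ref{thm:termination proof search G3JT}. All stages obey condition $(\dagger)$. The goal is to exhibit, for every rule of ${\sf G3TJT}_\CS$, an explicit bound depending only on the data of the endsequent ($l$, $r$, $e$, $n(:)$, $p(:)$, $n(\b)$, $p(\b)$, $n(+)$, $n(\cdot)$, $|\CS|$), so that the reduction tree is finite; since it is finitely branching, by K\"onig's lemma the tree is bounded, and by Corollary \ref{cor:saturated branch} (extended to ${\sf G3TJT}_\CS$ via Theorem \ref{thm:properties of G3MLJL}(9)) derivability is decided by inspection of its leaves.

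First, propositional rules reduce the complexity of labeled formulas, so each branch contains only finitely many propositional stages, bounded by the total number of propositional connectives appearing in the endsequent. Next, $(R:)$ and $(R\b)$ are bounded by $p(:)$ and $p(\b)$ respectively, each application consuming one positive occurrence of $:$ or $\b$ and introducing a fresh eigenlabel together with one new relational atom; hence the total number of labels occurring in the reduction tree is at most $l+p(:)+p(\b)$. Since $(Ref)$ is subject to $(\dagger)$ and only introduces atoms $\ww R\ww$ for labels $\ww$ in the tree, it contributes at most $l+p(:)+p(\b)$ relational atoms. Adding the $r$ atoms in the root and the $p(:)+p(\b)$ atoms produced by $(R:)$ and $(R\b)$, the total number of relational atoms available in any antecedent is bounded by $r+2p(:)+2p(\b)+l$.

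With these bounds in hand, rules $(L:)$ and $(L\b)$, which by $(\dagger)$ apply at most once per pair of principal formulas, are bounded respectively by $n(:)(r+2p(:)+2p(\b)+l)$ and $n(\b)(r+2p(:)+2p(\b)+l)$. Rule $(E)$ applies at most once per labeled formula $\vv\Vvdash t{:}A$ occurring in some antecedent; by the labeled-subformula property (Proposition \ref{prop:subformula property}) these formulas are labeled-subformulas of the endsequent, so $(E)$ is bounded by $n(:)\cdot(l+p(:)+p(\b))$ (counting labels times the number of $t{:}A$-subformulas, crudely majorised by $n(:)$). Rule $(IAN)$, since $\CS$ is finite, is bounded by $|\CS|(l+p(:)+p(\b))$. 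Finally, the stages for $(El+)$, $(Er+)$, $(E\cdot)$ fire only when their newly produced evidence atom carries a term in $Sub_{Tm}(\g\R\d)$ (Proposition \ref{prop:subterm property}); the number of available evidence atoms in antecedents is bounded by the sum of $e$, the output of $(E)$, and the output of $(IAN)$, so these rules are bounded respectively by $n(+)$ and $n(\cdot)$ multiplied by this finite quantity. Summing all bounds, every branch of the reduction tree has finite length; this gives the effective decision procedure: derive if every leaf is an initial sequent, else extract a ${\sf TJT}_\CS$-countermodel from a saturated branch.

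The main obstacle is the circular interaction between the rules that introduce new relational atoms ($(R:)$, $(R\b)$, $(Ref)$), the rules that are enabled by such atoms ($(L:)$, $(L\b)$), and the rules that generate fresh evidence atoms via $(E)$ and $(IAN)$, which in turn can feed $(El+)$, $(Er+)$, $(E\cdot)$. The delicate point is to order the bounds so that each later estimate uses only already-bounded quantities; this is possible precisely because ${\sf G3TJT}$ lacks $(Trans)$, $(Mon)$, $(E!)$, $(Sym)$, $(Eucl)$, $(SE)$, $(E\bar{?})$, so no rule reintroduces more labels or more general terms than those already counted (as guaranteed by the analyticity of ${\sf G3TJT}^-$ established in Theorem \ref{thm:properties of G3MLJL}(3)).
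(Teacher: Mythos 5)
Your proposal is correct and follows essentially the same route as the paper, which simply asserts that termination for ${\sf G3TJT}_\CS$ follows from the bounds already established for ${\sf G3J}_\CS$ and ${\sf G3JT}_\CS$, with the modal rules $(L\b)$ and $(R\b)$ treated exactly like $(L:)$ and $(R:)$. Your explicit accounting of labels, relational atoms, and evidence atoms, and your observation that the absence of $(Trans)$, $(Mon)$, and $(E!)$ prevents the circular regeneration that forces the extra propositions in the ${\sf G3S4LP}$ case, supplies precisely the detail the paper leaves implicit.
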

For {\sf G3\4}, we need bounds on the number of applications of $(R:)$ and $(R\b)$, similar to that given for $(R:)$ in {\sf G3LP} in Proposition \ref{bound R: in LP}. For {\sf G3\4}, since rule $(L\b)$, as well as rule $(L:)$, can be used in the argument given in the proof of Proposition \ref{bound R: in LP}, we have:
\begin{proposition}\label{bound R: in S4LP}
In a derivation of a sequent in {\sf G3\4} for each
formula of the form $t:A$ in its positive part, it is enough to have at most
$n(:)+n(\b)$ applications of $(R:)$ iterated on a chain of
accessible worlds $\ww R\ww_1 ,\ww_1 R\ww_2,\ldots$, with principal formula
$\ww_i\Vvdash t:A$.
\end{proposition}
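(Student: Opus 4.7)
The plan is to mimic the argument of Proposition \ref{bound R: in LP} for {\sf G3LP}, but now tracking both kinds of left modal rules, $(L:)$ and $(L\b)$, since in {\sf G3S4LP} a negative occurrence of either $:$ or $\b$ can give rise to a formula that, when decomposed at a new world introduced by an application of $(R:)$, adds fresh labeled formulas to the antecedent. Set $m = n(:) + n(\b)$, fix a positive occurrence $t:A$ in the endsequent, and consider a chain of applications of $(R:)$ with principal formula $\ww_i \Vvdash t:A$, yielding accessibility atoms $\ww R \ww_1, \ww_1 R \ww_2, \ldots$ that, by $(Trans)$, accumulate into $\ww R \ww_j$ for every $j$.

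To obtain the worst case I would assume all $m$ negative occurrences of $:$ and $\b$ are stacked in a single antecedent formula of the form $\ww \Vvdash M_1 M_2 \cdots M_m B$, where each $M_i$ is either an explicit modality $t_i{:}$ or the modality $\b$, and $B$ contains no further negative occurrence of $:$ or $\b$. After the first $(R:)$ step, applying the appropriate rule $(L:)$ or $(L\b)$ to $M_1$ at world $\ww_1$ adds $\ww_1 \Vvdash M_2 \cdots M_m B$ to the antecedent; after the second step, the relational atoms $\ww R \ww_2$ and $\ww_1 R \ww_2$ plus $(L:)$/$(L\b)$ add $\ww_2 \Vvdash M_2 \cdots M_m B$ and $\ww_2 \Vvdash M_3 \cdots M_m B$; and in general after $k \leq m$ applications of $(R:)$ the antecedent contains all $\ww_k \Vvdash M_i \cdots M_m B$ for $i = 2, \ldots, k+1$ (with the convention that the case $i = m+1$ is simply $\ww_m \Vvdash B$).

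After $m$ applications of $(R:)$ no new information can be obtained: an $(m{+}1)$-th application would produce a fresh label $\ww_{m+1}$ together with exactly the same set of labeled formulas $\ww_{m+1} \Vvdash M_i \cdots M_m B$ that already sits at $\ww_m$. Then, exactly as in the proof of Proposition \ref{bound R: in LP}, one applies the height-preserving substitution lemma (Lemma \ref{lemma: substitution lemma}) with $(\ww_m/\ww_{m+1})$, followed by height-preserving contraction (Theorem \ref{thm:admiss Contr.}) and $(Ref)$, which is available because {\sf S4} contains axiom {\bf T}; this shortens the derivation and shows the $(m{+}1)$-th iteration of $(R:)$ to be superfluous.

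The only genuinely new point compared with Proposition \ref{bound R: in LP} is the uniform treatment of $(L:)$ and $(L\b)$ in the peeling step, and this will be the main thing to check carefully: both rules consume an accessibility atom of the same shape $\ww_i R \ww_{i+1}$ and both produce a labeled formula at the new world with one fewer outer modality, so the counting works whether $M_i$ is $t_i{:}$ or $\b$. The admissibility of substitution, contraction and $(Trans_*)$ needed to justify the collapsing step are already available for {\sf G3S4LP} by Theorem \ref{thm:properties of G3MLJL}, so no new structural lemmas are required.
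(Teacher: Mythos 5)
Your proposal is correct and matches the paper's own treatment: the paper proves this proposition simply by noting that the argument of Proposition \ref{bound R: in LP} goes through once $(L\b)$ is allowed to play the same role as $(L:)$ in peeling off negative modalities, which is exactly the uniform $M_1\cdots M_m B$ worst case you describe, followed by the same collapse via substitution, contraction, and $(Ref)$. Your write-up just makes explicit the details the paper leaves implicit; no gap.
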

Since the proof of the following proposition repeats the proof of
Proposition 6.9 in \cite{Negri2005}, we omit it here.

\begin{proposition}\label{bound Rbox in S4LP}
In a derivation of a sequent in ${\sf G3S4LP}$, for each
formula of the form $\b A$ in its positive part, it is enough to have at most
$n(:)+n(\b)$ applications of $(R\b)$ iterated on a chain of
accessible worlds $\ww R\ww_1 ,\ww_1 R\ww_2,\ldots$, with principal formula
$\ww_i\Vvdash\b A$.
\end{proposition}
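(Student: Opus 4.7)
The plan is to adapt the argument of Proposition \ref{bound R: in LP} (and its counterpart Proposition \ref{bound R: in S4LP}) to the right-rule $(R\b)$, exploiting the fact that in ${\sf G3S4LP}$ the rules $(L\b)$ and $(L:)$ behave symmetrically with respect to an accessibility atom: each consumes one outer layer of an alternating stack of $\b$'s and $t{:}$'s in the antecedent, producing its immediate subformula at the accessible world. Thus every application of $(R\b)$ along a chain can be ``matched'' at most once by peeling off the next outermost layer of any such stack in the antecedent, regardless of whether that layer is $\b$ or $t{:}$. This is why the bound is $n(:)+n(\b)$ rather than $n(\b)$ alone.

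First, I would consider the worst case, where all negative occurrences of $:$ and $\b$ appear concentrated in a single block: a formula of the form $\ww\Vvdash \mathsf{M}_1\mathsf{M}_2\cdots\mathsf{M}_{m+n}B$ in the antecedent of the endsequent, where each $\mathsf{M}_i$ is either a $\b$ or a $t_i{:}$ for some term $t_i$, with $m=n(:)$, $n=n(\b)$, and $B$ contains no negative occurrence of $:$ or $\b$. Starting from a positive occurrence $\ww\Vvdash\b A$, I would trace an iteration of $(R\b)$ along a chain $\ww R\ww_1,\ww_1R\ww_2,\ldots$; after the $k$-th application, the antecedent contains $\ww R\ww_k$ (obtained via $(Trans)$ from the intermediate atoms), and alternating uses of $(L\b)$ and $(L:)$ strip $k$ outermost layers from $\mathsf{M}_1\mathsf{M}_2\cdots\mathsf{M}_{m+n}B$, placing the remaining tails as labeled formulas at $\ww_k$.

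Second, after $m+n$ iterations of $(R\b)$ the tail has been exhausted: at $\ww_{m+n}$ every subformula that could appear via $(L\b)$ or $(L:)$ is already present. One more application of $(R\b)$ would introduce a world $\ww_{m+n+1}$ carrying exactly the same labeled formulas (modulo renaming) as $\ww_{m+n}$. By the height-preserving substitution lemma (Lemma \ref{lemma: substitution lemma}) applied with $(\ww_{m+n}/\ww_{m+n+1})$, then by $(Ref)$ giving $\ww_{m+n}R\ww_{m+n}$, and finally by height-preserving contraction (Theorem \ref{thm:admiss Contr.}) on the duplicated relational, evidence, and labeled atoms, the subderivation originally ending at $\ww_{m+n+1}$ collapses into the one ending at $\ww_{m+n}$. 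Hence any $(m+n+1)$-th iteration of $(R\b)$ with principal formula $\ww_i\Vvdash\b A$ along this chain is superfluous.

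The main obstacle will be the bookkeeping showing that ``all relevant subformulas are already present at $\ww_{m+n}$''. The delicate part is that the stack need not be uniformly $\b$ or uniformly $t{:}$: each step of the chain may call on either $(L\b)$ or $(L:)$ (plus, in the latter case, the accompanying evidence atom produced earlier by $(E)$ and propagated via $(Mon)$ and $(Trans)$). I would formalize this by an induction on the chain length that maintains the invariant: at world $\ww_k$, every tail $\mathsf{M}_{k+1}\cdots\mathsf{M}_{m+n}B$ of the block — together with all side formulas originally accessible by $(L\b)$ or $(L:)$ at any earlier $\ww_j$ — is present, so that an extra $(R\b)$ step yields only duplicates and is collapsible by substitution, $(Ref)$, and contraction exactly as in the proof of Proposition \ref{bound R: in LP}.
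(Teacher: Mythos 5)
Your proposal is correct and follows essentially the same route the paper intends: the paper omits the proof, noting only that it repeats Negri's Proposition 6.9, and your argument is exactly the adaptation of the paper's own proof of Proposition \ref{bound R: in LP} (worst case of an alternating $\b$/$t{:}$ block of length $n(:)+n(\b)$ in the antecedent, peeled one layer per $(R\b)$ step by $(L\b)$ or $(L:)$, with the $(m{+}n{+}1)$-th application collapsed via substitution, $(Ref)$, and height-preserving contraction).
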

The system {\sf G3S4LP} combines {\sf G3LP} and {\sf G3S4}. As
you can see from the proof of connection principle in {\sf G3S4LP} (see the proof of Theorem \ref{thm:properties of G3MLJL}(8)), in the backward
proof search applications of $(R\b)$ introduce relational atoms,
and therefore can increase the number of applications of $(L:)$.
Similarly, applications of $(R:)$ can produce new applications of
$(L\b)$. In spite of this fact, by Propositions \ref{bound R: in
S4LP} and \ref{bound Rbox in S4LP} the number of applications of
$(R:)$ and $(R\b)$, and consequently of $(L:)$, $(L\b)$, and
$(AN)$ are bounded. Thus, by Theorem \ref{terminating proof search
LP} and termination of proof search of {\sf G3S4} (Corollary 6.10
in \cite{Negri2005}), we have
\begin{theorem}
Given any finite constant specification $\CS$, and any sequent
$\g\R\d$ in the language of {\sf G3S4LP}, it is decidable whether the sequent is derivable in ${\sf G3S4LP}_\CS$.
\end{theorem}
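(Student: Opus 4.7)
The plan is to adapt the termination argument for {\sf G3LP} (Theorem \ref{terminating proof search LP}) by combining it with the termination argument for {\sf G3S4} from Negri's work, using the bounds established in Propositions \ref{bound R: in S4LP} and \ref{bound Rbox in S4LP} to control the interaction between the modal and justification fragments. For an arbitrary sequent $\g\R\d$, construct its reduction tree as in Theorem \ref{thm:reduction tree}, augmented with stages for the rules $(L\b)$ and $(R\b)$ analogous to those for $(L:)$ and $(R:)$ and governed by the same condition $(\dagger)$. It then suffices to show that every branch of this reduction tree is finite: if all branches close at initial sequents, we obtain a derivation; otherwise, some branch is saturated and, by Theorem \ref{thm:properties of G3MLJL}, yields a countermodel, and in either case derivability is decided.

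First, I would bound the applications of the rules introducing eigenlabels. By Proposition \ref{bound R: in S4LP}, for each positive occurrence $t:A$ it is enough to apply $(R:)$ at most $n(:)+n(\b)$ times along a chain of accessible labels, so the total number of $(R:)$-applications in the reduction tree is bounded by $p(:)(n(:)+n(\b)+1)$; symmetrically, by Proposition \ref{bound Rbox in S4LP}, the number of $(R\b)$-applications is bounded by $p(\b)(n(:)+n(\b)+1)$. Consequently the number of eigenlabels, and hence the total number of labels $l^*$ occurring in the reduction tree, is bounded by $l+p(:)(n(:)+n(\b)+1)+p(\b)(n(:)+n(\b)+1)$. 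The number of relational atoms produced in antecedents by $(R:)$, $(R\b)$, $(Ref)$, and $(Trans)$ is then bounded by a polynomial in $l^*$, since $(Ref)$ contributes at most one atom per label and $(Trans)$ closes the existing relation (applications are controlled by condition $(\dagger)$ and by admissibility of $(Trans_*)$ from Lemma \ref{lemma: admissibility of Trans*}).

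Using these bounds, the left rules are bounded as follows. By condition $(\dagger)$, rule $(L:)$ can fire at most once on each pair $(\ww\Vvdash t:A,\ww R\vv)$, so the total number of $(L:)$-applications is bounded by $n(:)\cdot(r+\#\text{relational atoms produced})$, which is polynomial in the data of $\g\R\d$. The same bound governs $(L\b)$. For $(AN)$, by the sublabel property (Proposition \ref{prop:sublabel property}) relativized to this system, the rule only produces atoms $\ww E(c,F)$ with $\ww$ among the labels of the reduction tree and $c:F\in\CS$, and finiteness of $\CS$ together with the bound on $l^*$ yields a finite bound. Finally, the evidence rules $(E)$, $(El+)$, $(Er+)$, $(E\cdot)$, $(E!)$, and $(Mon)$ are each bounded using the subterm property (Proposition \ref{prop:subterm property}): they only introduce evidence atoms $\vv E(r,F)$ with $r\in Sub_{Tm}(\g\R\d)$ and $\vv$ among the bounded collection of labels, so there are only finitely many potential such atoms and, by $(\dagger)$, each rule application introduces a fresh one.

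The main obstacle will be the mutual amplification between the modal and justification halves: applications of $(R\b)$ introduce fresh accessibility atoms $\ww R\vv'$ which may enable new applications of $(L:)$ with principal formula $\ww\Vvdash t:A$, and dually $(R:)$-applications may trigger new $(L\b)$-applications. Without Propositions \ref{bound R: in S4LP} and \ref{bound Rbox in S4LP} this feedback could in principle spiral indefinitely, as the motivating example before Proposition \ref{bound R: in LP} illustrates for $(R:)$ alone. The crucial point is that those propositions give \emph{uniform} bounds phrased jointly in $n(:)+n(\b)$, which already anticipate contributions from both the modal and explicit fragments along a single accessibility chain; once these bounds are in place, the chain of dependencies terminates at the level of labels, which in turn caps the relational atoms, the applications of $(L:)$, $(L\b)$, $(AN)$, and finally the evidence rules. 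With all these counts finite, the reduction tree is finitely branching (each rule has finitely many premises) and of bounded depth along every branch, hence finite, so decidability follows at once.
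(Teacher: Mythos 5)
Your proposal is correct and follows essentially the same route as the paper: both arguments hinge on Propositions \ref{bound R: in S4LP} and \ref{bound Rbox in S4LP} to cap the applications of $(R:)$ and $(R\b)$ jointly via $n(:)+n(\b)$, thereby breaking the mutual amplification between the modal and justification fragments, after which the counts for $(L:)$, $(L\b)$, $(AN)$, and the evidence rules are bounded exactly as in the {\sf G3LP} and {\sf G3S4} termination arguments. You simply spell out the cascade of bounds (labels, relational atoms, left rules, evidence rules) that the paper leaves implicit by citing Theorem \ref{terminating proof search LP} and Negri's result for {\sf G3S4}.
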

 %%%%%%%%%%%%%%%%%%%%%%%%%%%%%%%%%%%%%%%%%%%%%%%%%%%%%%%%%%%%%%%%%%%%%%%%%%%%%%%%%%%%%%%
\subsection{Labeled sequent calculus based on AF-models}
In this section we recall Artemov-Fitting models (or AF-models)  for \4~and \N, which were first introduced in
\cite{AN2005b}, and then we introduce labeled sequent calculus based on AF-models for \4~and \N.
\begin{definition}\label{Def AF-models}
A structure $\M=(\W, \RR, \RR^e, \E, \V)$ is an $\4_\CS$-AF-model
if $(\W, \RR,  \E, \V)$ is an $\4_\CS$-model and $\RR^e$ is a
reflexive and transitive evidence accessibility relation, such
that $\RR\subseteq \RR^e$. Here, the Monotonicity property should
be read as follows:
\begin{itemize}
 \item Monotonicity: If $w\in\E(t,A)$ and $w\RR^e v$, then $v\in\E(t,A)$.
\end{itemize}
Moreover, the forcing relation on formulas of the form $\b A$ and
$t:A$ are defined as follows:
\begin{itemize}
 \item $(\M,w)\Vdash \b A$ iff for every $v\in \W$ with $w \RR v$, $(\M, v)\Vdash A$,
 \item $(\M, w)\Vdash t:A$ iff $w\in\E(t,A)$ and for every $v\in \W$ with $w \RR^e v$, $(\M, v)\Vdash A$.
\end{itemize}
\end{definition}
$\N_\CS$-AF-models are \4-AF-models where $\E$ is a possible evidence function on $\W$ for $\N_\CS$ and $\RR^e$ is also symmetric.
\begin{theorem}\label{compl S4LP-AF-model}(\cite{AN2005b})
$\4_\CS$ and $\N_\CS$ are
sound and complete with respect to their AF-models.
 \end{theorem}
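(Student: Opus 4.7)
The plan is standard: establish soundness by structural induction on derivations, and completeness via a canonical AF-model construction. For $\4_\CS$, soundness of the modal and justification axioms is inherited from their separate verifications in the canonical-F-model completeness proofs; the only genuinely new ingredient is the connection axiom $t:A \to \b A$, whose validity is immediate from $\RR \subseteq \RR^e$, since $(\M,w)\Vdash t:A$ forces $(\M,v)\Vdash A$ for every $v$ with $w\RR^e v$, and hence for every $v$ with $w\RR v$. For $\N_\CS$, one additionally checks the implicit-explicit negative introspection axiom $\neg t:A \to \b\neg t:A$: from $(\M,w)\Vdash \neg t:A$ and $w\RR u$, one obtains $w\RR^e u$ and, by symmetry of $\RR^e$, also $u\RR^e w$; monotonicity of $\E$ along $\RR^e$ combined with symmetry makes $\E(t,A)$ stable under $\RR^e$, so $w\notin \E(t,A)$ forces $u\notin \E(t,A)$, while any witness $v$ with $w\RR^e v$ and $(\M,v)\not\Vdash A$ satisfies $u\RR^e v$ by symmetry and transitivity, yielding $(\M,u)\Vdash \neg t:A$.

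For completeness I would build canonical AF-models $\M=(\W,\RR,\RR^e,\E,\V)$ whose worlds are $\4_\CS$-maximal (resp. $\N_\CS$-maximal) consistent sets. Put $\g\RR\d$ iff $\g^\sharp\subseteq \d$, where $\g^\sharp=\{A : \b A\in\g\}$, and $\g\RR^e\d$ iff $\g^\flat\subseteq \d$, where $\g^\flat=\{A : t:A\in\g \text{ for some } t\}$; set $\E(t,A)=\{\g\in\W : t:A\in\g\}$ and let $\V$ be the natural valuation on propositional variables. The connection axiom ensures $\RR\subseteq \RR^e$ (if $t:A\in\g$ then $\b A\in\g$, hence $A\in\g^\sharp$); reflexivity and transitivity of $\RR$ and $\RR^e$ follow from axioms {\bf T}, {\bf 4} and {\bf jT}, {\bf j4}, respectively; monotonicity and positive introspection of $\E$ are direct consequences of {\bf j4} and its internalization via {\bf AN}. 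A truth lemma is then proved by induction on formula complexity, with the $\b A$ case using the standard Lindenbaum argument (extending $\g^\sharp\cup\{\neg A\}$ to a maximal consistent set) and the $t:A$ case using the definition of $\E$ together with the inclusion $A\in\g^\flat$ whenever $t:A\in\g$.

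The main obstacle is establishing symmetry of $\RR^e$ in the canonical model for $\N_\CS$, since the plain $t:$-projection is not automatically symmetric. I would exploit the implicit-explicit negative introspection axiom to argue that if $\g\RR^e\d$ and $t:B\in\d$ but $B\notin\g$, then $\neg B\in\g$ and hence $\neg t:B\in\g$ would (via $\neg t:B\to\b\neg t:B$, a derivation in $\N$, and the $\RR$-step implicit in the $\RR^e$-step) contradict $t:B\in\d$; alternatively, one redefines $\RR^e$ as the symmetric-transitive closure of the $t:$-projection and verifies that both directions of $\g\RR^e\d$ still transfer $\g^\flat$ into $\d$ and vice versa, so that the truth lemma for $t:A$ goes through unchanged. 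Once this is settled, the standard completeness argument applies: if $\4_\CS\not\vdash F$ (resp. $\N_\CS\not\vdash F$), extend $\{\neg F\}$ to a maximal consistent $\g$, and the truth lemma yields $(\M,\g)\not\Vdash F$.
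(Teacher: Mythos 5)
The paper does not actually prove this theorem---it is imported from \cite{AN2005b}---so your argument has to stand on its own. Your soundness half does: the verification of the connection axiom from $\RR\subseteq\RR^e$ and the two-case verification of $\neg t:A\r\b\neg t:A$ (using symmetry, transitivity, and monotonicity of $\E$ along $\RR^e$) are both correct. Your canonical-model argument for $\4_\CS$ is also correct: with $\g\RR^e\d$ iff $\g^\flat\subseteq\d$, reflexivity, transitivity, monotonicity, the inclusion $\RR\subseteq\RR^e$, and the truth lemma all go through as you describe.

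The gap is exactly where you place it---symmetry of the canonical $\RR^e$ for $\N_\CS$---and neither of your two patches closes it. Your first argument has to move $\b\neg t:B$ from $\g$ into $\d$ along an $\RR^e$-step, i.e.\ it tacitly assumes $\RR^e\subseteq\RR$ (``the $\RR$-step implicit in the $\RR^e$-step''); but the connection axiom gives only $\g^\flat\subseteq\g^\sharp$, hence $\RR\subseteq\RR^e$, and $\g^\flat\subseteq\d$ does not imply $\g^\sharp\subseteq\d$. The one-step relation is genuinely non-symmetric: with $\CS=\emptyset$, let $\g$ be the theory of a reflexive one-point AF-model with empty evidence function and $P$ true, and $\d$ the theory of a reflexive one-point AF-model in which $x$ is admissible evidence for $P$; then $\g^\flat=\emptyset\subseteq\d$, but $!x:x:P\in\d$ puts $x:P\in\d^\flat$ while $\neg x:P\in\g$. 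Your second patch, passing to the symmetric--transitive closure, only makes things worse: it adds pairs $(\g,\d)$ for which merely $\d^\flat\subseteq\g$ holds, and for such a pair the truth lemma fails in the needed direction, since $t:A\in\g$ no longer forces $A\in\d$. The correct repair shrinks the relation instead of enlarging it: set $\g\RR^e\d$ iff $\g$ and $\d$ contain exactly the same formulas of the form $t:A$ (equivalently, by {\bf j4} and {\bf jT}, iff $\g^\flat\subseteq\d$ \emph{and} $\d^\flat\subseteq\g$). This is an equivalence relation, so reflexive, symmetric and transitive; it still implies $\g^\flat\subseteq\d$, so the truth lemma and monotonicity are unaffected; and the inclusion $\RR\subseteq\RR^e$ is precisely where the axiom $\neg t:A\r\b\neg t:A$ (together with the derivable $t:A\r\b t:A$) is used: if $\g^\sharp\subseteq\d$ then $t:A\in\g$ yields $\b t:A\in\g$ and hence $t:A\in\d$, while $t:A\notin\g$ yields $\b\neg t:A\in\g$ and hence $t:A\notin\d$.
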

 By internalizing AF-models of \4~and
\N~into the syntax of labeled
systems, we will obtain a labeled system with the labeled-subformula
property for \N~(as Example \ref{ex:counterexample subformula G3J5} shows the labeled-subformula property does not hold for {\sf G3\N}). In the following, we will define labeled systems
${\sf G3\4}^e$ and ${\sf G3\N}^e$.

Let us first extend the extended labeled language of justification logics
by labeled formulas $\ww\Vvdash A$, in which $A$ is a formula in the
language of \N, and \textit{evidence relational atoms} $\ww R^e \vv$
(which presents the evidence relation $w\RR^e v$ in AF-models).
System ${\sf G3\N}^e$ is an extension of the labeled sequent calculus ${\sf G3\s}$  with initial sequents and rules from Table \ref{table: rules for systems
with AF-models}. Rule $(R^e)$ reflects the condition $R\subseteq R^e$
in AF-models. Again, the initial sequent $(AxR^e)$ is added
to conclude the properties of the evidence accessibility relation.
System ${\sf G3\4}^e$ is obtained from ${\sf G3\N}^e$ by removing the rule $(Sym^e)$. In other words,
\[ {\sf G3\N}^e = {\sf G3\4}^e + (Sym^e).\]
Labeled systems ${\sf G3\4}^e_\CS$ and ${\sf G3\N}^e_\CS$ are defined in the usual way.
\begin{table}[t]
\centering\renewcommand{\arraystretch}{2}
\begin{tabular}{|lc|}
\hline
~\noindent{\bf Initial sequent:}&\\
~$\ww R^e \vv,\g\R\d, \ww R^e \vv$~~~~~$(AxR^e)$ & $\ww E(t,A),\g\R\d, \ww E(t,A)$~~~~~$(AxE)$\\
~\noindent {\bf Rules:}&\\
~$\di{\f{\vv\Vvdash A,\ww\Vvdash t:A,\ww R^e \vv,\g\R\d}{\ww\Vvdash
t:A,\ww R^e \vv,\g\R\d}}(L:^e)$ & $\di{\f{\ww R^e \vv,\ww E(t,A),\g\R\d,\vv\Vvdash A}{\ww E(t,A),\g\R\d,\ww\Vvdash
t:A}}(R:^e)$\\
\multicolumn{2}{|c|}{$\di{\f{\ww E(t,A),\ww\Vvdash
t:A,\g\R\d}{\ww\Vvdash
t:A,\g\R\d}}(E)$}\\
\multicolumn{2}{|c|}{In $(R:^e)$ the eigenlabel $\vv$ must not occur in
the conclusion of rule.}\\
~\noindent {\bf Rules for evidence atoms:}&\\
\multicolumn{2}{|c|}{$\di{\f{\ww E(s\cdot t,B),\ww E(s,A\r B),\ww E(t,A),\g\R\d}{\ww E(s,A\r B),\ww  E(t,A),\g\R\d}}(E\cdot)$}\\
   ~ $\di{\f{\ww E(s+t,A),\ww E(t,A),\g\R\d}{\ww E(t,A),\g\R\d}}(El+)$ & $\di{\f{\ww E(t+s,A),\ww E(t,A),\g\R\d}{\ww E(t,A),\g\R\d}}(Er+)$\\
  ~  $\di{\f{\ww E(!t,t:A),\ww E(t,A),\g\R\d}{\ww   E(t,A),\g\R\d}}(E!)$ &
   $\di{\f{\vv E(t,A),\ww E(t,A),\ww R^e \vv,\g\R\d}{\ww E(t,A),\ww R^e \vv,\g\R\d}}(Mon^e)$~ \\
  ~  \noindent {\bf Axiom necessitation rule:}&\\
\multicolumn{2}{|c|}{$\di{\f{\ww E(c,A),\g\R\d}{\g\R\d}}(AN)$}\\
~\noindent {\bf Rules for evidence relational atoms:}&\\
~$\di{\f{\ww R^e \vv,\ww R\vv,\g\R\d}{\ww R\vv,\g\R\d}\,(R^e)}$ &
$\di{\f{\ww R^e \ww,\g\R\d}{\g\R\d}}(Ref^e)$\\
~ $\di{\f{\vv R^e \ww,\ww R^e \vv,\g\R\d}{\ww R^e \vv,\g\R\d}}(Sym^e)$ &$\di{\f{\ww R^e \uu,\ww R^e \vv,\vv R^e \uu,\g\R\d}{\ww R^e \vv,\vv R^e
\uu,\g\R\d}}(Trans^e)$\\
\hline
\end{tabular}\vspace{0.3cm}
\caption{Initial sequent and rules which should be added to {\sf G3S4} to obtain ${\sf G3S4LPN}^e$.}\label{table: rules for systems with AF-models}
\end{table}
All the results of Sections 5-9 can be extended to the systems ${\sf G3\4}^e_\CS$ and ${\sf G3\N}^e_\CS$. Note that Definition \ref{validity of formula and sequence} is extended to the labeled systems ${\sf G3\4}^e_\CS$ and ${\sf G3\N}^e_\CS$ as follows. For AF-model
$\M=(\W,\RR,\RR^e,\E,\V)$ and $\M$-interpretation $[\cdot]:L\r
\W$, we add the following clause to Definition \ref{validity of formula and sequence}:
 \begin{itemize}
 \item $[\cdot]$ validates the evidence relational atom $\ww R^e \vv$, provided that $[\ww]\RR^e [\vv]$.
 \end{itemize}
 Inductively generated evidence functions for systems $\4_\CS$ and $\N_\CS$ based on a possible evidence function (on Kripke frame $(\W,\RR,\RR^e)$), is defined similar to the one for \LP~with the difference that we replace clause (7) in Definition \ref{def:generated evidence function} by the following one:
\begin{description}
\item[$6.$] $\E_{i+1} (t,F)=\E_{i} (t,F)\cup\{w\in\W~|~v \in\E_i(t,F), v\RR^e w\}$.
\end{description}
We now state the main properties of ${\sf G3\4}^e_\CS$ and ${\sf G3\N}^e_\CS$.
 \begin{theorem}
\begin{enumerate}
\item All sequents of the form $\ww\Vvdash A,\g\R\d,\ww\Vvdash A$, with $A$ an arbitrary $\N$-formula, are derivable in  ${\sf G3\N}^e_\CS$.
\item  All labeled formulas in a derivation in  ${\sf G3\N}^e_\CS$ are labeled-subformulas of labeled formulas in the endsequent.
\item All formulas in a derivation in  ${\sf G3\N}^e_\CS$ are either labeled-subformulas of a labeled formula in the endsequent or atomic formulas of the form $\ww E(t,A)$, $\ww R\vv$, or $\ww R^e \vv$.
\item Every sequent derivable in ${\sf G3\N}^e_\CS$ has an anlytic derivation.
\item  All rules of ${\sf G3\N}^e_\CS$ are height-preserving \CS-invertible.
\item The rule
\[\di{\f{\ww R^e \ww,\ww R^e \ww,\g\R\d}{\ww R^e \ww,\g\R\d}\,(Trans^e_*)}\]
is height-preserving $\CS$-admissible in ${\sf G3\N}^e_\CS$.
\item  The rules of substitution $(Subs)$, $(Trans_*)$, weakening, contraction
and $Cut$ are \CS-admissible in  ${\sf G3\N}^e_\CS$.
\item If the sequent $\g\R\d$ is derivable in ${\sf G3\N}^e_\CS$, then it is
valid in every $\N_\CS$-AF-model.
\item Let $A$ be a formula in the language of \N, and \CS~be a constant specification for \N. Then $A$ is provable in $\N_\CS$ iff $\R\ww\Vvdash A$ is provable in ${\sf G3\N}^e_\CS$.
\item Given any finite constant specification $\CS$ for \N, every sequent
$\g\R\d$ in the language of {\sf G3\N} is either derivable in ${\sf G3\N}^e_\CS$ or it has a $\N_\CS$-AF-countermodel.
\end{enumerate}
All the above results hold if we replace \N~and ${\sf G3\N}^e$ respectively with \4~and ${\sf G3\4}^e$.
 \end{theorem}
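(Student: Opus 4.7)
\medskip
\noindent\textbf{Proof plan.} The overall strategy is to repeat, mutatis mutandis, the machinery developed in Sections \ref{sec:Basic properties}--\ref{section: termination proof search} and adapted to modal-justification logics in Theorem \ref{thm:properties of G3MLJL}, treating the new rules $(L:^e)$, $(R:^e)$, $(Mon^e)$, $(Ref^e)$, $(Sym^e)$, $(Trans^e)$ as the obvious analogues of $(L:)$, $(R:)$, $(Mon)$, $(Ref)$, $(Sym)$, $(Trans)$ on the fresh relational atom $\ww R^e \vv$, and handling the mixing rule $(R^e)$ separately. For clause (1), I would run the standard induction on the complexity of $A$; the case $A=t:B$ uses $(R:^e)$ upward, followed by $(L:^e)$, closing at the induction hypothesis. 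Clauses (2) and (3) are immediate from an inspection of the rules: since ${\sf G3\N}^e_\CS$ contains neither $(SE)$ nor $(E\bar{?})$, every labeled formula in every premise of every rule is a labeled-subformula of the principal labeled formula or a side formula of the conclusion; this is the payoff of internalizing AF-models instead of Fitting models. Clause (4) is obtained by combining clause (2) with the sublabel property (proved exactly as in Proposition \ref{prop:sublabel property} by substituting a label of the conclusion for any non-eigen label occurring only above a rule) and the subterm property, which in turn follows from the argument of Lemmas \ref{lem:subterm property superfluous} and \ref{lem:subterm property E-rules} once one notes that the only place where an evidence atom can force a new subterm into the derivation is through $(AxE)$ or $(R:^e)$, and the latter produces a labeled formula $\vv\Vvdash t:B$ that must ultimately come from a labeled-subformula of the endsequent. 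Clause (5) is proved as in Proposition \ref{prop:inversion lemma}; the only nontrivial case is the invertibility of $(R:^e)$, which is handled by height-preserving substitution on the eigenlabel followed by the rule whose conclusion matches the sequent considered.

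\medskip
For clause (6), I would induct on the height of the derivation of $\ww R^e\ww,\ww R^e\ww,\g\R\d$, the key cases being those in which one occurrence of $\ww R^e\ww$ is principal: in $(Trans^e)$, $(Sym^e)$, $(L:^e)$ or $(Mon^e)$; each case reduces by one application of the induction hypothesis, exactly as in Lemma \ref{lemma: admissibility of Trans*}. For clause (7), height-preserving admissibility of weakening and substitution is a straightforward induction on height (as in Theorem \ref{thm:admiss weak.} and Lemma \ref{lemma: substitution lemma}), height-preserving admissibility of contraction uses the invertibility of all rules together with clause (6) in the critical case where both contraction formulas are principal in $(Trans^e)$ with $\ww=\vv=\uu$, and cut elimination proceeds by the standard double induction on the size of the cut formula and the level of the cut. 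The critical cut on a formula $\ww\Vvdash t:A$ is a $(R:^e)$--$(L:^e)$ interaction reduced using substitution of labels (as in Theorem \ref{thm:cut elimination}), and cuts on the atomic formulas $\ww R\vv$, $\ww R^e\vv$, $\ww E(t,A)$ permute upward, since none of these atoms can be principal on the right side of a sequent.

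\medskip
Clause (8) is by induction on the height of the derivation; the new cases concern $(L:^e)$ and $(R:^e)$, which are sound because in AF-models $(\M,w)\Vdash t:A$ iff $w\in\E(t,A)$ and $(\M,v)\Vdash A$ for every $v$ with $w\RR^e v$, and the rule $(R^e)$, which is sound because $\RR\subseteq\RR^e$; the frame rules $(Ref^e)$, $(Sym^e)$, $(Trans^e)$ mirror the corresponding conditions on $\RR^e$ in AF-models. For clause (9), the soundness direction follows from clause (8) together with the AF-model version of Lemma \ref{validity of formula and sequence}; for the completeness direction I must derive each axiom of $\4$ (resp.\ $\N$) in ${\sf G3\4}^e_\CS$ (resp.\ ${\sf G3\N}^e_\CS$). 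The connection principle $t:A\r\b A$ is derived by
\begin{prooftree}
\AXC{$\vv\Vvdash A,\ww R^e\vv,\ww R\vv,\ww\Vvdash t:A\R\vv\Vvdash A$}
\RightLabel{$(L:^e)$}
\UIC{$\ww R^e\vv,\ww R\vv,\ww\Vvdash t:A\R\vv\Vvdash A$}
\RightLabel{$(R^e)$}
\UIC{$\ww R\vv,\ww\Vvdash t:A\R\vv\Vvdash A$}
\RightLabel{$(R\b)$}
\UIC{$\ww\Vvdash t:A\R\ww\Vvdash\b A$}
\RightLabel{$(R\r)$}
\UIC{$\R\ww\Vvdash t:A\r\b A$}
\end{prooftree}
while for $\N^e$ the implicit--explicit negative introspection axiom $\neg t:A\r\b\neg t:A$ is derived using symmetry of $\RR^e$: from an upper sequent obtained by $(R:^e)$ upward, one uses $(Sym^e)$ to flip $\ww R^e\vv$ into $\vv R^e\ww$ and close via $(L:^e)$. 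Positive and negative introspection for $t:A$ are derived as in ${\sf G3LP}_\CS$ and ${\sf G3J5}_\CS$, using $(E!)$, $(Trans^e)$, $(Mon^e)$ and (in the $\N^e$ case) $(Sym^e)$.

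\medskip
Clause (10) is the main obstacle, since we must adapt the reduction-tree construction of Theorem \ref{thm:reduction tree} to produce an AF-countermodel. I would mimic the construction there, adding stages for $(L:^e)$, $(R:^e)$, $(R^e)$, $(Ref^e)$, $(Sym^e)$, $(Trans^e)$, $(Mon^e)$ in the obvious way. Given a saturated branch with union $\overline{\g}\R\overline{\d}$, set $\W$ to be all labels appearing on the branch, $\ww\RR\vv$ iff $\ww R\vv\in\overline\g$, $\ww\RR^e\vv$ iff $\ww R^e\vv\in\overline\g$, and let $\mathcal{A}(t,A)=\{\ww\mid\ww E(t,A)\in\overline\g\}$. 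The stages guarantee that $\RR\subseteq\RR^e$ and that $\RR^e$ is reflexive and transitive (and symmetric in the $\N^e$ case). The admissible evidence function $\E_\mathcal{A}$ is obtained from $\mathcal{A}$ by the inductively generated construction of Definition \ref{def:generated evidence function}, but with the monotonicity clause modified to close under $\RR^e$ rather than $\RR$, so that $\E_\mathcal{A}$ satisfies $(\E3)$--$(\E4)$ for AF-models and, in the $\N^e$ case, the anti-monotonicity condition via symmetry of $\RR^e$. The heart of the proof is the analogue of Lemmas \ref{lem:E in completeness countermodel} and \ref{lemma: in completeness countermodel}: by the subterm property (clause (4)) every subterm of a term in an evidence atom of $\E_\mathcal{A}$ that belongs to the endsequent appears in $\overline\g$, so the generated membership faithfully reflects the stages of $(El+)$, $(Er+)$, $(E\cdot)$, $(E!)$, $(Mon^e)$; the truth lemma for $\ww\Vvdash t:A$ then uses the $(R:^e)$ stage (producing an $\ww R^e\vv$ witness) on the right and the $(L:^e)$ stage on the left. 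The hard part will be verifying that the generated $\E_\mathcal{A}$ satisfies the monotonicity condition with respect to $\RR^e$ rather than $\RR$ without creating evidence atoms outside $Sub_{Tm}$ of the endsequent; this is exactly why $(R^e)$ and the separation of $\RR$ and $\RR^e$ are essential, and why the labeled-subformula and subterm properties of clause (4) had to be established first.
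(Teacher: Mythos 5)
Your proposal follows essentially the same route as the paper: every clause is proved by transplanting the machinery of Sections 5--8 with the $R^e$-rules treated as analogues of the $R$-rules, and clause (10) by adding reduction-tree stages for the new rules, reading off $\RR^e$ from the $R^e$-atoms of a saturated branch, and closing the generated evidence function under $\RR^e$ rather than $\RR$ --- exactly as the paper does. Two small remarks: for clause (6) the paper just observes that $(Trans^e_*)$ is literally an instance of $(Ref^e)$, which is always present in ${\sf G3\4}^e$ and ${\sf G3\N}^e$, so your case-by-case induction, while correct, is unnecessary; and your sketch of the derivation of $\neg t:A\r\b\neg t:A$ needs $(Mon^e)$ and $(Trans^e)$ in addition to $(Sym^e)$ --- after flipping $\ww R^e\vv$ to $\vv R^e\ww$ you must first transfer $\vv E(t,A)$ to $\ww$ by $(Mon^e)$ before $(R:^e)$ can fire, and then compose the $R^e$-chain by $(Trans^e)$ to reach the fresh eigenlabel before $(L:^e)$ closes the branch.
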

\begin{proof} The proof of these clauses are similar to those for \J. Only note that item 6 follows easily from the fact that ${\sf G3\4}^e$ and ${\sf G3\N}^e$ contain the rule $(Ref^e)$. For clause 10, we construct the reduction tree similar to \J~with stages correspond to rules of ${\sf G3\N}^e_\CS$ (and ${\sf G3\4}^e_\CS$). The stages of new rules $(L:^e)$, $(R:^e)$, $(Mon^e)$, $(Trans^e)$, and $(Ref^e)$ are similar to the stages of $(L:)$, $(R:)$, $(Mon)$, $(Trans)$, and $(Ref)$ respectively.

For stage of rule $(R^e)$, if the top-sequent is of the form:
\[ \ww_1 R \vv_1,\ldots,\ww_m R \vv_m,\g'\R\d'\]
where all relational atoms $\ww_i R \vv_i$ from the antecedent of the
topmost sequent are listed. Then, regarding condition $(\dagger)$,
we write the following node on top of it:
\[ \ww_1 R^e \vv_1,\ldots,\ww_m R^e \vv_m,\ww_1 R \vv_1,\ldots,\ww_m R \vv_m,\g'\R\d'\]

For stage of rule $(Sym^e)$, if the top-sequent is of the form:
\[ \ww_1 R^e \vv_1,\ldots,\ww_m R^e \vv_m,\g'\R\d'\]
where all relational atoms $\ww_i R^e \vv_i$ from the antecedent
of the topmost sequent are listed, then, regarding condition
$(\dagger)$, we write the following node on top of it:
\[ \vv_1 R^e \ww_1 ,\ldots,\vv_mR^e \ww_m ,\ww_1 R^e \vv_1,\ldots,\ww_m R^e \vv_m,\g'\R\d'\]

If the reduction tree has a saturated branch, in order to
construct an Artemov-Fitting countermodel
$\M=(\W,\RR,\RR^e,\E_\mathcal{A},\V)$, we add the following clause to the definition of countermodel in the proof of Theorem \ref{thm:reduction tree}:
\begin{description}
\item[$5.$] The evidence accessibility relation $\RR^e$ is determined by
evidence relational atoms in $\overline{\g}$ as follows: if
$\ww R^e \vv$ is in $\overline{\g}$, then $\ww \RR^e \vv$ (otherwise $\ww\RR^e
\vv$ does not hold).
\end{description}
The rest of the proof is similar to that in Theorem \ref{thm:reduction tree}.
\qe \end{proof}

Terminating of proof search of  ${\sf G3\N}^e_\CS$ (and ${\sf G3\4}^e_\CS$) for finite $\CS$ follows from the following propositions.
\begin{proposition}
In a derivation of a sequent in  ${\sf G3S4LPN}^e_\CS$ for each formula of the form $t:A$ in its positive part, it is enough to have at most $n(:)$ applications of $(R:^e)$ iterated on
a chain of accessible worlds $\ww R^e\ww_1 ,\ww_1 R^e \ww_2,\ldots$, with
principal formula $\ww_i\Vvdash t:A$. The same holds for ${\sf G3S4LP}^e_\CS$.
\end{proposition}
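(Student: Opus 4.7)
The plan is to mirror the argument of Proposition \ref{bound R: in LP} line by line, substituting $(R{:}^e)$ for $(R{:})$, $(L{:}^e)$ for $(L{:})$, the evidence accessibility $R^e$ for $R$, and $(Ref^e)$ for $(Ref)$. The three admissibility ingredients needed for the shortening step in the ${\sf G3LP}$ case were height-preserving substitution of labels, height-preserving contraction, and admissibility of a ``diagonal'' reflexivity atom; all three are available in ${\sf G3S4LPN}^e_\CS$, since substitution is height-preserving $\CS$-admissible (by the appropriate extension of Lemma \ref{lemma: substitution lemma}), contraction is $\CS$-admissible, and $R^e$ is reflexive via $(Ref^e)$.

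More explicitly, let $n(:)=m$ and fix a formula $t{:}A$ in the positive part of the endsequent. I would consider the worst case in which all $m$ negative occurrences of $:$ sit in a single block, so that the antecedent contains a labeled formula $\ww \Vvdash t_1{:}t_2{:}\ldots{:}t_m{:}B$ with $B$ free of negative $:$. Then I trace what happens as $(R{:}^e)$ is applied iteratively on $t{:}A$ along a chain $\ww R^e \ww_1 R^e \ww_2 R^e \cdots$. After the $i$-th application of $(R{:}^e)$ the new evidence relational atom $\ww_{i-1} R^e \ww_i$ is added; combined with $(Trans^e)$ this produces $\ww R^e \ww_i$ and by repeated applications of $(L{:}^e)$ on $\ww \Vvdash t_1{:}t_2{:}\ldots{:}t_m{:}B$ the antecedent gains progressively the formulas $\ww_i \Vvdash t_2{:}\ldots{:}t_m{:}B$, $\ww_i \Vvdash t_3{:}\ldots{:}t_m{:}B,\ldots,\ww_i \Vvdash B$. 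After $m$ applications of $(R{:}^e)$ the antecedent already contains the full list
\[ \ww_m \Vvdash t_2{:}\ldots{:}t_m{:}B,\ \ww_m \Vvdash t_3{:}\ldots{:}t_m{:}B,\ \ldots,\ \ww_m \Vvdash B.\]

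Now suppose, for contradiction to non-redundancy, that an $(m{+}1)$-th application of $(R{:}^e)$ occurs on the chain, introducing a fresh $\ww_{m+1}$ with $\ww_m R^e \ww_{m+1}$, and that subsequent applications of $(L{:}^e)$ produce the analogous list with $\ww_{m+1}$ in place of $\ww_m$. I would then apply the height-preserving substitution $(\ww_m/\ww_{m+1})$ (permissible since $\ww_{m+1}$ is an eigenlabel introduced at the last $(R{:}^e)$, hence not present in the conclusion) to obtain a derivation of the same height of a sequent whose antecedent contains $\ww_m R^e \ww_m$ together with two copies of each $\ww_m\Vvdash t_j{:}\ldots{:}t_m{:}B$; a single admissible use of $(Ref^e)$ together with repeated height-preserving contractions then collapses this back to the very sequent obtained after the $m$-th application of $(R{:}^e)$, in strictly fewer steps. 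Hence the $(m{+}1)$-th application is superfluous, establishing the bound $n(:)$.

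The main obstacle I anticipate is the interaction with the rules $(Mon^e)$, $(Trans^e)$ and $(R^e)$, which could in principle add extra $R^e$-atoms on the chain and thereby duplicate evidence atoms at later worlds. These rules, however, only add side formulas and do not change the labeled $:$-formulas present, so the same substitution-plus-contraction argument applies; one just needs to check that substituting $\ww_m$ for $\ww_{m+1}$ in the $R^e$-atoms produced by $(Trans^e)$ collapses them to $R^e$-atoms already present or to the diagonal $\ww_m R^e \ww_m$ available via $(Ref^e)$. With that verification the proof goes through verbatim, and the concluding sentence of the proposition for ${\sf G3S4LP}^e_\CS$ follows by the same argument with rule $(Sym^e)$ absent and therefore fewer cases to check.
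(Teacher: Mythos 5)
Your proof is correct and follows exactly the route the paper intends: the paper states this proposition without proof, clearly expecting the reader to adapt the argument of the corresponding proposition for ${\sf G3LP}$, and your adaptation (replacing $(R{:})$, $(L{:})$, $R$, $(Trans)$, $(Ref)$ by their $e$-superscripted counterparts and invoking height-preserving substitution, contraction, and $(Ref^e)$ for the collapsing step) is the right one. Your added observation that only $(L{:}^e)$ can unpack formulas along an $R^e$-chain --- since $(L\b)$ needs $R$-atoms, which the $(R{:}^e)$ chain never produces --- is precisely why the bound here is $n(:)$ rather than the $n(:)+n(\b)$ needed in ${\sf G3S4LP}$.
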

\begin{proposition}\label{bound Rbox in S4LP^e}
In a  derivation of a sequent in  ${\sf
G3S4LPN}^e_\CS$, for each formula of the form $\b A$ in its positive
part, it is enough to have at most $n(\b)$ applications of $(R\b)$ iterated
on a chain of accessible worlds $\ww R\ww_1 ,\ww_1 R\ww_2,\ldots$, with
principal formula $\ww_i\Vvdash\b A$. The same holds for ${\sf G3S4LP}^e_\CS$.
\end{proposition}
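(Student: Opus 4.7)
The plan is to adapt the argument of Proposition \ref{bound R: in LP} to the rule $(R\b)$ in ${\sf G3S4LPN}^e_\CS$. The essential feature of the system ${\sf G3S4LPN}^e_\CS$ (as opposed to ${\sf G3S4LP}$) is that the two accessibility relations $R$ and $R^e$ are internalized separately: $(R\b)$ introduces only $R$-atoms, $(R:^e)$ introduces only $R^e$-atoms, and the rule $(R^e)$ may convert an $R$-atom into an $R^e$-atom but never produces a new $\b$-formula. Consequently, only the rule $(L\b)$ needs to be considered when bounding a chain of applications of $(R\b)$: neither $(L:^e)$ nor any of the evidence-atom rules introduces labeled formulas whose outermost connective is $\b$, so they cannot force fresh iterations of $(R\b)$.

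Set $n(\b)=m$ and fix a positive subformula $\ww\Vvdash \b A$ of the endsequent. As in the proof of Proposition \ref{bound R: in LP}, consider the worst case in which all $m$ negative occurrences of $\b$ appear as a single nested block $\ww\Vvdash \underbrace{\b\b\cdots\b}_{m} B$ in the antecedent, with $B$ containing no negative occurrence of $\b$. A first application of $(R\b)$ on $\ww\Vvdash \b A$ introduces a fresh label $\ww_1$ together with $\ww R\ww_1$, and $(L\b)$ then strips one $\b$, yielding $\ww_1\Vvdash\b^{\,m-1}B$. Iterating $(R\b)$ and $(L\b)$ along the generated chain $\ww R\ww_1, \ww_1R\ww_2,\ldots$, after $m$ applications of $(R\b)$ the antecedent contains, among others, the formulas $\ww_m\Vvdash \b^{\,m-1}B,\ldots,\ww_m\Vvdash\b B,\ww_m\Vvdash B$.

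A further, $(m{+}1)$-th application of $(R\b)$ produces a fresh label $\ww_{m+1}$ with $\ww_m R\ww_{m+1}$, and a subsequent $(L\b)$-step yields $\ww_{m+1}\Vvdash\b^{\,m-1}B,\ldots,\ww_{m+1}\Vvdash B$. Applying the height-preserving substitution $(\ww_m/\ww_{m+1})$ from Lemma \ref{lemma: substitution lemma} transforms this into a sequent whose antecedent contains $\ww_m R\ww_m$ together with two copies of each $\ww_m\Vvdash\b^{\,j}B$. Admissibility of contraction (Theorem \ref{thm:admiss Contr.}) absorbs the duplicates, and the rule $(Ref)$, present in ${\sf G3S4}\subseteq{\sf G3S4LPN}^e_\CS$, eliminates $\ww_m R\ww_m$; this returns us precisely to the sequent already produced after $m$ applications of $(R\b)$. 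Hence the $(m{+}1)$-th application of $(R\b)$ is superfluous, and at most $n(\b)$ iterations on a chain of $R$-accessible worlds suffice for each positive subformula $\b A$.

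The main delicate point is verifying that no rule of ${\sf G3S4LPN}^e_\CS$ other than $(L\b)$ can occasion a genuinely new iteration of $(R\b)$ on the same chain. The separation between $R$ and $R^e$ is crucial here: applications of $(R:^e)$ never create $R$-atoms, and the rule $(R^e)$ only propagates an existing $R$-atom into the $R^e$-world, so it cannot extend the $R$-chain. Since all other rules act on evidence atoms or propositional connectives and never on $\b$-headed labeled formulas, the counting argument for $(R\b)$ is insensitive to them. The same proof, with the rules of ${\sf G3S4LP}^e_\CS\subseteq{\sf G3S4LPN}^e_\CS$ and without $(Sym^e)$, establishes the claim for ${\sf G3S4LP}^e_\CS$ as well.
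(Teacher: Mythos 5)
The paper states this proposition without proof (it is meant to follow the template of Proposition \ref{bound R: in LP} and of Negri's Proposition 6.9 for {\sf G3S4}), so your task is to supply the adaptation, and your overall strategy --- iterate $(R\b)$ and $(L\b)$ along the chain, then show the $(m{+}1)$-th application collapses onto the $m$-th via the substitution $(\ww_m/\ww_{m+1})$, admissible contraction and $(Ref)$ --- is the right one. However, the step on which the whole point of the proposition rests, namely why the bound drops from $n(:)+n(\b)$ (as in Proposition \ref{bound Rbox in S4LP} for ${\sf G3S4LP}$) to $n(\b)$, is not correctly justified. You claim that ``neither $(L:^e)$ nor any of the evidence-atom rules introduces labeled formulas whose outermost connective is $\b$''. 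This is false: from $\ww\Vvdash t:\b C$ and $\ww R^e \vv$ the rule $(L:^e)$ produces $\vv\Vvdash \b C$. Moreover, since $(R^e)$ turns every $R$-atom of the chain into an $R^e$-atom, $(L:^e)$ does act along the very chain generated by $(R\b)$, so justification prefixes interleaved with boxes, as in $\ww\Vvdash \b t_1:\b t_2:\cdots B$, can be stripped one per chain step exactly as in ${\sf G3S4LP}$. Your worst case $\ww\Vvdash \b^m B$ is therefore not the worst case, and as it stands your argument would equally well ``prove'' the bound $n(\b)$ for ${\sf G3S4LP}$, contradicting Proposition \ref{bound Rbox in S4LP}.

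What is actually needed is an argument that the $(L:^e)$-strippings never force the chain to advance: because $R^e$ is reflexive (rule $(Ref^e)$), transitive (rule $(Trans^e)$) and contains $R$ (rule $(R^e)$), whenever $\ww_i\Vvdash t:C$ occurs at a world of the chain, the formula $C$ can be deposited at $\ww_i$ itself (via $\ww_i R^e \ww_i$) and not only at $\ww_{i+1}$; hence, after the substitution $(\ww_m/\ww_{m+1})$, every formula reaching $\ww_{m+1}$ through an $(L:^e)$-step is already obtainable at $\ww_m$, and only genuine $(L\b)$-steps --- of which a nested block contains at most $n(\b)$ --- can distinguish $\ww_{m+1}$ from $\ww_m$. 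This is precisely the content that separates the AF-based system from ${\sf G3S4LP}$, and it is missing from your proof; your (correct) observations that $(R:^e)$ creates no $R$-atoms and that $(R^e)$ cannot extend the $R$-chain do not address it.
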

Thus, similar to the proof of termination of proof search of {\sf G3LP}, restrictions on the number of applications of $(R:^e)$ and $(R\b)$ should be imposed in the
reduction tree construction for the system ${\sf G3\N}^e_\CS$ (and ${\sf G3\4}^e_\CS$).
\begin{theorem}
Given any finite constant specification $\CS$, and any sequent
$\g\R\d$ in the language of ${\sf G3S4LPN}^e_\CS$, it is decidable whether the sequent is derivable in ${\sf G3S4LPN}^e_\CS$. The same holds for ${\sf G3S4LP}^e_\CS$.
\end{theorem}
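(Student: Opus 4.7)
The plan is to lift the argument given for \LP~in Theorem \ref{terminating proof search LP} to the AF-based setting, by constructing, for a given sequent, a finite reduction tree analogous to the one built in the proof of Theorem \ref{thm:reduction tree}, but now with stages corresponding to the rules of ${\sf G3\N}^e_\CS$: the propositional rules; the new rules $(L:^e), (R:^e), (E), (Mon^e), (AN)$ and the E-rules for terms; the modal rules $(L\b), (R\b)$; and the rules for relational and evidence-relational atoms $(R^e), (Ref^e), (Sym^e), (Trans^e)$ (for ${\sf G3\4}^e_\CS$ simply omit the $(Sym^e)$ stage). Once this tree is shown to be finite, decidability follows at once: if all leaves are initial sequents then the sequent is derivable, while otherwise a saturated branch yields a countermodel (hence, by soundness, non-derivability).

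As in the construction preceding Theorem \ref{terminating proof search LP}, the first step is to install counters that enforce the bounds provided by the two propositions just above, so that $(R:^e)$ fires at most $n(:)$ times on any chain $\ww R^e \ww_1, \ww_1 R^e \ww_2, \ldots$ with principal formula $\ww_i \Vvdash t:A$, for each positive subformula $t:A$, and $(R\b)$ fires at most $n(\b)$ times on any chain $\ww R \ww_1, \ww_1 R \ww_2, \ldots$ with principal formula $\ww_i \Vvdash \b A$, for each positive subformula $\b A$. This bounds the total number of eigenlabels by $p(:)(n(:)+1) + p(\b)(n(\b)+1)$, so that the stock of labels appearing anywhere in the reduction tree is finite. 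With a finite label stock in hand, condition $(\dagger)$ bounds all remaining rules. The possible relational atoms $\ww R \vv$ and $\ww R^e \vv$ form a finite set, so $(R^e), (Ref^e), (Sym^e), (Trans^e)$ can each be applied only finitely often. The possible evidence atoms $\ww E(t,A)$ belong to a finite reservoir, since by the analyticity of ${\sf G3\N}^e_\CS$ their term components stay within $Sub_{Tm}(\g\R\d)$ (extended by the finitely many constants occurring in $\CS$) and their formula components stay within $Sub_{Fm}(\g\R\d)$; hence the E-rules and $(Mon^e)$, together with $(AN)$, can be applied only finitely often under $(\dagger)$. The rules $(L:^e), (L\b), (E)$ are similarly bounded by the finite number of admissible pairs of principal formulas. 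The reduction tree is finitely branching and of bounded depth, hence finite, and derivability in ${\sf G3\N}^e_\CS$ is decidable; the same argument, with $(Sym^e)$ removed, settles ${\sf G3\4}^e_\CS$.

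The delicate point is the cascade: a fresh label introduced by $(R:^e)$ or $(R\b)$ yields new $R$- or $R^e$-atoms (via $(R^e), (Trans^e), (Sym^e)$ and friends), which in turn unlock new applications of $(L:^e), (L\b), (Mon^e)$, which produce new labeled and evidence atoms, which in turn may unlock further $(El+), (Er+), (E\cdot), (E!)$ and bring further $(R:^e)$ or $(R\b)$ applications into play. The main obstacle is verifying that this feedback loop truly closes off: the key observations are that Propositions 10.15 and 10.16 cap the label reservoir once and for all, the subterm property caps the term reservoir, the labeled-subformula property caps the formula reservoir, and condition $(\dagger)$ forbids any rule from firing twice on the same data. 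Together these four ingredients ensure that, despite the mutual interaction between the modal and justification layers, only finitely many distinct atoms can ever appear in the tree, so the construction terminates.
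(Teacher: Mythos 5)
Your proposal is correct and follows essentially the same route as the paper: both rest on the two propositions capping iterated applications of $(R:^e)$ and $(R\b)$ along chains of accessible worlds, together with condition $(\dagger)$ and the analyticity (subterm, sublabel, labeled-subformula properties) of ${\sf G3S4LPN}^e_\CS$, to bound every rule of the reduction tree. Your packaging --- first cap the label stock, then let the finiteness of the relational, evidence-relational and evidence-atom reservoirs close off the feedback loop --- is just a more explicit organization of the paper's rule-by-rule bounds.
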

\begin{proof} The termination of proof search for ${\sf G3S4LP}^e_\CS$ follows from
an argument similar to those of {\sf G3\s} and {\sf G3LP}, and the
fact that the number of applications of rule $(R^e)$ is bounded by a function of
$r$ and of the number of applications of rules $(R\b)$, $(Ref)$, $(Trans)$. Rules $(Ref^e)$ and $(Trans^e)$ are treated similar to
$(Ref)$ and $(Trans)$, respectively, in {\sf G3LP}. For ${\sf
G3S4LPN}^e_\CS$, the rule $(Sym^e)$ increase the number of
applications of $(L:^e)$. The number of applications of $(Sym^e)$
is bounded by a function of the number of evidence relational atoms in the
antecedent of the root sequent and of the number of applications of rules
$(R^e)$, $(Ref^e)$, $(Trans^e)$, $(R:^e)$. \qe \end{proof}
%%%%%%%%%%%%%%%%%%%%%%%%%%%%%%%%%%%%%%%%%%%%%%%%%%%%%%%%%%%%%%%%%%%%%%%%%%%%%%%%%%%%
\section{Conclusion}
The main achievement of this paper has been to provide a modular
approach to the proof theory of justification logics. We have
presented contraction- and cut-free labeled sequent calculus for all justification
logics. All of the presented labeled systems enjoy the sublabel property, and some of them enjoys in addition the labeled-subformula property and the subterm property. Thus, analyticity and termination of proof search were proved only
for some of the labeled systems.

Some of the main advantages of giving labeled proof systems for justification logics over pure syntactic proof systems are as follows: they enable us to provide a systematic
approach to the proof theory of justification logics and also of modal-justification logics; they enable us to construct (Fitting) countermodels for non-valid formulas; they enable us to give a form of correspondence theory for justification logics; they enable us to formalize possible world semantic arguments in a sequent calculus.

The method described in
this paper can be used to provide labeled sequent calculus for other justification logics that have Kripke-Fitting-style models, such as multi-agent logics ${\sf T_nLP}$,
${\sf S4_nLP}$ and ${\sf S5_nLP}$ (cf. \cite{A2006}).

It is also
possible to internalize Mkrtychev models
\cite{A2008,Mkrtychev1997}, which are singleton Fitting models,
within the syntax of sequent calculus to produce sequent systems
for justification logics. Moreover, using Mkrtychev models we
may create \textit{label-free} sequent systems. To this end
replace $\ww\Vvdash A$ and $\ww E(t,A)$ with $A$ and
$E(t,A)$, respectively, and omit relational atoms $\ww R\vv$ from the
labeled language, and then change initial sequents and rules of
Tables \ref{table: rules for G3J},\ref{table: rules for G3JL} accordingly. We leave the precise formulation of these systems for another work.

There remain still some questions. How could one extend these
results to find labeled systems based on F-models for {\sf JB} and
{\sf J5} and their extensions such that termination of proof search is proved? In other words, does the subterm property hold for the labeled systems {\sf G3JB}, {\sf G3J5} and their extensions? In this paper, our approach was to internalize the  known Fitting models of the justification logics. Of course one could try to give labeled systems based on other semantics.

Negri presented a cut-free labeled
system for provability logic {\sf GL} (cf. \cite{Negri2005}). Is
it possible to extend it to the \textit{logic of proofs and
provability} {\sf GLA}~(\cite{AN2004,Nogina 2007})?\\

\noindent
{\bf Acknowledgments}\\

Most of the results of this paper originate from \cite{Ghari2012-Thesis}, and part of the results presented here have been obtained while visiting the Logic and Theory Group (LTG) at the University of Bern in 2009. The author would like to thank Gerhard J\"{a}ger and Thomas Strahm for their helps, to Roman Kuznets and Kai Br\"{u}nnler for useful arguments.
I would like to thank Melvin Fitting and Remo Goetschi 
 for useful comments and catching several errors. This research was in part supported by a grant from IPM. (No. 93030416)

%%%%%%%%%%%%%%%%%%%%%%%%%%%

\end{document}